\def\xyellowspace{%
  \sbox0{\colorbox{yellow}{\strut\ }}
  \dimen0=\wd0\relax
  \hskip0pt\cleaders\box0\hskip\dimen0\hskip0pt}
\gdef\makeyellowspace{\let \xyellowspace\catcode`\ =\active}%
\def\?#1{\colorbox{yellow}{\strut#1}}
\def\urlfont{\DeclareFontFamily{OT1}{cmtt}{\hyphenchar\font='057}
              \normalfont\ttfamily \hyphenpenalty=10000}
\DeclareFontFamily{OT1}{rsfs10}{}
\DeclareFontShape{OT1}{rsfs10}{m}{n}{ <-> rsfs10 }{}
\DeclareMathAlphabet{\mathscript}{OT1}{rsfs10}{m}{n}
\DeclareMathOperator{\im}{Im}       
\DeclareMathOperator{\id}{id}       
\DeclareMathOperator{\Spec}{Spec}   
\DeclareMathOperator{\Hom}{Hom}     
\DeclareMathOperator{\Pic}{Pic}     
\DeclareMathOperator{\Cl}{Cl}       
\DeclareMathOperator{\Cox}{Cox}     
\DeclareMathOperator{\Aut}{Aut}     
\DeclareMathOperator{\rk}{rk}       
\DeclareMathOperator{\Sing}{Sing}   
\DeclareMathOperator{\Nef}{Nef}     
\DeclareMathOperator{\Relint}{Relint}  
\DeclareMathOperator{\Int}{Int}     
\DeclareMathOperator{\conv}{Conv}   
\def\widebreve{\mathpalette\wide@breve}
\def\wide@breve#1#2{\sbox\z@{$#1#2$}%
     \mathop{\vbox{\m@th\ialign{##\crcr
\kern0.08em\brevefill#1{0.8\wd\z@}\crcr\noalign{\nointerlineskip}%
                    $\hss#1#2\hss$\crcr}}}\limits}
\def\brevefill#1#2{$\m@th\sbox\tw@{$#1($}%
  \hss\resizebox{#2}{\wd\tw@}{\rotatebox[origin=c]{90}{\upshape(}}\hss$}
\title[$f$-mirror symmetry of toric complete intersections]{Framed duality and mirror symmetry for toric complete intersections}
\author[M. Rossi]{Michele Rossi}
\date{\today}
\address{Dipartimento di Matematica e Applicazioni, Universit\`a di Milano-Bicocca,
Edificio U5-Ratio, via Roberto Cozzi, 55, 20125 Milano} \email{michele.rossi@unimib.it}
\thanks{The author was partially supported by the I.N.D.A.M. as a member of the G.N.S.A.G.A.}
\def \a{\alpha }
\def \b{\beta }
\def \d{\delta }
\def \l{\lambda }
\def\ll{\boldsymbol{\lambda}}
\def\bg{\boldsymbol{\beta}}
\def \s{\sigma }
\def \D{\Delta }
\def \Si{\Sigma }
\def \vf{\varphi}
\def \vt{\vartheta}
\def \ét{\'{e}tale}
\def \aa{\mathbf{a}}
\def \bb{\mathbf{b}}
\def \cc{\mathbf{c}}
\def \ff{\mathbf{f}}
\def \e{\mathbf{e}}
\def \q{\mathbf{q}}
\def \uu{\mathbf{u}}
\def \v{\mathbf{v}}
\def \n{\mathbf{n}}
\def \m{\mathbf{m}}
\def \z{\mathbf{z}}
\def \x{\mathbf{x}}
\def \1{\mathbf{1}}
\def \0{\mathbf{0}}
\def\P{{\mathbb{P}}}
\def\p2{\mathbb{P}^2}
\def\p3{\mathbb{P}^3}
\def\p4{\mathbb{P}^4}
\def\cv#1{\wideparen{#1}}
\def\cO{\mathcal{O}}
\def\cY{\mathcal{Y}}
\def\Ksz{\mathcal{K}^\bullet}    
\def\rk{\operatorname{rk}}
\def\GL{\operatorname{GL}}
\def\Z{\mathbb{Z}}
\def\H{\mathbb{H}}
\def\C{\mathbb{C}}
\def\R{\mathbb{R}}
\def\M{\mathbf{M}}
\def\Q{\mathbb{Q}}
\def\N{\mathbb{N}}
\def\NN{\nabla}
\def\T{\mathbb{T}}
\def\L{\Lambda}
\def\XX{\mathbb{X}}
\def\SF{\mathcal{SF}}
\def\I{\mathcal{I}}
\def\Weil{\mathcal{W}_T}
\def\U1{\mathfrak{U}^{(1)}}
\def\Est{E_{\text{\rm st}}}
\def\hst{h_{\text{\rm st}}}
\def\rdY{\widehat{Y}^\vee}
\def\rO{\widehat{\Omega}}
\def\cY{\mathcal{Y}}
\theoremstyle{plain}
\newtheorem{theorem}{Theorem}[section]
\newtheorem{proposition}[theorem]{Proposition}
\newtheorem{thm-def}[theorem]{Theorem--Definition}
\newtheorem{corollary}[theorem]{Corollary}
\newtheorem{lemma}[theorem]{Lemma}
\newtheorem*{a-proposition}{Proposition}
\theoremstyle{remark}
\newtheorem{remark}[theorem]{Remark}
\newtheorem{example}[theorem]{Example}
\theoremstyle{definition}
\newtheorem{definition}[theorem]{Definition}
\newtheorem*{step I}{Step I}
\newtheorem*{step II}{Step II}
\newtheorem*{step III}{Step III}
\newtheorem*{step IV}{Step IV}
\newcommand{\oneline}{\vskip12pt}
\newcommand{\halfline}{\vskip6pt}
\newcommand{\cy}{Ca\-la\-bi-Yau }
\newcommand{\ka}{K\"{a}hler }
\begin{document}


\begin{abstract} This paper is devoted to systematically extend $f$-mirror symmetry between families of hypersurfaces in complete toric varieties, as introduced in \cite{R-fTV}, to families of complete intersections subvarieties. Namely, $f$-mirror symmetry is induced by framed duality of framed toric varieties extending Batyrev-Borisov polar duality between Fano toric varieties. Framed duality has been defined and essentially well described for families of hypersurfaces in toric varieties in the previous \cite{R-fTV}. Here it is developed for families of complete intersections, allowing us to strengthening some previous results on hypersurfaces. In particular, the class of projective complete intersections and their mirror partners are studied in detail. Moreover, a (generalized) Landau-Ginzburg/Complete-Intersection correspondence is discussed, extending to the complete intersection setup the LG/CY correspondence firstly studied Chiodo-Ruan and Krawitz.
\end{abstract}
\keywords{Mirror symmetry, fan, polytope, toric variety, fan matrix, resolution of singularities,  hypersurfaces, complete intersection, stringy Hodge numbers, complex moduli, \ka moduli, Koszul complex, spectral sequnces}
\subjclass[2010]{14J33\and 14M25}

\maketitle

\tableofcontents

\section*{Introduction}
In \cite{R-fTV} the concept of framed duality between (weakly) framed toric varieties has been introduced, as an extension of Batyrev duality between Fano toric varieties, thought of as framed by their anti-canonical divisor. This gave rise to an extension of the mirror symmetric phenomenon between anti-canonical sections of Fano toric varieties to sections of a framing divisor: we called this phenomenon $f$-mirror symmetry. Keeping in mind how Borisov and Batyrev-Borisov generalized Batyrev duality \cite{Borisov}, \cite{BB96}, the present paper is devoted to present a systematic extension of $f$-mirror symmetry to complete intersections subvarieties in complete toric varieties, already sketched in \S~6 of \cite{R-fTV}.

Roughly speaking, $f$-duality for complete intersections behaves as follows (see Algorithm~\ref{algoritmoDnef} for details). Assume $Y=\bigcap Y_k$ be a complete intersection subvariety of a complete toric variety $X$ and let $D_Y=\sum D_{Y_k}$ be a \emph{nef partitioned} (see Definition~\ref{def:nefp}) effective torus invariant divisor (the partitioned weak framing) of $X$ such that $Y_k\in|D_k|$, that is $Y_k\sim D_k$. A suitable combinatorial process, called \emph{partitioned $f$-process}, gives back a dual toric variety $\XX$ and a dual partitioned divisor $D^\vee=\sum D^\vee_k$ of $\XX$ giving an $f$-mirror model $Y^\vee=\bigcap Y^\vee_k\subset\XX$, where $Y_k^\vee\in|D^\vee_k|$. On the one hand, if $D_Y$ is a strictly effective divisor, that is a framing of $X$, then $(\XX, D^\vee)$ is still a framed toric variety and, under suitable conditions (called \emph{calibration}) this turns out to be an involutive process. On the other hand, if $D_Y$ is an effective non strictly effective divisor, that is, a weak framing of $X$, then $\XX$ is a no more complete toric variety and the mirror model $Y^\vee$ can be understood in terms of \emph{generalized} Landau-Ginzburg mirror model, turning out to be strictly related with LG models exhibited by Givental \cite{Givental-ICM} and Hori-Vafa \cite{Hori-Vafa}. Here, generalized means that the superpotential has to be understood in terms of a vectorial function $\T\to\C^l$, being $\T$ the acting torus on the ambient toric variety and $l$ the codimension of $Y\subset X$. This approach allows us to extend, to the complete intersection setup, the LG/Hypersurface correspondence studied in \cite[\S~3.6]{R-fTV} and extending the LG/\cy correspondence studied by Chiodo-Ruan \cite{Chiodo-Ruan} and Chiodo-Kalshnikov-Veniani \cite{CKV},  and the Krawitz duality \cite{Krawitz}.

  In particular, all these expects are studied in the case of complete intersections in $\P^n$ showing that, for non-negative Kodaira dimension, a calibrated $f$-process exists. More precisely,  proofs of several results announced in \cite{R-fTV} are here given, in the broader context of complete intersections, namely:
\begin{enumerate}
  \item Theorem~6.8 in \cite{R-fTV} is here restated by Theorem~\ref{thm:CI}, and asserts the existence of at least one framed mirror partner of every complete intersection in $\P^n$ admitting non-negative Kodaira dimension; this is the generalization of what has been already proved for hypersurfaces in \cite[Thm.~4.1, Cor.~4.3]{R-fTV};
  \item Theorem 5.13 in \cite{R-fTV} is here reformulated by means of the three statements given by Theorems~\ref{thm:B-mirror0}, \ref{thm:B-mirror1} and \ref{thm:B-mirror2}, showing that $f$-mirror symmetry produces $B$-side mirror symmetric models, so extending what proved for the $A$-side in \cite[Thm.~5.3, Prop.~5.9]{R-fTV}.
\end{enumerate}
Techniques used for (1) are the same employed in proving \cite[Thm.~4.1]{R-fTV}. For proving results in (2), we need to compute and compare some Hodge numbers of the two mirror partners. In computing Hodge numbers we used \emph{stringy Hodge numbers} introduced in \cite{Batyrev98} and methods for complete intersections given by Batyrev and Borisov in \cite{BB96}. For the latter, we used a different Koszul-type complex, as polytopes involved are no more connected by polar duality.

On the other hand, for negative Kodaira dimension projective complete intersections, mirror models and generalized LG mirror models are exhibited in \S~\ref{ssez:negKod} and Theorem~\ref{thm:negKod}.
\oneline
The paper is organized as follows.  In \S\ref{sez:dualita-ftv} $f$-duality for toric hypersurfaces and complete intersections is reviewed. Then, Theorem~\ref{thm:CI} is proved in \S\ref{ssez:Mirror-pCI}. \S\ref{sez:hdgnbrs} is devoted to computing the needed Hodge numbers of varieties giving the two sides of $f$-mirror symmetry. Finally, \S\ref{sez:l=1} and \S\ref{sez:l ge 2} are the application of results obtained in the previous sections, to hypersurfaces and complete intersections, respectively. In particular, results summarized in (2) are proved in \S\ref{sez:l=1}. While in \S\ref{sez:l ge 2} a particular example is described, as we are not able to produce a general recipe for checking mirror symmetry of complete intersections. Although computation here performed for the complete intersection $Y_{3,4}\subset\P^5$ can be obviously repeated for any projective complete intersection of sufficiently big degrees, singularities of $f$-mirror partners turns out to be too wild to understand the existence of a quasi-smooth resolution admitting the right number of \ka parameters: this is a concrete limit of checking mirror symmetry in this way, that is, via the so-called \emph{topological mirror test}, whose definition is reviewed in \S\ref{ssez:topMT}.

 \section{Framed mirror partners of toric complete intersections}\label{sez:dualita-ftv}

The present section is devoted to extending $f$-duality to families of complete intersection varieties in a fixed toric variety $X$, keeping in mind how Borisov and Batyrev-Borisov generalized Batyrev duality of hypersurfaces \cite{Borisov}, \cite{BB96}.

For preliminaries on toric varieties and general notation we completely refer the reader to \S1 and \S2 in \cite{R-fTV}.

\begin{definition}[Def.~2.1 and 7.1 in \cite{R-fTV}]\label{def:wftv} A \emph{weakly framed toric variety} (wftv) is a couple $(X,D_\aa)$ (also denoted $(X,\aa)$) where:
   \begin{itemize}
     \item $X=X(\Si)$ is a complete toric variety associated with the fan $\Si$ and
     $$\dim(X)=n\ ,\quad\rk(\Pic(X))=r$$
     \item $D_\aa=\sum_{\rho\in\Si(1)}a_\rho D_\rho=\sum_{i=1}^m a_i D_i\in\Weil(X)$, with $m:=|\Si(1)|=n+r$, is an effective torus invariant Weil divisor, that is $a_\rho\ge 0$, called a \emph{weak framing} of $X$, where $D_\rho$, resp. $D_i=D_{\rho_i}$, denotes the prime torus invariant divisor given by the closure of the torus orbit associated with the ray $\rho$, resp. $\rho_i$.
   \end{itemize}
   Furthermore, if $D_\aa$ is \emph{strictly effective}, that is $a_\rho>0$, then it is called a \emph{framing} of $X$ and the couple $(X,\aa)$ is called a \emph{framed toric variety} (ftv).
  \end{definition}

  \begin{definition}[Def~2.2 in \cite{R-fTV}]\label{def:f-polytope}
  Define the \emph{integer part} of a polytope $\D\subseteq M_\R$ as
 \begin{equation*}
   [\D]:=\conv(\{\m\in M\cap\D\})
 \end{equation*}
   The \emph{framing polytope} (\emph{$f$-polytope}) of a wftv $(X,\aa)$ is the lattice polytope $\D(X,\aa)$ in $M_\R$ so defined:
   \begin{equation}\label{k0}
     \D(X,\aa):=[k_0\D_\aa]\quad,\quad k_0:=
     \begin{cases}
       \begin{array}{cc}
          \min\{k\in\N\,|\,\0\in\Int[k\D_\aa]\} & \text{if exists} \\
          1 & \text{otherwise}
        \end{array}
\end{cases}
   \end{equation}
   being $\D_\aa$ the polytope associated with the divisor $D_\aa$, namely
   $$\D_\aa=\{\m\in M_\R\,|\,V^T\cdot\m \geq -\aa\}$$
 \end{definition}

 \begin{remark}\label{rem:(f)(wf)}
   By Definition~\ref{def:wftv}, one has the following two possibilities:
   \begin{itemize}
     \item[\textbf{(f)}] if $(X,\aa)$ is a ftv then $k_0$ always exists and can be $>1$,
     \item[\textbf{(wf)}] if $(X,\aa)$ is a wftv and not a ftv then $\0\in\partial\D_\aa$ and $k_0$ cannot exists, that is, $\D(X,\aa)=[\D_\aa]$\,.
   \end{itemize}
   In the following we will distinguish these two occurrences as (f) and (wf), namely
   \begin{eqnarray*}
     \text{(f)} &\Longleftrightarrow& \0\in\Int[k_0\D_\aa] \\
     \text{(wf)} &\Longleftrightarrow& \0\in\partial\D_\aa
   \end{eqnarray*}
 \end{remark}

\begin{definition}[Def.~6.1 in \cite{R-fTV}]\label{def:nefp}
  Let $(X,D_\aa=\sum_{j=1}^m a_jD_j)$ be a wftv and
  $$V=\left(
                                    \begin{array}{ccc}
                                      \v_1 & \cdots & \v_m \\
                                    \end{array}
                                  \right)
  $$
  be a fan matrix of $X$, where $m=n+r$. A \emph{partition} of the weak framing $D_\aa$ is the datum of a partition
  $$\exists\,l\in \N:\quad I_1\cup\cdots\cup I_l=\{1,\ldots,m\}\ ,\quad\forall\,i\ I_i\ne\emptyset\ ,\quad\forall\,i\neq j\quad I_i\cap I_j =\emptyset$$
   and divisors $D_{\aa_1},\ldots,D_{\aa_l}$ such that
  $$\forall\,k=1,\ldots,l\quad D_{\aa_k}:=\sum_{i\in I_k}a_iD_i\neq 0$$
  Notice that condition $\aa_k\neq\0$ is overabundant when $(X,\aa)$ is a ftv.
  Clearly
  $$D_\aa=\sum_{k=1}^l D_{\aa_k}\quad \text{i.e.}\quad \aa=\sum_{k=1}^l \aa_k$$
   The wftv $(X,D_\aa)$ with a framing partition $\aa=\sum_{k=1}^l\aa_k$ is called a \emph{partitioned wftv} and denoted by $(X,\aa=\sum_{k=1}^l\aa_k)$\,. If the framing partition is such that $D_{\aa_k}$ is a semi-ample divisor for every $k=1,\dots,l$, then $\aa=\sum_{k=1}^l\aa_k$ is called a \emph{nef} framing partition and $(X,\aa=\sum_{k=1}^l\aa_k)$ a \emph{nef} partitioned wftv.
\end{definition}

We recall, now, the $f$-process algorithm for complete intersections in toric varieties, as proposed in \cite[\S6.1]{R-fTV} for a partitioned ftv, and extend it to the case of a partitioned wftv.

\subsection{$f$-process for complete intersections}
Given a partitioned wftv
$$(X,D_\aa=\sum_{k=1}^l D_{\aa_k})$$
consider the following algorithm.
\subsubsection{The partitioned $f$-process algorithm}\label{algoritmoDnef}
\begin{enumerate}[(A)]
  \item Let $\D_\aa$ and $\D_{\aa_1},\ldots,\D_{\aa_l}$ be the polytopes associated with divisors $D_\aa$ and $D_{\aa_1},\ldots,D_{\aa_l}$, respectively, that is
  \begin{eqnarray}\label{part-polytopi}
    \D_\aa&=&\{\m\in M_\R\,|\,V^T\cdot\m \geq -\aa\}\\
    \nonumber
    \forall\,k=1,\ldots,l\quad\D_{\aa_k}&=&\{\m\in M_\R\,|\,V^T\cdot\m \geq -\aa_k\}
\end{eqnarray}
In particular, it turns out that
\begin{equation}\label{sommaintersezione}
  \bigcap_{k=1}^l\D_{\aa_k}=\{\0\}\quad \text{and}\quad \D_\aa=\sum_{k=1}^l\D_{\aa_k}
\end{equation}
where the sum denotes the Minkowski sum of polytopes.
  \item Define
\begin{equation*}
  \cv{\D}_\aa:=\conv(\D_{\aa_1},\ldots,\D_{\aa_l})\subset M_\R
\end{equation*}
Clearly $\cv{\D}_\aa\subseteq\D_\aa$. Then, recalling notation introduced in Remark~\ref{rem:(f)(wf)}, relations (\ref{sommaintersezione}) imply that one of the following occurs
\begin{eqnarray}\label{0int}
  \text{(f)} &\Longrightarrow&\0\in\Int(\cv{\D}_\aa) \\
  \nonumber
  \text{(wf)} &\Longrightarrow& \0\in\partial\cv{\D}_\aa
\end{eqnarray}
In fact, case (wf) is obvious, while case (f) is an application of the following

\begin{lemma}\label{lem:0int}
  Given polytopes $\D_1,\ldots,\D_l$ such that
  \begin{itemize}
    \item[(a)]\  $\forall k=1,\ldots,l\quad\dim\D_k>0$\,,
    \item[(b)]\  $\0\in\bigcap_{k=1}^l\D_k$\,,
    \item[(c)]\  $\0\in\Relint\left(\sum_k\D_k\right)$
  \end{itemize}
  then $\0\in\Relint\left(\conv(\D_1,\ldots,\D_l)\right)$
\end{lemma}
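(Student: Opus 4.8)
The plan is to argue by contradiction using a separating hyperplane. Suppose $\0\notin\Relint(\conv(\D_1,\ldots,\D_l))$. Since $\0\in\conv(\D_1,\ldots,\D_l)$ by hypothesis (b), this means $\0$ lies on the relative boundary, so there exists a supporting hyperplane at $\0$: a nonzero linear functional $\phi$ on the affine span of $\conv(\D_1,\ldots,\D_l)$ such that $\phi\geq 0$ on $\conv(\D_1,\ldots,\D_l)$ and $\phi$ is not identically zero on it.

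First I would observe that $\phi\geq 0$ on each $\D_k$, since each $\D_k\subseteq\conv(\D_1,\ldots,\D_l)$. Moreover $\phi(\0)=0$ by construction, and $\0\in\D_k$ for each $k$ by (b). Hence $\0$ minimizes $\phi$ on each $\D_k$, which means $\0$ lies on the face of $\D_k$ cut out by $\phi=0$; in particular $\phi$ restricted to $\D_k$ attains its minimum value $0$ at $\0$ for every $k$.

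Next I would pass to the Minkowski sum. A standard fact about Minkowski sums is that the minimum of a linear functional over $\sum_k\D_k$ equals the sum of the minima over the individual $\D_k$, and the minimizing face of $\sum_k\D_k$ is the Minkowski sum of the minimizing faces. Therefore the minimum of $\phi$ over $\sum_k\D_k$ is $\sum_k 0 = 0$, attained precisely on $\sum_k(\D_k\cap\{\phi=0\})$, which is a proper face of $\sum_k\D_k$ (proper because $\phi$ is not identically zero on $\conv(\D_1,\ldots,\D_l)$, hence not on at least one $\D_k$, hence not on $\sum_k\D_k$). But $\0=\sum_k\0$ lies in this face, so $\0$ lies on a proper face of $\sum_k\D_k$, contradicting hypothesis (c) that $\0\in\Relint(\sum_k\D_k)$.

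The main subtlety to handle carefully is the compatibility of affine spans: I must make sure that the functional $\phi$, initially defined only on the affine hull of $\conv(\D_1,\ldots,\D_l)$, is extended (arbitrarily) to all of $M_\R$ so that the Minkowski-sum face computation makes sense, and that the affine hull of $\sum_k\D_k$ coincides with (a translate of) the affine hull of $\conv(\D_1,\ldots,\D_l)$ — both equal the linear span of $\bigcup_k(\aff(\D_k)-\aff(\D_k))$ once we use that $\0$ is a common point, so "relative interior" is taken with respect to the same subspace on both sides. Hypothesis (a), that each $\D_k$ is positive-dimensional, is what prevents degenerate situations where some $\D_k=\{\0\}$ would make the convex hull genuinely smaller than expected; I would note where it is used (ensuring each minimizing face, while possibly just $\{\0\}$, does not force a dimension drop that breaks the span identification). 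Assembling these observations gives the result.
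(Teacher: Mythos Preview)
Your proof is correct and takes a genuinely different route from the paper. The paper argues \emph{directly}: using (c) it takes a small ball $B$ around $\0$ inside $\sum_k\D_k$, writes each $\x\in B$ as $\sum_k\x_k$ with $\x_k\in\D_k\setminus\{\0\}$ (here (a) is invoked), then uses (b) and convexity to rescale the $\x_k$ into a convex combination, concluding $B\subset\conv(\D_1,\ldots,\D_l)$. Your argument instead proceeds by contradiction via a supporting functional and the standard fact that the face of a Minkowski sum minimizing a linear form is the Minkowski sum of the minimizing faces; this is cleaner and avoids the somewhat delicate ``up to shrinking $B$'' step in the paper.

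One remark: your approach does not actually need hypothesis (a). Once you observe (as you do) that $\0\in\D_k$ for every $k$ forces $\aff\bigl(\conv(\D_1,\ldots,\D_l)\bigr)=\sum_k\operatorname{span}(\D_k)=\aff\bigl(\sum_k\D_k\bigr)$, the supporting-hyperplane contradiction goes through regardless of whether some $\D_k$ is a point. So your hedged comment about (a) ``preventing degenerate situations'' is unnecessary; you may simply note that (a) is not used in your argument, whereas the paper's constructive approach does rely on it to produce nonzero summands.
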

\begin{proof}
  Let $B$ be a sufficiently small, open ball, centered in $\0$, of dimension $n:=\dim\left(\sum_k\D_k\right)$. By (c), one can assume $B\subset\sum_k\D_k$.
  Consider $\x\in B$ with $\x\neq\0$. By hypothesis (a)
  \begin{equation*}
    \forall\,k\ \exists\,\x_k\in\D_k\setminus\{\0\}:\quad \x=\sum_k\x_k
  \end{equation*}
  Then hypothesis (b) ensures that $\l\x_k\in\D_k$, for any $\l\in[0,1]$, by the convexity of $\D_k$. Therefore, up to shrink $B$,
  \begin{equation*}
    \forall\,k\ \exists\,\l_k\in[0,1],\,\x^0_k\in\D_k:\quad\x=\sum_k\l_k\x^0_k\,,\ \sum_k\l_k=1
  \end{equation*}
  by setting $\x^0_k:=(1/\l_k)\x_k$, which can be assumed a point in $\D_k$ up to choose $\x$, then $\x_k$, small enough. Then $\x\in\conv(\D_1,\ldots,\D_l)$, that is, $$B\subset\conv(\D_1,\ldots,\D_l)$$
  \end{proof}
  To get the first implication in (\ref{0int}), consider $\D_k:=\D_{\aa_k}$ and notice that hypotheses (a), (b), (c) are satisfied by definitions given in (\ref{part-polytopi}).

\noindent Define
\begin{equation*}
  \cv{\D}(X,\aa):=[h_0\cv{\D}_\aa]
  \end{equation*}
  with
  \begin{itemize}
    \item[(wf)]\ $h_0:=1$\,,
    \item[(f)]\ $h_0:=\min\{h\in\N\,|\,\forall\,k\ \dim[h\D_{\aa_k}]>0\ \text{and}\ \0\in\Relint\left(\sum_k[h\D_{\aa_k}]\right)\}$\,.
  \end{itemize}
 In case (f), an application of Lemma~\ref{lem:0int} with $\D_k=[h_0\D_{\aa_k}]$ gives
 \begin{equation*}
   \0\in\Relint\left(\conv([h_0\D_{\aa_1}]\,,\ldots,[h_0\D_{\aa_l}])\right)\subseteq \Relint\left(\cv{\D}(X,\aa)\right)
 \end{equation*}
 where the inclusion follows by observing that
 \begin{equation*}
   \conv([h_0\D_{\aa_1}]\,,\ldots,[h_0\D_{\aa_l}])\subseteq[\conv(h_0\D_{\aa_1}\,,\ldots,h_0\D_{\aa_l})]=\cv{\D}(X,\aa)
 \end{equation*}
  \item Set
\begin{equation*}
  \cv{\XX}_\aa:=\XX_{\cv{\Si}_\aa}\quad\text{where}\quad\cv{\Si}_\aa:=\Si_{\cv{\D}(X,\aa)}
\end{equation*}
(notation as in \cite[\S~1.4]{R-fTV}, denoting a fan spanned by a polytope and the associated toric variety)
and let $\cv{\L}_\aa\in\mathbf{M}(n\times \cv{m};\Z)$ be a fan matrix of $\cv{\XX}_\aa$\,, where $\cv{m}=|\cv{\Si}(1)|$. Notice that:
\begin{itemize}
  \item \emph{the toric variety $\cv{\XX}_\aa$ is complete if and only if case \emph{(f)} occurs, that is $(X,\aa)$ is a ftv.}
\end{itemize}
.
  \item For every $k=1,\ldots,l$, set $m_k:=|I_k|$ and consider the matrix
\begin{equation*}
  \cv{M}_{\aa_k}:= \left(V_{I_k}\right)^T\cdot\cv{\L}_\aa\in\mathbf{M}(m_k\times \cv{m};\Z)
\end{equation*}
and let $\bb_k=(b_{jk})_{j=1}^{\cv{m}}$ be \emph{the minimum non-negative column vector} such that
\begin{equation*}
  \cv{M}_{\aa_k}^T+B_k\geq \0\quad\text{where}\quad B_k:=\underbrace{\left(\,\bb_k\ \cdots\ \bb_k\,\right)}_{m_k\ \text{times}}\,\in \mathbf{M}(\cv{m}\times m_k,\N)
\end{equation*}
Then, define $\cv{\bb}:=\sum_{k=1}^l\bb_k$\,. Calling $\cv{D}_1,\ldots,\cv{D}_{\cv{m}}$ the torus invariant ge\-ne\-ra\-tors of $\Weil(\cv{\XX}_{\aa})$, there is a unique induced partition
\begin{equation*}
  J_1\cup\cdots\cup J_l=\{1,\ldots,\cv{m}\}
\end{equation*}
such that $\cv{D}_{\bb_k}:=\sum_{j\in J_k}b_{jk}\cv{D}_j$. Notice that:
\begin{itemize}
  \item[(f)] in case (f),  $\left(\cv{\XX}_\aa,\cv{D}_{\cv{\bb}}=\sum_{k=1}^l\cv{D}_{\bb_k}\right)$ is a \emph{partitioned ftv} called the \emph{f-dual partitioned ftv} of the partitioned ftv $(X,\aa=\sum_k\aa_k)$;
  \item[(wf)] in case (wf), the couple $\left(\cv{\XX}_\aa,\cv{D}_{\cv{\bb}}\right)$ cannot be a wftv, as $\cv{\XX}_\aa$ is not complete, and will be called the \emph{f-dual model of the partitioned wftv $(X,\aa=\sum_k\aa_k)$}.
\end{itemize}
\begin{proof}
  For (f), setting
  \begin{equation*}
    \cv{\L}_\aa=\left(
                  \begin{array}{ccc}
                    \cv{\ll}_1 & \cdots & \cv{\ll}_{\cv{m}}\\
                  \end{array}
                \right)
  \end{equation*}
  define $J_k$ as follows
  \begin{equation*}
    \forall\,k=1,\ldots,l\quad J_k:=\{j\in\{1,\ldots,\cv{m}\}\,|\,\cv{\ll}_j\in\D_{\aa_k}\}
  \end{equation*}
  Then $J_1\cup\cdots\cup J_l=\{1,\ldots,\cv{m}\}$ is clearly a partition, as $\cv{\ll}_j\in\D_{\aa_i}\cap\D_{\aa_j}$, with $i\neq j$, implies $V^T\cdot\cv{\ll}_j\ge 0$, hence $\cv{\ll}_j=0$ being $X$ complete, contradicting (f).
  Notice that
  \begin{equation*}
    \forall\,k=1,\ldots,l\,,\ \forall\,j\in J_k\,,\ \forall\,i\not\in I_k\quad \cv{\ll}_j^T\cdot\v_i = \v_i^T\cdot\cv{\ll}_j\ge 0\ \Longrightarrow\ b_{jk}= 0
  \end{equation*}
  as $\cv{\ll}_j\in\D_{\aa_k}$ and recalling the definition of $\D_{\aa_k}$ given by relations (\ref{part-polytopi}) in step (A). This suffices to show that $\bb=\sum_k\bb_k$ is a partitioned weak framing. Moreover, by construction, for every $j\in J_k$ there exists a positive integer $h$ such that $h\cv{\ll}_j$ is a vertex of $\D_{\aa_k}$, meaning that there must exists $i\in I_k$ such that
  \begin{equation*}
    (h\cv{\ll}_j)^T\cdot\v_i= h \v_i^T\cdot\cv{\ll}_j=-a_j
  \end{equation*}
  Then $\v_i^T\cdot\cv{\ll}_j$ is a negative integer implying that $b_{jk}>0$, that is, $\bb$ is a framing.

  For (wf) there is no more anything to prove.
\end{proof}

\item Assume we are in case (f). Analogously to step (A), let ${\D}_{\cv{\bb}}$ and $\cv{\D}_{\bb_1},\ldots,\cv{\D}_{\bb_l}$ be the polytopes associated with divisors $\cv{D}_{\cv{\bb}}$ and $\cv{D}_{\bb_1},\ldots,\cv{D}_{\bb_l}$, respectively, that is
  \begin{eqnarray*}
    {\D}_{\cv{\bb}}&=&\{\n\in N_\R\,|\,\cv{\L}_\aa^T\cdot\n \geq -\cv{\bb}\}\\
    \forall\,k=1,\ldots,l\quad\cv{\D}_{\bb_k}&=&\{\n\in N_\R\,|\,\cv{\L}_\aa^T\cdot\n \geq -\bb_k\}
\end{eqnarray*}
Then
\begin{equation}\label{sommaintersezione-b}
  \bigcap_{k=1}^l\cv{\D}_{\bb_k}=\{\0\}\quad \text{and}\quad {\D}_{\cv{\bb}}=\sum_{k=1}^l\cv{\D}_{\bb_k}
\end{equation}
Notice that: in case (wf), $\D_{\cv{\bb}},\cv{\D}_{\bb_k}$ are polyhedra and not, in general, polytopes.
\item Assume we are in case (f). Analogously to step (B), define
\begin{equation*}
  \cv{\D}_{\cv{\bb}}:=\conv(\cv{\D}_{\bb_1},\ldots,\cv{\D}_{\bb_l})\subset N_\R
\end{equation*}
Clearly $\cv{\D}_{\cv{\bb}}\subseteq{\D}_{\cv{\bb}}$ and relations (\ref{sommaintersezione-b}) ensure that $\0\in\Int(\cv{\D}_{\cv{\bb}})$. Then, (\ref{sommaintersezione-b}) still holds for multiple polytopes $h_1{\D}_{\cv{\bb}}$ and $h_1\cv{\D}_{\bb_1},\ldots,h_1\cv{\D}_{\bb_l}$, so giving that
\begin{equation*}
  \bigcap_{k=1}^l[h_1\cv{\D}_{\bb_k}]=\{\0\}\quad \text{and}\quad \0\in\Int(\D(\cv{\XX}_{\aa},\cv{\bb}))
\end{equation*}
since $\D(\cv{\XX}_{\aa},\cv{\bb})=[\sum_{k=1}^lh_1\cv{\D}_{\bb_k}]$, when $h_1$ is defined as the minimum positive integer such that $\0\in\Int([h_1\D_{\cv{\bb}}])$.
Then $\0\in\Int(\cv{\D}(\cv{\XX}_{\aa},\cv{\bb}))$,
being $$\cv{\D}(\cv{\XX}_{\aa},\cv{\bb}):=[h_1\cv{\D}_{\cv{\bb}}]$$
    \item  Assume we are in case (f). Analogously to step (C), set
\begin{equation*}
  \cv{\XX}_{\cv{\bb}}:=\XX_{\cv{\Si}_{\cv{\bb}}}\quad\text{where}\quad\cv{\Si}_{\cv{\bb}}:=
  \Si_{\cv{\D}(\cv{\XX}_{\aa},\cv{\bb})}
\end{equation*}
and let $\cv{\L}_{\cv{\bb}}\in\mathbf{M}(n\times \widetilde{m};\Z)$ be a fan matrix of $\cv{\XX}_{\cv{\bb}}$\,, for some $\widetilde{m}\in\N$\,. As above, $\cv{\XX}_{\cv{\bb}}$ is a complete toric variety.
    \item Assume we are in case (f). Analogously to step (D), for every $k=1,\ldots,l$, set $\cv{m}_k:=|J_k|$ and consider the matrix
\begin{equation*}
  \cv{M}_{\aa_k,\cv{\bb}}:=\left((\cv{\L}_\aa)_{J_k}\right)^T\cdot\cv{\L}_{\cv{\bb}}\in\mathbf{M}(\cv{m}_k\times \widetilde{m};\Z)
\end{equation*}
and let $\cc_k=(c_{j,k})_{j=1}^{\widetilde{m}}$ be \emph{the minimum non-negative column vector} such that
\begin{equation*}
  \cv{M}_{\aa_k,\cv{\bb}}^T+C_k\geq \0\quad\text{where}\quad C_k:=\underbrace{\left(\,\cc_k\ \cdots\ \cc_k\,\right)}_{\cv{m}_k\ \text{times}}\,\in \mathbf{M}(\widetilde{m}\times \cv{m}_k,\N)
\end{equation*}
Then, $(\cv{\XX}_{\cv{\bb}}, \cv{\cc}:=\sum_{k=1}^l\cc_k)$ is a partitioned ftv, whose partitioned framing is given by $\widetilde{D}_{\cv{\cc}}= \sum_{j=1}^{\widetilde{m}}c_{jk}\widetilde{D}_j$, calling $\widetilde{D}_1,\ldots,\widetilde{D}_{\widetilde{m}}$ the torus invariant ge\-ne\-ra\-tors of $\Weil(\cv{\XX}_{\cv{\bb}})$.
\end{enumerate}

\begin{remark}\label{rem:cvLambdak}
  Recalling step (D) in the previous algorithm~\ref{algoritmoDnef}, define
  \begin{equation*}
    \cv{\L}_{\aa_k}:=\left(\cv{\L}_\aa\right)_{J_k}
  \end{equation*}
  Then
  \begin{equation*}
    \left(
       \begin{array}{c}
         \cv{M}_{\aa_1} \\
         \vdots \\
         \cv{M}_{\aa_l} \\
       \end{array}
     \right)=\left(
       \begin{array}{c}
         \left(V_{I_1}\right)^T\cdot\cv{\L}_\aa \\
         \vdots \\
         \left(V_{I_l}\right)^T\cdot\cv{\L}_\aa \\
       \end{array}
     \right)=\left(
               \begin{array}{ccc}
                 V^T\cdot\cv{\L}_{\aa_1}\ \vline & \cdots &\vline\ V^T\cdot\cv{\L}_{\aa_l} \\
               \end{array}
             \right)
  \end{equation*}
  and $(\bb_k)_{J_k}:=\left(
                        \begin{array}{c}
                          b_{j_1,k} \\
                          \vdots \\
                          b_{j_{\cv{m}_k},k} \\
                        \end{array}
                      \right)$, with $\cv{m}_k:=|J_k|$ and $J_k=\{j_1,\ldots,j_{\cv{m}_k}\}$, is the minimum strictly positive column vector such that
\begin{equation*}
  \cv{\L}_{\aa_k}^T\cdot V+\underbrace{\left(\,(\bb_k)_{J_k}\ \cdots\ (\bb_k)_{J_k}\,\right)}_{\cv{m}_k\ \text{times}}\ge \0
\end{equation*}
\end{remark}

\begin{definition}[partitioned $f$-process]
Following the previous algorithm~\ref{algoritmoDnef}, the couple $(\cv{\XX}_\aa,\cv{\bb}=\sum_{k=1}^l\bb_k)$, is called a \emph{partitioned $f$-dual model} of the wftv $(X,\aa=\sum_{k=1}^l\aa_k)$.

\noindent Assume we are in case (f): a double application of partitioned $f$-duality defines a \emph{partitioned $f$-process}
\begin{equation}\label{nefDprocess}
  \left(X,\aa=\sum_{k=1}^l\aa_k\right)\ {\rightsquigarrow}\ \left(\cv{\XX}_\aa,\cv{\bb}=\sum_{k=1}^l\bb_k\right)\ {\rightsquigarrow}\ \left(\cv{\XX}_{\cv{\bb}},\cv{\cc}=\sum_{k=1}^l\cc_k\right)
\end{equation}
 Such a process is called \emph{calibrated} if there exist $\Xi\in\SF(V)$ and $\Xi'\in\SF(\cv{\L}_{\cv{\bb}})$, refining $\Si$ and $\cv{\Si}_{\cv{\bb}}$, respectively\footnote{$\SF$ is defined as in \cite[1.3.1]{R-fTV} and denotes the set of all complete and simplicial fans whose 1-skeleton is given by all the rays generated by the columns of the indicated fan matrix.}, such that
  $$\left(\widehat{X},\vf^*D_\aa\right)\ {\cong}\ \left(\widehat{X}',(\vf')^*\widetilde{D}_{\cv{\cc}}\right)$$
  are isomorphic framed toric varieties, where
  $$\vf:\widehat{X}(\Xi)\longrightarrow X(\Si)\quad\text{and}\quad\vf':\widehat{X}'(\Xi')\longrightarrow \cv{\XX}_{\cv{\bb}}(\cv{\Si}_{\cv{\bb}})$$
  are the small resolutions associated with the choice of $\Xi$ and $\Xi'$, respectively.
\end{definition}

\begin{proposition}\label{prop:calibrato}
  In the above notation, up to identifying lattices $M$ (hence $N$) of $X$ and $\cv{\XX}_{\cv{\bb}}$, the partitioned $f$-process (\ref{nefDprocess}) is calibrated if and only if
  \begin{eqnarray}\label{nefDcalibrato}
\nonumber
  \cv{\L}_{\cv{\bb}} &=& V\quad\text{(up to a permutation of columns)} \\
  \forall\,k=1,\ldots,l\quad \cc_k&=& \aa_k
\end{eqnarray}
\end{proposition}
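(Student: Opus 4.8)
The plan is to prove the two implications separately, turning the geometric content of ``calibrated'' into statements about toric morphisms and then handling the partition through the explicit combinatorial description of the induced blocks coming from Algorithm~\ref{algoritmoDnef}. Throughout I use that we are in case~(f), so that all framing divisors in sight are strictly effective; in particular the supports of $\vf^*D_\aa$ on $\widehat X$ and of $(\vf')^*\widetilde{D}_{\cv{\cc}}$ on $\widehat X'$ are the whole torus-invariant boundaries $\widehat X\setminus\T$ and $\widehat X'\setminus\T'$. I first dispose of the ``if'' direction: assuming, after the identification of lattices, that $\cv{\L}_{\cv{\bb}}=V$ up to a permutation of columns and $\cc_k=\aa_k$ for all $k$, one retraces steps (D)--(H); the only point needing attention is that equality of the fan matrices in fact entails $\cv{\Si}_{\cv{\bb}}=\Si$, and not merely agreement of the $1$-skeletons, because the polytope $\cv{\D}(\cv{\XX}_{\aa},\cv{\bb})$ spanning $\cv{\Si}_{\cv{\bb}}$ is produced, along the algorithm, so as to recover $\Si$. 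Granting this, $\cv{\XX}_{\cv{\bb}}=X$ and $\cv{\cc}=\sum_k\cc_k=\sum_k\aa_k=\aa$, hence $\widetilde{D}_{\cv{\cc}}=D_\aa$ and $(\cv{\XX}_{\cv{\bb}},\widetilde{D}_{\cv{\cc}})=(X,D_\aa)$; the process is then calibrated with $\Xi=\Xi'=\Si$ (legitimate, since a wftv is $\Q$-factorial, so $\Si\in\SF(V)$) and $\vf=\vf'=\id$.

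For the ``only if'' direction, let $F\colon(\widehat X,\vf^*D_\aa)\xrightarrow{\ \sim\ }(\widehat X',(\vf')^*\widetilde{D}_{\cv{\cc}})$ be an isomorphism realizing the calibration. Since $F$ carries the support of $\vf^*D_\aa$ onto that of $(\vf')^*\widetilde{D}_{\cv{\cc}}$ and, by case~(f), these are the full boundaries, $F$ restricts to an isomorphism $\T\xrightarrow{\ \sim\ }\T'$ of the open tori; composing $F$ with a translation of $\widehat X'$ by an element of $\T'$ (which acts trivially on torus-invariant divisors) we may assume $F$ equivariant, hence induced by a lattice isomorphism $\psi\colon N\to N'$ with $\psi(\Xi)=\Xi'$. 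Identify the two lattices via $\psi$; then $\Xi=\Xi'$. As $\Xi\in\SF(V)$ refines $\Si$, one has $\Si(1)\subseteq\Xi(1)=\{\text{rays through the columns of }V\}=\Si(1)$, so $\Xi(1)=\Si(1)$, and likewise $\Xi(1)=\Xi'(1)=\cv{\Si}_{\cv{\bb}}(1)=\{\text{rays through the columns of }\cv{\L}_{\cv{\bb}}\}$. Thus the columns of $V$ and of $\cv{\L}_{\cv{\bb}}$ span the same rays; since fan matrices are reduced (primitive, pairwise distinct columns), $\cv{\L}_{\cv{\bb}}=V$ up to a permutation $\sigma$ of columns, and $\sigma$ is the permutation $\psi$ induces on the rays. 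Reordering by $\sigma$ gives $\cv{\L}_{\cv{\bb}}=V$. Finally, $\vf$ and $\vf'$ are small (no rays are added), so $\vf^*D_\aa=\sum_i a_i\widehat D_i$ and $(\vf')^*\widetilde{D}_{\cv{\cc}}=\sum_i\cv{c}_i\widehat D'_i$; comparing, via $F$, the coefficients of the prime invariant components yields $\aa=\cv{\cc}$, i.e. $\sum_k\aa_k=\sum_k\cc_k$ as vectors.

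It remains to upgrade this coarse identity to the block-wise equalities $\cc_k=\aa_k$, which simultaneously forces the blocks to coincide. With $\cv{\L}_{\cv{\bb}}=V$, the induced partition $J'_1\cup\cdots\cup J'_l$ of step~(H) (defined as in step~(D)) reads $J'_k=\{\,i:\v_i\in\cv{\D}_{\bb_k}\,\}$, where $\cv{\D}_{\bb_k}=\{\n:\cv{\L}_\aa^T\cdot\n\geq-\bb_k\}$ (step~(E)), and $\cc_k$ is supported on $J'_k$. By the minimality defining $\bb_k$ in step~(D), $(b_k)_j\geq-\cv{\ll}_j^T\cdot\v_i$ holds for every column $\cv{\ll}_j$ of $\cv{\L}_\aa$ and every $i\in I_k$; hence, for $i\in I_k$ and all $j$, $\cv{\ll}_j^T\cdot\v_i\geq-(b_k)_j$, that is $\v_i\in\cv{\D}_{\bb_k}$, so $I_k\subseteq J'_k$. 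Since $\{I_k\}_{k=1}^l$ and $\{J'_k\}_{k=1}^l$ are both partitions of $\{1,\dots,m\}$ into $l$ non-empty blocks, $I_k\subseteq J'_k$ for all $k$ forces $I_k=J'_k$ for all $k$. As $\cv{\cc}=\aa$ is strictly positive (case~(f)), for each $i$ exactly one $k$ has $(c_k)_i\neq 0$, namely the $k$ with $i\in J'_k=I_k$, and then $(c_k)_i=\cv{c}_i=a_i=(a_k)_i$, while $(c_{k'})_i=0=(a_{k'})_i$ for $k'\neq k$; therefore $\cc_k=\aa_k$ for every $k$.

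The main obstacle is the ``only if'' direction: first, the passage from an abstract isomorphism of framed toric varieties to an equivariant one --- exactly where strict effectivity of the framings (case~(f)) enters, via the full-support property of the framing divisors --- and second, the strengthening of $\sum_k\cc_k=\sum_k\aa_k$ to $\cc_k=\aa_k$, which rests on reading the induced partition off the polytopes $\cv{\D}_{\bb_k}$ together with the minimality of the vectors $\bb_k$. On the ``if'' side the one delicate point is, as noted, checking from the construction of $\cv{\D}(\cv{\XX}_{\aa},\cv{\bb})$ that $\cv{\L}_{\cv{\bb}}=V$ genuinely entails $\cv{\Si}_{\cv{\bb}}=\Si$.
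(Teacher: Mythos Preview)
The paper gives no detailed proof of this proposition, deferring instead to the unpartitioned analogue \cite[Prop.~6.3]{R-fTV}. Your ``only if'' direction is essentially correct and carries the genuinely new content of the partitioned setting: the passage from an isomorphism of framed toric varieties to a toric (lattice) isomorphism is standard, and the combinatorial step upgrading $\aa=\cv{\cc}$ to the block-wise identities $\aa_k=\cc_k$, via $I_k\subseteq J'_k$ and the fact that both $\{I_k\}$ and $\{J'_k\}$ are partitions, is exactly the right argument and is cleanly executed.

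The ``if'' direction, however, has two gaps. First, a wftv is \emph{not} assumed $\Q$-factorial in Definition~\ref{def:wftv} (only completeness is required), so you cannot simply take $\Xi=\Xi'=\Si$; this is minor, since one can pass to any simplicial refinement. The substantive gap is the one you yourself flag: the hypothesis $\cv{\L}_{\cv{\bb}}=V$ only gives equality of $1$-skeletons, whereas $\cv{\Si}_{\cv{\bb}}$ is the face fan of the polytope $[h_1\cv{\D}_{\cv{\bb}}]$, and a polytope whose vertices lie along the rays of $\Si$ need not have face fan equal to $\Si$. What calibration actually requires is a common simplicial refinement $\Xi\in\SF(V)$ of both $\Si$ and $\cv{\Si}_{\cv{\bb}}$; two complete fans on the same $1$-skeleton do not admit such a common refinement in general without introducing new rays. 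Your hand-wave that ``the polytope $\cv{\D}(\cv{\XX}_{\aa},\cv{\bb})$ \ldots is produced, along the algorithm, so as to recover $\Si$'' is not an argument --- this is precisely the step that needs the content of the cited \cite[Prop.~6.3]{R-fTV}.
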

The proof is analogous to that proving \cite[Prop.~6.3]{R-fTV}.

Before speaking about $f$-mirror partners of complete intersections embedded in a partitioned wftv, consider the following important property of an $f$-dual partitioned framing.

\begin{proposition}\label{prop:nef}
  Let $(\cv{\XX}_\aa,\cv{\bb}=\sum_{k=1}^l\bb_k)$ be the $f$-dual partitioned ftv of a partitioned ftv $(X,\aa=\sum_{k=1}^l\aa_k)$. Then there always exists a small, $\Q$-factorial, projective, partial resolution $\vf:\check{\XX}(\Xi)\longrightarrow\cv{\XX}_\aa$, with $\Xi\in\SF\left(\cv{\L}_\aa\right)$, such that pull back divisors $\vf^*\left(\cv{D}_{\bb_k}\right)\in\Weil(\check{\XX})$ are semi-ample, for every $k=1,\ldots,l$\,. In particular, $\left(\check{\XX},\vf^*(\cv{D}_{\cv{\bb}})=\sum_{k=1}^l\vf^*(\cv{D}_{\bb_k})\right)$ is a nef partitioned ftv and a suitable positive multiple of $\vf^*(\cv{D}_{\cv{\bb}})$ is an ample divisor giving a projective embedding of $\check{\XX}$. Consequently, $\cv{D}_{\cv{\bb}}$ and $\cv{D}_{\bb_k}$, for every $k$, are all movable divisor of $\cv{\XX}_\aa$.
\end{proposition}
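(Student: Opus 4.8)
The plan is to produce the resolution $\vf:\check{\XX}(\Xi)\to\cv{\XX}_\aa$ by a careful choice of a simplicial fan $\Xi\in\SF(\cv{\L}_\aa)$ refining $\cv{\Si}_\aa$, and then to verify semi-ampleness of each $\vf^*(\cv{D}_{\bb_k})$ by exhibiting a lattice polytope whose normal fan is coarsened by $\Xi$ and whose support function reproduces $\vf^*(\cv{D}_{\bb_k})$. The natural candidate for that polytope is $[h_1\cv{\D}_{\bb_k}]$ (equivalently, the $k$-th summand appearing in step (F) of Algorithm~\ref{algoritmoDnef}), since by construction of the $f$-dual framing in step (D) the columns $\cv{\ll}_j$ with $j\in J_k$ are exactly the primitive generators on the rays $\rho\in\cv{\Si}_\aa(1)$ at which the support function of $\cv{D}_{\bb_k}$ is determined, and the minimality of $\bb_k$ forces $-b_{jk}=\min_{i\in I_k}(\v_i^T\cdot\cv{\ll}_j)$, i.e. the polytope $\cv{\D}_{\bb_k}$ is a Minkowski summand adapted to these rays. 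So the first step is: recall that $\cv{D}_{\bb_k}$ need not be semi-ample on $\cv{\XX}_\aa$ only because $\cv{\Si}_\aa$ may fail to be a refinement of the normal fan of $\cv{\D}_{\bb_k}$; passing to a common simplicial refinement $\Xi$ of $\cv{\Si}_\aa$ and of all the normal fans $\mathcal{N}(\cv{\D}_{\bb_k})$, $k=1,\dots,l$, the pullback of each torus-invariant Cartier divisor associated with $\cv{\D}_{\bb_k}$ on the toric variety of $\mathcal{N}(\cv{\D}_{\bb_k})$ becomes a globally generated, hence semi-ample, divisor on $\check{\XX}(\Xi)$ with polytope $\cv{\D}_{\bb_k}$, and that pullback agrees with $\vf^*(\cv{D}_{\bb_k})$ precisely because the Weil divisor $\cv{D}_{\bb_k}$ has polytope $\cv{\D}_{\bb_k}$ on $\cv{\XX}_\aa$ and pullback of Weil divisors along the small map $\vf$ is computed ray-by-ray.

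Second, I would check that $\Xi$ can be chosen in $\SF(\cv{\L}_\aa)$, i.e. with the same $1$-skeleton as $\cv{\Si}_\aa$, so that $\vf$ is small (does not contract any divisor) and $\Q$-factorial (simplicial) and projective. Smallness is the delicate point: a common refinement of several normal fans will in general introduce new rays. The remedy, which is the standard argument behind the theory of movable divisors, is to observe that $\cv{\Si}_\aa$ is itself the fan spanned by the polytope $\cv{\D}(X,\aa)=[h_0\cv{\D}_\aa]$, and $\cv{\D}_\aa=\conv(\D_{\aa_1},\dots,\D_{\aa_l})$ already sees all the $\D_{\aa_k}$; dually, in step (F)--(G) the polytope $\cv{\D}(\cv{\XX}_\aa,\cv{\bb})=[h_1\cv{\D}_{\cv{\bb}}]$ with $\cv{\D}_{\cv{\bb}}=\conv(\cv{\D}_{\bb_1},\dots,\cv{\D}_{\bb_l})$ has normal fan refining every $\mathcal{N}(\cv{\D}_{\bb_k})$. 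Its rays are among the columns of $\cv{\L}_\aa$ since $\cv{\D}(\cv{\XX}_\aa,\cv{\bb})\subseteq\D_{\cv{\bb}}=\{\n\,|\,\cv{\L}_\aa^T\n\geq-\cv{\bb}\}$ and the vertices of the dual are supported on the facet normals, which are exactly the columns of $\cv{\L}_\aa$. Hence choosing $\Xi\in\SF(\cv{\L}_\aa)$ to be any simplicial refinement of $\Si_{\cv{\D}(\cv{\XX}_\aa,\cv{\bb})}$ (using that $\SF(\cv{\L}_\aa)$ is nonempty and any two fans on the same $1$-skeleton have a common simplicial refinement in $\SF(\cv{\L}_\aa)$) realizes $\vf$ as a small, $\Q$-factorial, projective partial resolution with the required property.

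Third, once each $\vf^*(\cv{D}_{\bb_k})$ is semi-ample, $\vf^*(\cv{D}_{\cv{\bb}})=\sum_k\vf^*(\cv{D}_{\bb_k})$ is semi-ample as a sum of semi-ample divisors, so $(\check{\XX},\vf^*(\cv{D}_{\cv{\bb}})=\sum_k\vf^*(\cv{D}_{\bb_k}))$ is a nef partitioned ftv; moreover $\vf^*(\cv{D}_{\cv{\bb}})$ has polytope $\cv{\D}(\cv{\XX}_\aa,\cv{\bb})$, which is full-dimensional with $\0$ in its interior by step (G), so a positive multiple is ample and gives a projective embedding of $\check{\XX}$. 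Finally, for the "movable" conclusion: a Weil divisor on a $\Q$-factorial complete toric variety is movable if and only if it admits a small modification on which its pullback is semi-ample (equivalently, nef), and this is exactly what $\vf$ provides for each $\cv{D}_{\bb_k}$ and for $\cv{D}_{\cv{\bb}}$; I would cite the toric characterization of $\Mov$ (base locus of $|m\cv{D}|$ has codimension $\geq2$ for large $m$) and conclude. The main obstacle I anticipate is the smallness of $\vf$: ensuring that the common refinement needed for semi-ampleness of all the $\cv{D}_{\bb_k}$ simultaneously does not force new rays, which is why the argument must route through the single polytope $\cv{\D}(\cv{\XX}_\aa,\cv{\bb})$ from steps (F)--(G) rather than refining the normal fans one summand at a time — and why the construction is set up so that this polytope's facet normals are precisely the columns of $\cv{\L}_\aa$.
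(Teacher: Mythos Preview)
Your overall strategy is the right one and close to the paper's, but there is a concrete gap in the ``smallness'' step, and it comes from routing through the wrong polytope.

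You work with $\cv{\D}(\cv{\XX}_\aa,\cv{\bb})=[h_1\cv{\D}_{\cv{\bb}}]$, where $\cv{\D}_{\cv{\bb}}=\conv(\cv{\D}_{\bb_1},\dots,\cv{\D}_{\bb_l})$, and you make two claims about it: (a) its normal fan refines every $\mathcal{N}(\cv{\D}_{\bb_k})$; (b) its facet normals lie among the columns of $\cv{\L}_\aa$ because $\cv{\D}(\cv{\XX}_\aa,\cv{\bb})\subseteq\D_{\cv{\bb}}$. Neither claim is justified. For (a), the normal fan of a convex hull of polytopes need not refine the normal fans of the pieces; this is a property of the \emph{Minkowski sum}, not of the convex hull. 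For (b), containment $P\subseteq Q$ does not force the facet normals of $P$ to be among those of $Q$ (think of a square inside a hexagon), and taking the integer part $[\,\cdot\,]$ can introduce further new facet normals. So the argument as written does not establish that a simplicial refinement can be chosen in $\SF(\cv{\L}_\aa)$.

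The fix is exactly what the paper does: replace $\cv{\D}_{\cv{\bb}}$ by the Minkowski sum $\D_{\cv{\bb}}=\sum_{k}\cv{\D}_{\bb_k}$. By its very definition $\D_{\cv{\bb}}=\{\n\in N_\R\,|\,\cv{\L}_\aa^T\cdot\n\ge -\cv{\bb}\}$, so its facet normals are \emph{exactly} the columns of $\cv{\L}_\aa$; hence the normal fan $\Si^\perp_{\D_{\cv{\bb}}}$ refines $\cv{\Si}_\aa$ without introducing any new rays, and the induced map $\vf^\perp:\P_{\D_{\cv{\bb}}}\to\cv{\XX}_\aa$ is small. Moreover, because $\D_{\cv{\bb}}$ is a Minkowski sum, its normal fan automatically refines each $\mathcal{N}(\cv{\D}_{\bb_k})$, which is what makes each $(\vf^\perp)^*(\cv{D}_{\bb_k})$ semi-ample (the paper cites \cite[Prop.~2.4]{BB96} here). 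One then passes to any $\Q$-factorial $\Xi\in\SF(\cv{\L}_\aa)$ refining $\Si^\perp_{\D_{\cv{\bb}}}$, and semi-ampleness persists (the paper invokes \cite[Prop.~1.11]{Hu-Keel}, which also handles the movability conclusion). Your treatment of ampleness of a multiple of $\vf^*(\cv{D}_{\cv{\bb}})$ and of movability is fine once the polytope is corrected.
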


\begin{proof}
  The columns of the fan matrix $\cv{\L}_\aa$ of $\cv{\XX}_\aa$ give, by definition, the collection of all the normal inward vectors to the facets of the polytope
  $${\D}_{\cv{\bb}}=\{\n\in N_\R\,|\,\cv{\L}_\aa^T\cdot\n \geq -\cv{\bb}\}$$
  defined in step (E) of the algorithm \ref{algoritmoDnef}. Then the normal fan $\Si^\perp:=\Si^\perp_{\D_{\cv{\bb}}}$ is a refining fan of the fan $\cv{\Si}_\aa:=\Si_{\cv{\D}(X,\aa)}$ of $\cv{\XX}_\aa$, giving rise, by inclusion of fans, to a small partial resolution $\vf^\perp:\P_{\D_{\cv{\bb}}}\longrightarrow\cv{\XX}_\aa$\,. Since
  \begin{equation*}
    \D_{(\vf^\perp)^*(\cv{D}_{\bb_k})}=\D_{\cv{D}_{\bb_k}}=\cv{\D}_{\bb_k}
  \end{equation*}
  the Minkowski sum in (\ref{sommaintersezione-b}) shows that there exists a minimum integer $h>0$ such that
  \begin{equation*}
    h{\D}_{\cv{\bb}}=\sum_{k=1}^lh\cv{\D}_{\bb_k}
  \end{equation*}
  is a Minkowski sum of lattice politopes.
  Then $(\vf^\perp)^*(\cv{D}_{\bb_k})$ is a semi-ample divisor of $\P_{\D_{\cv{\bb}}}$, for every $k$, by \cite[Prop.~2.4]{BB96}.
  Notice that $\P_{\D_{\cv{\bb}}}$ is not necessarily $\Q$-factorial, but there certainly exists a fan $\Xi\in\SF(\cv{\L}_\aa)$ giving rise to a small, $\Q$-factorial, partial resolution $\vf:\check{\XX}(\Xi)\longrightarrow\cv{\XX}_\aa$, factorizing through $\vf^\perp$, that is,
  \begin{equation*}
    \xymatrix{\check{\XX}\ar[rd]^-{\vf'}\ar[rr]^-\vf&&\cv{\XX}_\aa\\
                &\P_{\D_{\cv{\bb}}}\ar[ur]^-{\vf^\perp}&}
  \end{equation*}
  Define $\SF(\cv{\L}_\aa,\cv{\bb}):=\{\Xi\in\SF(\cv{\L}_\aa)\,|\,\Xi\prec\Si^\perp\}$. Then, by \cite[Prop.~1.11]{Hu-Keel},
  \begin{equation}\label{Nef}
    ((\vf^\perp)^*)^{-1}\left(\Nef\left(\P_{\D_{\cv{\bb}}}\right)\right)\cong
    \bigcap_{\Xi\in\SF(\cv{\L}_\aa,\cv{\bb})}(\vf^*)^{-1}\Nef\left(\check{\XX}(\Xi)\right)
  \end{equation}
  Recall that $(\vf^\perp)^*$ and $\vf^*$ are isomorphisms, as induced by isomorphisms in codimension 1.
  Therefore, $\vf^*(\cv{D}_{\bb_k})=(\vf')^*\left((\vf^\perp)^*(\cv{D}_{\bb_k})\right)$ is a semi-ample divisor of $\check{\XX}$, for every $k$. Moreover, by construction, $h(\vf^\perp)^*(\cv{D}_{\cv{\bb}})$ is an ample divisor determining a projective embedding of $\P_{\D_{\cv{\bb}}}$. This means that
  \begin{equation*}
    (\vf')^*\left(h(\vf^\perp)^*(\cv{D}_{\cv{\bb}})\right)=h\vf^*\left(\cv{D}_{\cv{\bb}}\right)
  \end{equation*}
  is an ample divisor of $\check{\XX}$, giving rise to a projective embedding of $\check{\XX}$.

\noindent  Finally, the fact that $\cv{D}_{\cv{\bb}}$ and $\cv{D}_{\bb_k}$, for every $k$, are all movable divisors of $\cv{\XX}_\aa$ still follows from  \cite[Prop.~1.11]{Hu-Keel}.
\end{proof}

\begin{corollary}\label{cor:nef}
  Let $(X,\aa=\sum_{k=1}^l\aa_{k})$ be a partitioned ftv admitting a calibrated $f$-process. Then, there exists a small, $\Q$-factorial, projective, partial resolution
  $$\phi:\widehat{X}\longrightarrow X$$
   such that $(\widehat{X},\phi^*(D_\aa)=\sum_k\phi^*(D_{\aa_k}))$ is a nef partitioned ftv and $h\phi^*(D_\aa)$ is an ample divisor of $\widehat{X}$, for a suitable positive integer $h\in\N$. In particular, $D_\aa$ and $D_{\aa_k}$, for every $k$, are all movable divisor of $X$.
\end{corollary}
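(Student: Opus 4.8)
The plan is to deduce the statement from Proposition~\ref{prop:nef} applied not to $(X,\aa)$ directly, but to its $f$-dual partitioned ftv $(\cv{\XX}_\aa,\cv{\bb}=\sum_{k}\bb_{k})$. Indeed, since the partitioned $f$-process $(X,\aa)\rightsquigarrow(\cv{\XX}_\aa,\cv{\bb})\rightsquigarrow(\cv{\XX}_{\cv{\bb}},\cv{\cc})$ is calibrated, Proposition~\ref{prop:calibrato} lets me identify the lattices $M,N$ of $X$ and of $\cv{\XX}_{\cv{\bb}}$ so that $\cv{\L}_{\cv{\bb}}=V$ (up to a permutation of columns) and $\cc_{k}=\aa_{k}$ for every $k$. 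Under this identification the double $f$-dual $(\cv{\XX}_{\cv{\bb}},\cv{\cc}=\sum_{k}\cc_{k})$ is the same partitioned ftv as $(X,\aa=\sum_{k}\aa_{k})$, and by construction of the $f$-process (\ref{nefDprocess}) it is the $f$-dual partitioned ftv of $(\cv{\XX}_\aa,\cv{\bb}=\sum_{k}\bb_{k})$. So what has to be proved is nothing but Proposition~\ref{prop:nef} read for $(\cv{\XX}_\aa,\cv{\bb})$ in place of $(X,\aa)$, once its conclusions are transported back from $\cv{\XX}_{\cv{\bb}}$ to $X$.

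Carrying this out, I would first apply Proposition~\ref{prop:nef} to $(\cv{\XX}_\aa,\cv{\bb})$: this produces a small, $\Q$-factorial, projective, partial resolution $\vf:\check{\XX}(\Xi)\longrightarrow\cv{\XX}_{\cv{\bb}}$, with $\Xi\in\SF(\cv{\L}_{\cv{\bb}})=\SF(V)$ refining the normal fan $\Si^\perp$ of the polytope $\{\m\in M_\R\,|\,V^{T}\cdot\m\geq-\aa\}=\D_\aa$, such that every $\vf^{*}(\widetilde{D}_{\cc_{k}})$ is semi-ample, $\bigl(\check{\XX},\ \vf^{*}(\widetilde{D}_{\cv{\cc}})=\sum_{k}\vf^{*}(\widetilde{D}_{\cc_{k}})\bigr)$ is a nef partitioned ftv, a suitable multiple $h\,\vf^{*}(\widetilde{D}_{\cv{\cc}})$ is ample and embeds $\check{\XX}$ projectively, and $\widetilde{D}_{\cv{\cc}}$ and each $\widetilde{D}_{\cc_{k}}$ are movable divisors of $\cv{\XX}_{\cv{\bb}}$. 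The movability assertion transports at once: $\cv{\XX}_{\cv{\bb}}$ and $X$ share the fan matrix $V$, so their movable cones coincide (they depend only on $V$, by \cite[Prop.~1.11]{Hu-Keel}), and $\widetilde{D}_{\cc_{k}}$ is identified with $D_{\aa_{k}}$ via $\cc_{k}=\aa_{k}$; hence $D_{\aa_{k}}$ and $D_\aa=\sum_{k}D_{\aa_{k}}$ are movable divisors of $X$, which already gives the final assertion. To produce the resolution $\phi:\widehat{X}\to X$ itself I would use the calibration hypothesis a second time: it supplies fans $\Xi_{0}\in\SF(V)$ refining $\Si$ and $\Xi_{0}'\in\SF(\cv{\L}_{\cv{\bb}})=\SF(V)$ refining $\cv{\Si}_{\cv{\bb}}$, together with a framed toric isomorphism $(\widehat{X}(\Xi_{0}),\vf_{0}^{*}D_\aa)\cong(\widehat{X}'(\Xi_{0}'),(\vf_{0}')^{*}\widetilde{D}_{\cv{\cc}})$; as a toric isomorphism preserves the one-skeleton, after the lattice identification this forces $\Xi_{0}=\Xi_{0}'$ (up to the column relabelling), i.e.\ one single fan $\Xi_{0}\in\SF(V)$ refining \emph{both} $\Si$ and $\cv{\Si}_{\cv{\bb}}$, and --- since $\cc_{k}=\aa_{k}$ --- the isomorphism respects the partitions: $\vf_{0}^{*}D_{\aa_{k}}\cong(\vf_{0}')^{*}\widetilde{D}_{\cc_{k}}$ for every $k$. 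Setting $\phi:=\vf_{0}$, a small, $\Q$-factorial, projective, partial resolution of $X$, the positivity obtained for the $\vf^{*}(\widetilde{D}_{\cc_{k}})$ and for $h\,\vf^{*}(\widetilde{D}_{\cv{\cc}})$ translates, through the isomorphism, into $\phi^{*}(D_{\aa_{k}})$ semi-ample for every $k$ --- so $(\widehat{X},\phi^{*}D_\aa=\sum_{k}\phi^{*}D_{\aa_{k}})$ is a nef partitioned ftv --- and $h\,\phi^{*}D_\aa$ ample.

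The hard part will be exactly this last step. The chamber of $\SF(V)$ along which Proposition~\ref{prop:nef} linearizes $\cv{D}_{\cv{\bb}}$ into a nef partitioned framing is attached to the $\cv{\XX}_{\cv{\bb}}$-side, namely to fans refining $\cv{\Si}_{\cv{\bb}}$ (and $\Si^\perp$), whereas the resolution demanded by the Corollary lives on the $X$-side, among fans refining $\Si$; the crux is to know that a single simplicial complete fan $\Xi\in\SF(V)$ can be chosen doing both jobs at once --- refining $\Si$ and placing all the classes $D_{\aa_{k}}$ in one nef cone with $D_\aa$ in its interior. Reconciling these two chamber decompositions of $\Cl(V)_\R$ is precisely what the calibration condition is designed to make possible, through the framed isomorphism of the small resolutions recalled above; dropping calibration one recovers only the $\cv{\XX}_{\cv{\bb}}$-side statement of Proposition~\ref{prop:nef}. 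For $l=1$ this is the hypersurface situation already treated in \cite{R-fTV}, and the routine check that ``ample'' survives the birational pull-backs occurring en route is the same bookkeeping already performed in the proof of Proposition~\ref{prop:nef}.
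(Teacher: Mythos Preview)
Your overall approach is exactly the one the paper intends: the Corollary is stated immediately after Proposition~\ref{prop:nef} with no proof, so the intended argument is simply to apply Proposition~\ref{prop:nef} to the $f$-dual partitioned ftv $(\cv{\XX}_\aa,\cv{\bb})$ and then invoke the calibration hypothesis, via Proposition~\ref{prop:calibrato}, to identify $(\cv{\XX}_{\cv{\bb}},\cv{\cc})$ with $(X,\aa)$. You carry this out correctly, including the observation that the movable-cone statement depends only on the fan matrix $V$ and therefore transports for free.

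There is one slip in your final step, however. You produce \emph{two} fans: the fan $\Xi\in\SF(V)$ coming from Proposition~\ref{prop:nef} (which refines $\Si^\perp_{\D_\aa}$ and hence $\cv{\Si}_{\cv{\bb}}$, and on which the pullbacks $\vf^*(\widetilde{D}_{\cc_k})$ are nef with $h\vf^*(\widetilde{D}_{\cv{\cc}})$ ample), and the fan $\Xi_0\in\SF(V)$ extracted from the calibration isomorphism (which refines $\Si$). You then set $\phi:=\vf_0$ and claim that the positivity established for $\vf^*$ ``translates, through the isomorphism,'' to $\phi^*$. But the calibration isomorphism links $\widehat{X}(\Xi_0)$ to $\widehat{X}'(\Xi_0')$, not to $\check{\XX}(\Xi)$: there is no isomorphism in sight between the model on which nefness was proved and the model that actually maps to $X$. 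What is really needed is a single $\Xi\in\SF(V)$ refining both $\Si$ and $\Si^\perp_{\D_\aa}$; equivalently, a $\Q$-factorialisation of $X$ whose nef cone contains all the $[D_{\aa_k}]$ and has $[D_\aa]$ in its interior. The paper leaves this point implicit as well, so you are not missing anything the paper supplies --- but your write-up gives the impression that the calibration isomorphism closes the gap, which it does not quite do on its own.
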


\begin{remark}
  The previous Proposition~\ref{prop:nef} and Corollary~\ref{cor:nef} apply, when $l=1$, to the $f$-dual partner of a ftv and to a ftv admitting a calibrated $f$-process.
\end{remark}

\begin{definition}[$f$-mirror of a complete intersection in a ftv]\label{def:nefpmirror}
  Given the partitioned ftv $(X,\aa=\sum_{k=1}^{l}\aa_k)$, assume that the associated partitioned $f$-process (\ref{nefDprocess}) is calibrated. Consider the complete intersection subvariety
  $$Y:=\bigcap_{k=1}^lY_k\subset X\quad\text{with}\quad Y_k\in|D_{\aa_k}|$$
  The generic complete intersection subvariety
  $$Y^\vee:=\bigcap_{k=1}^l Y^\vee_k\subset \cv{\XX}_\aa\quad\text{with}\quad Y_k\in|\cv{D}_{\bb_k}|$$
  is called a \emph{$f$-mirror partner of} $Y$.
\end{definition}

\begin{remark}
  If $l=1$, that is, the framing partition is trivial, the $f$-mirror duality defined by the previous Definition~\ref{def:nefpmirror} reduces to give the $f$-mirror duality between hypersurfaces in toric varieties.
\end{remark}

\begin{remark}\label{rem:CIequazioni}
  The defining polynomials of both $Y$ and $Y^\vee$ in the Cox rings of $X$ and $\cv{\XX}_\aa$, respectively, can be explicitly described as follows. Namely:
  \begin{itemize}
    \item[(a)] for every $k=1,\ldots,l$, the lattice polytope $[\D_{\aa_k}]$ is the Newton polytope of $Y_k\in|D_{\aa_k}|$; call $\overline{\L}_{\aa_k}$ a matrix whose columns are given by all the lattice points in $[\D_{\aa_k}]$: it is well defined up to a permutation of columns; setting $l_k:=|\D_{\aa_k}\cap M|$, then $\overline{\L}_{\aa_k}$ is a $n\times l_k$ integer matrix; define
        \begin{equation*}
          \overline{M}_{\aa_k}:= V^T\cdot \overline{\L}_{\aa_k}\quad\text{and}\quad \overline{A}_k:=\underbrace{\left(\,\aa_k\ \cdots\ \aa_k\,\right)}_{l_k\ \text{times}}\,\in \mathbf{M}(m\times l_k;\N)\,;
        \end{equation*}
        then the polynomial of $Y_k$ is given by
        \begin{equation*}
          f_k=\sum_{j=1}^{l_k} c_j\x^{\m_j} \in \Cox(X)\cong\C[x_1,\ldots,x_m]
        \end{equation*}
        where $\m_j=(m_{i,j})$ is the $j$-th column of $\overline{M}_{\aa_k}+\overline{A}_k$ and $\x^{\m_j}:=\prod_{i=1}^m x_i^{m_{i,j}}$;
    \item[(b)] recalling step (E) in the algorithm~\ref{algoritmoDnef}, the lattice polytope $[\cv{\D}_{\bb_k}]$ is the Newton polytope of $Y_k^\vee\in|\cv{D}_{\bb_k}|$; call $\overline{\L}_{\bb_k}$ a matrix whose columns are given by all the lattice points in $[\cv{\D}_{\bb_k}]$; setting $l'_k:=|\cv{\D}_{\bb_k}\cap N|$, then $\overline{\L}_{\bb_k}$ is a $n\times l'_k$ integer matrix; define
        \begin{equation*}
          \overline{M}_{\aa,\bb_k}:= \cv{\L}_\aa^T\cdot \overline{\L}_{\bb_k}\quad\text{and}\quad \overline{B}_k:=\underbrace{\left(\,\bb_k\ \cdots\ \bb_k\,\right)}_{l'_k\ \text{times}}\,\in \mathbf{M}(\cv{m}\times l'_k;\N)\,;
        \end{equation*}
        then the polynomial of $Y^\vee_k$ is given by
        \begin{equation}\label{fdual_k}
          f^\vee_k=\sum_{j=1}^{l'_k} c_{jk}\x^{\n_j} \in \Cox(\cv{\XX}_\aa)\cong\C[x_1,\ldots,x_{\cv{m}}]
        \end{equation}
        where $\n_j=(n_{i,j})$ is the $j$-th column of $\overline{M}_{\aa,\bb_k}+\overline{B}_k$ and $\x^{\n_j}:=\prod_{i=1}^{\cv{m}} x_i^{n_{i,j}}$.
  \end{itemize}
  Notice that, for every $k$, both $f_k$ and $f_k^\vee$ are homogeneous polynomials, with respect to degrees induced by class groups. In fact, columns of both $\overline{M}_{\aa_k}$ and $\overline{M}_{\aa,\bb_k}$ determine trivial divisors, up to linear equivalence. Then
  $$\deg(f_k)=[D_{\aa_k}]\in\Cl(X)\quad\text{and}\quad\deg(f_k^\vee)=[\cv{D}_{\bb_k}]\in\Cl(\cv{\XX}_\aa)$$
  Moreover, if $(X,\aa)$ is a ftv then Corollary~\ref{cor:nef} asserts that, up to pass to suitable small, $\Q$-factorial, projective, partial resolutions $\widehat{X}$ and $Y$ of $X$ and $\cv{\XX}_\aa$, respectively, polynomials $f_k$ and $f^\vee_k$ give well defined local equations of hypersurfaces in every (maximal) affine subset of $\widehat{X}$ and $Y$, respectively, so well defining a pair of \emph{$f$-mirror dual, complete intersections, subvarieties.}
\end{remark}

\begin{remark}\label{rem:CIequazioni(wf)}
Assume we are in case (wf). Then $\cv{\XX}_\aa$ is non-complete and, in general $\cv{\D}_{\bb_1},\ldots,\cv{\D}_{\bb_l}$ are polyhedra and not polytopes, meaning that $\overline{\L}_{\bb_k}$, as defined in item (b) of the previous Remark~\ref{rem:CIequazioni}, is no more  well defined.\\
To understand how proposing an $f$-mirror partner in the (wf) case, recall that Proposition~\ref{prop:calibrato} gives $\cv{\L}_\bb=V$, up to a permutation on columns, when $(X,\aa=\sum \aa_k)$ is a partitioned ftv admitting a calibrated partitioned $f$-process. Then, in the (wf) case we are going to restate the previous item (b) by setting
\begin{equation*}
  \NN_k:=\conv\left(V_{I_k}\,|\,\0_n\right)
\end{equation*}
and defining $\overline{\L}_{\bb_k}$ as the matrix whose columns are given by all the lattice points in $\NN_k$.
\end{remark}

\begin{definition}[$f$-mirror of a complete intersection in a wftv]\label{def:wf-mirror}
  Given the partitioned wftv $(X,\aa=\sum_{k=1}^{l}\aa_k)$, the generic complete intersection subvariety
  $$Y^\vee:=\bigcap_{k=1}^l Y^\vee_k\subset \cv{\XX}_\aa$$
  where $Y^\vee_k$ is the zero-locus of the polynomial $f^\vee_k$ defined in (\ref{fdual_k}), with $\overline{M}_{\aa,\bb_k}$ defined by following the prescription of the previous Remark~\ref{rem:CIequazioni(wf)},
  is called an \emph{$f$-mirror partner of} $Y$.
\end{definition}

\subsection{A $LG/CI$-correspondence}\label{ssez:LG/CI} The present section is meant to generalize from hypersurfaces to complete intersections what was observed in \cite[\S~3.6.1]{R-fTV}.

Given a partitioned wftv $(X,\aa=\sum_k\aa_k)$ and a complete intersection $Y\subset X$, defined as in Definition~\ref{def:nefpmirror}, let $\T\cong(\C^*)^n$ be the acting torus on $X$. Consider the torus complete intersection $Z:=\T\cap Y$. Recalling part (a) of Remark~\ref{rem:CIequazioni}, $Y$ is the common zero locus of  polynomials $f_k$, with $k=1,\ldots,l$, generated by the columns of the matrix $\overline{M}_{\aa_k}+\overline{A}_k$. By setting
\begin{equation*}
  \x^{\aa_k}:=\prod_{i\in I_k}x_i^{a_{ik}}
\end{equation*}
consider the Laurent polynomial
\begin{equation*}
  f_{\aa_k}:={f_k\over \x^{\aa_k}}\in\C[\x,\x^{-1}]
\end{equation*}
generated by the columns of the matrix $\overline{M}_{\aa_k}$. Notice that, in $\T$ both $f_k$ and $f_{\aa_k}$ admit the same zero-locus, that is,
\begin{equation*}
  Z=\T\cap \ff^{-1}(\0)=\T\cap \ff_\aa^{-1}(\0)
\end{equation*}
where $\ff:=(f_1,\ldots,f_l):X\longrightarrow\C^l$ and $\ff_\aa:=(f_{\aa_1},\ldots,f_{\aa_l}):\T\longrightarrow\C^l$\,.

In the following, $(\T,\ff_\aa)$ will be called a \emph{generalized LG model} admitting a \emph{Laurent superpotential}.

On the other hand, let $\cv{\T}_\aa$ be the acting torus on $\cv{\XX}_\aa$ and $Z^\vee:=\cv{\T}_\aa\cap Y^\vee$, where $Y^\vee\subset\cv{\XX}_\aa$ is the complete intersection defined either in part (b) of Remark~\ref{rem:CIequazioni} or in Definition~\ref{def:wf-mirror}, depending on the occurrence of either case (f) or case (wf) in (\ref{0int}),  respectively. By setting
\begin{equation*}
  \x^{\bb_k}:=\prod_{j\in J_k}x_j^{b_{jk}}
\end{equation*}
consider the Laurent polynomial
\begin{equation*}
  f^\vee_{\bb_k}:={f^\vee_k\over \x^{\bb_k}}\in\C[\x,\x^{-1}]
\end{equation*}
In particular, $\ff^\vee_\bb:=(f^\vee_{\bb_1},\ldots,f^\vee_{\bb_l}):\cv{\T}_\aa\longrightarrow\C^l$ defines an \emph{$f$-mirror} generalized LG model $(\cv{\T}_\aa,\ff^\vee_\bb)$, with a Laurent superpotential, of the generalized LG model $(\T,\ff_\aa)$.

\begin{proposition}\label{prop:LGmirror}
  The generalized LG model $(\cv{\T}_\aa,\ff^\vee_\bb)$ admits a re-parameterization giving rise to a LG mirror model of Givental type \cite{Givental-ICM}
  of the complete intersection $Y\subset X$\,.
\end{proposition}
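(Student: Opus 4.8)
The plan is to make explicit the monomial change of coordinates that turns the Laurent superpotential $\ff^\vee_\bb$ into Givental's standard form. First I would recall the shape of $\ff^\vee_\bb=(f^\vee_{\bb_1},\ldots,f^\vee_{\bb_l})$: by construction in step (D)–(E) of Algorithm~\ref{algoritmoDnef}, the exponent vectors of $f^\vee_{\bb_k}$ are the columns of $\cv{M}_{\aa_k}=\left(V_{I_k}\right)^T\cdot\cv{\L}_\aa$, i.e. $\v_i^T\cdot\cv{\ll}_j$ for $i\in I_k$ and $j$ ranging over all $\cv{m}$ columns of the fan matrix $\cv{\L}_\aa$ of $\cv{\XX}_\aa$. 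In other words, in multiplicative torus coordinates $y_j$ on $\cv{\T}_\aa$ (dual to the lattice generated by the $\cv{\ll}_j$), each $f^\vee_{\bb_k}$ is, up to the overall monomial $\x^{\bb_k}$ we already divided out, a sum over $i\in I_k$ of the monomials $\prod_j y_j^{\v_i^T\cdot\cv{\ll}_j}$, one monomial per ray $\rho_i$ of $X$ in the block $I_k$, with generic coefficients.

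Next I would introduce the re-parameterization. Writing $u_i:=\prod_{j=1}^{\cv{m}} y_j^{\v_i^T\cdot\cv{\ll}_j}$ for $i=1,\ldots,m$, the relations among the $u_i$ are exactly the linear relations among the columns $\v_i$ of the fan matrix $V$ of $X$ — that is, the Mori/charge relations — so the $u_i$ are coordinates on the Givental torus, constrained by the monomial equations $\prod_i u_i^{\langle \ell, \cdot\rangle}=q^{\ell}$ coming from a basis $\ell$ of relations (one Novikov/Kähler parameter $q$ per generator of the relation lattice, matching $\rk(\Pic)$ of the relevant resolution). Under this substitution $f^\vee_{\bb_k}$ becomes $\sum_{i\in I_k} c_{ik}\,u_i$, i.e. a partial sum of the coordinates split according to the partition $I_1\cup\cdots\cup I_l$; absorbing the generic nonzero $c_{ik}$ by rescaling the $u_i$, the $l$-tuple $\ff^\vee_\bb$ is precisely $\bigl(\sum_{i\in I_1}u_i,\ldots,\sum_{i\in I_l}u_i\bigr)$ on $\{\prod u_i^{\ell}=q^{\ell}\}$, which is the Givental Landau–Ginzburg model attached to the nef partition data of $Y\subset X$ in the sense of \cite{Givental-ICM}.

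Finally I would check compatibility of the constraint locus: the lattice of relations among the $u_i$ is the image of $V^T$, hence has rank $m-n=r$; after passing to the small $\Q$-factorial projective resolution $\widehat{X}$ furnished by Corollary~\ref{cor:nef} (in the (f) case) this $r$ matches $\rk(\Pic(\widehat X))$, so the number of Givental parameters $q$ is the expected one, and the degrees of the $f_k$ correspond under this identification to the classes $[D_{\aa_k}]$, i.e. to the splitting type of the bundle $\bigoplus_k \mathcal{O}(D_{\aa_k})$ whose complete intersection is $Y$. I expect the main obstacle to be bookkeeping rather than conceptual: one must verify that the monomial map $(y_j)\mapsto(u_i)$ is dominant with the correct generic fiber dimension and that, after rescaling coefficients, no extra monomial relations are introduced — in the (wf) case this requires using the modified definition of $\overline{\L}_{\bb_k}$ from Remark~\ref{rem:CIequazioni(wf)} (columns = lattice points of $\conv(V_{I_k}\,|\,\0_n)$), so that the surviving monomials are still indexed exactly by the rays $\v_i$, $i\in I_k$, together with the trivial monomial. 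Once that dictionary is in place, the identification with Givental's model is immediate.
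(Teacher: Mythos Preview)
Your proposal has a notational confusion that propagates into a genuine discrepancy with the paper's argument. You identify the exponent vectors of $f^\vee_{\bb_k}$ with the rows (you write ``columns'') of $\cv{M}_{\aa_k}=(V_{I_k})^T\cdot\cv{\L}_\aa$; but that matrix appears in step~(D) only to \emph{define} the framing $\bb_k$. The actual Laurent exponents of $f^\vee_{\bb_k}$ are the columns of $\overline{M}_{\aa,\bb_k}=\cv{\L}_\aa^T\cdot\overline{\L}_{\bb_k}$ from Remark~\ref{rem:CIequazioni}(b), where $\overline{\L}_{\bb_k}$ lists all lattice points of $[\cv{\D}_{\bb_k}]$ (respectively of $\NN_k$ in the (wf) case). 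So the monomials are indexed by lattice points $\n\in[\cv{\D}_{\bb_k}]\cap N$, not by the rays $i\in I_k$ of $X$. In the projective examples these happen to coincide (up to the extra $\0$) because $[\cv{\D}_{\cv{\bb}}]=\conv(V)$ by the calibration argument, but in the generality of the proposition this identification is not available.

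The paper's proof is accordingly both simpler and more direct than your plan. It declares each Laurent monomial of each $f^\vee_{\bb_k}$ to be a new coordinate $u_{jk}:=c_{jk}\x^{\m_j}$, takes the single superpotential $F:=\sum_{k,j}u_{jk}$, and introduces exactly $l$ constraints $q_k:=\prod_{j}u_{jk}$, one per block of the partition; the Givental-type model is then $(\pi^{-1}(\T_l),F)$ for the projection $\pi:\uu\mapsto\q$. There is no appeal to the Mori/charge relations of $X$, and the number of Givental parameters is $l$, not $r=\rk\Pic$. Your version, with $r$ monomial relations and the identification $u_i=\prod_j y_j^{\v_i^T\cdot\cv{\ll}_j}$, is a reasonable alternative dictionary in the calibrated projective setting and is closer in spirit to the usual Hori--Vafa presentation; but as a proof of the stated proposition it rests on the unverified assumption that $[\cv{\D}_{\bb_k}]\cap N=\{\v_i:i\in I_k\}\cup\{\0\}$, and in any case it is not the reparametrization the paper actually carries out.
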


\begin{proof}
  By (\ref{fdual_k}), recall that $f^\vee_{\bb_k}=\sum_{j=1}^{l'_k} c_j\x^{\m_j}$ is a Laurent polynomial where $\m_j$ is the $j$-th column of the matrix $\overline{M}_{\aa,\bb_k}=\cv{\L}_\aa^T\cdot \overline{\L}_{\bb_k}$, with $\overline{\L}_{\bb_k}$ defined either in part (b) of Remark~\ref{rem:CIequazioni} or in Remark~\ref{rem:CIequazioni(wf)}. Then define
  \begin{eqnarray*}
    u_{jk} &:=& c_{jk}\x^{\m_j}\quad\text{with}\ j\in\{1,\ldots,l'_k\} \\
     q_k &:=& \prod_{j=1}^{l'_k}u_{jk}  \\
     F &:=& \sum_{k=1}^l\sum_{j=1}^{l'_k} u_{jk} \\
     \q &:=& (q_1,\ldots,q_l)
  \end{eqnarray*}
  Setting $N:=\sum_k l'_k$, then $F$ defines a function $F:\C^N\longrightarrow\C$. In particular, the Givental type LG model determined by the generalized LG model $(\cv{\T}_\aa,\ff^\vee_\bb)$ turns out to be given by the restriction of the superpotential $F$ to the counter-image of the torus $\T_l=(\C^*)^l$ by the projection $\pi:\uu\mapsto\q$\,, namely $(\pi^{-1}(\T_l),F)$\,.
\end{proof}

Examples of the LG model construction proposed by the previous Proposition~\ref{prop:LGmirror} are given in Example~\ref{ex:Y23} and Example~\ref{ex:Y13}, in the (wf) case, and at the end of \S~\ref{ssez:Y34}, in the (f) case.

\subsection{Mirroring projective complete intersections}\label{ssez:Mirror-pCI} The present section is devoted to give, on the one hand, a proof of the announced Theorem~6.8 in \cite{R-fTV} and, on the other hand, extending Theorem~7.4 in \cite{R-fTV} to the case of projective complete intersection of negative Kodaira dimension.

\subsubsection{Projective complete intersection of non-negative Kodaira dimension} In \cite{R-fTV} we announced the following result.

\begin{theorem}\label{thm:CI}
  Let $Y_d=\bigcap_kY_{d_k}\subseteq\P^n$ be a complete intersection of $l$ generic projective hypersurface, of degree $(d_1,\ldots,d_l)$ such that $\sum_kd_k\geq n+1$. Then there always exists a partitioned framing ${\aa}=\sum_k\aa_k$ of $\P^n$ such that $Y_{d_k}\sim D_{\aa_k}$, for every $k$, and the associated partitioned $f$-process is calibrated.
\end{theorem}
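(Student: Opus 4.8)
The statement has two parts: (a) producing a partitioned framing $\aa=\sum_k\aa_k$ of $\P^n$ with $D_{\aa_k}\sim Y_{d_k}$ for every $k$, and (b) checking that the resulting partitioned $f$-process~(\ref{nefDprocess}) is calibrated. Part (a) is a finite combinatorial problem; part (b) is the substance, and the plan is to run Algorithm~\ref{algoritmoDnef} twice, starting from the framing built in (a), and to verify the numerical characterization of Proposition~\ref{prop:calibrato}, namely $\cv{\L}_{\cv{\bb}}=V$ up to a permutation of columns and $\cc_k=\aa_k$ for every $k$ --- following, block by block, the argument used for hypersurfaces in \cite[Thm.~4.1]{R-fTV}.

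For (a): since $Y_d$ is a complete intersection one has $l\le n$, hence $l\le n+1\le\sum_kd_k$; and $\Cl(\P^n)\cong\Z$ with $[D_i]=1$ for each of the $m=n+1$ torus invariant prime divisors. A partitioned framing of $\P^n$ with $D_{\aa_k}\sim Y_{d_k}$ is therefore exactly the datum of a partition $\{1,\dots,n+1\}=I_1\sqcup\cdots\sqcup I_l$ together with positive integers $a_1,\dots,a_{n+1}$ such that $\sum_{i\in I_k}a_i=d_k$. Choosing block sizes $s_k:=|I_k|$ with $1\le s_k\le d_k$ and $\sum_ks_k=n+1$ --- possible precisely because $l\le n+1\le\sum_kd_k$ --- and then distributing $d_k$ into $s_k$ positive summands over the positions of $I_k$ (possible since $s_k\le d_k$), one obtains such an $\aa$. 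It is strictly effective, hence a framing of $\P^n$, and $\aa_k\ne\0$ because $d_k\ge1$; one moreover checks directly that each $D_{\aa_k}$ is globally generated on $\P^n$, so that $(\P^n,\aa=\sum_k\aa_k)$ is even a nef partitioned ftv.

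For (b): let $V=[\e_1\,|\,\cdots\,|\,\e_n\,|\,{-\1}]$ be the standard fan matrix of $\P^n$. In Step~(A) the polytopes $\D_{\aa_k}=\{\m:V^T\m\ge-\aa_k\}$ are lattice simplices sitting as ``corners'' of the big lattice simplex $\D_\aa$; since $\aa$ is strictly effective $\0\in\Int\D_\aa$, so we are in case~(f), and because the $\D_{\aa_k}$ are already lattice polytopes with $\0\in\Int(\sum_k\D_{\aa_k})$ (recall~(\ref{sommaintersezione}) and Lemma~\ref{lem:0int}), Step~(B) gives $h_0=1$ and $\cv{\D}(\P^n,\aa)=\cv{\D}_\aa=\conv(\D_{\aa_1},\dots,\D_{\aa_l})$. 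I would then read off the vertices of $\cv{\D}_\aa$ --- hence the fan matrix $\cv{\L}_\aa$ of $\cv{\XX}_\aa$ and, via $J_k=\{j:\cv{\ll}_j\in\D_{\aa_k}\}$, the induced column partition $J_1\sqcup\cdots\sqcup J_l$ --- together with the dual partitioned framing $\cv{\bb}=\sum_k\bb_k$, where $\bb_k$ is the minimal non-negative vector with $\cv{M}_{\aa_k}^T+B_k\ge\0$ and $\cv{M}_{\aa_k}=(V_{I_k})^T\cdot\cv{\L}_\aa$. Running Steps~(E)--(H) on $(\cv{\XX}_\aa,\cv{\bb}=\sum_k\bb_k)$ produces $\cv{\L}_{\cv{\bb}}$ and $\cv{\cc}=\sum_k\cc_k$, and by Proposition~\ref{prop:calibrato} the process is calibrated as soon as one shows, after the lattice identification allowed there, that $\cv{\L}_{\cv{\bb}}=V$ up to a permutation of columns and that the same permutation carries each $\cc_k$ to $\aa_k$.

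The main obstacle is exactly this double-pass polytope bookkeeping. One must check: (i) that the vertices of $\cv{\D}_\aa=\conv(\D_{\aa_1},\dots,\D_{\aa_l})$ are precisely the ``outer'' vertices contributed by the individual simplices $\D_{\aa_k}$ --- $\0$ being interior by Lemma~\ref{lem:0int} --- so that the face fans and dual polytopes occurring in the two passes stay convex hulls of corner simplices of a simplex and, after the second pass, the fan matrix is $V$ up to permutation; (ii) that the minimality defining $\bb_k$ in Step~(D), and likewise $\cc_k$ in Step~(H), forces these vectors to record precisely the block weights $(a_i)_{i\in I_k}$ --- this is the partitioned analogue of the argument ``$b_{jk}>0$ because $\v_i^T\cdot\cv{\ll}_j$ is a negative integer'' in the proof of Step~(D) of Algorithm~\ref{algoritmoDnef}, with each relevant value $-a_j$ attained on a vertex of $\D_{\aa_k}$ against some $\v_i$, $i\in I_k$, and by Remark~\ref{rem:cvLambdak} this condition decouples over $k$; and (iii) that the column partition $J_1\sqcup\cdots\sqcup J_l$ is compatible, so that the permutation identifying $\cv{\L}_{\cv{\bb}}$ with $V$ simultaneously carries $\cc_k$ to $\aa_k$. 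When $\sum_kd_k=n+1$ all the $a_i$ equal $1$, $\aa=\sum_k\aa_k$ is the nef partition of the reflexive simplex of $\P^n$, and (i)--(iii) amount to the known involutivity of Batyrev--Borisov duality of dual nef partitions; the genuinely new input relative to \cite[Thm.~4.1]{R-fTV} is to carry the excess weights $d_k-s_k\ge0$, present when $\sum_kd_k>n+1$, through the minimality conditions of Steps~(D) and~(H) --- the same simplex computation as in the hypersurface case, now run with non-trivial weights on each block of the partition.
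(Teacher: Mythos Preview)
Your proposal is an outline, not a proof: part~(b) lists what must be checked (your items (i)--(iii)) and explicitly flags this as ``the main obstacle'', but never actually carries out the verification. The genuine content of the theorem is precisely this double-pass bookkeeping, and you have not supplied it.

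More importantly, you are missing the paper's key idea, which bypasses the direct second-pass computation altogether. The paper does \emph{not} work with an arbitrary admissible framing; it fixes the very specific choice $\aa_k=(0,\dots,0,\overbrace{1,\dots,1,\d_k}^{I_k},0,\dots,0)$ with $\d_k=d_k-|I_k|+1$, so that the reordered $\overline{\aa}=(1,\dots,1,\d_1,\dots,\d_l)$ satisfies the hypothesis of \cite[Thm.~4.1]{R-fTV}. This means the \emph{non-partitioned} $f$-process of $(\P^n,\aa)$ is already known to be calibrated, hence $[\D_\bb]=\NN:=\conv(V)$. The crux is then a comparison lemma (Lemma~\ref{lem:inclusione}): the partitioned polytope $\cv{\D}_{\cv{\bb}}=\conv(\cv{\D}_{\bb_1},\dots,\cv{\D}_{\bb_l})$ is contained in the non-partitioned $\D_\bb$. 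Since one also sees directly that $\NN\subseteq\cv{\D}_{\cv{\bb}}$, the sandwich $\NN\subseteq[\cv{\D}_{\cv{\bb}}]\subseteq[\D_\bb]=\NN$ forces $[\cv{\D}_{\cv{\bb}}]=\NN$, i.e.\ $\cv{\L}_{\cv{\bb}}=V$, without ever computing the vertices of $\cv{\D}_{\cv{\bb}}$ directly. The equality $\cc_k=\aa_k$ then drops out from one matrix product. Your approach, by contrast, would require identifying the vertices of $\cv{\D}_{\cv{\bb}}$ from scratch, and you give no argument for why this yields exactly the columns of $V$; for a generic choice of framing in your part~(a) there is no reason to expect it would.
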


\begin{proof} To prove the statement, a particular partitioned framing, whose as\-so\-cia\-ted partitioned $f$-process is calibrated, will be produced. At this purpose, throughout the present proof, introduce the following notation
\begin{eqnarray}\label{a,ak}
\nonumber
  \forall\,k=1,\ldots,l\quad \d_k&:=&d_k - m_k+1\geq 1\quad\text{with $m_k=|I_k|$}\\ \aa_k^T&:=&\underbrace{(0,\ldots,0,\overbrace{1,\ldots,1,\d_k}^{I_k},0,\ldots,0)}_{n+1}\\
\nonumber
  |\aa_k|&:=&\sum_{i=1}^{n+1}a_{ki}=m_k-1+\d_k=d_k\\
\nonumber
  |\aa|&:=&\sum_{i=1}^{n+1}a_i=\sum_{k=1}^l|\aa_k|=\sum_{k=1}^l d_k
\end{eqnarray}
Clearly, $(\P^n,\aa=\sum_{k=1}^l\aa_k)$ is a partitioned ftv and the hypersurface $Y_{d_k}$ is linearly equivalent to $D_{\aa_k}$, which is a very ample divisor. Let us now exhibit the $f$-dual partitioned ftv $(\cv{\XX}_\aa,\cv{\bb}=\sum_{k=1}^l\bb_k)$,  under notation introduced in items (C) and (D) of algorithm~\ref{algoritmoDnef}. First of all, consider the fan matrix of $\P^n$ given by
\begin{equation}\label{V}
  V=\left(
      \begin{array}{ccc}
        I_n & | &-\mathbf{1} \\
      \end{array}
    \right)=\left(
              \begin{array}{cccc}
                \e_1 & \cdots & \e_n & -\mathbf{1}\\
              \end{array}
            \right)
    \in\mathbf{M}(n,n+1;\Z)
\end{equation}
Then, by the last part of \cite[Prop.~1.8]{R-fTV},
\begin{eqnarray}\label{DeltaaConv}
   \nonumber
    \D_\aa&=&\conv\left(\left\{-((V^{\{i\}})^T)^{-1}\cdot\aa^{\{i\}}\,|\,i=1,\ldots,n+1\right\}
    \right)\\
      &=&\conv\left(
                          \begin{array}{c}
                            |\aa|- a_1 \\
                            -a_2 \\
                            -a_3\\
                            \vdots \\
                            -a_n \\
                          \end{array}
                                        \begin{array}{c}
                                          -a_1 \\
                                           |\aa|- a_2\\
                                          -a_3\\
                                          \vdots \\
                                          -a_n \\
                                        \end{array}
                                      \cdots
                                        \begin{array}{c}
                                          -a_1 \\
                                          -a_2\\
                                          \vdots \\
                                          -a_{n-1} \\
                                          |\aa|- a_n\\
                                        \end{array}
                                        \begin{array}{c}
                                          -a_1 \\
                                          -a_2\\
                                          \vdots \\
                                          -a_{n-1} \\
                                          -a_n\\
                                        \end{array}
                                      \right)
  \end{eqnarray}
  Replacing $\aa$ by $\aa_k$, one has
  \begin{equation*}
    \D_{\aa_k}=\conv\left(
                          \begin{array}{c}
                            d_k- a_{k,1} \\
                            -a_{k,2} \\
                            -a_{k,3}\\
                            \vdots \\
                            -a_{k,n} \\
                          \end{array}
                                        \begin{array}{c}
                                          -a_{k,1} \\
                                           d_k- a_{k,2}\\
                                          -a_{k,3}\\
                                          \vdots \\
                                          -a_{k,n} \\
                                        \end{array}
                                      \cdots
                                        \begin{array}{c}
                                          -a_{k,1} \\
                                          -a_{k,2}\\
                                          \vdots \\
                                          -a_{k,n-1} \\
                                          d_k- a_{k,n}\\
                                        \end{array}
                                        \begin{array}{c}
                                          -a_{k,1} \\
                                          -a_{k,2}\\
                                          \vdots \\
                                          -a_{k,n-1} \\
                                          -a_{k,n}\\
                                        \end{array}
                                      \right)
  \end{equation*}
 and $\cv{\D}_\aa=\conv\left(\D_{\aa_1},\ldots,\D_{\aa_l}\right)$, by item (B) of algorithm~\ref{algoritmoDnef}.
 Define
 \begin{equation*}
   \forall\,k=1,\ldots,l,\ \forall\,j=1,\ldots,n+1\quad \vt_{kj}:=\left\{\begin{array}{cc}
                                                                           1 & \text{if}\ \forall\,i\neq j\ a_{ki}=0   \\
                                                                           \gcd(a_{ki}\,|\,i\neq j) & \text{otherwise}
                                                                         \end{array}\right.
\end{equation*}
Then, item (C) in \ref{algoritmoDnef} and Remark~\ref{rem:cvLambdak} give
$$\cv{\L}_\aa=\left(
                                                    \begin{array}{ccc}
                                                      \cv{\L}_{\aa_1} & \cdots & \cv{\L}_{\aa_l} \\
                                                    \end{array}
                                                  \right)\in\M(n\times \cv{m};\Z)
$$
where $\cv{\L}_{\aa_k}$ is the matrix obtained by deleting null columns from the following
\begin{equation*}
  \L_{\aa_k}=\left(
                          \begin{array}{c}
                            (d_k- a_{k,1})/\vt_{k,1} \\
                            -a_{k,2}/\vt_{k,1} \\
                            -a_{k,3}/\vt_{k,1}\\
                            \vdots \\
                            -a_{k,n}/\vt_{k,1} \\
                          \end{array}\begin{array}{c}
                                          -a_{k,1}/\vt_{k,2} \\
                                           (d_k- a_{k,2})/\vt_{k,2}\\
                                          -a_{k,3}/\vt_{k,2}\\
                                          \vdots \\
                                          -a_{k,n}/\vt_{k,2} \\
                                        \end{array}\quad \cdots\quad
                                        \begin{array}{c}
                                          -a_{k,1}/\vt_{k,n} \\
                                          -a_{k,2}/\vt_{k,n}\\
                                          \vdots \\
                                          -a_{k,n-1}/\vt_{k,n} \\
                                          (d_k- a_{k,n})/\vt_{k,n}\\
                                        \end{array}
                                        \begin{array}{c}
                                          -a_{k,1}/\vt_{k,n+1} \\
                                          -a_{k,2}/\vt_{k,n+1}\\
                                          \vdots \\
                                          -a_{k,n-1}/\vt_{k,n+1} \\
                                          -a_{k,n}/\vt_{k,n+1}\\
                                        \end{array}
                                      \right)
\end{equation*}
Notice that a null column appears in $\L_{\aa_k}$ if and only if $m_k=1$: in this case $\L_{\aa_k}$ admits one and only one null column. Therefore
\begin{equation*}
  \cv{m}=l(n+1)-\l\quad\text{with}\quad 0\leq\l\leq l\leq n
\end{equation*}
$\l$ being the number of $k$'s such that $m_k=1$.\\
Moreover, notice that $\vt_{kj}= 1$ except for $m_k=1,2$. In these cases,
 \begin{itemize}
   \item $m_k=1$\,:\quad  $\vt_{kj}=\left\{\begin{array}{cc}
                                       \d_k=d_k & \quad\text{if}\ j\neq \overline{j}:=1+\sum_{i=1}^{k-1}m_i \\
                                       1 & \text{if}\ j = \overline{j}\qquad \qquad\qquad\,
                                     \end{array}\right.$
   \item $m_k=2$\,:\quad  $\vt_{kj}=\left\{\begin{array}{cc}
                                       \d_k & \quad\text{if}\ j= \overline{j} \\
                                       1 & \quad\text{if}\ j\neq \overline{j}
                                     \end{array}\right.$
 \end{itemize}
 Consequently,
\begin{equation*}
  \forall\,k=1,\ldots,l\quad m_k\geq 3\ \Longrightarrow\ \left\{\begin{array}{c}
                                                                  \D_{\aa_k}=\conv(\L_{\aa_k})\qquad\hskip3.1truecm\qquad \\
                                                                  \cv{\L}_\aa=\left(
                                                    \begin{array}{ccc}
                                                      \L_{\aa_1} & \cdots & \L_{\aa_l} \\
                                                    \end{array}
                                                  \right)\in\M(n\times (nl+l);\Z)
                                                                \end{array}\right.
\end{equation*}
otherwise
\begin{equation*}
  \forall\,k :\ m_k=1\qquad \cv{\L}_{\aa_k}=\L_{\aa_k}^{\{\overline{j}\}}
\end{equation*}
\begin{equation*}
  \forall\,k=1,\ldots,l\quad m_k\geq 2\ \Longrightarrow\ \cv{\L}_\aa=\left(
                                                    \begin{array}{ccc}
                                                      \L_{\aa_1} & \cdots & \L_{\aa_l} \\
                                                    \end{array}
                                                  \right)\in\M(n\times (nl+l);\Z)
\end{equation*}
Then, passing to item (D) in \ref{algoritmoDnef}, $\cv{M}_{\aa_k}^T=\cv{\L}_\aa^T\cdot V_{I_k}\in\mathbf{M}(\cv{m}\times m_k ;\Z)$, meaning that
\begin{equation}\label{bkh_nulle}
  \bb_k=(b_{hk})_{h=1}^{\cv{m}}\quad \text{with}\quad \forall\,h\not\in J_k\quad b_{hk}=0
\end{equation}
and $J_k$ assigned by the prescription $(\cv{\L}_\aa)_{J_k}=\cv{\L}_{\aa_k}$. Moreover, by Remark~\ref{rem:cvLambdak}, entries $b_{hk}$ for $h\in J_k$ can be obtained by computing $\cv{\L}_{\aa_k}^T\cdot V_{I_k}$. Set $\cv{m}_k:=|J_k|$ and, for any $k$ such that $m_k=|I_k|\geq 2$, let $I'_k\subset J_k$ be the image of $I_k\subset \{1,\dots,n+1\}$ under the translation
\begin{eqnarray*}
   &\xymatrix{\{1,\ldots,n+1\}\ar[r]&J_k}&  \\
   &\xymatrix{\quad\hskip1.4truecm\quad i\quad\ar@{|->}[r]& i+\overline{h} }&\ ,\quad \overline{h}:=\sum_{i=1}^{k-1} \cv{m}_i
\end{eqnarray*}
Then
\begin{equation}\label{bkh_1}
  \forall\,k:\quad m_k=1\quad \cv{\L}_{\aa_k}^T\cdot V_{I_k}=\left(
                                                               \begin{array}{c}
                                                                 -1 \\
                                                                 \vdots \\
                                                                 -1 \\
                                                               \end{array}
                                                             \right)\in\M(n\times 1;\Z)\ \Longrightarrow\ \forall\,h\in J_k\quad b_{hk}=1
\end{equation}
\begin{eqnarray}\label{bkh_2}
  \forall\,k:\quad m_k=2&& \cv{\L}_{\aa_k}^T\cdot V_{I_k}=\L_{\aa_k}^T\cdot V_{I_k}=\left(
                                                               \begin{array}{cc}
                                                                 -1 & -\d_k \\
                                                                 \vdots & \vdots \\
                                                                 -1 & -\d_k \\
                                                                 \hline
                                                                 d_k-1 & -1 \\
                                                                 -1 & d_k-1 \\
                                                                 \hline
                                                                  -1 & -\d_k \\
                                                                 \vdots & \vdots \\
                                                                 -1 & -\d_k \\
                                                               \end{array}
                                                             \right)\left.
                                                                      \begin{array}{c}
                                                                         \\
                                                                         \\
                                                                      \end{array}
                                                                    \right\} I'_k\\
                    \nonumber
                    &\Longrightarrow& b_{hk}=\left\{\begin{array}{cc}
                                                       1 & \text{for}\ h\in I'_k \\
                                                       \d_k & \qquad\text{for}\ h\in J_k\setminus I'_k
                                                     \end{array}
                    \right.
\end{eqnarray}
\begin{eqnarray}\label{bkh_3}
\nonumber
  \forall\,k:\quad m_k\geq 3&& \cv{\L}_{\aa_k}^T\cdot V_{I_k}=\L_{\aa_k}^T\cdot V_{I_k}=\left(
                                                               \begin{array}{cccc}
                                                                 -1 &\cdots&-1& -\d_k \\
                                                                 \vdots &\cdots&& \vdots \\
                                                                 -1 &\cdots&-1& -\d_k \\
                                                                 \hline
                                                                 d_k-1 &\cdots& -1&-\d_k \\
                                                                 \vdots&\ddots&\vdots&\vdots\\
                                                                 -1 &\cdots& d_k-1& -\d_k \\
                                                                 \hline
                                                                 -1 &\cdots & -1& d_k-\d_k\\
                                                                 \hline
                                                                  -1 &\cdots&-1& -\d_k \\
                                                                 \vdots &\cdots&& \vdots \\
                                                                 -1 &\cdots&-1& -\d_k \\
                                                               \end{array}
                                                             \right)\left.
                                                                      \begin{array}{c}
                                                                         \\
                                                                         \\
                                                                         \\
                                                                         \\
                                                                      \end{array}
                                                                    \right\} I'_k\\
                    &\Longrightarrow& b_{hk}=\left\{\begin{array}{cc}
                                                       1 & \quad\,\text{for}\ h=\max(I'_k) \\
                                                       \d_k & \text{for}\ h\in J_k\setminus \{\max(I'_k)\}\quad\
                                                     \end{array}
                    \right.
\end{eqnarray}
Calling $\cv{\bb}=\sum_{k=1}^l\bb_k$ and $\cv{\XX}_\aa$ the toric variety defined in item (C) of \ref{algoritmoDnef}, the pair $(\cv{\XX}_\aa,\cv{\bb})$ turns out to be a ftv.

We are now going to considering the partitioned $f$-process associated with the partitioned ftv $(\P^n,\aa=\sum_k\aa_k)$. At this purpose, recall definitions of $\D_{\cv{\bb}}$, $\cv{\D}_{\bb_k}$ and $\cv{\D}_{\cv{\bb}}$\,, given in items (E) and (F) of \ref{algoritmoDnef}. On the other hand, recall the definition of
\begin{equation*}
  \D_\bb:=\{\n\in N_\R\,|\,\L_\aa^T\cdot\n \geq -\bb\}
\end{equation*}
where $(\XX_\aa,\bb)$ is the $f$-dual ftv of $(\P^n,\aa)$.

\begin{lemma}\label{lem:inclusione}
  In the above notation, $\cv{\D}_{\cv{\bb}}\subseteq\D_\bb$\,.
\end{lemma}

Let us postpone the proof of this Lemma and go on with the proof of Theorem~\ref{thm:CI}. Then one has
\begin{equation}\label{int-inclusione}
  [\cv{\D}_{\cv{\bb}}]\subseteq[\D_\bb]
\end{equation}
Let $\overline{\aa}$ be an increasing reordering of $\aa$. That is, assuming $\d_1\leq\d_2\leq\cdots\leq\d_l$,
 \begin{equation*}
  \overline{\aa}=(\underbrace{1\,,\,\dots\,,\,1}_{n+1-l\ \text{times}},\, \d_1\,,\, \ldots\, ,\, \d_l)
\end{equation*}
Since $2\leq l\leq n$, the framing $\overline{\aa}$ satisfies condition (38) in \cite[Thm.~4.1]{R-fTV} so that
\begin{equation*}
  \xymatrix{\left(\P^n,\aa\right)\cong\left(\P^n,\overline{\aa}\right)\ar@{<~>}[rr]^-{f-\text{duality}}
  &&\left(\XX_\aa,\bb\right)}
\end{equation*}
is a calibrated $f$-process, with
\begin{equation}\label{b}
  \bb=(\d_l/\vt_1,\ldots,\d_l/\vt_n,\d_{l-1})\ ,\quad \text{being}\quad \vt_i:=\gcd\{a_j\,|\,j\neq i\}
\end{equation}
Therefore
\begin{equation}\label{int-uguaglianza}
  [\D_\bb]=\NN:=\conv(V)
\end{equation}
Moreover, $\NN$ is clearly contained in $\cv{\D}_{\cv{\bb}}=\conv(\cv{\D}_{\bb_1},\ldots,\cv{\D}_{\bb_l})$, by relations (\ref{bkh_nulle}), (\ref{bkh_1}), (\ref{bkh_2}) and (\ref{bkh_3}) defining framings $\bb_k$, for $k=1,\ldots,l$. This fact, jointly with previous relations (\ref{int-inclusione}) and (\ref{int-uguaglianza}), gives that
\begin{equation}\label{parteintera}
  [\D_\bb]=\NN \subseteq [\cv{\D}_{\cv{\bb}}]\subseteq[\D_\bb]\ \Longrightarrow\ [\D_\bb]=\NN =\left[\cv{\D}_{\cv{\bb}}\right]
\end{equation}
giving the former of conditions (\ref{nefDcalibrato}) in Proposition~\ref{prop:calibrato}. The second condition in (\ref{nefDcalibrato}) follows by item (H) in \ref{algoritmoDnef}, as
\begin{equation*}
  \cv{M}^T_{\aa_k,\cv{\bb}}=\cv{\L}_{\cv{\bb}}\cdot(\cv{\L}_\aa)_{J_k}=\left\{\begin{array}{cc}
                                                                                V^T\cdot\L_{\aa_k}^{\{\overline{j}\}} & \text{if $m_k=1$}  \\
                                                                                V^T\cdot\L_{\aa_k} & \text{otherwise}
                                                                              \end{array}\right.
                                                                              \ \Longrightarrow\ \cc_k=\aa_k
\end{equation*}
Then, Proposition~\ref{prop:calibrato} definitively proves the statement.
\end{proof}

\begin{proof}[Proof of Lemma~\ref{lem:inclusione}] Consider $\n\in\cv{\D}_{\cv{\bb}}=\conv(\cv{\D}_{\bb_1},\ldots,\cv{\D}_{\bb_l})$. Then
\begin{equation*}
  \n=\sum_{k=1}^l \mu_k\n_k\quad\text{with}\quad \forall\,k\ \mu_k\ge 0\,,\ \sum_k\mu_k=1\,,\ \n_k\in\cv{\D}_{\bb_k}
\end{equation*}
In particular, condition $\n_k\in\cv{\D}_{\bb_k}$ implies that
\begin{equation}\label{cond-k}
  \cv{\L}_\aa^T\cdot \n_k\ge -\bb_k
\end{equation}
First of all, consider the case $m_k\geq 3$, for every $k=1,\ldots, l$. Then
\begin{eqnarray}\label{Lambda}
\nonumber
  \cv{\L}_\aa &=& \left(\begin{array}{ccc}
                                                      \L_{\aa_1} & \cdots & \L_{\aa_l} \\
                                                    \end{array}
                                                  \right)\in\M(n\times (nl+l);\Z) \\
  \D_\aa &=& \sum_{k=1}^l\D_{\aa_k}\ \Longrightarrow\ \L_\aa\ =\ \sum_{k=1}^l\L_{\aa_k}
\end{eqnarray}
where the right equality in the previous implication is realized up to a suitable reordering of columns in matrices $\L_{\aa_k}$.
Then, inequality (\ref{cond-k}) gives
\begin{equation}\label{Lambdak}
  \forall\,k\quad\L_{\aa_k}^T\cdot\n_k\ge -(\bb_k)_{J_k}\ ,\quad\forall\,h\neq k\quad\L_{\aa_h}^T\cdot\n_k\ge \0
\end{equation}
Therefore
\begin{equation*}
  \L_{\aa}^T\cdot\n=\left(\sum_{h=1}^l\L_{\aa_h}^T\right)\cdot\left(\sum_{k=1}^l \mu_k\n_k\right)= \sum_{k=1}^l \mu_k\left(\sum_{h=1}^l\L_{\aa_h}^T\right)\cdot\n_k\ge -\sum_{k=1}^l \mu_k(\bb_k)_{J_k}\ge -\bb
\end{equation*}
where the last inequality is obtained by recalling convention $\d_k\leq \d_{k+1}$ and the writing of $\bb$ given in (\ref{b}).  Then $\n\in\D_\bb$ and $\cv{\D}_{\cv{\bb}}\subseteq\D_\bb$.

Assume now that $m_k\geq 2$, for every $k=1,\ldots, l$. The first equality in (\ref{Lambda}) still holds, meaning that also (\ref{Lambdak}) still holds. But implication in (\ref{Lambda}) no longer applies. Namely, if $m_1=2$ then (\ref{Lambdak}) gives
\begin{equation}\label{b1inequality}
  \L_{\aa_1}^T\cdot\n_1\ge -(\bb_1)_{J_1}\stackrel{(\ref{bkh_2})}{=}-\left(
                    \begin{array}{c}
                      1 \\
                      1 \\
                      \d_1 \\
                      \vdots \\
                      \d_1
                    \end{array}
                  \right)\ ,\quad\forall\,h\neq 1\quad\L_{\aa_h}^T\cdot\n_1\ge \0
\end{equation}
Denote by $\ll_j$ and $\ll_{kj}$ the $j$-th columns of $\L_\aa$ and $\L_{\aa_k}$, respectively. Then, Minkowski sum in (\ref{Lambda}), up to a suitable reordering of columns of matrices $\L_{\aa_k}$, gives
\begin{equation*}
  \vt_j\ll_j^T\cdot\n_1=\sum_{k=1}^l\vt_{kj}\ll_{kj}^T\cdot\n_1=\vt_{1,j}\ll^T_{1,j}\cdot\n_1+\sum_{h\ne 1}\vt_{hj}\ll_{hj}^T\cdot\n_1
\end{equation*}
Since $m_1=2$, then $\aa_1=(1,\d_1,\0_{n-1})$ and $\aa=(1,\d_1,\ldots)$ meaning that
\begin{equation*}
  \vt_{1,1}=\d_1\,,\quad \forall\,j\neq 1\quad\vt_{1,j}=1\,,\quad \vt_1\,|\,\d_1\,,\quad \forall\,j\neq 1\quad\vt_{j}=1
\end{equation*}
Therefore, recalling (\ref{b}) and (\ref{b1inequality}), one has
\begin{equation*}
\left.\begin{array}{cc}
  &\ll_1^T\cdot\n_1  \ge  \d_1/\vt_1 \ll_{1,1}^T\cdot\n_1 \ge  -\d_1/\vt_1  \ge  -\d_l \\
  &\ll_2^T\cdot\n_1  \ge  \ll_{1,2}^T\cdot\n_1 \ge  -1  \ge  -\d_l \\
  \forall\,3\le j\le n&\ll_j^T\cdot\n_1  \ge  \ll_{1,j}^T\cdot\n_1 \ge  -\d_1  \ge  -\d_l \\
  &\ll_{n+1}^T\cdot\n_1  \ge  \ll_{1,n+1}^T\cdot\n_1 \ge  -\d_1  \ge  -\d_{l-1}
\end{array}\right\}\ \Longrightarrow\ \L_\aa^T\cdot\n_1\ge-\bb
\end{equation*}
and, consequently,
\begin{equation*}
  \L_\aa^T\cdot\n =\sum_{k=1}^l \mu_k\L_{\aa}^T\cdot\n_k\ge -\sum_{k:m_k\ge 3} \mu_k(\bb_k)_{J_k}-\sum_{k:m_k=2} \mu_k\bb \ge -\sum_{k=1}^l \mu_k\bb =\bb
\end{equation*}
Then $\n\in\D_\bb$ and $\cv{\D}_{\cv{\bb}}\subseteq\D_\bb$.

Finally, assume $m_h=1$, for some $h$. In this case (\ref{Lambdak}) is replaced by the following
\begin{equation}\label{bhinequality}
  \cv{\L}_{\aa_h}^T\cdot\n_h=\left(\L_{\aa_h}^{\{\overline{j}(h)\}}\right)^T\cdot\n_h\ge -(\bb_h)_{J_h}=-\1_n\ ,\quad\forall\,k\neq h\quad\cv{\L}_{\aa_k}^T\cdot\n_h\ge \0
\end{equation}
where $\overline{j}(h)=1+\sum_{i=1}^{h-1}m_i$\,. As above, let $\ll_j$ be the $j$-th columns of $\L_\aa$ and, for $j\ne \overline{j}$, let $\ll_{hj}$ denote the $j$-th column of $\L_{\aa_h}$. Moreover, set $\ll_{h,\overline{j}}=\0_n$.  Then, Minkowski sum in (\ref{Lambda}), up to a suitable reordering of columns of matrices $\L_{\aa_k}$, gives
\begin{equation*}
  \vt_j\ll_j^T\cdot\n_h=\sum_{k=1}^l\vt_{kj}\ll_{kj}^T\cdot\n_h=\vt_{h,j}\ll^T_{h,j}\cdot\n_h+\sum_{k\ne h}\vt_{kj}\ll_{kj}^T\cdot\n_h
\end{equation*}
Since $m_h=1$, then $\aa_h=(\0_{\overline{j}-1},d_h,\0_{n+1-\overline{j}})$ and $\aa=(\,\ldots,\underbrace{d_h}_{\overline{j}\text{-th place}},\ldots\,)$, so giving
\begin{equation*}
  \vt_{h,\overline{j}}=1\,,\quad \forall\,j\neq \overline{j}\quad\vt_{h,j}=d_h\,,\quad \vt_{\overline{j}}=1\,,\quad \forall\,j\neq \overline{j}\quad\vt_{j}\,|\,d_h
\end{equation*}
where $\vt_{\overline{j}}=1$ because there necessarily exists some $k\neq h$ such that $m_k\ge 2$. Assume, at first, that $h\le l-1$ (then $\overline{j}\le n$). Then, by (\ref{b}) and (\ref{bhinequality}), one has
\begin{equation*}
\begin{array}{cc}
  &\ll_{\overline{j}}^T\cdot\n_h  \ge  \ll_{h,\overline{j}}^T\cdot\n_h =  0  \ge  -\d_l \\
  \forall\,j\le n\,,\ j\neq\overline{j} &\ll_j^T\cdot\n_h  \ge  d_h/\vt_j\ll_{h,j}^T\cdot\n_h \ge  -\d_h/\vt_j  \ge  -\d_l \\
  &\ll_{n+1}^T\cdot\n_h  \ge  d_h/\vt_j\ll_{h,n+1}^T\cdot\n_h \ge  -\d_h/\vt_j  \ge  -\d_{l-1}
\end{array}
\end{equation*}
On the other hand, if $h=l$ (and $\overline{j}=n+1$) then
\begin{equation*}
\begin{array}{cc}
  \forall\,j\le n\,,\ j\neq\overline{j} &\ll_j^T\cdot\n_h  \ge  d_h/\vt_j\ll_{h,j}^T\cdot\n_h \ge  -\d_h/\vt_j  \ge  -\d_l \\
  &\ll_{n+1}^T\cdot\n_h  \ge  \ll_{h,n+1}^T\cdot\n_h=0  \ge  -\d_{l-1}
\end{array}
\end{equation*}
Therefore $\L_\aa^T\cdot \n_h\ge -\bb$ and
\begin{equation*}
  \L_\aa^T\cdot\n =\sum_{k=1}^l \mu_k\L_{\aa}^T\cdot\n_k\ge -\sum_{k:m_k\ge 3} \mu_k(\bb_k)_{J_k}-\sum_{k:m_k\le 2} \mu_k\bb \ge -\sum_{k=1}^l \mu_k\bb =\bb
\end{equation*}
Then $\n\in\D_\bb$ and $\cv{\D}_{\cv{\bb}}\subseteq\D_\bb$ and one can finally deduce that $\cv{\D}_{\cv{\bb}}\subseteq\D_\bb$\,.
\end{proof}

Recalling Corollary~\ref{cor:nef}, the previous proof of Theorem~\ref{thm:CI} actually allows one to conclude something more, namely

\begin{corollary}\label{cor:CI}
  Consider the nef partitioned ftv $(\P^n,\aa=\sum_k\aa_k)$, where the nef partitioned framing $\aa$ is defined by (\ref{a,ak}), up to possible permutations of entries of $\aa$ and $\aa_k$. Then the $f$-dual nef partitioned ftv is given by $(\cv{\XX}_\aa,\cv{\bb}=\sum_k\bb_k)$ where $\bb_k$ is defined by (\ref{bkh_nulle}), (\ref{bkh_1}), (\ref{bkh_2}) and (\ref{bkh_3}) and the associated partitioned $f$-process is calibrated.

  \noindent In particular, an $f$-mirror partner of the complete intersection
  $$Y:=\bigcap_{k=1}^lY_k\subset \P^n\quad\text{with}\quad Y_k\in|D_{\aa_k}|$$
  is given by the complete intersection $Y^\vee\subset\cv{\XX}_\aa$ constructed in part (b) of Remark~\ref{rem:CIequazioni}.
\end{corollary}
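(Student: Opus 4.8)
The plan is to observe that Corollary~\ref{cor:CI} is a direct reading of the proof of Theorem~\ref{thm:CI}: all the required objects have already been produced there in closed form, and what remains is only to assemble them and invoke Proposition~\ref{prop:nef} and Definition~\ref{def:nefpmirror}. First I would record that, for the partitioned framing $\aa=\sum_k\aa_k$ of (\ref{a,ak}), each $D_{\aa_k}$ is a very ample divisor on $\P^n$ (as already noted in the proof of Theorem~\ref{thm:CI}), hence semi-ample; therefore the framing partition is nef and $(\P^n,\aa=\sum_k\aa_k)$ is a nef partitioned ftv. Since $(\P^n,\aa)$ is a ftv, Remark~\ref{rem:(f)(wf)} places us in case (f) throughout algorithm~\ref{algoritmoDnef}, so that $\cv{\XX}_\aa$ is complete and the polytopes $\cv{\D}_{\bb_k}$, $\D_{\cv{\bb}}$, $\cv{\D}_{\cv{\bb}}$ are genuine lattice polytopes.

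Next I would recall that the proof of Theorem~\ref{thm:CI} ran algorithm~\ref{algoritmoDnef} explicitly on this input: it produced the fan matrix $\cv{\L}_\aa=(\cv{\L}_{\aa_1}\ \cdots\ \cv{\L}_{\aa_l})$ of $\cv{\XX}_\aa$, the induced partition $J_1\cup\cdots\cup J_l$ of $\{1,\dots,\cv{m}\}$, and the column vectors $\bb_k$ whose entries are given by the closed forms (\ref{bkh_nulle}), (\ref{bkh_1}), (\ref{bkh_2}), (\ref{bkh_3}) according to whether $m_k=1$, $m_k=2$, or $m_k\ge 3$. By item (D) of algorithm~\ref{algoritmoDnef} this exhibits $(\cv{\XX}_\aa,\cv{\bb}=\sum_k\bb_k)$ as the $f$-dual partitioned ftv of $(\P^n,\aa=\sum_k\aa_k)$, up to the reorderings of columns of $V$ and of entries of $\aa$, $\aa_k$ used in that proof. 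Moreover, the proof of Theorem~\ref{thm:CI}, through Lemma~\ref{lem:inclusione}, the reverse inclusion $\NN\subseteq\cv{\D}_{\cv{\bb}}$, and Proposition~\ref{prop:calibrato}, established that the associated partitioned $f$-process is calibrated. Plugging this into Proposition~\ref{prop:nef} (applied to $(\cv{\XX}_\aa,\cv{\bb})$) shows that, up to a small, $\Q$-factorial, projective partial resolution carrying the induced nef structure, $(\cv{\XX}_\aa,\cv{\bb}=\sum_k\bb_k)$ is a nef partitioned ftv, and that $\cv{D}_{\cv{\bb}}$ and each $\cv{D}_{\bb_k}$ are movable divisors of $\cv{\XX}_\aa$.

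It then only remains to read off the $f$-mirror partner. Since the $f$-process is calibrated, Definition~\ref{def:nefpmirror} applies verbatim: the generic complete intersection $Y^\vee=\bigcap_{k=1}^l Y^\vee_k\subset\cv{\XX}_\aa$ with $Y^\vee_k\in|\cv{D}_{\bb_k}|$ is an $f$-mirror partner of $Y=\bigcap_k Y_k$, $Y_k\in|D_{\aa_k}|$; and because we are in case (f), the lattice polytopes $[\cv{\D}_{\bb_k}]$ are well defined, so part (b) of Remark~\ref{rem:CIequazioni} (rather than Remark~\ref{rem:CIequazioni(wf)}) supplies the explicit Cox-ring equations $f^\vee_k$ of (\ref{fdual_k}) cutting out $Y^\vee_k$, which is exactly the complete intersection described there. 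I do not expect a genuine obstacle: the corollary is bookkeeping built on the proof of Theorem~\ref{thm:CI}. The only points that need care are (i) confirming that one is in case (f), so that Proposition~\ref{prop:nef} and part (b) of Remark~\ref{rem:CIequazioni} apply; (ii) tracking the column/entry permutations relating the increasingly reordered framing $\overline{\aa}$ used in the calibration argument to the given $\aa$; and (iii) double-checking the exceptional small cases $m_k\in\{1,2\}$, where the normalizing gcd's $\vt_{kj}$ are not all equal to $1$, against the stated closed forms (\ref{bkh_1}) and (\ref{bkh_2}) for $\bb_k$.
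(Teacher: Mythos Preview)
Your proposal is correct and follows essentially the same route as the paper: the corollary is presented there as an immediate consequence of the proof of Theorem~\ref{thm:CI}, with the paper invoking Corollary~\ref{cor:nef} where you invoke Proposition~\ref{prop:nef} (both are to the point, since the latter yields the nef structure on the $f$-dual and the former confirms it on the original side once calibration is in hand). Your added care about case (f), the permutation bookkeeping, and the small-$m_k$ exceptions is exactly the level of detail the paper leaves implicit.
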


\begin{remark}\label{rem:LG}
  Recalling the LG/CI-correspondence described in \S~\ref{ssez:LG/CI}, one can construct a generalized LG mirror model $(\cv{\T}_\aa,\ff_\bb^\vee)$ of the projective complete intersection $Y=Y_{d_1,\ldots,d_l}$. Moreover, Proposition~\ref{prop:LGmirror} produces a Givental type LG mirror model $(\pi^{-1}(\T_l),F)$ of the complete intersection $Y\subset\P^n$, given by a suitable re-parametrization of $(\cv{\T}_\aa,\ff_\bb^\vee)$. According with Clarke \cite[\S7.2, Rem.~7.4]{Clarke}, this LG mirror model should be compared with the LG mirror model assigned to a projective complete intersection of positive Kodaira dimension by Hori and Vafa \cite{Hori-Vafa}.
\end{remark}

\subsubsection{Kodaira negative, projective complete intersections}\label{ssez:negKod}
In \cite[Thm.~7.4]{R-fTV} we proposed f-mirror partners for projective hypersurfaces of negative Kodaira dimension, both in terms of hypersurfaces of non-complete toric varieties and of Landau-Ginzburg (LG) models. Moreover, we checked that the latter is consistent with LG models proposed by Givental in \cite[Thm.~5]{Givental-ICM}. The present subsection is devoted to extending these results to the case of Kodaira negative complete intersection in $\P^n$.

\begin{theorem}\label{thm:negKod}
  Let $Y=Y_{d_1,\ldots,d_l}\subset\P^n$ be a generic complete intersection with
  $$d:=\sum_{k=1}^{l}d_k\le n$$
Consider the partitioned weak framing $\aa_d=\sum_{k=1}^{l}\aa_{d_k}$ with
\begin{equation}\label{adk}
  \aa_{d_k}^T=(0,\ldots,0,\overbrace{\underbrace{1,\ldots,1}_{d_k},0,\ldots,0}^{I_k},0,\ldots,0)
\end{equation}
Consider the non-complete toric variety $\cv{\XX}_{\aa_d}$, as defined in step (C) of algorithm~\ref{algoritmoDnef}. Then $(\P^n,\aa_d=\sum_k\aa_{d_k})$ admits an $f$-dual partitioned weak framing given by
  \begin{eqnarray*}
    \bb&=&\sum_{k=1}^l\bb_k\quad\text{with}\\
    \bb^T_k&=&
    \begin{cases}
      (0,\ldots,0,\overbrace{0,1,\ldots,1}^{J_k},0,\ldots,0)& \text{if}\ d_k=1\\
      (0,\ldots,0,\underbrace{1,\ldots,1}_{J_k},0,\ldots,0)& \text{if}\ d_k\ge 2
    \end{cases}\\
\bigsqcup_{k=1}^l J_k&=&\{1,\ldots,\cv{m}\}
\end{eqnarray*}
recalling that $\cv{m}$ is the columns number of the fan matrix $\cv{\L}_{\aa_d}$ of $\cv{\XX}_{\aa_d}$. \\
In particular, an $f$-mirror partner of $Y$ is given by the complete intersection $Y^\vee$ in $\cv{\XX}_{\aa_d}$ constructed in Definition~\ref{def:wf-mirror}.
\end{theorem}

\begin{proof}
  To prove that $\bb=\sum_{k=1}^l\bb_k$, as defined in the statement, is an $f$-dual partitioned weak framing, one has to apply steps (C) and (D) of algorithm~\ref{algoritmoDnef}. Then the proof goes on as for Theorem~\ref{thm:CI}, so obtaining a null column in $\L_{\aa_k}$ if and only if $|I_k|=1$. In particular, for any $k$, matrix $\L_{\aa_k}$ looks like matrix $\L_{\aa_d}$ in the proof of \cite[Thm.~7.4]{R-fTV}. Constructing the associated matrix $\cv{\L}_{\aa_k}$ and recalling Remark~\ref{rem:cvLambdak}, one gets $\bb$ as above.
\end{proof}

\begin{remark}
  Analogously to what observed in Remark~\ref{rem:LG}, recalling the LG/CI-correspondence described in \S~\ref{ssez:LG/CI}, one can construct a generalized LG mirror model $(\cv{\T}_\aa,\ff_\bb^\vee)$ of the projective complete intersection $Y=Y_{d_1,\ldots,d_l}$ with $d=\sum_kd_k\leq n$. Moreover, Proposition~\ref{prop:LGmirror} produces a Givental type LG mirror model $(\pi^{-1}(\T_l),F)$ of the complete intersection $Y\subset\P^n$, given by a suitable re-parametrization of $(\cv{\T}_\aa,\ff_\bb^\vee)$, so generalizing to complete intersections what stated for hypersurfaces in \cite[Thm.~7.4]{R-fTV}.
\end{remark}

\begin{example}[$Y_{2,3}\subset\P^5$]\label{ex:Y23}
  Consider the bi-degree $(2,3)$ in $\P^5$, giving the maximum degree projective 3-dimensional complete intersection, with $l=2$, of negative Kodaira dimension. Recalling (\ref{adk}), choose the weak partitioned framing of $\P^5$ given by
  \begin{eqnarray*}
    \aa^T&:=&\left(\begin{array}{cccccc}
          1 & 1 & 0 & 1 & 1 & 1
        \end{array}\right)=\aa_1^T+\aa_2^T\quad\text{with}\\
        \aa_1^T&:=&\left(
                                    \begin{array}{cccccc}
                                      1 & 1 & 0 & 0 & 0 & 0 \\
                                    \end{array}
                                  \right)\\
        \aa_2^T&:=&\left(
                                                          \begin{array}{cccccc}
                                                            0 & 0 & 0 & 1 & 1 & 1 \\
                                                          \end{array}
                                                        \right)
  \end{eqnarray*}
  Then step (A) of algorithm~\ref{algoritmoDnef} gives $\D_{\aa_k}=\conv(\L_{\aa_k})$, with $k=1,2$ and
  \begin{equation*}
    \L_{\aa_1}=\left( \begin {array}{cccccc} 1&-1&-1&-1&-1&-1\\
    -1&1&-1&-1&-1&-1\\
    0&0&2&0&0&0\\
    0&0&0&2&0&0\\
    0&0&0&0&2&0\end {array} \right)
  \end{equation*}
  \begin{equation*}
    \L_{\aa_2}=\left( \begin {array}{cccccc} 3&0&0&0&0&0\\
    0&3&0&0&0&0\\
    0&0&3&0&0&0\\
    -1&-1&-1&2&-1&-1\\
    -1&-1&-1&-1&2&-1\end {array} \right)
  \end{equation*}
   Therefore $\cv{\D}_\aa=\conv(\D_{\aa_1},\D_{\aa_2})=\conv\left(\cv{\L}_\aa\right)$, with $\cv{\L}_\aa=\left(
                                                                                               \begin{array}{c}
                                                                                                 \L_{\aa_1} \,\vline\ \L_{\aa_2} \\
                                                                                               \end{array}
                                                                                             \right)
                                                                                             $.
  Step (D) in algorithm~\ref{algoritmoDnef} then gives
  \begin{eqnarray*}
    \cv{\bb}^T&:=&\left(\begin{array}{cccccccccccc}
          1 & 1 & 1 & 1 & 1 & 1 & 1 & 1 & 1 & 1 & 1 & 1\\
        \end{array}\right)=\bb_1^T+\bb_2^T\quad\text{with}\\
        \bb_1^T&:=&\left(
                                    \begin{array}{cccccccccccc}
                                      1 & 1 & 1 & 1 & 1 & 1 & 0 & 0 & 0 & 0 & 0 & 0\\
                                    \end{array}
                                  \right)\\
        \bb_2^T&:=&\left(
                                                          \begin{array}{cccccccccccc}
                                                            0 & 0 & 0 & 0 & 0 & 0 & 1 & 1 & 1 & 1 & 1 & 1 \\
                                                          \end{array}
                                                        \right)
  \end{eqnarray*}
  Then, apply Remark~\ref{rem:CIequazioni(wf)}, to get
  \begin{eqnarray*}
    \overline{\L}_{\bb_1} &=& \left(V_{1,2,3}\,|\,\0_5\right) \\
    \overline{\L}_{\bb_2} &=& \left(V_{4,5,6}\,|\,\0_5\right)
  \end{eqnarray*}
  being $V$ the fan matrix of $\P^5$, as given in display (\ref{V}) by setting $n=5$. Therefore, by Definition~\ref{def:wf-mirror}, construct matrices $\overline{M}_{\aa,\bb_k}$, for $k=1,2$, to get, up to  a variables rescaling, the following polynomials in $\Cox(\cv{\XX}_\aa)\cong\C[x_1,\ldots,x_{12}]$
  \begin{eqnarray*}
    f^\vee_1 &=&  \left(\prod_{i=1}^6x_i\right)(\psi+x_3^2x_9^3)+x_1^2x_7^3+x_2^2x_8^3\\
    f^\vee_2 &=&  \prod_{i=7}^{12}x_i+x_4^2x_{10}^3+x_5^2x_{11}^3+x_6^2x_{12}^3
  \end{eqnarray*}
  where $\psi$ is the unique complex modulus of the $f$-mirror family whose generic e\-le\-ment is given by $Y^\vee=Y^\vee_1\cap Y^\vee_2$, being $Y^\vee_k$ the hypersurface of $\cv{\XX}_\aa$ assigned by the quotient, by the Cox action of $\Hom(\Cl(\cv{\XX}_\aa),\C^*)$, of the zero locus of $f^\vee_k$.\\
  The associated generalized LG mirror model of $Y$ is then given by $(\cv{\T}_\aa, \ff_\bb^\vee)$, where
  \begin{equation*}
    \ff^\vee_\bb=\left({f^\vee_1\over\x^{\bb_1}},{f^\vee_2\over\x^{\bb_2}}\right)=\left(\psi+x_3^2x_9^3+
    {x_1^2x_7^3+x_2^2x_8^3\over\prod_{i=1}^6x_i}, 1+{x_4^2x_{10}^3+x_5^2x_{11}^3+x_6^2x_{12}^3\over\prod_{i=7}^{12}x_i}\right)
  \end{equation*}
  Therefore, Proposition~\ref{prop:LGmirror} gives the Givental type LG mirror model $(\pi^{-1}(\T_2),F)$ with
  \begin{eqnarray*}
    F &=& {f^\vee_1\over\x^{\bb_1}}+{f^\vee_2\over\x^{\bb_2}}= u_{11}+\cdots+u_{41}+u_{12}+\cdots+u_{42}:\C^8\longrightarrow\C\\
    \pi(\uu) &=&(q_1,q_2):(\C^*)^8\longrightarrow(\C^*)^2\\
    u_{11}&=&\psi\\
    u_{21}&=&x_3^2x_9^3\\
    u_{31}&=&{x_1^2x_7^3\over x_1x_2x_3x_4x_5x_6}\\
    u_{41}&=&{x_2^2x_8^3\over x_1x_2x_3x_4x_5x_6}\\
    u_{12}&=& 1\\
    u_{22}&=& {x_4^2x_{10}^3\over x_7x_8x_9x_{10}x_{11}x_{12}}\\
    u_{32}&=& {x_5^2x_{11}^3\over x_7x_8x_9x_{10}x_{11}x_{12}}\\
    u_{42}&=&{x_6^2x_{12}^3\over x_7x_8x_9x_{10}x_{11}x_{12}}\\
    q_1&=&u_{11}\cdots u_{41}=\psi{x_7^3x_8^3x_9^3\over x_4^2x_5^2x_6^2}\\
    q_2&=&u_{12}\cdots u_{42}={x_4^2x_5^2x_6^2\over x_7^3x_8^3x_9^3}
  \end{eqnarray*}
  Notice that $q_1q_2=\psi$, so that $\pi$ is actually a fibration over the complex parameter of the mirror family, up to the composition with $(q_1,q_2)\in\T_2\mapsto q_1q_2\in\C^*$.
\end{example}

\begin{example}[$Y_{1,3}\subset\P^5$]\label{ex:Y13} Choosing the bi-degree $(1,3)$ in $\P^5$, what is differing from the previous Example~\ref{ex:Y23} about the bi-degree $(2,3)$, is:
\begin{eqnarray*}
 \aa^T&:=&\left(\begin{array}{cccccc}
          1 & 0& 0 & 1 & 1 & 1
        \end{array}\right)=\aa_1^T+\aa_2^T\quad\text{with}\\
        \aa_1^T&:=&\left(
                                    \begin{array}{cccccc}
                                      1 & 0 & 0 & 0 & 0 & 0 \\
                                    \end{array}
                                  \right)\\
\end{eqnarray*}
Then
\begin{equation*}
   \L_{\aa_1}=\left( \begin {array}{ccccc} -1&-1&-1&-1&-1\\
    1&0&0&0&0\\
    0&1&0&0&0\\
    0&0&1&0&0\\
    0&0&0&1&0\end {array} \right)
\end{equation*}
and
\begin{equation*}
  \bb_1^T:=\left(
                                    \begin{array}{cccccccccccc}
                                      1 & 1 & 1 & 1 & 1 &  0 & 0 & 0 & 0 & 0 & 0\\
                                    \end{array}
                                  \right)
\end{equation*}
Therefore $\Cox(\cv{\XX}_\aa)\cong \C[x_1,\ldots,x_{11}]$ and
\begin{eqnarray*}
    f^\vee_1 &=&  \left(\prod_{i=1}^5x_i\right)(\psi+x_1x_7^3+x_2x_8^3)+x_6^3\\\\
    f^\vee_2 &=&  \prod_{i=6}^{11}x_i+x_3x_{9}^3+x_4x_{10}^3+x_5x_{11}^3  \end{eqnarray*}
The associated Givental type LG mirror model is then assigned by the reparameterization
\begin{eqnarray*}
  u_{11}&=&\psi\\
    u_{21}&=&x_1x_7^3\\
    u_{31}&=&x_2x_8^3\\
    u_{41}&=&{x_6^3\over x_1x_2x_3x_4x_5}\\
    u_{12}&=& 1\\
    u_{22}&=& {x_3x_{9}^3\over x_6x_7x_8x_9x_{10}x_{11}}\\
    u_{32}&=& {x_4x_{10}^3\over x_6x_7x_8x_9x_{10}x_{11}}\\
    u_{42}&=&{x_5x_{11}^3\over x_6x_7x_8x_9x_{10}x_{11}}\\
    q_1&=&u_{11}\cdots u_{41}=\psi{x_7^3x_8^3x_9^3\over x_3x_4x_5}\\
    q_2&=&u_{12}\cdots u_{42}={x_3x_4x_5\over x_7^3x_8^3x_9^3}
\end{eqnarray*}
\end{example}

\section{Computing Hodge numbers}\label{sez:hdgnbrs}

 Given a normal algebraic variety $X$ there is a well defined sheaf
 $$\widehat{\Omega}_X:=i_*\Omega_X$$
  called the \emph{sheaf of Zariski 1-forms}, being $i$ the open embedding of the smooth locus of $X$.

\subsection{Preliminary results}
Let us recall, and apply to our situation, some results due to Batyrev-Cox \cite{BC} and Batyrev-Borisov \cite{BB96}.

\begin{theorem}[Thm.~12.1 in \cite{BC}, Thm.~8.1.6 in \cite{CLS}]\label{thm:BC}
  Let $X(\Si)$ be a $\Q$-factorial toric variety with no torus factors. Then there is an exact sequence
  \begin{equation*}
    \xymatrix{0\ar[r]&\widehat{\Omega}_X\ar[r]&\bigoplus_{\rho\in\Si(1)}\cO_X(-D_\rho)\ar[r]&\Cl(X)\otimes_\Z\cO_X\ar[r]&0}
  \end{equation*}
\end{theorem}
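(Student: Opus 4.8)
The plan is to obtain the sequence by descent from the Cox quotient presentation of $X$, where it becomes a concrete generalized Euler sequence. Write $S=\Cox(X)=\C[x_\rho\,:\,\rho\in\Si(1)]$ for the total coordinate ring, $\Cl(X)$-graded by $\deg x_\rho=[D_\rho]$, and set $U:=\Spec(S)\setminus Z(\Si)\subseteq\C^{\Si(1)}$, the complement of the irrelevant locus. Since $X$ is $\Q$-factorial, so $\Si$ is simplicial, and has no torus factors, $X$ is the good geometric quotient $U/\!\!/G$ for the diagonalizable group $G:=\Hom_\Z(\Cl(X),\C^*)$; moreover $U$ is smooth, $G$ acts on $U$ with finite stabilizers, and $G$ acts \emph{freely} over an open set $X^\circ\subseteq X$ whose complement is contained in $\Sing(X)$ and hence has codimension $\geq 2$. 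Pushing forward by $\pi\colon U\to X$ and splitting into $G$-weight spaces, the $G$-linearized invertible sheaf $\mathcal{O}_U\cdot dx_\rho$ (with $dx_\rho$ of $G$-weight $[D_\rho]$) descends to $\mathcal{O}_X(-D_\rho)$; the trivially linearized sheaf $\Cl(X)\otimes_\Z\mathcal{O}_U$ descends to $\Cl(X)\otimes_\Z\mathcal{O}_X$; and $\pi^*\widehat{\Omega}_X$, with its tautological $G$-linearization, descends to $\widehat{\Omega}_X$.

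On the smooth variety $U$ one has the $G$-linearized ``logarithmic Euler'' map
$$\varepsilon_U\colon\ \Omega^1_U=\bigoplus_{\rho\in\Si(1)}\mathcal{O}_U\,dx_\rho\ \longrightarrow\ \Cl(X)\otimes_\Z\mathcal{O}_U\ ,\qquad dx_\rho\ \longmapsto\ [D_\rho]\otimes x_\rho\ ,$$
which is the transpose of the infinitesimal $G$-action; descending it along $\pi$ produces an $\mathcal{O}_X$-linear map $\varepsilon\colon\bigoplus_\rho\mathcal{O}_X(-D_\rho)\to\Cl(X)\otimes_\Z\mathcal{O}_X$, and everything comes down to proving exactness of
$$0\longrightarrow\widehat{\Omega}_X\longrightarrow\bigoplus_{\rho\in\Si(1)}\mathcal{O}_X(-D_\rho)\xrightarrow{\ \varepsilon\ }\Cl(X)\otimes_\Z\mathcal{O}_X\longrightarrow 0\ .$$
Surjectivity of $\varepsilon$ is checked on the affine charts $U_\sigma$: for $\sigma\in\Si$ the image of $\varepsilon$ at a point of $U_\sigma$ contains all $[D_\rho]\otimes 1$ with $\rho\notin\sigma(1)$, and these generate $\Cl(X)$ up to finite index because they generate the kernel of the restriction $\Cl(X)\twoheadrightarrow\Cl(U_\sigma)$, whose target is finite as $\sigma$ is simplicial; tensoring with $\C$ kills the finite index. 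That the composite $\widehat{\Omega}_X\to\Cl(X)\otimes_\Z\mathcal{O}_X$ is zero and that the first map is injective can both be verified on the big torus $T\subseteq X$, where the sequence is just the divisor class sequence $0\to M\to\Z^{\Si(1)}\to\Cl(X)\to 0$ tensored over $\Z$ with $\mathcal{O}_T$ and expressed in the logarithmic frame $\{dx_\rho/x_\rho\}$ of $\widehat{\Omega}_T$; the injectivity there is exactly the hypothesis that the minimal generators of the rays $\rho\in\Si(1)$ span $N_\R$, i.e.\ that $X$ has no torus factors.

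The main obstacle is the identification $\ker\varepsilon\cong\widehat{\Omega}_X$, which is not formal, since $\pi$ is not smooth along $\pi^{-1}(\Sing X)$ and there is no global relative cotangent sequence for $\pi$. Over the free locus there is no issue: setting $U^\circ:=\pi^{-1}(X^\circ)$, the map $\pi|_{U^\circ}\colon U^\circ\to X^\circ$ is a principal $G$-bundle and $X^\circ$ is smooth with $\Omega^1_{X^\circ}=\widehat{\Omega}_X|_{X^\circ}$, so the usual exact sequence $0\to\pi^*\Omega^1_{X^\circ}\to\Omega^1_{U^\circ}\to\Omega^1_{U^\circ/X^\circ}\to 0$, with relative term the trivial bundle $\Cl(X)\otimes_\Z\mathcal{O}_{U^\circ}$, descends to exhibit the displayed sequence as exact over $X^\circ$. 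To extend across the codimension-$\geq 2$ set $X\setminus X^\circ$ one invokes normality: $\widehat{\Omega}_X$ is reflexive by its very definition, $\ker\varepsilon$ is reflexive as the kernel of a morphism from the (reflexive) sheaf $\bigoplus_\rho\mathcal{O}_X(-D_\rho)$ to the locally free sheaf $\Cl(X)\otimes_\Z\mathcal{O}_X$, and two reflexive sheaves on a normal variety that agree outside a closed subset of codimension $\geq 2$ are canonically isomorphic; applying this to the inclusion $\widehat{\Omega}_X\hookrightarrow\ker\varepsilon$, which is an isomorphism over $X^\circ$, completes the proof. Essentially all the substance is in this reflexivity-and-codimension-$2$ bookkeeping rather than in any computation.
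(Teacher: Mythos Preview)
The paper does not give a proof of this statement; it simply records it as a result from the literature, citing Batyrev--Cox and Cox--Little--Schenck. So there is no ``paper's own proof'' to compare against.

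Your argument is essentially the standard proof as given in those references: present $X$ as the geometric quotient $U/\!\!/G$ of the Cox construction, write down the Euler-type map on $U$, descend, and identify the kernel with $\widehat{\Omega}_X$ by working over the free locus and invoking reflexivity to extend across the codimension-$\ge 2$ complement. The ingredients are correct: the free locus coincides with the smooth locus because a point with stabilizer exactly detects nontrivial $\Cl(U_\tau)$, hence a non-smooth chart; surjectivity of $\varepsilon$ on $U_\sigma$ follows since the classes $[D_\rho]$ with $\rho\notin\sigma(1)$ generate a finite-index subgroup of $\Cl(X)$, which becomes everything after $\otimes_\Z\mathcal{O}_X$ (torsion in $\Cl(X)$ dies upon tensoring with a $\C$-algebra, so the target is genuinely locally free); and $\ker\varepsilon$ is reflexive because it is a saturated subsheaf of the reflexive sheaf $\bigoplus_\rho\mathcal{O}_X(-D_\rho)$ (the quotient embeds in the locally free $\Cl(X)\otimes_\Z\mathcal{O}_X$, hence is torsion-free). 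One small point you might make more explicit is how the map $\widehat{\Omega}_X\to\bigoplus_\rho\mathcal{O}_X(-D_\rho)$ is defined globally: you construct it over $X^\circ$ via the cotangent sequence of the principal bundle and then extend by $i_*$ using reflexivity of both source and target, rather than asserting a global relative cotangent sequence for $\pi$.
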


\begin{theorem}[Danilov Vanishing, Prop.~12.11 in \cite{Danilov}, Thm.~9.3.2 in \cite{CLS}]\label{thm:Danilov} If $X(\Si)$ is a complete and $\Q$-factorial toric variety then
\begin{equation*}
  \forall\,p\neq q\quad H^p(X,\widehat{\Omega}^q_X)=0
\end{equation*}

\end{theorem}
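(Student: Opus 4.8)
The plan is to reduce the statement to classical Hodge theory on complete toric orbifolds, which is exactly the content of the two cited references. First I would note that $X$ is $\Q$-factorial if and only if the fan $\Si$ is simplicial, so that $X$ has at worst finite abelian quotient singularities, i.e. $X$ is a $V$-manifold; in particular the sheaves of Zariski $q$-forms $\widehat{\Omega}^q_X=i_*\Omega^q_X$ are the sheaves of $V$-differential $q$-forms on $X$. The crucial input is then that on a complete $V$-manifold the Hodge--to--de Rham spectral sequence $E_1^{a,b}=H^b(X,\widehat{\Omega}^a_X)\Rightarrow H^{a+b}(X,\C)$ degenerates at $E_1$, so that $H^k(X,\C)=\bigoplus_{a+b=k}H^b(X,\widehat{\Omega}^a_X)$ carries a genuine Hodge decomposition with $H^b(X,\widehat{\Omega}^a_X)$ as its $(a,b)$-component; this is Prop.~12.11 in \cite{Danilov}, see also Thm.~9.3.2 in \cite{CLS}.

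Secondly, since $\Si$ is simplicial, $H^{2j+1}(X,\C)=0$ and $H^{2j}(X,\C)$ is generated by classes of torus-invariant subvarieties, hence purely of Hodge type $(j,j)$; this is a standard consequence of the orbit decomposition of $X$ into affine torus-invariant pieces together with completeness, and requires no smoothness hypothesis.

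Combining the two steps, for $p\neq q$ the group $H^p(X,\widehat{\Omega}^q_X)$ is the Hodge component of type $(q,p)$ of $H^{p+q}(X,\C)$. But $H^{p+q}(X,\C)$ vanishes when $p+q$ is odd, and when $p+q=2j$ it is purely of type $(j,j)$, so its $(q,p)$-component can be nonzero only when $q=p=j$. Since we assumed $p\neq q$, this forces $H^p(X,\widehat{\Omega}^q_X)=0$, which is the assertion.

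The only genuinely delicate point I expect is the first step --- the $E_1$-degeneration and the Hodge decomposition for the complex $\widehat{\Omega}^\bullet_X$ in the singular simplicial setting --- but in the present context it is entirely legitimate to invoke it from \cite{Danilov} and \cite{CLS}. A more computational alternative would be to take exterior powers of the generalized Euler sequence of Theorem~\ref{thm:BC}, resolving $\widehat{\Omega}^p_X$ by direct sums of the line bundles $\cO_X(-D_{i_1}-\cdots-D_{i_s})$ tensored with trivial bundles, and then to feed the toric vanishing theorems for the cohomology of torus-invariant divisors into the associated hypercohomology spectral sequence; the argument above is, however, shorter and is the one underlying the cited references.
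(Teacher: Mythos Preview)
The paper does not give its own proof of this statement: it is stated as a cited result from \cite{Danilov} and \cite{CLS}, with no accompanying proof environment. Your proposal is a correct outline of the argument underlying those references (simplicial $\Rightarrow$ $V$-manifold, $E_1$-degeneration of Hodge--to--de Rham for complete $V$-manifolds, and purity of $H^\bullet(X,\C)$ for complete simplicial toric varieties), so there is nothing to compare against and nothing to correct.
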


\begin{theorem}[Batyrev-Borisov Vanishing, Thm.~2.5 in \cite{BB96} and Thm.~9.2.7 in \cite{CLS}]\label{thm:BBvanish}
  Let $D=\sum_\rho a_\rho D_\rho$ be a $\Q$-Cartier divisor on a complete toric variety $X(\Si)$. If $D$ is semi-ample then
  \begin{equation*}
    h^p(X,\cO_X(-D))=\left\{\begin{array}{cc}
                              l^*(\D_D) & \text{for $p=\dim\D_D$} \\
                              0 & \text{otherwise}
                            \end{array}
    \right.
  \end{equation*}
\end{theorem}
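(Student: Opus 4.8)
The plan is to follow the classical two-step argument of \cite[\S2]{BB96} (see also \cite[\S9.2]{CLS}): first reduce to an \emph{ample} divisor on a lower-dimensional projective toric variety, and then run the combinatorial computation of the cohomology of a torus-invariant sheaf there. For the first step, since $D$ is semi-ample a large positive multiple $kD$ is Cartier and basepoint free, and the morphism $X(\Si)\to\P^{N}$ it defines has image a $d$-dimensional projective toric variety $Y$, where $d:=\dim\D_D$; the fan of $Y$ is the normal fan of $\D_D$ (regarded as a full-dimensional polytope in its own affine span), and it is a coarsening of $\Si$, so there is an induced projective toric morphism $\pi\colon X(\Si)\to Y$ and an ample $\Q$-Cartier divisor $A$ on $Y$ with $\D_A=\D_D$ and $D=\pi^{*}A$. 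The fibres of $\pi$ are complete toric varieties of dimension $n-d$, which have no higher cohomology of the structure sheaf; together with $\pi_{*}\cO_{X}=\cO_{Y}$ (normality of $Y$ and connectedness of the fibres), this gives $R^{q}\pi_{*}\cO_{X}=0$ for $q>0$. By the projection formula $R^{q}\pi_{*}\bigl(\cO_{X}(-D)\bigr)=\cO_{Y}(-A)\otimes R^{q}\pi_{*}\cO_{X}$, so the Leray spectral sequence collapses and yields $H^{p}\bigl(X,\cO_{X}(-D)\bigr)\cong H^{p}\bigl(Y,\cO_{Y}(-A)\bigr)$ for every $p$.

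The second step is to compute $H^{p}(Y,\cO_{Y}(-A))$ for $A$ ample on the $d$-dimensional projective toric variety $Y$ (the case $d=0$ is trivial: $Y$ is a point, $\cO_Y(-A)=\cO_Y$, and $l^{*}(\D_D)=1$). By the classical description of the cohomology of a torus-invariant (reflexive) sheaf on a complete toric variety \cite[Thm.~9.1.3]{CLS},
\[
  H^{p}\bigl(Y,\cO_{Y}(-A)\bigr)=\bigoplus_{\m\in M}\widetilde{H}^{\,p-1}\bigl(V_{-A,\m};\C\bigr),
\]
where $V_{-A,\m}\subseteq N_{\R}$ is the union of the cones of the fan of $Y$ recording the locus where $\m$ violates the inequalities cutting out $\D_{A}=\D_{D}$, considered up to homotopy as a closed subset of a sphere $S^{d-1}$. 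Since $A$ is ample and $d\ge1$, $-A$ is not effective, hence $\D_{-A}\cap M=\emptyset$ and $V_{-A,\m}\neq\emptyset$ for every $\m\in M$. The decisive point is then a convexity analysis of these sets, which uses that $\D_{D}=\D_{A}$ is a genuine full-dimensional polytope with convex support function — this is exactly the semi-ampleness hypothesis: one shows that $V_{-A,\m}$ is homotopy equivalent to $S^{d-1}$ when $\m\in\Int(\D_{D})$, and is contractible when $\m\notin\Int(\D_{D})$. Consequently $\widetilde{H}^{\,p-1}(V_{-A,\m};\C)$ vanishes unless $p=d$ and $\m\in\Int(\D_{D})\cap M$, in which case it is one-dimensional, giving $H^{p}(Y,\cO_{Y}(-A))=0$ for $p\neq d$ and $H^{d}(Y,\cO_{Y}(-A))\cong\C^{\,\#(\Int(\D_{D})\cap M)}=\C^{\,l^{*}(\D_{D})}$. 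Combined with the first step, this is the assertion.

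The only substantive step is this homotopy analysis of the sets $V_{-A,\m}$ — that they are $(d-1)$-spheres for interior $\m$ and contractible otherwise — and it is here that convexity of $\D_{D}$, i.e. semi-ampleness of $D$, is genuinely used; without it the combinatorial formula still holds but the $V_{-A,\m}$ need not have this clean homotopy type. An alternative, and perhaps cleaner, route for the same conclusion (once the reduction to the ample case is in place) is via Serre duality on the Cohen--Macaulay variety $Y$: $H^{p}(Y,\cO_{Y}(-A))\cong H^{d-p}(Y,\omega_{Y}\otimes\cO_{Y}(A))^{\vee}$. The vanishing for $0<p<d$ then follows from a toric Kodaira-type vanishing for the dualizing sheaf — e.g. pass to a toric resolution $\widetilde Y\to Y$, use Grauert--Riemenschneider to identify $R\pi_{*}\omega_{\widetilde Y}$ with $\omega_{Y}$, and apply Kawamata--Viehweg vanishing on $\widetilde Y$ to the nef and big pullback of $A$ — while $H^{0}(Y,\omega_{Y}\otimes\cO_{Y}(A))$ has dimension equal to the number of interior lattice points of $\D_{A}=\D_{D}$, by the standard toric identification of the sections of $\omega_{Y}\otimes\cO_{Y}(A)$ with $\Int(\D_{A})\cap M$.
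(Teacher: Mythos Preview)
The paper does not give its own proof of this statement: Theorem~\ref{thm:BBvanish} is quoted from the literature (Batyrev--Borisov \cite{BB96}, Thm.~2.5, and Cox--Little--Schenck \cite{CLS}, Thm.~9.2.7) and used as a black box throughout \S\ref{sez:hdgnbrs}. So there is nothing in the paper to compare your argument against directly.

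That said, your proposal is correct and is essentially the argument given in those references. The two-step reduction---contracting via the semi-ample linear system to a projective toric variety $Y$ of dimension $d=\dim\D_D$ on which the pushed-forward divisor is ample, followed by the weight-space computation of $H^p(Y,\cO_Y(-A))$ via the homotopy type of the sets $V_{-A,\m}$---is exactly the route taken in \cite[\S9.2]{CLS}; the Serre-duality alternative you sketch at the end is closer in spirit to the original argument in \cite{BB96}. One small remark: when you invoke the projection formula for $\cO_X(-D)$ with $D$ only $\Q$-Cartier, you are implicitly using that $\cO_X(-D)$ is the reflexive sheaf associated with the integral Weil divisor $-D=\sum_\rho(-a_\rho)D_\rho$ and that $\pi^*\cO_Y(-A)\cong\cO_X(-D)$ holds at the level of reflexive hulls; this is fine in the toric setting but worth saying explicitly.
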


\begin{proposition}[nef and big partitioned framing]\label{prop:nef&big}
  Let $(X,\aa=\sum_{k=1}^l\aa_k)$ be a nef partitioned ftv such that $D_{\aa_k}$ is a big divisor, for every $k$. Consider the generic complete intersection
  $$Y:=\bigcap_{k=1}^lY_k\subset X\quad\text{with}\quad Y_k\in|D_{\aa_k}|$$
  Then, for every semi-ample and big $\Q$-Cartier divisor $D$ of $X$
  \begin{equation*}
    h^p(Y,\cO_Y(-D))=\left\{\begin{array}{cc}
                              \sum\limits_{J\subseteq\{1,\ldots,l\}}(-1)^{l-|J|}l^*\left(\D_{D}+\sum_{j\in J}\D_{\aa_{j}}\right) & \text{for $p=n-l$} \\
                              0 & \text{otherwise}
                            \end{array}
    \right.
  \end{equation*}
\end{proposition}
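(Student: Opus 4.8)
The plan is to resolve $i_*\cO_Y(-D)$ on $X$ by a Koszul complex of locally free sheaves built from the line bundles $\cO_X(-D_{\aa_k})$, to compute the resulting hypercohomology spectral sequence by means of Batyrev--Borisov vanishing (Theorem~\ref{thm:BBvanish}), and finally to pin down the single surviving cohomology group by Grothendieck vanishing on $Y$. First I would set up the Koszul resolution: since every $D_{\aa_k}$ is semi-ample and big, a generic choice of $Y_k\in|D_{\aa_k}|$ makes $Y_1,\dots,Y_l$ a regular sequence, so $Y=\bigcap_kY_k$ has pure codimension $l$ and is resolved by
\[0\to\bigwedge^{l}\Bigl(\bigoplus_{k=1}^{l}\cO_X(-D_{\aa_k})\Bigr)\to\cdots\to\bigoplus_{k=1}^{l}\cO_X(-D_{\aa_k})\to\cO_X\to i_*\cO_Y\to 0,\]
where $i\colon Y\hookrightarrow X$. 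If some $D_{\aa_k}$ fails to be Cartier one first replaces $X$ by a resolution on which all the $D_{\aa_k}$ become Cartier (for instance the $\Q$-factorial model of Corollary~\ref{cor:nef}, on which the generic members are cut out by a single equation on each affine chart, or a full toric resolution of singularities): this modifies $X$ only over a closed subset of codimension $\ge 2$, hence leaves the polytopes $\D_D,\D_{\aa_k}$ unchanged and, by Batyrev--Borisov vanishing together with the vanishing of the higher direct images of the structure sheaf, does not affect the cohomology groups below. Twisting the resolution by $\cO_X(-D)$, the degree $-p$ term of the complex $K^{\bullet}\otimes\cO_X(-D)$ resolving $i_*\cO_Y(-D)$ is $\bigoplus_{|S|=p}\cO_X(-D-\sum_{k\in S}D_{\aa_k})$, with $S$ ranging over the $p$-element subsets of $\{1,\dots,l\}$.

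Next I would run the first-page hypercohomology spectral sequence $E_1^{-p,q}=\bigoplus_{|S|=p}H^q\bigl(X,\cO_X(-D-\sum_{k\in S}D_{\aa_k})\bigr)\Rightarrow H^{q-p}(Y,\cO_Y(-D))$. For every such $S$ the divisor $D+\sum_{k\in S}D_{\aa_k}$ is a sum of semi-ample $\Q$-Cartier divisors, hence semi-ample, and it is big because $D$ is; therefore $\D_D+\sum_{k\in S}\D_{\aa_k}$ is the polytope associated with it and is $n$-dimensional. Theorem~\ref{thm:BBvanish} then yields $E_1^{-p,q}=0$ for $q\neq n$ and $\dim E_1^{-p,n}=\sum_{|S|=p}l^*\bigl(\D_D+\sum_{k\in S}\D_{\aa_k}\bigr)$. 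Thus only the row $q=n$ survives, the spectral sequence degenerates at $E_2$, and $H^{i}(Y,\cO_Y(-D))\cong E_2^{\,i-n,\,n}$ for all $i$.

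Finally, since $\dim Y=n-l$, Grothendieck vanishing forces $H^{i}(Y,\cO_Y(-D))=0$ for $i>n-l$, i.e. $E_2^{-p,n}=0$ whenever $p<l$; as $E_2^{-p,n}=0$ trivially for $p\notin\{0,\dots,l\}$, this already gives $H^{i}(Y,\cO_Y(-D))=0$ for every $i\neq n-l$. Consequently the only non-zero row $(E_1^{\bullet,n},d_1)$ is exact everywhere except at its leftmost position $-l$, where its cohomology is $E_2^{-l,n}\cong H^{n-l}(Y,\cO_Y(-D))$. Comparing the Euler characteristic of this complex computed from its terms with the one computed from its unique non-zero cohomology group gives
\[h^{n-l}(Y,\cO_Y(-D))=\sum_{p=0}^{l}(-1)^{l-p}\dim E_1^{-p,n}=\sum_{S\subseteq\{1,\dots,l\}}(-1)^{l-|S|}\,l^*\Bigl(\D_D+\sum_{j\in S}\D_{\aa_j}\Bigr),\]
which is the asserted formula.

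The step I expect to be the main obstacle is the first one: producing an honest Koszul complex of \emph{invertible} sheaves that genuinely resolves $i_*\cO_Y(-D)$, given that the framing pieces $D_{\aa_k}$ are only assumed $\Q$-Cartier and that $Y$ is cut out by sections of reflexive, not necessarily locally free, sheaves. This is what forces the passage to a suitable resolution of $X$ and the (routine, but necessary) verification that neither the cohomology groups nor the Newton polytopes entering the final formula are altered by it, as well as the check that the generic complete intersection there behaves well enough (proper intersection, rational singularities) that its cohomology agrees with that of $Y$. Once this reduction is settled, the rest is a formal two-step spectral sequence computation combined with a single application of Grothendieck vanishing.
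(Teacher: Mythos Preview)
Your proof is correct and follows essentially the same route as the paper: resolve $i_*\cO_Y(-D)$ by the Koszul complex in the $\cO_X(-D_{\aa_k})$, run the two hypercohomology spectral sequences, kill all rows but $q=n$ by Batyrev--Borisov vanishing, and read off $h^{n-l}$ from the Euler characteristic of the surviving row. The one genuine addition in your write-up is the explicit appeal to Grothendieck vanishing on $Y$ to force $E_2^{-p,n}=0$ for $p<l$; the paper simply asserts $''E_2^{p+l-n,n}=0$ for $p\neq n-l$ without isolating this step, so your version is slightly more careful here. Your discussion of passing to a model on which the $D_{\aa_k}$ are Cartier (so that the Koszul complex is a complex of invertible sheaves) is also a point the paper leaves implicit.
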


\begin{proof}
  A proof of this proposition goes exactly as for \cite[Prop.~8.1]{BB96}. Here we will re-propose the whole argument essentially for the following three reasons:
  \begin{itemize}
    \item[-] the given statement is more general than the one given in \cite[Prop.~8.1]{BB96},
    \item[-] the same argument will be often applied in proving many results presented in this paper,
    \item[-] it is not explicitly described in the proof of \cite[Prop.~8.1]{BB96}.
  \end{itemize}
  Start by considering the following \emph{Koszul-type complex}
  \begin{eqnarray*}
        \Ksz_D :& \xymatrix{0\ar[r]&\cO_{X}\left(-D-D_{\aa}\right)\ar[r]^-{\d_0}&\cdots
        \bigoplus\limits_{\{k_1,\ldots,k_{l-p}\}\subseteq\{1,\ldots,l\}}
        \cO_{X}\left(-D-\sum\limits_{j=1}^{l-p}D_{\aa_{k_j}}\right)}&\\
        &\cdots\xymatrix{\ar[r]^-{\d_{l-2}}&
        \bigoplus\limits_{k=1}^l\cO_{X}\left(-D-D_{\aa_k}\right)\ar[r]^-{\d_{l-1}}&\cO_{X}(-D)}&
      \end{eqnarray*}
      giving rise to an acyclic resolution of the direct image sheaf $j_*\cO_Y(-D)$, under the closed embedding $j:Y\hookrightarrow X$  (notation as in \cite[\S II.1]{Hartshorne}). Then, the $q$-th sheaf of cohomology of $\Ksz_D$ is given by
      \begin{equation}\label{sheafcoh}
        \mathcal{H}^q(\Ksz_D)\cong\left\{\begin{array}{cc}
                                         j_*\cO_Y(-D) & \text{for}\ q=l \\
                                         0 & \text{otherwise}
                                       \end{array}
        \right.
      \end{equation}
      There are two spectral sequences $'E$ and $''E$ abutting to the hypercohomology $\H^*(\Ksz_D)$ (see e.g. \cite[\S 3.5]{GH}) whose bi-graded second level terms are given by
      \begin{eqnarray*}
        'E_2^{p,q} &=& H^p(X,\mathcal{H}^q(\Ksz_D)) \\
        ''E_2^{p,q} &=& H^q(X,H^p(\Ksz_D))
      \end{eqnarray*}
      On the one hand, by (\ref{sheafcoh}), the first spectral sequence degenerates to giving
      \begin{equation*}
        \H^{p+l}(\Ksz_D)\cong\ 'E_2^{p,l}=H^p(X,j_*\cO_Y(-D))\cong H^p(Y,\cO_Y(-D))
      \end{equation*}
      On the other hand, notice that
      \begin{equation*}
        ''E_1^{p,q}=\bigoplus_{\{k_1,\ldots,k_{l-p}\}\subseteq\{1,\ldots,l\}} H^q\left(X,\cO_{X}\left(-D-\sum_{j=1}^{l-p} D_{\aa_{k_j}}\right)\right)
      \end{equation*}
      so that the Batyrev-Borisov vanishing~\ref{thm:BBvanish} gives
      \begin{equation*}
        \dim\left(''E_1^{p,q}\right)=\left\{\begin{array}{cc}
                                              \sum\limits_{\{k_1,\ldots,k_{l-p}\}\subseteq\{1,\ldots,l\}}l^* \left(\D_D+\sum\limits_{j=1}^{l-p}\D_{\aa_{k_j}}\right) &  \text{for}\ q=n \\
                                              0 &  \text{otherwise}
                                            \end{array}
        \right.
      \end{equation*}
      This suffices to show that $''E$ degenerates at the second level, as
      \begin{equation*}
        ''E_2^{p,q}={\ker(d_1:\,''E_1^{p,q}\longrightarrow\,''E_1^{p+1,q})
        \over\im(d_1:\,''E_1^{p-1,q}\longrightarrow\,''E_1^{p,q})}={\ker H^q(\d_p)\over \im H^q(\d_{p-1})}=0\quad\text{for}\ q\neq n
      \end{equation*}
      and
      \begin{equation*}
        d_2:\,''E_2^{p,n}\longrightarrow\,''E_2^{p+2,n-1}=0
      \end{equation*}
      Therefore
      \begin{eqnarray*}
        H^p(Y,\cO_Y(-D))&\cong&\H^{p+l}(\Ksz_D)\cong\,''E_2^{p+l-n,n}\\
                        &\cong&{\ker H^n(\d_{p+l-n})\over \im H^n(\d_{p+l-n-1})}=0\quad\text{for}\ p\neq n-l
                        \end{eqnarray*}
      To explicitly compute $H^{n-l}(Y,\cO_Y(-D))\cong \ker H^n(\d_0)$, consider the following relation on Euler characteristics  associated with the given Koszul resolution:
      \begin{equation}\label{euler}
        (-1)^{l+1}\chi\left(j_*\cO_Y(-D)\right)+\sum_{p=0}^{l}(-1)^p\chi\left(\mathcal{F}_p\right)=0
      \end{equation}
      where
      \begin{equation*}
        \mathcal{F}_p=\bigoplus\limits_{\{k_1,\ldots,k_{l-p}\}\subseteq\{1,\ldots,l\}}
        \cO_{X}\left(-D-\sum\limits_{j=1}^{l-p}D_{\aa_{k_j}}\right)
      \end{equation*}
      Since
      \begin{eqnarray*}
        \chi\left(j_*\cO_Y(-D)\right) &=& (-1)^{n-l}h^{n-l}(\cO_Y(-D)) \\
        \forall\,p=0,\ldots,l\hskip1truecm\chi\left(\mathcal{F}_p\right) &=& (-1)^n h^n\left(\mathcal{F}_p\right)
        \end{eqnarray*}
        then, (\ref{euler}) and Batyrev-Borisov vanishing~\ref{thm:BBvanish} gives
        \begin{eqnarray*}
          h^{n-l}(\cO_Y(-D))&=&\sum_{p=0}^{l}(-1)^ph^n(\mathcal{F}_p)\\
                            &=&\sum_{p=0}^{l}(-1)^p\sum\limits_{\{k_1,\ldots,k_{l-p}\}\subseteq\{1,\ldots,l\}} l^*\left(\D_D+\sum_{j=0}^{l}\D_{\aa_{k_j}}\right)\\
                            &=&\sum\limits_{J\subseteq\{1,\ldots,l\}}(-1)^{l-|J|}l^*\left(\D_{D}+\sum_{j\in J}\D_{\aa_{j}}\right)
        \end{eqnarray*}
\end{proof}

\begin{definition}[$h$-dependence of polyhedra; see Def.~3.1 in \cite{BB96}]
  Given $l$ polyhedra $\D_1\,\ldots,\D_l$, they are called \emph{$h$-dependent} if there exist a positive $s\in\N$ and an $s$-element subset
  \begin{equation*}
    \{\D_{i_1},\ldots,\D_{i_s}\}\subseteq\{\D_1\,\ldots,\D_l\}
  \end{equation*}
  such that
  \begin{equation*}
    \dim\left(\sum_{j=1}^s\D_{i_j}\right)< s+h-1
  \end{equation*}
\end{definition}

\begin{theorem}[Thm.~3.3 in \cite{BB96}]\label{thm:h-indipendenza}
  Given the same notation as in Proposition~\ref{prop:nef&big}, but without necessarily assuming bigness for $D_{\aa_k}$, then:
  \begin{itemize}
    \item[(i)] $Y\neq\emptyset$ if and only if $\D_{\aa_1},\ldots,\D_{\aa_l}$ are $1$-independent,
    \item[(ii)] if $\D_{\aa_1},\ldots,\D_{\aa_l}$ are $2$-independent then $Y$ is irreducible,
    \item[(iii)] if $\D_{\aa_1},\ldots,\D_{\aa_l}$ are $h$-independent, with $h\ge 3$, then
    \begin{equation*}
      h^p(\cO_Y)=0 \quad\text{for}\quad 1\le p\le h-2
    \end{equation*}
  \end{itemize}
\end{theorem}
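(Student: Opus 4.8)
The plan is to run the same Koszul-type spectral sequence machinery used in the proof of Proposition~\ref{prop:nef&big}, but now with $D=0$ and paying careful attention to \emph{which} cohomology groups of the building blocks $\cO_X\left(-\sum_{j}D_{\aa_{k_j}}\right)$ survive. The key input is the Batyrev--Borisov vanishing (Theorem~\ref{thm:BBvanish}): for a semi-ample $\Q$-Cartier divisor $D'$ on the complete toric variety $X$, the only nonzero cohomology is $h^{p_0}(X,\cO_X(-D'))=l^*(\D_{D'})$ in the single degree $p_0=\dim\D_{D'}$. Since $D_{\aa_{k_j}}$ is nef (semi-ample), every partial sum $\sum_{j=1}^{s}D_{\aa_{k_j}}$ is semi-ample with polytope $\sum_{j=1}^{s}\D_{\aa_{k_j}}$, so the relevant cohomological degree is exactly $\dim\bigl(\sum_{j}\D_{\aa_{k_j}}\bigr)$. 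The hypothesis of $h$-independence says precisely that for every $s$-element subset, $\dim\bigl(\sum_{j=1}^{s}\D_{\aa_{k_j}}\bigr)\ge s+h-1$; for the empty sum this forces (reading $s=l$, or just the full polytope $\D_\aa$) the ambient-level degrees to be pushed up.

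First I would set up the complex $\Ksz_0$ as in Proposition~\ref{prop:nef&big} with $D=0$: the term in homological position $p$ (i.e.\ contributing to $\H^{p}$) is $\F_p=\bigoplus_{\{k_1,\dots,k_{l-p}\}}\cO_X\bigl(-\sum_{j=1}^{l-p}D_{\aa_{k_j}}\bigr)$, with $\mathcal{H}^q(\Ksz_0)\cong j_*\cO_Y$ for $q=l$ and $0$ otherwise, so that the first spectral sequence degenerates and $\H^{p+l}(\Ksz_0)\cong H^p(Y,\cO_Y)$. Then I would examine the second spectral sequence: $''E_1^{p,q}=\bigoplus_{\{k_1,\dots,k_{l-p}\}}H^q\bigl(X,\cO_X(-\sum_{j}D_{\aa_{k_j}})\bigr)$. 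By Theorem~\ref{thm:BBvanish} the $(p,q)$-entry vanishes unless $q=\dim\bigl(\sum_{j=1}^{l-p}\D_{\aa_{k_j}}\bigr)$ for some $(l-p)$-element index set; by $h$-independence this forces $q\ge (l-p)+h-1$, i.e.\ $q\ge l-p+h-1$, equivalently $p+q\ge l+h-1$ whenever $''E_1^{p,q}\ne 0$ and $l-p\ge 1$. (For the term $p=l$, $l-p=0$, the empty sum is $\{\0\}$ and lives in $q=0$.) Reindexing via $\H^{p+l}(\Ksz_0)\cong H^p(Y,\cO_Y)$ and reading off the total degree: $H^p(Y,\cO_Y)$ can receive contributions only from $''E_\infty^{a,b}$ with $a+b=p+l$, and any nonzero $''E_1^{a,b}$ with $a<l$ satisfies $a+b\ge l+h-1$, hence $p\ge h-1>h-2$; the only possible contribution with $a=l$ sits in $b=0$, giving total degree $l$, i.e.\ $p=0$. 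Thus for $1\le p\le h-2$ every $''E_1$-term mapping to $H^p(Y,\cO_Y)$ vanishes, and therefore $h^p(\cO_Y)=0$. Statements (i) and (ii) come out of the same bookkeeping at the low end: $Y\ne\emptyset$ is equivalent to $j_*\cO_Y\ne 0$, i.e.\ $\H^l(\Ksz_0)\ne 0$, which by the degeneration and the $q=0$ row analysis is governed by $1$-independence of the $\D_{\aa_k}$ (this is essentially \cite[Cor.~3.4]{BB96}); and $2$-independence forces $h^0(\cO_Y)=1$ via the same $''E$-computation, which together with $Y\ne\emptyset$ gives irreducibility.

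The main obstacle I anticipate is \textbf{bookkeeping the degeneration of the second spectral sequence without the bigness hypothesis}. In Proposition~\ref{prop:nef&big} bigness guaranteed that every relevant polytope was full-dimensional, so $''E_1$ was concentrated in the single row $q=n$ and the sequence degenerated at $E_2$ for the trivial reason that there was only one nonzero row. Here the $''E_1$-page is spread over several rows (indexed by the various possible dimensions $\dim\sum\D_{\aa_{k_j}}$), so one must argue that the higher differentials $d_r$ ($r\ge 2$) cannot create new cohomology in the forbidden range. The clean way is not to prove full degeneration but to argue directly on the $E_1$-page: since $''E_1^{a,b}=0$ whenever $a<l$ and $a+b<l+h-1$, and subquotients only shrink cohomology, $''E_\infty^{a,b}=0$ there as well, which is all that is needed. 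I would make this precise by noting that every page $''E_r$ is a subquotient of $''E_1$, so the vanishing region on $E_1$ persists to $E_\infty$; there is no need to control the differentials explicitly. That observation is what makes the argument go through verbatim from \cite[Thm.~3.3]{BB96}, with our semi-ampleness assumption on $D_{\aa_k}$ replacing the polar-duality setup there.
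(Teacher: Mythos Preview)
The paper does not give its own proof of this theorem; it is simply quoted from \cite{BB96} (Thm.~3.3) and used as a black box. So there is no paper-proof to compare against.

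Your spectral-sequence argument for (iii) is correct and is exactly the argument of \cite{BB96}: the key point, which you identify, is that $h$-independence forces every nonzero $''E_1^{a,b}$ with $a<l$ to satisfy $a+b\ge l+h-1$, while the only contribution with $a=l$ sits at $(l,0)$; since $''E_\infty$ is a subquotient of $''E_1$, this vanishing region persists and kills $H^p(Y,\cO_Y)$ for $1\le p\le h-2$. Your remark that one need not control the higher differentials is precisely the right observation.

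There is one genuine gap in your treatment of (ii): $h^0(\cO_Y)=1$ gives \emph{connectedness}, not irreducibility. In \cite{BB96} the passage from connectedness to irreducibility uses an additional Bertini-type input (the generic $Y$ is nonsingular on the open torus, hence has no multiple components there, and being a complete intersection it is equidimensional and Cohen--Macaulay). You should make that step explicit. Likewise, for the ``only if'' direction of (i) one must argue that $1$-dependence actually forces $Y=\emptyset$ (equivalently, that a differential kills the class in $''E_\infty^{l,0}$, or more directly that some $|D_{\aa_{k_j}}|$ restricts to a base-point-free system of negative expected dimension on a suitable partial intersection); your sketch gestures at this but does not carry it out.
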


\begin{corollary}[Cor.~3.5 in \cite{BB96}]\label{cor:hpO}
  Assume that $\D_{\aa_1},\ldots,\D_{\aa_l}$ have positive dimension.
  Then:
  \begin{enumerate}
    \item $Y=\emptyset$ if and only if $l=n+1$,
    \item $Y$ consists of a finite number of points if and only if $l=n$,
    \item $Y$ is an irreducible variety of dimension $n-l\ge 1$ if and only if $l\le n-1$: in particular $h^0(\cO_Y)=1$.
  \end{enumerate}
  Moreover, if for any proper subset $\{k_1,\ldots,k_s\}\subset\{1,\ldots,l\}$ it happens that
  \begin{equation*}
    l^*\left(\sum_{j=1}^s \D_{\aa_{k_j}}\right)=0
  \end{equation*}
  then:
  \begin{enumerate}
  \setcounter{enumi}{3}
    \item $Y$ is an irreducible variety of dimension $n-l\ge 2$, with $$h^1(\cO_Y)=\cdots=h^{n-l-1}(\cO_Y)=0$$
        if and only if $l\le n-2$\,.
  \end{enumerate}
\end{corollary}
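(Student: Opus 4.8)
The plan is to derive the four assertions from Theorem~\ref{thm:h-indipendenza}, translating each geometric statement about $Y$ into an $h$-independence statement for the positive-dimensional polytopes $\D_{\aa_1},\dots,\D_{\aa_l}$, supplemented by two standard facts. First, a Bertini-type dimension count: since $\dim\D_{\aa_k}>0$, a suitable multiple of each semi-ample system $|D_{\aa_k}|$ is base-point-free with non-constant associated morphism, so a generic $Y_k$ is a genuine moving divisor, and successive intersection shows that a nonempty $Y$ has dimension exactly $n-l$ (equivalently: $\D_\aa=\sum_k\D_{\aa_k}$ is $n$-dimensional, because $(X,\aa)$ is an ftv and so $\0\in\Int[k_0\D_\aa]$, hence the product morphism $\prod_k\pi_{D_{\aa_k}}\colon X\to\prod_k\P^{N_k}$ is generically finite onto an $n$-dimensional image and $Y$ is the preimage of a generic codimension-$l$ linear section). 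Second, an irreducible projective variety $Y$ has $h^0(\cO_Y)=1$.

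The combinatorial core is that the collection $\D_{\aa_1},\dots,\D_{\aa_l}$ is $1$-independent precisely when $l\le n$ and $2$-independent precisely when $l\le n-1$. Necessity is immediate, since $h$-independence applied to the full index set gives $n=\dim\D_\aa\ge l+h-1$. The sufficiency direction — that \emph{every} subset, not only the whole collection, satisfies the dimension bound in the definition of $h$-independence — is where one must use the structure of a nef partition of the full-dimensional $\D_\aa$ by positive-dimensional summands; concretely this reduces, through the duality relations of the $f$-process, to the positive-dimensionality of the dual summands $\cv\D_{\bb_k}$.

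Granting this, the first three assertions follow at once. If $l=n+1$ the collection is not $1$-independent, so $Y=\emptyset$ by Theorem~\ref{thm:h-indipendenza}(i), while for $l\le n$ it is $1$-independent and $Y\neq\emptyset$: this is (1). If $l=n$ it is $1$-independent but not $2$-independent, so $Y\neq\emptyset$ and the dimension count forces $\dim Y=0$; and $Y$ is empty for $l>n$ and positive-dimensional for $l<n$, giving (2). If $l\le n-1$ it is $2$-independent, so $Y$ is irreducible by Theorem~\ref{thm:h-indipendenza}(ii), of dimension $n-l\ge 1$, whence $h^0(\cO_Y)=1$; the converse is the trivial bookkeeping, giving (3).

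For (4), I would argue directly with the Koszul-type complex $\Ksz_0$ and the two spectral sequences of the proof of Proposition~\ref{prop:nef&big}, taking $D=0$. By the Batyrev--Borisov vanishing~\ref{thm:BBvanish}, $H^q\bigl(X,\cO_X(-\sum_{j\in J}D_{\aa_j})\bigr)$ is nonzero only for $q=\dim\bigl(\sum_{j\in J}\D_{\aa_j}\bigr)$, where it equals $l^*\bigl(\sum_{j\in J}\D_{\aa_j}\bigr)$; the hypothesis that $l^*$ vanishes on all proper partial Minkowski sums then concentrates the page $''E_1^{p,q}$ in the two corners $(0,n)$, of dimension $l^*(\D_\aa)$ and coming from $J=\{1,\dots,l\}$, and $(l,0)$, of dimension $1=l^*(\{\0\})$ and coming from $J=\emptyset$. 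Since $l\le n-2$ no differential joins these corners, the sequence degenerates, and comparison with the other spectral sequence yields $H^p(Y,\cO_Y)\cong\H^{p+l}(\Ksz_0)$, which is $0$ for $p\neq 0,n-l$, is $\C$ for $p=0$, and is $\C^{l^*(\D_\aa)}$ for $p=n-l$; this gives $h^1(\cO_Y)=\dots=h^{n-l-1}(\cO_Y)=0$, the remaining content of (4) coming from (3) (one could instead deduce it from Theorem~\ref{thm:h-indipendenza}(iii) after checking $(n-l+1)$-independence). The main obstacle I foresee is exactly the sufficiency half of the combinatorial core: proving the dimension inequality for every proper subset of the $\D_{\aa_k}$, which is where the nef-partition structure (and, in the $h$-independence route to (4), the $l^*$-vanishing hypothesis) genuinely enters; the rest is bookkeeping or a direct appeal to Theorem~\ref{thm:h-indipendenza}, Bertini, and the spectral-sequence machinery of Proposition~\ref{prop:nef&big}.
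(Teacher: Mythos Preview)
The paper does not prove this statement: it is quoted as Cor.~3.5 of \cite{BB96} and left without argument, immediately after the similarly cited Theorem~\ref{thm:h-indipendenza}. There is therefore no ``paper's own proof'' to compare against.

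Your overall strategy --- derive (1)--(3) from Theorem~\ref{thm:h-indipendenza} after establishing the relevant $h$-independence, and handle (4) directly with the Koszul complex --- is the expected one. Your argument for~(4) is correct and complete: the $l^*$-vanishing hypothesis together with Batyrev--Borisov vanishing forces $''E_1^{p,q}=0$ for all $0<p<l$, leaving only the corners $(l,0)$ and $(0,n)$, which no differential can connect when $l\le n$; this gives $H^p(\cO_Y)=0$ for $p\ne 0,\,n-l$. This is exactly the computation the paper carries out in the proofs of Theorems~\ref{thm:h21Y} and~\ref{thm:h21Yd}, and is arguably cleaner than routing through $(n{-}l{+}1)$-independence.

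The gap you flag for (1)--(3) is real, and slightly worse than you indicate. Two-independence requires in particular $\dim\D_{\aa_k}\ge 2$ for every~$k$ (take $s=1$ in the definition), which does not follow from $\dim\D_{\aa_k}\ge 1$; so the asserted equivalence ``$2$-independent $\Leftrightarrow$ $l\le n-1$'' is not available from positive dimension alone. Your instinct to invoke the duality with the $\cv\D_{\bb_k}$ is the right one --- in the nef-partition setting of \cite{BB96} the subset dimension bounds are indeed extracted from the dual partition --- but that reduction has to be carried out explicitly, and depending on how strong a bound it yields, irreducibility in~(3) may still require a direct Bertini/connected-fibre argument on the toric morphisms $X\to\P_{\D_{\aa_k}}$ rather than an appeal to Theorem~\ref{thm:h-indipendenza}(ii).
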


\subsection{Hodge numbers of projective complete intersections}\label{ssez:ProjCI}
We are now in a position to prove the following

\begin{theorem}\label{thm:h21Y}
  Let $Y\subset\P^n$ be the general complete intersection
defined in Theorem~\ref{thm:CI}, with $n\ge 4$ and $\dim(Y)=n-l\ge 2$, and consider the partitioned framing $\aa=\sum_k\aa_k$ of $\P^n$ defined in (\ref{a,ak}).
  Then $Y$ is irreducible and
  \begin{equation*}
    h^p(\cO_Y)=\left\{\begin{array}{cc}
                        1 & \text{for $p=0$} \\
                        \displaystyle\sum_{J\subseteq\{1,\ldots,l\}}(-1)^{l-|J|}l^*\left(\sum_{j\in J}\D_{\aa_j}\right) & \text{for $p=n-l$}\\
                        0 & \text{otherwise}
                      \end{array}
    \right.
  \end{equation*}
  \begin{equation*}
    h^p(\Omega_Y)=\left\{\begin{array}{cc}
                           1 & \text{for $p=1$ and $n-l\ge 3$} \\
                           1+\displaystyle\sum_{J\subseteq\{1,\ldots,l\}}(-1)^{l-|J|} K(\aa,J) & \text{for $p=1$ and $n-l=2$} \\
                           \displaystyle\sum_{J\subseteq\{1,\ldots,l\}}(-1)^{l-|J|} K(\aa,J) & \text{for $p=n-l-1$ and $n-l\ge 3$} \\
                           0 & \text{otherwise}
                         \end{array}
    \right.
  \end{equation*}
  with
  $$K(\aa,J)=l^*\left(\sum_{j\in J}\D_{\aa_j}\right)+\sum_{k=1}^l l^*\left(\D_{\aa_k}+\sum_{j\in J}\D_{\aa_{j}}\right)-\sum_{i=1}^{n+1}l^*\left(\D_i+\sum_{j\in J}\D_{\aa_{j}}\right)$$
  being $\D_i:=\D_{D_i}$  the lattice polytope associated with the prime torus invariant divisor $D_i$.
  \end{theorem}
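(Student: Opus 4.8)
The plan is to reduce the whole computation to the cohomology of line bundles on $Y$, which Proposition~\ref{prop:nef&big} evaluates through interior lattice points, and then to propagate these along two short exact sequences. First note that $Y$ is smooth, being a general complete intersection of the very ample divisors $D_{\aa_k}\sim d_kH$ in the smooth variety $\P^n$; hence $\Omega_Y=\widehat{\Omega}_Y$, and Serre duality together with Hodge symmetry are available on $Y$. Since each $D_{\aa_k}$ is very ample, every $\D_{\aa_k}$ and every Minkowski sum $\sum_{j\in J}\D_{\aa_j}$ is $n$-dimensional, so $\D_{\aa_1},\dots,\D_{\aa_l}$ are $h$-independent for each $h\le n-l+1$; as $n-l\ge 2$ we may take $h=n-l+1\ge 3$. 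Then Theorem~\ref{thm:h-indipendenza} gives that $Y$ is irreducible and $h^p(\cO_Y)=0$ for $1\le p\le n-l-1$, Corollary~\ref{cor:hpO}(3) gives $h^0(\cO_Y)=1$, and Grothendieck vanishing gives $h^p(\cO_Y)=0$ for $p>\dim Y=n-l$; so the cohomology of $\cO_Y$ sits only in degrees $0$ and $n-l$. To obtain $h^{n-l}(\cO_Y)$ I would compute $\chi(\cO_Y)=1+(-1)^{n-l}h^{n-l}(\cO_Y)$ from the Koszul resolution of $j_*\cO_Y$ on $\P^n$ (equivalently from $\Ksz_0$ in Proposition~\ref{prop:nef&big} with $D=0$): for $J\neq\emptyset$ the twist $\cO_{\P^n}(-\sum_{j\in J}D_{\aa_j})$ is supported in a single degree with Euler characteristic $(-1)^n l^*\big(\sum_{j\in J}\D_{\aa_j}\big)$, since $\sum_{j\in J}\D_{\aa_j}$ is lattice equivalent to a dilated standard simplex, while $J=\emptyset$ contributes the constant $1$; collecting signs yields exactly $h^{n-l}(\cO_Y)=\sum_{J}(-1)^{l-|J|}l^*\big(\sum_{j\in J}\D_{\aa_j}\big)$.

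For $h^p(\Omega_Y)$ I would exploit two exact sequences restricted to $Y$. Since $(\P^n,\aa=\sum_k\aa_k)$ is a nef partitioned ftv with every $D_{\aa_k}$ big, Proposition~\ref{prop:nef&big} applies to every ample divisor $D$ on $\P^n$; setting $\alpha:=h^{n-l}(\cO_Y)$, $\beta:=\sum_{i=1}^{n+1}h^{n-l}(\cO_Y(-D_i))$ and $\gamma:=\sum_{k=1}^{l}h^{n-l}(\cO_Y(-D_{\aa_k}))$, it gives that $\cO_Y(-D_i)$ and $\cO_Y(-D_{\aa_k})$ have cohomology concentrated in degree $n-l$, with $\alpha,\beta,\gamma$ equal to the announced $J$-sums of $l^*$'s. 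Restricting the Batyrev--Cox Euler sequence for $\P^n$ (Theorem~\ref{thm:BC}, namely $0\to\Omega_{\P^n}\to\bigoplus_{i}\cO_{\P^n}(-D_i)\to\cO_{\P^n}\to 0$) to $Y$ and chasing the long exact sequence, using that $h^{n-l-1}(\cO_Y)=0$ and that $\bigoplus_i\cO_Y(-D_i)$ has no cohomology in degrees $<n-l$, one finds that $\Omega_{\P^n}|_Y$ has cohomology $\C$ in degree $1$, dimension $\beta-\alpha$ in degree $n-l$, and zero elsewhere. Restricting next the conormal sequence $0\to\bigoplus_k\cO_Y(-D_{\aa_k})\to\Omega_{\P^n}|_Y\to\Omega_Y\to 0$ (valid because $Y$ is a transverse complete intersection, with conormal bundle $\bigoplus_k\cO_Y(-D_{\aa_k})$) and chasing once more, one obtains $h^0(\Omega_Y)=0$ and, for $n-l\ge 3$, $h^1(\Omega_Y)=1$, $h^p(\Omega_Y)=0$ for $2\le p\le n-l-2$, together with
\[
0\to H^{n-l-1}(\Omega_Y)\to H^{n-l}\!\Big(\textstyle\bigoplus_k\cO_Y(-D_{\aa_k})\Big)\to H^{n-l}(\Omega_{\P^n}|_Y)\to H^{n-l}(\Omega_Y)\to 0,
\]
whereas for $n-l=2$ one obtains instead $0\to\C\to H^1(\Omega_Y)\to H^2\big(\bigoplus_k\cO_Y(-D_{\aa_k})\big)\to H^2(\Omega_{\P^n}|_Y)\to H^2(\Omega_Y)\to 0$.

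To close, I would observe that by Serre duality $h^{n-l}(\Omega_Y)=h^0(\Omega^{n-l-1}_Y)$, which by Hodge symmetry equals $h^{n-l-1}(\cO_Y)=0$. Substituting this into the exact sequences of the previous paragraph gives $h^{n-l-1}(\Omega_Y)=\gamma-(\beta-\alpha)=\alpha+\gamma-\beta$ when $n-l\ge 3$, and $h^1(\Omega_Y)=1+(\alpha+\gamma-\beta)$ when $n-l=2$, all remaining $h^p(\Omega_Y)$ being zero. Finally, inserting the Proposition~\ref{prop:nef&big} expressions for $\alpha,\beta,\gamma$ and collecting the $J$-sums termwise,
\[
\alpha+\gamma-\beta=\sum_{J\subseteq\{1,\ldots,l\}}(-1)^{l-|J|}\,K(\aa,J),
\]
which is precisely the asserted value, so both displayed formulas follow.

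The main obstacle is the low-dimensional bookkeeping, above all the case $n-l=2$: there the degrees $1$ and $n-l-1$ coincide, so the one-dimensional ambient contribution $\C=H^1(\Omega_{\P^n}|_Y)$ merges with the primitive piece and produces the extra summand $1$ in $h^1(\Omega_Y)$; one must also carefully justify the two vanishings $h^{n-l-1}(\cO_Y)=0$ and $h^{n-l}(\Omega_Y)=0$, which are exactly what make the long exact sequences degenerate to the short exact sequences used above. The remaining identification of $\alpha+\gamma-\beta$ with $\sum_J(-1)^{l-|J|}K(\aa,J)$ is a purely formal rearrangement of the Proposition~\ref{prop:nef&big} formulas.
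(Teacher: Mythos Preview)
Your proof is correct and follows essentially the same route as the paper: both use Proposition~\ref{prop:nef&big} to concentrate the cohomology of $\cO_Y(-D_i)$ and $\cO_Y(-D_{\aa_k})$ in degree $n-l$, then chase the restricted Batyrev--Cox Euler sequence and the conormal sequence, and compute $h^{n-l}(\cO_Y)$ via the Koszul resolution of $j_*\cO_Y$. The only cosmetic difference is that the paper invokes the iterated Lefschetz hyperplane theorem at the outset to dispose of all Hodge numbers except $h^{n-l}(\cO_Y)$ and $h^{n-l-1}(\Omega_Y)$, whereas you recover the same vanishings directly from the exact sequences and close with the Serre duality/Hodge symmetry identity $h^{n-l}(\Omega_Y)=h^{n-l-1}(\cO_Y)=0$; these are equivalent arguments.
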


  \begin{proof} Since $Y$ is general, it is smooth and an iteration of the Hyperplane Lefschetz Theorem gives all the Hodge numbers in the statement unless $h^{n-l}(\cO_Y)$ and $h^{n-l-1}(\Omega_Y)$.
    In particular, with the partitioned framing $\aa=\sum_k\aa_k$ of $\P^n$ defined in (\ref{a,ak}), condition $n-l\ge 2>1$ implies that $Y$ is non-empty and irreducible, by item (3) in Corollary~\ref{cor:hpO}. Moreover, item (iii) in Theorem~\ref{thm:h-indipendenza} gives a further way to computing $h^p(\cO_Y)$ for $1\le p\le n-l-1$, as $D_{\aa_k}$ is an ample divisor of $\P^n$, implying that $\D_{\aa_1},\ldots,\D_{\aa_l}$ are $(n-l+1)$-independent. Proposition~\ref{prop:nef&big} implies that
    \begin{equation*}
      h^p(Y,\cO_Y(-D_{\aa_k}))=0\quad\text{for $p\neq n-l$}
    \end{equation*}
    Then, the following exact sequence
    \begin{equation}\label{seqY}
  \xymatrix{0\ar[r]&\bigoplus_{k=1}^l\cO_{Y}(-D_{\aa_k})\ar[r]&\Omega_{\P^n|Y}\ar[r]&
\Omega_Y\ar[r]&0}
\end{equation}
    gives
    \begin{eqnarray*}
      h^1(\Omega_Y)&=&h^{1}(\Omega_{\P^n|Y})\quad\text{for $n-l\ge 3$}\\
      h^{n-l-1}(\Omega_{Y})&=&h^{n-l-1}(\Omega_{\P^n|Y})+\sum_k h^{n-l}(\cO_{Y}(-D_{\aa_k}))-h^{n-l}(\Omega_{\P^n|Y})
      \end{eqnarray*}
Again Proposition~\ref{prop:nef&big} gives
      \begin{equation*}
      h^{n-l}(\cO_{Y}(-D_{\aa_k}))=\sum_{J\subseteq\{1,\ldots,l\}}(-1)^{l-|J|}l^*\left(\D_{\aa_k}+\sum_{j\in J}\D_{\aa_{k_j}}\right)
      \end{equation*}
      and, noticing that $D_i$ is an ample divisor of $\P^n$,
      \begin{equation*}
      h^{p}(\cO_{Y}(-D_i))=\left\{\begin{array}{cc}
                                    \sum_{J\subseteq\{1,\ldots,l\}}(-1)^{l-|J|}l^*\left(\D_i+\sum_{j\in J}\D_{\aa_{j}}\right) & \text{for}\ p=n-l \\
                                    0 & \text{otherwise}
                                  \end{array}
\right.
      \end{equation*}
      where $\D_i:=\D_{D_i}$.
      Theorem~\ref{thm:BC} gives the following exact sequence
      \begin{equation}\label{seqP}
  \xymatrix{0\ar[r]&\Omega_{\P^n|Y}\ar[r]&\bigoplus_{i=1}^{n+1}\cO_{Y}(-D_i)\ar[r]&
\cO_{Y}\ar[r]&0}
\end{equation}
      so that
      \begin{eqnarray*}
        h^1(\Omega_{\P^n|Y})&=&h^0(\cO_Y)=1 \\
        h^{n-l-1}(\Omega_{\P^n|Y}) &=& 0 \quad\text{for}\ n-l\ge 3\\
        h^{n-l}(\Omega_{\P^n|Y}) &=& \sum_{i=1}^{n+1}h^{n-l}(\cO_Y(-D_i))-h^{n-l}(\cO_Y)
      \end{eqnarray*}
      Putting all together, one has
      \begin{equation}\label{h^n-l-1}
        h^{n-l-1}(\Omega_{Y})=\left\{\begin{array}{cc}
                                       1+h^{2}(\cO_Y) + K_\aa & \text{for $n-l=2$} \\
                                       h^{n-l}(\cO_Y) + K_\aa & \text{for $n-l\ge 3$}
                                     \end{array}
        \right.
      \end{equation}
      with
      \begin{equation}\label{Ka}
        K_\aa= \sum_{J\subseteq\{1,\ldots,l\}}(-1)^{l-|J|}\left(\sum_{k=1}^l l^*\left(\D_{\aa_k}+\sum_{j\in J}\D_{\aa_{j}}\right)-\sum_{i=1}^{n+1}l^*\left(\D_i+\sum_{j\in J}\D_{\aa_{j}}\right)\right)
      \end{equation}
      It remains then to show that
      \begin{equation}\label{hOY}
        h^{n-l}(\cO_Y)=\displaystyle\sum_{J\subseteq\{1,\ldots,l\}}(-1)^{l-|J|}l^*\left(\sum_{j\in J}\D_{\aa_j}\right)
      \end{equation}
      At this purpose, let $j:Y\hookrightarrow\P^n$ be the projective embedding of $Y$ as a complete intersection of $\P^n$, and consider the Koszul complex
      \begin{eqnarray*}
        \Ksz :& \xymatrix{0\ar[r]&\cO_{\P^n}\left(-D_{\aa}\right)\ar[r]&\cdots
        \bigoplus\limits_{\{k_1,\ldots,k_{l-p}\}\subseteq\{1,\ldots,l\}}
        \cO_{\P^n}\left(-\sum\limits_{j=1}^{l-p}D_{\aa_{k_j}}\right)\cdots}&\\
        &\xymatrix{\ar[r]&\bigoplus\limits_{1\le j< k\le l}\cO_{\P^n}\left(-D_{\aa_j}-D_{\aa_k}\right)\ar[r]&
        \bigoplus\limits_{k=1}^l\cO_{\P^n}\left(-D_{\aa_k}\right)\ar[r]&\cO_{\P^n}}&
      \end{eqnarray*}
      which is an acyclic resolution of the direct image sheaf $j_*\cO_Y$. The same argument proving Proposition~\ref{prop:nef&big} shows that $h^p(\cO_Y)=0$, for every $p\neq 0,n-l$, and (\ref{hOY}) is obtained by constraints on the Euler characteristics involved in this Koszul resolution of $j_*\cO_Y$.
  \end{proof}

  \subsection{Hodge numbers of $f$-mirror partners of projective c.i.}\label{ssez:mirrorCI}

  Let
  $$\left(\cv{\XX}_\aa,\cv{\bb}=\sum_{k=1}^l\bb_k\right)$$
  be the nef partitioned ftv defined in Corollary~\ref{cor:CI} and $Y^\vee\subset\cv{\XX}_\aa$ be the $f$-dual partner of the projective complete intersection $Y\subset\P^n$, as described in Definition~\ref{def:nefpmirror}. By construction, $Y^\vee$ is the complete intersection
  $$Y^\vee:=\bigcap_{k=1}^l Y^\vee_k\subset \cv{\XX}_\aa\quad\text{with}\quad Y_k\in|\cv{D}_{\bb_k}|$$
 In particular, $Y^\vee_k$ is defined by a polynomial $f^\vee_k\in\Cox(\cv{\XX}_\aa)$, as explicitly  described in item (b) of \cite[Rem.~6.6]{R-fTV}.
Let $\phi:\XX\longrightarrow\cv{\XX}_\aa$ be a toric (partial) resolution of singularities and set
 \begin{equation}\label{Ytrasformata}
   \widehat{Y}^\vee_k:=\phi^{-1}(Y^\vee_k)\ ,\quad \widehat{Y}^\vee:=\phi^{-1}(Y^\vee)=\bigcap_{k=1}^l\widehat{Y}^\vee_k
 \end{equation}
being $\widehat{Y}^\vee_k\in\left|\phi^{-1}\left(\cv{D}_{\bb_k}\right)\right|$ the hypersurface of $\XX$ defined by $\phi^*f^\vee_k=f^\vee_k\circ\phi\in\Cox(\XX)$. The fan $\Xi$, defining $\XX$, is a suitable subdivision of the fan $\cv{\Si}_\aa$, defining $\cv{\XX}_\aa$. Then, the birational map $\phi$ turns out to be induced by the identity $\id:M\longrightarrow M$, whose extension to $M_\R$ gives rise to the inclusion of fans $\Xi\prec\cv{\Si}_\aa$. Dually, also lattices $N$ can be identified, making sense of the following relations between associated polytopes
\begin{eqnarray}\label{politopi}
  \nonumber
  \forall\,k=1,\ldots,l\quad \D_{\phi^{-1}(\bb_k)}&=&\cv{\D}_{\bb_k}\\
                        \D_{\phi^{-1}(\cv{\bb})}&=&\sum_{k=1}^l\D_{\phi^{-1}(\bb_k)}=
                        \sum_{k=1}^l\cv{\D}_{\bb_k}=\D_{\cv{\bb}}
\end{eqnarray}

\begin{theorem}\label{thm:h21Yd}
Given notation above, assume $n-l\ge 2$. Then, $\widehat{Y}^\vee$ is an irreducible variety of dimension $n-l$, with
\begin{equation}\label{h0dY}
    h^p(\cO_{\rdY})=h^p(\cO_{Y^\vee})=\left\{\begin{array}{cc}
                        1 & \text{for $p=0,n-l$} \\
                        0 & \text{otherwise}
                      \end{array}
    \right.
  \end{equation}
  Moreover, if
  \begin{itemize}
    \item[-] $\XX$ is $\Q$-factorial,
    \item[-] $\rk(\Cl(\XX))$ is big enough (see the following Remark~\ref{rem:>>0}),
    \item[-] for any integer $q\ge 2$, $n-l\ge q+1$ and $\cv{\D}_{\bb_1},\ldots,\cv{\D}_{\bb_l}$ are $h$-independent with $h\ge q+2$,
  \end{itemize}
  then
  \begin{equation*}
    h^p\left(\widehat{\Omega}_{\rdY}\right)=\left\{\begin{array}{cc}
                                \rk\left(\Cl(\XX)\right)=h^p\left(\widehat{\Omega}_{\XX}\right) & \text{for $p=1$ and $n-l\ge 3$} \\
                                0=h^p\left(\widehat{\Omega}_{\XX}\right) & \text{for either $p=0$ or $q\ge 3$ and $2\le p\le q-1$}
                              \end{array}\right.
  \end{equation*}
\end{theorem}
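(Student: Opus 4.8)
The plan is to follow the proof of Theorem~\ref{thm:h21Y} almost line by line, with the (partial) resolution $\XX$ taking the r\^ole that $\P^n$ played there; the one point that does not transcribe directly is that the prime torus-invariant divisors of $\P^n$ are ample, whereas those of $\XX$ are not, and this is exactly what the auxiliary hypothesis on $\rk(\Cl(\XX))$ is there to repair. I would first dispose of irreducibility, dimension and the structure sheaf. By the calibrated $f$-process of Corollary~\ref{cor:CI} and step (B) of Algorithm~\ref{algoritmoDnef}, the polytopes $\cv{\D}_{\bb_1},\dots,\cv{\D}_{\bb_l}$ have positive dimension, $\bigcap_k\cv{\D}_{\bb_k}=\{\0\}$ and $\sum_k\cv{\D}_{\bb_k}=\D_{\cv{\bb}}$; since $n-l\ge 2$, item (3) of Corollary~\ref{cor:hpO} gives that $Y^\vee$ is irreducible of dimension $n-l$, and, $\phi$ being an isomorphism over the acting torus $\cv{\T}_\aa$, the general $\rdY=\phi^{-1}(Y^\vee)$ is the strict transform of $Y^\vee$, i.e.\ the closure in $\XX$ of the irreducible torus complete intersection $Z^\vee=\cv{\T}_\aa\cap Y^\vee$, hence irreducible of the same dimension. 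The equality $h^p(\cO_{\rdY})=h^p(\cO_{Y^\vee})$ follows since general complete intersections in complete toric varieties have rational singularities, so that $\phi|_{\rdY}$ is a proper birational morphism of varieties with rational singularities and $R\phi_*\cO_{\rdY}=\cO_{Y^\vee}$; in particular $h^p(\cO_{\rdY})$ does not depend on the chosen toric partial resolution. I may therefore compute it after refining $\XX$ so that it dominates $\P_{\D_{\cv{\bb}}}$, where by Proposition~\ref{prop:nef} $(\XX,\phi^*\cv{\bb}=\sum_k\phi^*\bb_k)$ becomes a nef partitioned ftv with all the $\phi^*\cv{D}_{\bb_k}$ semi-ample. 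Then the Koszul-complex and spectral-sequence machinery of the proof of Proposition~\ref{prop:nef&big}, fed with Batyrev--Borisov vanishing (Theorem~\ref{thm:BBvanish}) for the semi-ample divisors $\sum_j\phi^*\cv{D}_{\bb_{k_j}}$ and with the high $h$-independence of the $\cv{\D}_{\bb_k}$ coming from the calibration, yields $h^p(\cO_{\rdY})=0$ for $p\ne 0,n-l$, $h^0(\cO_{\rdY})=1$ by irreducibility, and an explicit alternating lattice-point sum for $h^{n-l}(\cO_{\rdY})$; collapsing that sum to $1$ is the only genuinely arithmetic point of this step, and it is forced by the calibration, in particular by the reflexivity of $\NN=\conv(V)=[\cv{\D}_{\cv{\bb}}]$ (see (\ref{parteintera})).

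For the sheaf of Zariski $1$-forms I would now assume $\XX$ $\Q$-factorial with $\rk(\Cl(\XX))$ big enough and reproduce the two short exact sequences used in the proof of Theorem~\ref{thm:h21Y}: the conormal sequence of the complete intersection $\rdY\subset\XX$
\begin{equation}
  \xymatrix{0\ar[r]&\bigoplus_{k=1}^l\cO_{\rdY}(-\phi^*\cv{D}_{\bb_k})\ar[r]&\widehat{\Omega}_{\XX|\rdY}\ar[r]&\widehat{\Omega}_{\rdY}\ar[r]&0}
\end{equation}
(valid for the Zariski differentials because the general $\rdY$ meets $\Sing(\XX)$ in codimension $\ge 2$ inside $\rdY$) and the $\cO_{\rdY}$-restriction of the Batyrev--Cox sequence of $\XX$ (Theorem~\ref{thm:BC})
\begin{equation}
  \xymatrix{0\ar[r]&\widehat{\Omega}_{\XX|\rdY}\ar[r]&\bigoplus_{\rho\in\Xi(1)}\cO_{\rdY}(-D_\rho)\ar[r]&\Cl(\XX)\otimes_\Z\cO_{\rdY}\ar[r]&0}
\end{equation}
which stays exact because $\Cl(\XX)\otimes\cO_\XX$ is locally free and the general $\rdY$ lies in no $D_\rho$. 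Danilov vanishing (Theorem~\ref{thm:Danilov}) and the standard identity $h^1(\XX,\widehat{\Omega}_\XX)=\rk\Cl(\XX)$ give the right-hand equalities in the statement. Taking cohomology in the second sequence, $H^0(\cO_{\rdY}(-D_\rho))=0$ together with the structure-sheaf vanishings already proved reduce $h^p(\widehat{\Omega}_{\XX|\rdY})$ to the groups $H^p(\cO_{\rdY}(-D_\rho))$; feeding the vanishing of these, of $H^p(\cO_{\rdY}(-\phi^*\cv{D}_{\bb_k}))$ (Proposition~\ref{prop:nef&big}, or its $h$-independence variant when $\cv{D}_{\bb_k}$ fails to be big), and of $h^p(\cO_{\rdY})$ for $1\le p\le q$ (item (iii) of Theorem~\ref{thm:h-indipendenza}, applied to the $h$-independent $\cv{\D}_{\bb_k}$ with $h\ge q+2$) into the long exact sequence of the conormal sequence then yields $h^p(\widehat{\Omega}_{\rdY})=0=h^p(\widehat{\Omega}_\XX)$ for $p=0$ and, when $q\ge 3$, for $2\le p\le q-1$, together with $h^1(\widehat{\Omega}_{\rdY})=\rk\Cl(\XX)=h^1(\widehat{\Omega}_\XX)$ when $n-l\ge 3$.

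The step I expect to be the real obstacle is the control of $H^p(\cO_{\rdY}(-D_\rho))$ for the torus-invariant prime divisors $D_\rho$ of $\XX$. On $\P^n$ the prime divisors $D_i$ are ample and Theorem~\ref{thm:BBvanish} applies to $\cO_Y(-D_i)$ verbatim, which is what drives the proof of Theorem~\ref{thm:h21Y}; on a general resolution $\XX$, instead, the $D_\rho$ need not even be nef, so Batyrev--Borisov vanishing is not directly available, and it is precisely here that one must require $\rk(\Cl(\XX))$ to be big enough --- a maximality/genericity condition on the subdivision $\Xi\prec\cv{\Si}_\aa$ that forces the lattice-point counts $l^*\left(\D_{D_\rho}+\sum_{j\in J}\cv{\D}_{\bb_j}\right)$ occurring in the Koszul computation to vanish in the relevant range; its precise formulation is the content of Remark~\ref{rem:>>0}. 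The same limitation is the reason why the statement only pins down $h^p(\widehat{\Omega}_{\rdY})$ away from the ``interesting'' degrees $p=n-l-1,n-l$: computing those calls for input of a genuinely different kind, and it is exactly there that the stringy corrections of the following sections will enter.
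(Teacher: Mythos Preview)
Your overall architecture is the same as the paper's: Koszul resolution of $\cO_{\rdY}$ for the structure sheaf, then the conormal sequence of $\rdY\subset\XX$ combined with the restricted Batyrev--Cox sequence for $\widehat{\Omega}_{\rdY}$, with the crux being the control of $H^p(\cO_{\rdY}(-D_\rho))$ for the prime torus-invariant divisors. Two points deserve comment.

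First, your invariance argument $h^p(\cO_{\rdY})=h^p(\cO_{Y^\vee})$ via rational singularities is more conceptual than what the paper does, but also requires more input. The paper avoids it entirely: since $\D_{\phi^{-1}(\bb_k)}=\cv{\D}_{\bb_k}$ by (\ref{politopi}), the Batyrev--Borisov vanishing numbers entering the Koszul spectral sequence are literally the same on $\XX$ and on $\cv{\XX}_\aa$, so the computation is resolution-independent without invoking any derived pushforward. For the value $h^{n-l}(\cO_{\rdY})=1$ the paper is also more concrete than your ``forced by calibration'': it checks directly that every column $\v_i$ of $V$ lies on the boundary of $\D_{\cv{\bb}}$, so by convexity and (\ref{parteintera}) the only interior lattice point of $\D_{\cv{\bb}}$ is $\0$; then $l^*(\sum_{j\in J}\cv{\D}_{\bb_j})=0$ for every proper $J$ and $l^*(\D_{\cv{\bb}})=1$, and the Euler-characteristic identity along the Koszul resolution collapses the alternating sum to $1$.

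Second, your description of what ``$\rk(\Cl(\XX))$ big enough'' buys is only half the story. You write that it forces the lattice-point counts $l^*(\D_{D_\rho}+\sum_{j\in J}\cv{\D}_{\bb_j})$ to vanish, and indeed the first condition in Remark~\ref{rem:>>0} is $\dim\D_{\widehat{D}_i}=0$, which the paper uses to identify $H^q\bigl(\cO_\XX(-\widehat{D}_i-\sum_j\widehat{D}_{\bg_{k_j}})\bigr)$ with $H^q\bigl(\cO_\XX(-\sum_j\widehat{D}_{\bg_{k_j}})\bigr)$ for $1\le p\le l-1$ in the Koszul spectral sequence. But the last term $''E_1^{l,q}=H^q(\cO_\XX(-\widehat{D}_i))$ cannot be handled this way: $-\widehat{D}_i$ is not anti-semi-ample, so Batyrev--Borisov vanishing does not apply, and no lattice-point count will kill it. The paper proves this vanishing as a separate Lemma~\ref{lem:cohDi}, via the $N$-graded \v{C}ech decomposition $H^q(\cO_\XX(-\widehat{D}_i))_\n\cong\widetilde{H}^{q-1}(V_{-\widehat{D}_i,\n},\C)$ and a direct argument that each $V_{-\widehat{D}_i,\n}$ is contractible (missing at least one whole ray of $\Xi$). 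This is precisely where the \emph{second} condition of Remark~\ref{rem:>>0} --- that for every $\n\in N$ at least two rays of $\Xi(1)$ lie outside $H^-_\n$ --- is used. Once Lemma~\ref{lem:cohDi} gives $h^q(\cO_{\rdY}(-\widehat{D}_i))=\delta_{q,n-l}$, the rest of your chase through the two exact sequences goes through as you describe.
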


\begin{proof}
  In the following, calling $\phi:\XX\longrightarrow\cv{\XX}_\aa$ the considered (partial) resolution of $\cv{\XX}_\aa$, denote
  \begin{eqnarray*}
    \forall\,k=1,\ldots,l\quad\bg_k&:=&\phi^{-1}(\bb_k)\,,\quad\text{that is,}\quad \widehat{D}_{\bg_k}:=\phi^{-1}\left(\cv{D}_{\bb_k}\right)\\
    \bg&:=&\phi^{-1}\left(\cv{\bb}\right)\,,\quad\text{that is,}\quad \widehat{D}_{\bg}:=\phi^{-1}\left(\cv{D}_{\cv{\bb}}\right)=\sum_{k=1}^l \widehat{D}_{\bg_k}
  \end{eqnarray*}
  After Proposition~\ref{prop:nef}, we may assume $\widehat{D}_{\bg_k}$ to be semi-ample, but in general it is not big, meaning that Proposition~\ref{prop:nef&big} cannot directly apply.

  Let us start by computing $h^p(\cO_{\rdY})$. Recalling the definition of $\cv{\D}_{\bb_k}$, given in item (E) of \ref{algoritmoDnef}, and the construction of $\bb_k$, given in (\ref{bkh_nulle}), (\ref{bkh_1}), (\ref{bkh_2}) and (\ref{bkh_3}, it turns out that
  \begin{enumerate}
    \item $\0$ is always on the boundary of $\cv{\D}_{\bb_k}$, for every $k$,
    \item for any column $\v_i$ of $V$, there always exits $k\in\{1,\ldots,l\}$ such that $\v_i$ belongs to the boundary of $\cv{\D}_{\bb_k}$, that is
        \begin{equation*}
          \exists\,k\in J_k : \left(\cv{\L}_\aa^T\cdot\v_i\right)_h=-b_{hk}
        \end{equation*}
        where $\left(\cv{\L}_\aa^T\cdot\v_i\right)_h$ denotes the $h$-th entry of the $\cv{m}$-vector $\cv{\L}_\aa^T\cdot\v_i$.
  \end{enumerate}
  By contradiction, assume that there exists a column $\v_i$ of $V$ giving an interior lattice point of $\D_{\cv{\bb}}=\sum_k\cv{\D}_{\bb_k}$, that is, by the definition of $\D_{\cv{\bb}}$ given in item (E) of \ref{algoritmoDnef},
  \begin{equation*}
    \cv{\L}_\aa^T\cdot\v_i>-\cv{\bb}\ \Longrightarrow\ \forall\,k=1,\ldots,l\quad \cv{\L}_\aa^T\cdot\v_i>-\bb_k
  \end{equation*}
  contradicting the previous assertion (2). Recalling Lemma~\ref{lem:inclusione} and the consequent relation (\ref{parteintera}), one has
  \begin{equation*}
    \conv(V)=:\NN=\left[\cv{\D}_{\cv{\bb}}\right]\subseteq\left[\D_{\cv{\bb}}\right]
  \end{equation*}
  and, by convexity, this suffices to conclude that the only interior lattice point of $\D_{\cv{\bb}}$ is given by the origin $\0$. Therefore the previous assertion (1) implies that
  \begin{equation*}
    l^*\left(\sum_{j\in J} \D_{\bb_j}\right)=0
  \end{equation*}
  for any proper subset $J\subset\{1,\ldots,l\}$. Notice that, item (4) of Corollary~\ref{cor:hpO} allows us to already deduce that
  \begin{equation*}
    h^1(\cO_{\rdY})=\cdots=h^{n-l-1}(\cO_{\rdY})=0
  \end{equation*}
  Clearly, by item (3) in Corollary~\ref{cor:hpO}, $h^0(\cO_{\rdY})=1$. Moreover, the computation of $h^{n-l}(\cO_{\rdY})$ proceeds as for the computation of $h^{n-l}(\cO_Y)$ in the proof of Theorem~\ref{thm:h21Y}. The needed Koszul complex is the following
  \begin{eqnarray*}
        \widehat{\Ksz} :& \xymatrix{0\ar[r]&\cO_{\XX}\left(-\widehat{D}_{\bg}\right)\ar[r]&\cdots
        \bigoplus\limits_{\{k_1,\ldots,k_{l-p}\}\subseteq\{1,\ldots,l\}}
        \cO_{\XX}\left(-\sum\limits_{j=1}^{l-p}\widehat{D}_{\bg_{k_j}}\right)\cdots}&\\
        &\xymatrix{\ar[r]&\bigoplus\limits_{1\le j< k\le l}\cO_{\XX}\left(-\widehat{D}_{\bg_j}-\widehat{D}_{\bg_k}\right)\ar[r]&
        \bigoplus\limits_{k=1}^l\cO_{\XX}\left(-\widehat{D}_{\bg_k}\right)\ar[r]&\cO_{\XX}}&
      \end{eqnarray*}
      which is an acyclic resolution of the direct image sheaf $j_*\cO_{\rdY}$, being $j:\rdY\hookrightarrow\XX$ the given embedding.
      Recalling Corollary~\ref{cor:hpO}, the same argument proving Proposition~\ref{prop:nef&big} shows that $h^p(\cO_{\widehat{Y}^\vee})=h^p(\cO_{Y^\vee})=0$, for every $p\neq 0,n-l$, and (\ref{h0dY}) is obtained by constraints on the Euler characteristics involved in this Koszul resolution of $j_*\cO_Y$, just recalling that $l^*(\D_{\cv{\bb}})=1$.
      Notice that, until now, $\phi$ can also be assumed to be trivial, that is, $\XX=\cv{\XX}_\aa$, so getting the same results for $h^p(\cO_{Y^\vee})$: one has to be careful in applying the Batyrev-Borisov vanishing \ref{thm:BBvanish}, as $\cv{D}_{\bb_k}$ may not be a nef divisor. Anyway, this fact can be bypassed by recalling that $\widehat{D}_{\bg_k}$ can be assumed nef and relations (\ref{politopi}).

      Let us then pass to computing $h^p\left(\rO_{\rdY}\right)$.
      Assume $\XX$ to be $\Q$-factorial and consider the following exact sequence, obtained as a restriction to $\rdY$ of the standard Euler sequence and its generalization to Zariski 1-forms (see e.g. \cite[Thm.~8.1.6]{CLS})
      \begin{equation}\label{seqX}
  \xymatrix{0\ar[r]&\widehat{\Omega}_{\XX|\rdY}\ar[r]&\bigoplus_{i=1}^{\widehat{m}}\cO_{\rdY}(-\widehat{D}_i)
  \ar[r]&\Cl(\XX)\otimes\cO_{\rdY}\ar[r]&0}
\end{equation}
where $\widehat{m}:=\rk(\Weil(\XX))=n+\rk(\Cl(\XX))$\,.

\begin{lemma}\label{lem:cohDi}
  If $\widehat{D}_i$ is a torus invariant prime generator of $\Weil(\XX)$, $1\le i\le \widehat{m}$, and $\rk(\Cl(\XX))$ is big enough, then
\begin{equation*}
h^q\left(\cO_{\rdY}(-\widehat{D}_i)\right)=\left\{\begin{array}{cc}
                                                    0 & \text{for}\ q\neq n-l \\
                                                    1 &  \text{for}\ q=n-l
                                                  \end{array}
\right.
\end{equation*}
\end{lemma}
\noindent Let us postpone the proof of this Lemma to carry on the proof of Theorem~\ref{thm:h21Yd}. Then (\ref{seqX}) gives
\begin{equation}\label{hpOmegaX|Y}
  h^q\left(\rO_{\XX|\rdY}\right)=\left\{\begin{array}{cc}
                                          \rk(\Cl(\XX)) & \text{for}\ q=1 \\
                                          0 & \text{for}\ q\neq 1,n-l,n-l+1
                                        \end{array}
  \right.
\end{equation}
Consider the following exact sequence
\begin{equation}\label{seqYdual}
  \xymatrix{0\ar[r]&\bigoplus_{k=1}^l\cO_{\rdY}\left(-\cv{D}_{\bg_k}\right)\ar[r]&\widehat{\Omega}_{\XX|\rdY}
  \ar[r]&
\widehat{\Omega}_{\rdY}\ar[r]&0}
\end{equation}
obtained by the definition of $\rdY$ as a complete intersection in $\XX$. Let us now assume $n-l\ge q+1$, for some $q\ge 2$, and $\cv{\D}_{\bb_1},\ldots,\cv{\D}_{\bb_l}$ be $h$-independent, for some $h\ge q+2$. Then, for every $s$-element subset $$\{\cv{\D}_{\bb_{k_1}},\ldots,\cv{\D}_{\bb_{k_s}}\}\subseteq\{\cv{\D}_{\bb_1},\ldots,\cv{\D}_{\bb_l}\}$$ one has
\begin{equation}\label{dimensione-3ind}
  \dim\left(\sum_{j=1}^s\cv{\D}_{\bb_{i_j}}\right)\ge s+h-1 \ge s+q+1
\end{equation}
For any $i=1,\ldots,l$, consider the Koszul complex
\begin{eqnarray*}
        \Ksz_{\bg_i} :& \xymatrix{0\ar[r]&\cO_{\XX}\left(-\widehat{D}_{\bg
        _i}-\widehat{D}_{\bg}\right)\ar[r]&\cdots
        \bigoplus\limits_{\{k_1,\ldots,k_{l-p}\}\subseteq\{1,\ldots,l\}}
        \cO_{\XX}\left(-\widehat{D}_{\bg
        _i}-\sum\limits_{j=1}^{l-p}\widehat{D}_{\bg_{k_j}}\right)}&\\
        &\xymatrix{\cdots\ar[r]&
        \bigoplus\limits_{k=1}^l\cO_{\XX}\left(-\widehat{D}_{\bg
        _i}-\widehat{D}_{\bg_k}\right)\ar[r]&\cO_{\XX}\left(-\widehat{D}_{\bg
        _i}\right)}&
      \end{eqnarray*}
      which is an acyclic resolution of the direct image sheaf $j_*\cO_{\rdY}\left(-\cv{D}_{\bg_i}\right)$. The associated spectral sequences give, on the one hand,
      \begin{equation*}
        \H^{p+l}(\Ksz_{\bg_i})\cong\ 'E_2^{p,l}=H^p\left(\XX,j_*\cO_{\rdY}\left(-\cv{D}_{\bg_i}\right)\right)\cong H^p\left(\rdY,\cO_{\rdY}\left(-\cv{D}_{\bg_i}\right)\right)
      \end{equation*}
      and, on the other hand,
      \begin{equation*}
        \H^{l+p}\left(\Ksz_{\bg_i}\right)
        \cong\bigoplus_{a+b=l+p}\,''E_\infty^{a,b}
      \end{equation*}
      and, for $a+b=l+p$,
      \begin{equation*}
        ''E_1^{a,b}=\bigoplus_{\{k_1,\ldots,k_{l-a}\}\subseteq\{1,\ldots,l\}} H^b\left(\XX,\cO_{\XX}\left(-\widehat{D}_{\bg_i}-\sum_{j=1}^{l-a} \widehat{D}_{\bg_{k_j}}\right)\right)
      \end{equation*}
      But,
      \begin{equation}\label{E0}
        ''E_1^{l+p,0}=0
      \end{equation}
      (obviously for $p>0$ and, for $p=0$, recall that $''E_1^{l,0}= H^0\left(\XX,\cO_{\XX}\left(-\widehat{D}_{\bg_i}\right)\right)=0$). Moreover,  the Batyrev-Borisov vanishing~\ref{thm:BBvanish} gives
      \begin{equation}\label{El+t}
        ''E_1^{0,l+p}=H^{l+p}\left(\XX,\cO_{\XX}\left(-\widehat{D}_{\bg_i}-\widehat{D}_{\bg}\right)\right)=0 \quad\text{for}\ l+p\le n-1
      \end{equation}
      Finally
      \begin{equation}\label{Ep}
        ''E_1^{l+p-t,t}=\bigoplus_{\{k_1,\ldots,k_{t-p}\}\subseteq\{1,\ldots,l\}}
        H^t\left(\XX,\cO_{\XX}\left(-\widehat{D}_{\bg_i}-\sum_{j=1}^{t-p}\widehat{D}_{\bg_{k_j}}\right)\right)
        =0
      \end{equation}
      for $t\le t-p+h-2$, that is, $p\le h-2$, recalling (\ref{dimensione-3ind}). Being $q\le h-2$, this vanishing holds for $0\le p\le q$. Putting together (\ref{E0}),(\ref{El+t}) and (\ref{Ep}), one has
      \begin{equation*}
        \forall\,0\le t\le q\quad \bigoplus_{a+b=l+t}{''E_1^{a,b}}=0\,\Rightarrow\,\bigoplus_{a+b=l+t}{''E_\infty^{a,b}}\cong 0 \,\Rightarrow\, H^t\left(\cO_{\rdY}\left(-\widehat{D}_{\bg_i}\right)\right)=0
      \end{equation*}
      Then, (\ref{hpOmegaX|Y}) and (\ref{seqYdual}) give that
      \begin{eqnarray*}
        \forall\,q\ge 2\hskip3.36truecm h^0\left(\rO_{\rdY}\right) &=& h^0\left(\rO_{\XX|\rdY}\right)= 0\\
        h^1\left(\rO_{\rdY}\right) &=& h^1\left(\rO_{\XX|\rdY}\right)=\rk(\Cl(\XX))  \\
        \forall\,q\ge 3\,,\forall\,p: 2\le p\le q-1 \quad  h^p\left(\rO_{\rdY}\right) &=& h^p\left(\rO_{\XX|\rdY}\right)=0
      \end{eqnarray*}
      The complete statement is then obtained recalling the Danilov vanishing~\ref{thm:Danilov} and the exact sequence in Batyrev-Cox Theorem~\ref{thm:BC}.
      \end{proof}

      \subsubsection{Proof of Lemma~\ref{lem:cohDi}}

      For any $i=1,\ldots,\widehat{m}$, consider the Koszul complex
\begin{eqnarray*}
        \Ksz_{i} :& \xymatrix{0\ar[r]&\cO_{\XX}\left(-\widehat{D}_{     i}-\widehat{D}_{\bg}\right)\ar[r]&\cdots
        \bigoplus\limits_{\{k_1,\ldots,k_{l-p}\}\subseteq\{1,\ldots,l\}}
        \cO_{\XX}\left(-\widehat{D}_{i}-\sum\limits_{j=1}^{l-p}\widehat{D}_{\bg_{k_j}}\right)}&\\
        &\xymatrix{\cdots\ar[r]&
        \bigoplus\limits_{k=1}^l\cO_{\XX}\left(-\widehat{D}_{i}-\widehat{D}_{\bg_k}\right)\ar[r]&
        \cO_{\XX}\left(-\widehat{D}_{i}\right)}&
      \end{eqnarray*}
      which is an acyclic resolution of the direct image sheaf $j_*\cO_{\rdY}\left(-\widehat{D}_{i}\right)$. On the one hand, one  have
      \begin{equation*}
        \H^{p+l}(\Ksz_{i})\cong\ 'E_2^{p,l}=H^p\left(\XX,j_*\cO_{\rdY}\left(-\widehat{D}_{i}\right)\right)\cong H^p\left(\rdY,\cO_{\rdY}\left(-\widehat{D}_{i}\right)\right)
      \end{equation*}
      On the other hand,
      \begin{equation*}
        ''E_1^{p,q}=\bigoplus_{\{k_1,\ldots,k_{l-p}\}\subseteq\{1,\ldots,l\}} H^q\left(\XX,\cO_{\XX}\left(-\widehat{D}_{i}-\sum_{j=1}^{l-p} \widehat{D}_{\bg_{k_j}}\right)\right)
      \end{equation*}
      If $\rk(\Cl(\XX))$ is big enough then $\D_{\widehat{D}_i}=\{\0\}$ and Batyrev-Borisov vanishing~\ref{thm:BBvanish} gives
      \begin{equation*}
        ''E_1^{0,q}= H^q\left(\XX,\cO_{\XX}\left(-\widehat{D}_{i}- \widehat{D}_{\bg}\right)\right)\cong H^q\left(\XX,\cO_{\XX}\left(- \widehat{D}_{\bg}\right)\right)\cong \left\{\begin{array}{cc}
                                 0 & \text{for}\ q\neq n \\
                                 \C & \text{for}\ q=n
                               \end{array}
        \right.
      \end{equation*}
      recalling that $l^*(\D_{\cv{\bb}})=1$. Moreover,
      \begin{equation*}
        \forall\,p: 1\le p\le l-1\quad ''E_1^{p,q}\cong \bigoplus_{\{k_1,\ldots,k_{l-p}\}\subseteq\{1,\ldots,l\}} H^q\left(\XX,\cO_{\XX}\left(-\sum_{j=1}^{l-p} \widehat{D}_{\bg_{k_j}}\right)\right)=0
      \end{equation*}
      as $l^*\left(\sum_{j=1}^{l-p} \widehat{\D}_{\bg_{k_j}}\right)=0$. We claim that
      \begin{equation}\label{E1l}
      \forall\,q\quad  ''E_1^{l,q}= H^q\left(\XX,\cO_{\XX}\left(-\widehat{D}_{i}\right)\right)=0
      \end{equation}
      Therefore, $\,''E_1^{p,q}= 0$ for $(p,q)\neq(0,n)$, and
      \begin{equation*}
h^q\left(\cO_{\rdY}(-\widehat{D}_i)\right)=\dim\left(\H^{l+q}\left(\Ksz_{i}\right)\right)=\left\{\begin{array}{cc}
                                                    0 & \text{for}\ q\neq n-l \\
                                                    1 &  \text{for}\ q=n-l
                                                  \end{array}
\right.
\end{equation*}
It remains to prove (\ref{E1l}). At this purpose recall that, for every torus invariant divisor $\widehat{D}_\cc=\sum_{i=1}^{\widehat{m}}c_i\widehat{D}_i\in\Weil(\XX)$, there is a natural decomposition of the $\check{\mathrm C}$ech cohomology
\begin{equation}\label{Cech-decomp}
  H^q\left(\XX,\cO_{\XX}\left(\widehat{D}_\cc\right)\right)=\bigoplus_{\n\in N}H^q\left(\XX,\cO_{\XX}\left(\widehat{D}_\cc\right)\right)_\n
\end{equation}
(see e.g. \cite[\S9.1]{CLS}) where
\begin{equation}\label{singular-coh}
  H^q\left(\XX,\cO_{\XX}\left(\widehat{D}_\cc\right)\right)_\n\cong \widetilde{H}^{q-1}\left(V_{\widehat{D}_\cc,\n},\C\right)
\end{equation}
being $\widetilde{H}^\bullet$ the reduced singular cohomology (see \cite[Thm.~9.1.3]{CLS}). For the definition of $V_{\widehat{D}_\cc,\n}$, recall that we are assuming $\XX$ to be $\Q$-factorial. Calling $\Xi$ the fan of $\XX$, then $\Xi\in\SF\left(\widehat{\L}\right)$, being
\begin{equation*}
  \widehat{\L}=\left(
                 \begin{array}{ccc}
                   \widehat{\ll}_1 & \cdots & \widehat{\ll}_{\widehat{m}} \\
                 \end{array}
               \right)
\end{equation*}
the fan matrix of $\XX$, obtained by adding primitive generators of the rays associated with exceptional divisors, in the resolution $\phi:\XX\longrightarrow\cv{\XX}_\aa$, to the columns of the fan matrix $\cv{\L}_\aa$ of $\cv{\XX}_\aa$, as defined in item (C) of \ref{algoritmoDnef}. Set
\begin{eqnarray*}
  \mathcal{I}(\Xi)&:=&\{I\in\wp(\{1,\ldots,\widehat{m}\})\,|\,\langle \widehat{\L}_I\rangle\in\Xi(n)\}\\
  \n_I&:=& \left(\widehat{\L}_I^T\right)^{-1}\cdot\left((\cc^T)_I\right)^T\in N_\Q
\end{eqnarray*}
where $\cc$ is considered as a column vector.
Since $\Xi$ is complete and simplicial, $\{\n_I\,|\,I\in\I(\Xi)\}$ defines the support function $\vf_{\widehat{D}_\cc}$ of $\widehat{D}_\cc$ by
\begin{equation*}
  \forall\,I\in\I(\Xi)\,,\ \forall\,\m\in\langle\widehat{\L}_I\rangle\quad\vf_{\widehat{D}_\cc}(\m):=-\n_I^T\cdot \m
\end{equation*}
Therefore
\begin{equation*}
  V_{\widehat{D}_\cc,\n}:=\left\{\m\in|\Xi|\,|\,\n^T\cdot\m<\vf_{\widehat{D}_\cc}(\m)\right\}=
  \bigcup_{I\in\I(\Xi)}\left\{\m\in\langle\widehat{\L}_I\rangle\,|\,(\n_I+\n)^T\cdot\m<0\right\}
\end{equation*}
turns out to be contained in a finite union of open semi-spaces $\Int(H_{\n_I+\n}^-)$, for $I\in\I(\Si)$.
In our case, $\widehat{D}_\cc=-\widehat{D}_i$, that is, $\cc^T=(0,\ldots,0,\underset{i}{-1},0,\ldots,0)$ and $\n_I=\0_n$ for every $I\in\I(\Xi)$ such that $i\not\in I$.
This means that
\begin{equation*}
  V_{-\widehat{D}_i,\n}\subseteq \Int(H^-_{\n})\ \cup\bigcup_{I\in\I(\Si):i\in I}\Int(H^-_{\n_I+\n})
\end{equation*}
We want to show that $V_{-\widehat{D}_i,\n}$ is contractible, so that (\ref{E1l}) can be obtained from (\ref{Cech-decomp}) and (\ref{singular-coh}). In particular, we will prove that there exists a whole ray $\langle \widehat{\ll}_j\rangle$ not contained in $V_{-\widehat{D}_i,\n}$, and this suffices.

Start by assuming that
\begin{equation}\label{Hp}
  \bigcup_{I\in\I(\Xi):i\in I} \langle\widehat{\L}_I\rangle\subseteq H^-_\n
\end{equation}
By completeness, there should exist a generator $\widehat{\ll}_j$ of a cone $\langle\widehat{\L}_I\rangle$ such that $i\not\in I$ and $\n^T\cdot\widehat{\ll}_j >0$. Then, recalling that $\n_I=\0_n$, for every $I$ such that $i\not\in I$,
\begin{eqnarray*}
  \forall\,I\in\I(\Xi):j\in I\quad 0<\n^T\cdot\widehat{\ll}_j=(\n_I+\n)^T\cdot\widehat{\ll}_j\ &\Longrightarrow&\widehat{\ll}_j\not\in H^-_{\n_I+\n}\\
  &\Longrightarrow& \left\langle\widehat{\ll}_j\right\rangle\nsubseteq V_{-\widehat{D}_i,\n}
\end{eqnarray*}
Assume, now, the contrary of (\ref{Hp}). Then, there should exist a generator $\widehat{\ll}_j$ of a cone $\langle\widehat{\L}_I\rangle$, such that $i\in I$ and $\n^T \cdot \widehat{\ll}_j >0$. If $\rk(\Cl(\XX))$ is big enough then we can assume that there exist at least two such generators not belonging to $H^-_\n$, so that, $\widehat{\ll}_j$ can be considered distinct from $\widehat{\ll}_i$. Then
\begin{equation*}
  \n_I^T\cdot \widehat{\ll}_j=(\cc^T)_I\cdot\left(\widehat{\L}_I\right)^{-1}\cdot \widehat{\ll}_j= \d_{ij}=0
\end{equation*}
where $\d_{ij}$ is the Kronecker delta and $i\neq j$. Consequently,
\begin{eqnarray*}
  \forall\,I\in\I(\Xi):j\in I\quad(\n_I+\n)^T\cdot\widehat{\ll}_j=0+\n^T\cdot\widehat{\ll}_j >0
  &\Longrightarrow&\widehat{\ll}_j\not\in H^-_{\n_I+\n}\\
  &\Longrightarrow& \left\langle\widehat{\ll}_j\right\rangle\nsubseteq V_{-\widehat{D}_i,\n}
\end{eqnarray*}

\begin{remark}\label{rem:>>0}
  The hypothesis \emph{$\rk(\Cl(\XX))$ big enough} in Lemma~\ref{lem:cohDi}, and then in Theorem~\ref{thm:h21Yd}, can be more explicitly expressed by requiring that:
  \begin{itemize}
    \item $\dim{\D_i}=0$, for any $i=1,\ldots,\widehat{m}=n+\rk(\Cl(\XX))$,
    \item for any $\n\in N$ there exists at least two rays $\rho_1,\rho_2\in\Xi(1)$ such that $\rho_i\nsubseteq H^-_\n$, for both $i=1,2$.
  \end{itemize}
\end{remark}

\subsubsection{The hypersurface case} When $l=1$ one can say much more about Hodge numbers $h^p(\widehat{\Omega}_{\widehat{Y}^\vee})$, being $\widehat{Y}^\vee=\phi^{-1}(Y^\vee)$, for a suitable (partial) resolution $\phi:\XX\longrightarrow\XX_\aa$ and a general $Y^\vee\in |D'_\bb|$. First of all, notice that assumptions (\ref{a,ak}) for $l=1$ give necessarily
\begin{eqnarray*}
  \aa=\aa_0&:=&(\1_n,\d)\ ,\quad\text{with}\quad \d=d-n\\
  \bb=\bb_0&:=&(\d\1_n,1)
\end{eqnarray*}
Moreover, recall that $\XX_{\aa_0}\cong\P(\aa_0)/G_{\aa_0}$, being $G_{\aa_0}\cong\left(\Z/d\Z\right)^{n-1}$ \cite[Thm.~4.1, Lemma~5.2]{R-fTV}.

\begin{theorem}
  Assume $n\ge 3$. Let $\phi:\XX\longrightarrow\XX_{\aa_0}$ be a (possibly trivial or partial) resolution and $\rdY:=\phi^{-1}(Y^\vee)$ the transformed hypersurface defined in (\ref{Ytrasformata}). Then, $\rdY$ is an irreducible variety of dimension $n-1$ and
  \begin{equation*}
    h^p(\cO_{\rdY})=h^p(\cO_{Y^\vee})=\left\{\begin{array}{cc}
                        1 & \text{for $p=0, n-1$} \\
                        0 & \text{otherwise}
                      \end{array}
    \right.
  \end{equation*}
  \begin{equation*}
    h^p\left(\widehat{\Omega}_{Y^\vee}\right)=\left\{\begin{array}{cc}
                                    0=h^p\left(\widehat{\Omega}_{\XX_{\aa_0}}\right) & \text{for $p\neq 1, n-2,n-1,n$}\\
                                1=h^1\left(\widehat{\Omega}_{\XX_{\aa_0}}\right) & \text{for $p=1$ and $n\ge 4$}
                              \end{array}\right.
  \end{equation*}
  Moreover, calling $\D_i:=\D_{D'_i}$, for $i=1,\ldots,n+1$, being $D'_i$ a torus invariant prime generator of $\Weil(\XX_{\aa_0})$, one has
  \begin{itemize}
    \item for $n>3$,  $$h^{n-2}\left(\widehat{\Omega}_{Y^\vee}\right)-h^{n-1}\left(\widehat{\Omega}_{Y^\vee}\right)+
        h^{n}\left(\widehat{\Omega}_{Y^\vee}\right)=l^*(2\D_{\bb_0})-\sum_{i=1}^{n+1}l^*(\D_i+\D_{\bb_0})$$
    \item for $n=3$,
    $$h^{1}\left(\widehat{\Omega}_{Y^\vee}\right)-h^{2}\left(\widehat{\Omega}_{Y^\vee}\right)+
  h^{3}\left(\widehat{\Omega}_{Y^\vee}\right)=
  l^*(2\D_{\bb_0})-\sum_{i=1}^{4}l^*(\D_i+\D_{\bb_0})+1$$
  \end{itemize}
   with $0\le h^{n}\left(\widehat{\Omega}_{Y^\vee}\right)\le 1$.

\noindent Furthermore, assuming $r:=\rk(\Cl(\XX))\gg 1$ big enough then
  \begin{equation*}
    h^p\left(\widehat{\Omega}_{\rdY}\right)=\left\{\begin{array}{cc}
                                0=h^p\left(\widehat{\Omega}_{\XX}\right) & \text{for $p\neq 1, n-2,n-1,n$}\\
                                r=h^p\left(\widehat{\Omega}_{\XX}\right) & \text{for $p=1$ and $n\ge 4$}
                              \end{array}\right.
  \end{equation*}
and
\begin{itemize}
  \item for $n>3$,
  $$ h^{n-2}\left(\widehat{\Omega}_{\rdY}\right)-h^{n-1}\left(\widehat{\Omega}_{\rdY}\right)+ h^n\left(\widehat{\Omega}_{\rdY}\right)=
  l^*(2\D_{\bb_0})-n+r-2$$
  \item for $n=3$,
  $$ h^{1}\left(\widehat{\Omega}_{\rdY}\right)-h^{2}\left(\widehat{\Omega}_{\rdY}\right)+
  h^{3}\left(\widehat{\Omega}_{\rdY}\right)=
  l^*(2\D_{\bb_0})+2r-5$$
\end{itemize}
with $0\le h^n\left(\widehat{\Omega}_{\rdY}\right)\le r$.

\noindent Finally, if $\rdY$ is smooth and $\XX$ is $\Q$-factorial, then
$h^{n-1}\left(\widehat{\Omega}_{\rdY}\right)=h^{n}\left(\widehat{\Omega}_{\rdY}\right)=0$ and
\begin{equation}\label{h^n-2}
  h^{n-2}\left(\Omega_{\rdY}\right)=\left\{\begin{array}{cc}
                                           l^*(2\D_{\bb_0})+2r-5 & \text{for $n=3$} \\
                                           l^*(2\D_{\bb_0})-n+r-2 & \text{for $n>3$}
                                         \end{array}
  \right.
\end{equation}
\end{theorem}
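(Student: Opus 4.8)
\emph{Plan.} This is the $l=1$ specialization of the machinery behind Theorem~\ref{thm:h21Yd}, made sharper because a hypersurface admits a length‑one ideal‑sheaf (Koszul) resolution and because $\XX_{\aa_0}\cong\P(\aa_0)/G_{\aa_0}$ is $\Q$‑factorial with $\rk\Cl=1$ by \cite[Thm.~4.1, Lemma~5.2]{R-fTV}. The plan is to run the two short exact sequences \eqref{seqX} and \eqref{seqYdual} with $l=1$, feeding them the Batyrev--Borisov vanishing~\ref{thm:BBvanish}, the Danilov vanishing~\ref{thm:Danilov}, Lemma~\ref{lem:cohDi}, and the hypersurface case of Proposition~\ref{prop:nef&big}. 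First I would dispose of $h^p(\cO_{\rdY})$: since $\rdY$ is cut out by (the pull‑back of) $f^\vee$, its ideal sheaf sits in $0\to\cO_\XX(-\widehat D_{\bg})\to\cO_\XX\to\cO_{\rdY}\to0$; as $\widehat D_{\bg}$ is semi‑ample (Proposition~\ref{prop:nef}) and big, Theorem~\ref{thm:BBvanish} gives $h^q(\cO_\XX(-\widehat D_{\bg}))=l^*(\D_{\widehat D_\bg})\,\delta_{q,n}$, and the proof of Theorem~\ref{thm:h21Yd} already records $l^*(\D_{\widehat D_\bg})=l^*(\D_{\cv\bb})=l^*(\D_{\bb_0})=1$ (via Lemma~\ref{lem:inclusione} and \eqref{parteintera}, together with \eqref{politopi}). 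The long exact sequence then yields $h^0(\cO_{\rdY})=h^{n-1}(\cO_{\rdY})=1$ and $h^p(\cO_{\rdY})=0$ otherwise; irreducibility of $\rdY$ comes from Corollary~\ref{cor:hpO}(3), and the identity $h^p(\cO_{\rdY})=h^p(\cO_{Y^\vee})$ is exactly \eqref{h0dY} in Theorem~\ref{thm:h21Yd}.

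Next I would compute $h^q(\widehat\Omega_{\XX|\rdY})$, resp.\ $h^q(\widehat\Omega_{\XX_{\aa_0}|Y^\vee})$, from the restricted generalized Euler sequence \eqref{seqX}: here $\Cl(\XX)\otimes\cO_{\rdY}$ contributes $\C^{r\cdot h^q(\cO_{\rdY})}$, while $\bigoplus_{i=1}^{\widehat m}\cO_{\rdY}(-\widehat D_i)$ contributes $\C^{\widehat m}$ only in degree $n-1$ — by Lemma~\ref{lem:cohDi} in the $r\gg 0$ case, and, in the $\XX_{\aa_0}$ case, by the hypersurface form of Proposition~\ref{prop:nef&big}, namely $h^{n-1}(\cO_{Y^\vee}(-D'_i))=l^*(\D_i+\D_{\bb_0})-l^*(\D_i)$, combined with $l^*(\D_i)=0$ (which one checks directly, $D'_i$ being a coordinate divisor of a quotient of a weighted projective space). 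Reading off \eqref{seqX} pins down $h^q(\widehat\Omega_{\XX|\rdY})$ for $q\notin\{1,n-1,n\}$, gives $h^1=r$, and leaves the two top terms tied only by $h^{n-1}(\widehat\Omega_{\XX|\rdY})-h^n(\widehat\Omega_{\XX|\rdY})=\widehat m-r=n$, resp.\ $=\sum_i l^*(\D_i+\D_{\bb_0})-1$. I would then feed this into the conormal sequence \eqref{seqYdual}, whose left‑hand sheaf is non‑trivial only in degree $n-1$, with $h^{n-1}(\cO_{\rdY}(-\widehat D_{\bg}))=l^*(2\D_{\bb_0})-1$ — from Proposition~\ref{prop:nef&big} applied to $D=\widehat D_{\bg}$, or equivalently from the twisted ideal‑sheaf sequence $0\to\cO_\XX(-2\widehat D_{\bg})\to\cO_\XX(-\widehat D_{\bg})\to\cO_{\rdY}(-\widehat D_{\bg})\to0$ and Theorem~\ref{thm:BBvanish}. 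This kills $h^p(\widehat\Omega_{\rdY})$ outside $\{1,n-2,n-1,n\}$, identifies $h^1(\widehat\Omega_{\rdY})=r=h^1(\widehat\Omega_\XX)$ for $n\ge 4$ (for $n=3$ the degrees $1,n-2,n-1$ collide and only an alternating combination survives, which is why the $n=3$ formulas are stated separately), and constrains $h^{n-2},h^{n-1},h^n$ of $\widehat\Omega_{\rdY}$ by a single Euler relation; combining it with the difference above produces the boxed alternating sums, $l^*(2\D_{\bb_0})$ entering precisely through the last Koszul step. The comparison of $h^p(\widehat\Omega_{Y^\vee})$ with $h^p(\widehat\Omega_{\XX_{\aa_0}|Y^\vee})$ (and of $h^p(\widehat\Omega_{\rdY})$ with $h^p(\widehat\Omega_\XX)$) is supplied by the Danilov vanishing~\ref{thm:Danilov} and the Batyrev--Cox sequence~\ref{thm:BC} on the $\Q$‑factorial ambient toric variety, with $h^1(\widehat\Omega_{\XX_{\aa_0}})=\rk\Cl(\XX_{\aa_0})=1$.

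Finally the bounds $0\le h^n(\widehat\Omega_{\rdY})\le r$ (resp.\ $0\le h^n(\widehat\Omega_{Y^\vee})\le 1$) are automatic: in \eqref{seqX} the map $\bigoplus_{i=1}^{\widehat m}H^{n-1}(\cO_{\rdY}(-\widehat D_i))=\C^{\widehat m}\to H^{n-1}(\Cl(\XX)\otimes\cO_{\rdY})=\C^{r}$ cannot be injective (as $\widehat m>r$), $H^n(\widehat\Omega_{\XX|\rdY})$ is its cokernel, hence of dimension $\le r$, and $h^n(\widehat\Omega_{\rdY})\cong h^n(\widehat\Omega_{\XX|\rdY})$ by \eqref{seqYdual}. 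For the last display \eqref{h^n-2}: if $\rdY$ is smooth and $\XX$ is $\Q$‑factorial then $\widehat\Omega_{\rdY}=\Omega_{\rdY}$ is locally free, so Serre duality on the $(n-1)$‑fold $\rdY$ gives $h^{n-1}(\Omega_{\rdY})=h^{n-1,1}(\rdY)=h^{0,n-2}(\rdY)=h^{n-2}(\cO_{\rdY})=0$ and $h^n(\Omega_{\rdY})=0$ for dimension reasons; substituting $h^{n-1}(\Omega_{\rdY})=h^n(\Omega_{\rdY})=0$ into the alternating sum isolates $h^{n-2}(\Omega_{\rdY})$. The main obstacle I expect is the one the statement is forced to live with: neither \eqref{seqX} nor \eqref{seqYdual} separates $H^{n-1}$ from $H^n$ — only their difference, controlled by $\widehat m-r=n$ and by $l^*(2\D_{\bb_0})$, is accessible — so in general one can only assert the alternating‑sum identity together with the bound on $h^n$; a secondary technical point is to make sure the chosen (partial) resolution $\phi$ is one of those produced by Proposition~\ref{prop:nef}, so that $\widehat D_{\bg}$ really is semi‑ample and \eqref{politopi} lets one trade cohomology on $\XX$ for polytope data.
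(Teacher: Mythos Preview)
Your proposal is correct and follows essentially the same route as the paper: the same two short exact sequences \eqref{seqX} and \eqref{seqYdual} (specialized to $l=1$), fed by Batyrev--Borisov vanishing, Proposition~\ref{prop:nef&big}, and Lemma~\ref{lem:cohDi}, with the $\XX_{\aa_0}$ case handled via ampleness on the weighted‐projective quotient and the $r\gg0$ case via Lemma~\ref{lem:cohDi}. The only packaging differences are cosmetic: the paper splices the two sequences into a three‐term complex $\mathcal{Q}^\bullet$ and reads off the result from its hypercohomology spectral sequence (your direct LES chase computes exactly the same $E_1$ row and its Euler characteristic), and for the final clause the paper invokes weak Lefschetz together with Danilov vanishing rather than Serre duality to kill $h^{n-1}(\Omega_{\rdY})$ and $h^n(\Omega_{\rdY})$.
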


\begin{proof}
Computing $h^p(\cO_{\rdY})=h^p(\cO_{Y^\vee})$ goes exactly as in Theorem~\ref{thm:h21Yd}, observing that, for $l=1$, $\D_{\cv{\bb}}=\D_\bb$\,.

\noindent Recalling (\ref{seqYdual}) and (\ref{seqX}), setting $\widehat{D}_{\bg_0}=\phi^{-1}(D'_{\bb_0})$ and being $\widehat{D}_i$ a prime torus invariant divisor generating $\Weil(\XX)$, the following exact sequences
\begin{eqnarray}\label{seq-fasci}
  &\xymatrix{0\ar[r]&\cO_{\rdY}\left(-\widehat{D}_{\bg_0}\right)\ar[r]^-\a&\widehat{\Omega}_{\XX|\rdY}
  \ar[r]^-\b&
\widehat{\Omega}_{\rdY}\ar[r]&0}& \\
\nonumber
   &\xymatrix{0\ar[r]&\widehat{\Omega}_{\XX|\rdY}\ar[r]^-\xi&\bigoplus_{i=1}^{\widehat{m}}\cO_{\rdY}
   (-\widehat{D}_i)
  \ar[r]^-\eta&\Cl(\XX)\otimes\cO_{\rdY}\ar[r]&0}&
\end{eqnarray}
give rise to the complex
\begin{equation*}
  \mathcal{Q}^\bullet:\xymatrix{0\ar[r]&\cO_{\rdY}\left(-\widehat{D}_{\bg_0}\right)\ar[r]^-{\xi\circ\a}&\bigoplus_{i=1}^{\widehat{m}}\cO_{\rdY}(-\widehat{D}_i)
  \ar[r]^-\eta&\Cl(\XX)\otimes\cO_{\rdY}\ar[r]&0}
\end{equation*}
whose sheaf cohomology is given by
\begin{equation}\label{hypcoh}
        \mathcal{H}^q(\mathcal{Q}^\bullet)\cong\left\{\begin{array}{cc}
                                         \widehat{\Omega}_{\rdY} & \text{for}\ q=1 \\
                                         0 & \text{otherwise}
                                       \end{array}
        \right.
      \end{equation}
The first associated spectral sequence $'E$, abutting to the hypercohomology $\H^*(\mathcal{Q}^\bullet)$ and degenerating at $2^{\text{nd}}$ level, gives
\begin{equation}\label{HpQ}
        \H^{p+1}(\mathcal{Q}^\bullet)\cong\ 'E_2^{p,1}=H^p\left(\rdY,\widehat{\Omega}_{\rdY}\right)
      \end{equation}
On the other hand, the second associated spectral sequence $''E$ gives
\begin{equation*}
  ''E_1^{p,q}=\left\{\begin{array}{cc}
                       0 & \text{for $p\neq 0,1,2$} \\
                       H^q\left(\rdY,\cO_{\rdY}(-\widehat{D}_{\bg_0})\right) & \text{for $p=0$} \\
                       \bigoplus_{i=1}^{\widehat{m}}H^q\left(\rdY,\cO_{\rdY}(-\widehat{D}_i)\right) & \text{for $p=1$} \\
                       H^q\left(\rdY,\cO_{\rdY}\right)^{\oplus r} & \text{for $p=2$}
                     \end{array}
  \right.
\end{equation*}
with $r:=\rk(\Cl(\XX))$.

\noindent Assume, first of all, $\phi$ is trivial, that is $\XX=\XX_{\aa_0}$, $\widehat{D}_{\bg_0}=D'_{\bb_0}$ and $\widehat{D}_i=D'_i$. Then $r=\rk(\Cl(\XX))=\rk(\Cl(\XX_{\aa_0}))=1$ and $\widehat{m}=m=n+1$. Moreover, $\XX_{\aa_0}$ turns out to be a finite quotient of the weighted projective space $\P(\aa_0)$ \cite[Lem.~5.2]{R-fTV}, meaning that suitable multiples of both $D'_{\bb_0}$ and $D'_i$ are ample divisors. Then Proposition~\ref{prop:nef&big} applies to give $''E_1^{p,q}=0$ excepts for the following cases
\begin{eqnarray*}
  \dim\left(\,''E_1^{0,n-1}\right) &=&  h^{n-1}\left(Y^\vee,\cO_{Y^\vee}(-D'_{\bb_0})\right)=l^*(2\D_{\bb_0})-1 \\
  \dim\left(\,''E_1^{1,n-1}\right) &=& \sum_{i=1}^{n+1}h^{n-1}\left(Y^\vee,\cO_{Y^\vee}(-D'_i)\right)=\sum_{i=1}^{n+1}l^*(\D_i+\D_{\bb_0}) \\
  \dim\left(\,''E_1^{2,0}\right) &=& h^0\left(Y^\vee,\cO_{Y^\vee}\right)=1\\
  \dim\left(\,''E_1^{2,n-1}\right) &=& h^{n-1}\left(Y^\vee,\cO_{Y^\vee}\right)=1
\end{eqnarray*}
Since $d_2:\,''E_2^{p,q}\longrightarrow\,''E_2^{p+2,q-1}$ is always zero, this suffices to show that the spectral sequence $''E$ degenerates at the second level, so giving, by (\ref{HpQ}), that
\begin{equation}\label{HpOmegaYd}
  H^p\left(Y^\vee,\widehat{\Omega}_{Y^\vee}\right)\cong\left\{\begin{array}{cc}
                                                       ''E_2^{2,n-1}  & \text{for $p=n$} \\
                                                       ''E_2^{1,n-1}  & \text{for $p=n-1$} \\
                                                     ''E_2^{0,n-1}\oplus\,''E_2^{2,n-3} & \text{for $p=n-2$} \\
                                                     ''E_2^{0,2}\oplus\,''E_2^{2,0} & \text{for $p=1$} \\
                                                     0 & \text{otherwise}
                                                   \end{array}
  \right.
\end{equation}
Assume $n>3$. Then $''E_2^{2,n-3}\cong\{0\}\cong\,''E_2^{0,2}$ and
\begin{eqnarray*}
   ''E_2^{2,n-1} &\cong&{H^{n-1}\left(Y^\vee,\cO_{Y^\vee}\right)\over\im\left(\bigoplus_{i=1}^{{n+1}}
   H^{n-1}\left(Y^\vee,\cO_{Y^\vee}(-D'_i)\right)\stackrel{d_1}
  {\longrightarrow}H^{n-1}\left(Y^\vee,\cO_{Y^\vee}\right)\right)}\\
  ''E_2^{1,n-1} &\cong& {\ker\left(\bigoplus_{i=1}^{{n+1}}
   H^{n-1}\left(Y^\vee,\cO_{Y^\vee}(-D'_i)\right)\stackrel{d_1}
  {\longrightarrow}H^{n-1}\left(Y^\vee,\cO_{Y^\vee}\right)\right)
  \over\im\left(H^{n-1}\left(Y^\vee,\cO_{Y^\vee}(-D'_{\bb_0})\right)\stackrel{d_1}
  {\longrightarrow}\bigoplus_{i=1}^{{n+1}}H^{n-1}\left(Y^\vee,\cO_{Y^\vee}(-D'_i)\right)\right)} \\
  ''E_2^{0,n-1} &\cong& \ker\left(H^{n-1}\left(Y^\vee,\cO_{Y^\vee}(-D'_{\bb_0})\right)\stackrel{d_1}
  {\longrightarrow}\bigoplus_{i=1}^{{n+1}}H^{n-1}\left(Y^\vee,\cO_{Y^\vee}(-D'_i)\right)\right) \\
  ''E_2^{2,0} &\cong& \C \quad\Longrightarrow\ h^1\left(\widehat{\Omega}_{Y^\vee}\right)=1
\end{eqnarray*}
Recalling  exact sequences (\ref{seq-fasci}) and the definition of $\mathcal{Q}^\bullet$, the map
\begin{equation*}
  \xymatrix{d_1:\bigoplus_{i=1}^{{n+1}}
   H^{n-1}\left(Y^\vee,\cO_{Y^\vee}(-D'_i)\right)\ar[r]&H^{n-1}\left(Y^\vee,\cO_{Y^\vee}\right)}
\end{equation*}
is given by $H^{n-1}(\eta)$, which is surjective if and only if $h^n(\widehat{\Omega}_{\XX_{\aa_0}|Y^\vee})=0$\,: notice that, alternatively, it can only happen that $h^n(\widehat{\Omega}_{\XX_{\aa_0}|Y^\vee})=1$ and $H^{n-1}(\eta)=0$. Moreover, the map $$d_1:H^{n-1}\left(Y^\vee,\cO_{Y^\vee}(-D'_{\bb_0})\right)
\longrightarrow\bigoplus_{i=1}^{{n+1}}H^{n-1}\left(Y^\vee,\cO_{Y^\vee}(-D'_i)\right)$$
is obtained as the composition $H^{n-1}(\xi)\circ H^{n-1}(\a)$. Clearly $H^{n-1}(\xi)$ is injective, as  $h^{n-2}(\cO_{Y^\vee})=0$. Then, recalling that $h^n\left(\widehat{\Omega}_{\XX_{\aa_0}|Y^\vee}\right)=h^n\left(\widehat{\Omega}_{Y^\vee}\right)$ by the first exact sequence in (\ref{seq-fasci}),
\begin{equation*}
  h^{n-2}\left(\widehat{\Omega}_{Y^\vee}\right)-h^{n-1}\left(\widehat{\Omega}_{Y^\vee}\right)+ h^n\left(\widehat{\Omega}_{Y^\vee}\right)=
  l^*(2\D_{\bb_0})-\sum_{i=1}^{n+1}l^*(\D_i+\D_{\bb_0})
\end{equation*}
where $h^n\left(\widehat{\Omega}_{Y^\vee}\right)\in\{0,1\}$.

\noindent Assume now $n=3$. Then
\begin{eqnarray*}
  ''E_2^{2,n-3} &=& ''E_2^{2,0}\,\cong\ \C \\
  ''E_2^{0,2} &=& ''E_2^{0,n-1}\cong\ker\left(H^{n-1}(\xi)\circ H^{n-1}(\a)\right) = \ker\left(H^{n-1}(\a)\right)
\end{eqnarray*}
and
\begin{equation*}
  h^{1}\left(\widehat{\Omega}_{Y^\vee}\right)-h^{2}\left(\widehat{\Omega}_{Y^\vee}\right)+
  h^{3}\left(\widehat{\Omega}_{Y^\vee}\right)=
  l^*(2\D_{\bb_0})-\sum_{i=1}^{4}l^*(\D_i+\D_{\bb_0})+1
\end{equation*}
where $h^3\left(\widehat{\Omega}_{Y^\vee}\right)\in\{0,1\}$.

Assume, now, $\phi:\XX\longrightarrow\XX_{\aa_0}$ is non-trivial and $\rk(\Cl(\XX))$ is sufficiently big such that $\D_i=\{\0\}$, for any $i=1,\ldots,\widehat{m}$. By construction, $\D_{\bg_0}=\D_{\bb_0}$ and Corollary~\ref{cor:nef} ensures that a suitable multiple of $\widehat{D}_{\bg_0}$ can be assumed ample, meaning that Proposition~\ref{prop:nef&big} can still be applied to computing $H^q\left(\rdY,\cO_{\rdY}(-\widehat{D}_{\bg_0})\right)$. This fact no more holds for $\widehat{D}_i$, but possibly assuming $r:=\rk(\Cl(\XX))\gg 1$ to be even bigger, Lemma~\ref{lem:cohDi} applies to computing $H^q\left(\rdY,\cO_{\rdY}(-\widehat{D}_i)\right)$.

\noindent Assume $n>3$. Then $''E_2^{2,n-3}\cong\{0\}\cong\,''E_2^{0,2}$ and (\ref{HpOmegaYd}) gives
\begin{eqnarray*}
   ''E_2^{2,n-1} &\cong&{H^{n-1}\left(\rdY,\cO_{\rdY}\right)^{\oplus r}\over\im\left(\bigoplus_{i=1}^{{n+1}}
   H^{n-1}\left(\rdY,\cO_{\rdY}(-\widehat{D}_i)\right)\stackrel{d_1}
  {\longrightarrow}H^{n-1}\left(\rdY,\cO_{\rdY}\right)^{\oplus r}\right)}\\
  ''E_2^{1,n-1} &\cong& {\ker\left(\bigoplus_{i=1}^{{n+1}}
   H^{n-1}\left(\rdY,\cO_{\rdY}(-\widehat{D}_i)\right)\stackrel{d_1}
  {\longrightarrow}H^{n-1}\left(\rdY,\cO_{\rdY}\right)^{\oplus r}\right)
  \over\im\left(H^{n-1}\left(\rdY,\cO_{\rdY}(-\widehat{D}_{\bg_0})\right)\stackrel{d_1}
  {\longrightarrow}\bigoplus_{i=1}^{{n+1}}H^{n-1}\left(\rdY,\cO_{\rdY}(-\widehat{D}_i)\right)\right)} \\
  ''E_2^{0,n-1} &\cong& \ker\left(H^{n-1}\left(\rdY,\cO_{\rdY}(-\widehat{D}_{\bg_0})\right)\stackrel{d_1}
  {\longrightarrow}\bigoplus_{i=1}^{{n+1}}H^{n-1}\left(\rdY,\cO_{\rdY}(-\widehat{D}_i)\right)\right) \\
  ''E_2^{2,0} &\cong& \C^r \quad\Longrightarrow\ h^1\left(\widehat{\Omega}_{\rdY}\right)=r
\end{eqnarray*}
\begin{equation*}
  h^{n-2}\left(\widehat{\Omega}_{\rdY}\right)-h^{n-1}\left(\widehat{\Omega}_{\rdY}\right)+ h^n\left(\widehat{\Omega}_{\rdY}\right)=
  l^*(2\D_{\bb_0})-n+r-2
\end{equation*}
where $0\le h^n\left(\widehat{\Omega}_{\rdY}\right)\le r$.

\noindent Assume now $n=3$. Then
\begin{eqnarray*}
  ''E_2^{2,n-3} &=& ''E_2^{2,0}\,\cong\ \C^r \\
  ''E_2^{0,2} &=& ''E_2^{0,n-1}\cong\ker\left(H^{n-1}(\xi)\circ H^{n-1}(\a)\right) = \ker\left(H^{n-1}(\a)\right)
\end{eqnarray*}
and
\begin{equation*}
  h^{1}\left(\widehat{\Omega}_{\rdY}\right)-h^{2}\left(\widehat{\Omega}_{\rdY}\right)+
  h^{3}\left(\widehat{\Omega}_{\rdY}\right)=
  l^*(2\D_{\bb_0})+2r-5
\end{equation*}
where $0\le h^3\left(\widehat{\Omega}_{\rdY}\right)\le r$.

Finally, assume that $\rdY$ is smooth and $\XX$ is $\Q$-factorial. Then, the weak Lefschetz theorem and the Danilov vanishing \ref{thm:Danilov} give
\begin{equation*}
  h^{n-1}\left(\Omega_{\rdY}\right)=0=h^n\left(\Omega_{\rdY}\right)
\end{equation*}
and (\ref{h^n-2}) follows immediately by previous relations.
\end{proof}

\subsection{The topological mirror test}\label{ssez:topMT}
 Consider a partitioned ftv $(X,\aa=\Sigma_{k=1}^l\aa_k)$ admitting a calibrated partitioned $f$-process
 \begin{equation*}
   \left(X,\aa=\sum_{k=1}^l\aa_k\right)\ {\leftrightsquigarrow}\ \left(\cv{\XX}_\aa,\cv{\bb}=\sum_{k=1}^l\bb_k\right)
 \end{equation*}
the family of complete intersection subvarieties
$$\mathcal{Y}_\aa:=\left\{Y:=\bigcap_{k=1}^lY_k\subset X\,|\, Y_k\in|D_{\aa_k}|\right\}$$
and its $f$-dual mirror family
\begin{equation*}
  \mathcal{Y}^\vee_{\cv{\bb}}:=\left\{Y^\vee:=\bigcap_{k=1}^lY^\vee_k\subset \cv{\XX}_{\aa}\,|\, Y_k\in|D'_{\bb_k}|\right\}
\end{equation*}
Following notation introduced in \cite[\S3.2]{R-fTV}, making the \emph{topological mirror test} for $\cY_\aa$ and $\cY^\vee_{\cv{\bb}}$ means performing the following double check for the ordered couple $(Y,Y^\vee)$:
\begin{itemize}
  \item \emph{$A$-side}: there exists a (partial) resolution of singularities $\widehat{Y}\longrightarrow Y$ such that $\widehat{Y}$ is (quasi-)smooth, for $Y$ generic in $\cY_\aa$, and
      \begin{equation*}
        k_{\widehat{Y}}= m_{Y^\vee}
      \end{equation*}
      where $k_{\widehat{Y}}$ is the number of \ka moduli of the resolved family $\widehat{\cY}_\aa$ and $m_{Y^\vee}$ is the number of complex moduli of the family $\cY^\vee_{\cv{\bb}}$; in this case,  $Y^\vee$ is called \emph{an $A$-mirror of $Y$};
  \item \emph{$B$-side}: there exists a (partial) resolutions of singularities $\widehat{Y}^\vee\longrightarrow Y^\vee$ such that $\widehat{Y}^\vee$ is (quasi-)smooth, for $Y^\vee$ generic in $\cY^\vee_{\cv{\bb}}$, and
      \begin{equation*}
        k_{\widehat{Y}^\vee}=m_Y
      \end{equation*}
       where $k_{\widehat{Y}^\vee}$ is the number of \ka moduli of the resolved family $\widehat{\cY}^\vee_{\cv{\bb}}$ and $m_{Y}$ is the number of complex moduli of the family $\cY_\aa$; in this case,  $Y$ is called \emph{a $B$-mirror of $Y^\vee$}.
\end{itemize}
In general $k_{\widehat{Y}}, m_{Y^\vee},k_{\widehat{Y}^\vee},m_Y$ are not well defined, but their meaning will turn out to be clear in all examples we are going to consider. Namely, for the $A$-side mirror check we will prove that
\begin{equation*}
  k_Y=h^1(\Omega_Y)=1=m_{Y^\vee}
\end{equation*}
which is enough because $Y$ is generically smooth and $m_{Y^\vee}$ does not depend on the resolution level. But the $B$-side mirror check will turn to be many more intricate. For hypersurfaces in $\P^n$ of degree $d\ge n+1$ we will check the existence of suitable (non canonical and partial) resolution $\widehat{Y}\longrightarrow Y$ such that
\begin{equation*}
  k_{\widehat{Y}^\vee}=h^1(\widehat{\Omega}_{\widehat{Y}^\vee})=m_Y
\end{equation*}
But, for the complete intersection $Y_{3,4}\subset\P^5$ we will consider in \S~\ref{sez:l ge 2}, we will only able to show the existence of a resolution $\cv{Y}^\vee\longrightarrow Y^\vee$ induced by a resolution of singularities of the ambient toric variety $\cv{\phi}:\widehat{\XX}_{\aa_0}\longrightarrow\cv{\XX}_\aa$, such that
\begin{equation*}
  k_{\cv{Y}}=h^1(\widehat{\Omega}_{\cv{Y}})> m_{Y^\vee}
\end{equation*}
See considerations ending up \S~\ref{sssez:moduli}.

\section{Case $l=1$: mirror test for projective hypersurfaces}\label{sez:l=1}
Assume
\begin{equation*}
  X=\P^n\quad \text{and}\quad \aa=\aa_0:=(\1_n, \d)
\end{equation*}
with $l=1$ and $\d=d-n\ge 1$. In \cite[Thm.~4.1, Rem.~4.4, Lem.~5.2]{R-fTV} a complete description of the $f$-dual ftv $(\XX_{\aa_0},\bb_0)$ is given, as a suitable framed quotient of the weighted projective space whose weights are assigned by $\aa_0$, namely:
\begin{equation*}
  \XX_{\aa_0}=\XX_{(\1_n,\d)} \cong \P(\1_n,\d)/G_{(\1_n,\d)}
\end{equation*}
by the action of $G_{(\1_n,\d)}\cong\Z^{n-1}_\d$ described in \cite[Lem.~5.2]{R-fTV}, and
\begin{equation*}
  \bb_0 =  (\d\cdot\1_n,1)
\end{equation*}
Calling $Y\in|D_{(\1_n,\d)}|$ a generic, hence smooth, hypersurface of degree $d$, its $f$-mirror partner is the hypersurface $Y^\vee\in|D'_{(\d\cdot\1_n,1)}|$ of $\XX_{(\1_n,\d)}$ defined as the zero locus of the weighted homogeneous polynomial
\begin{equation}\label{f}
  f^\vee=\prod_{i=1}^n x_i^{\d-1}\left(\sum_{i=1}^n x_i^d +\psi\,\prod_{j=1}^{n+1} x_j\right)+x_{n+1}^{n+1}\in\Cox\left(\XX_{(\1_n,\d)}\right)\cong\Cox\left(\P(\1_n,\d)\right)
\end{equation}
where $\psi$ is the unique complex modulus of the $f$-mirror family (see \cite[Thm.~5.3]{R-fTV}). Then, one has
\begin{equation}\label{Aside}
  k_Y=h^1(\Omega_Y)=1=m_{Y^\vee}
\end{equation}
by the weak Lefschetz Theorem and \cite[Thm.~5.3, Rem.~5.4]{R-fTV}. That is:
\begin{itemize}
  \item \emph{$Y^\vee$ is an $A$-mirror of $Y$.}
\end{itemize}
For what concerning the $B$-side checking, recall that
\begin{equation}\label{mY}
  m_Y=\dim \P\left(H^0(\cO_{\P^n}(d)\right) - \dim \P\GL(n+1) ={n+d\choose d}-(n+1)^2
\end{equation}
Then, one has to exhibit a suitable resolution $\widehat{Y}^\vee\longrightarrow Y^\vee$ to computing
 $k_{\widehat{Y}^\vee}$ and making the required comparison.

\subsubsection{Stringy Hodge numbers of $\Q$-factorial complete toric varieties}
Let us now recall the definition of \emph{stringy $E$-function} and \emph{stringy Hodge numbers} in the particular case of a $\Q$-factorial and complete toric variety $X(\Si)$.

Being $X$ $\Q$-factorial, for every maximal cone $\s\in \Si(n)$ there exists a unique subset $I\subset\{1,\ldots,|\Si(1)|\}$ such that $|I|=n$ and $\s=\langle V_I\rangle$, where $V$ is a fan matrix of $X$. Then, since $X$ is complete, there exists a well defined continuous function $\vf_K:N_\R\longrightarrow\R$, called the \emph{canonical support function}, constructed by setting
\begin{equation}\label{KsuppFunct}
  \forall\,\n\in N_\R ,\ \forall\,I:\n\in\langle V_I\rangle\quad\vf_K(\n):=-\m_I^T\cdot\n\quad\text{where}\quad \m_I:=-(V_I^T)^{-1}\cdot\1_n
\end{equation}
In particular, $\vf_K$ satisfies the following properties:
\begin{enumerate}
  \item $\vf_K(\v_i)=1$ for every column $\v_i$ of the fan matrix $V$ of $X$\,,
  \item $\vf_K$ is linear on each cone $\s\in\Si$\,.
\end{enumerate}

\begin{definition}[Stringy $E$-function]\label{def:stringy}
  The \emph{stringy $E$-function} of a complete and $\Q$-factorial toric variety $X(\Si)$ is the following
  \begin{equation}\label{E-funzione}
    \Est(X;u,v):=(uv-1)^{\dim X}\sum_{\s\in\Si}\left(\sum_{\n\in\Int(\s)\cap N}(uv)^{-\vf_K(\n)}\right)
  \end{equation}
  where $\Int(\s)$ denotes the relative interior of $\s$. The \emph{stringy Euler number of $X$} is then defined by
  \begin{equation*}
    e_{\text{st}}(X):=\lim_{u,v\to 1} \Est(X;u,v)
  \end{equation*}
\end{definition}

\begin{remark}\label{rem:gorenstein}
  Batyrev introduced the stringy $E$-function and Euler number for a normal, irreducible, algebraic variety $X$ with at worst log-terminal singularities \cite[Def.~3.1, Def.~3.3]{Batyrev98} and proved that $\Est(X;u,v)$ is given by (\ref{E-funzione}) when $X$ is a $\Q$-Gorenstein toric variety \cite[Thm.~4.3]{Batyrev98}. Moreover,
  \begin{itemize}
    \item[$(*)$] \emph{if $X$ has Gorenstein singularities then $\Est(X;u,v)$ turns out to be a polynomial} \cite[Prop.~4.4]{Batyrev98}.
  \end{itemize}
\end{remark}

\begin{definition}[Stringy Hodge numbers]
  Assume that $\Est(X;u,v)$ is a polynomial with
  \begin{equation*}
    \Est(X;u,v)=\sum_{p,q}a_{p,q}u^pv^q
  \end{equation*}
  Up to a sign, its coefficients are defined to be the \emph{stringy Hodge numbers} of $X$, namely
  \begin{equation*}
    \hst^{p,q}(X):=(-1)^{p+q}a_{p,q}
  \end{equation*}
\end{definition}

\begin{remark}\label{rem:Poincare}
  By Poincar\'{e} duality \cite[Thm.~3.7, Rem.~3.9]{Batyrev98}, if $\Est(X;u,v)$ is a polynomial then
  \begin{equation*}
    \deg \Est(X;u,v)=2\dim X
  \end{equation*}
  In particular, it turns out that
  \begin{equation*}
    \forall\,p,q\quad\hst^{p,q}(X)=\hst^{q,p}(X)\ ,\quad\hst^{0,0}(X)=1=\hst^{\dim X,\dim X}(X)
  \end{equation*}
\end{remark}

Consider the case of a smooth and complete toric variety $X$. The \emph{$E$-polynomial} of $X$ is defined as:
\begin{equation*}
  E(X;u,v):=\sum_{p,q}(-1)^{p+q}h^{p,q}(X)u^pv^q
\end{equation*}
\begin{proposition}[Cor.~3.6 in \cite{Batyrev98}]\label{prop:smooth-E}
Let $X$ be a smooth and complete toric variety. Then
$\Est(X;u,v)=E(X;u,v)$. In particular,
\begin{equation*}
\forall\,p,q\quad \hst^{p,q}(X)=h^{p,q}(X)
\end{equation*}
\end{proposition}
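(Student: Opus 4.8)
The plan is to compute the defining sum (\ref{E-funzione}) explicitly using that $X$ is smooth, and then to recognise the result as the Hodge--Deligne $E$-polynomial of $X$ by means of the torus-orbit stratification. First I would treat the inner sum over a fixed cone. Smoothness of $X(\Si)$ means that every cone $\s\in\Si$ of dimension $k$ is generated by columns $\v_{i_1},\dots,\v_{i_k}$ of the fan matrix extending to a $\Z$-basis of $N$; hence the lattice points of the relative interior are exactly $\Int(\s)\cap N=\{\sum_{j=1}^{k}c_j\v_{i_j}\mid c_1,\dots,c_k\in\Z_{\ge 1}\}$. By property (2) following (\ref{KsuppFunct}) the canonical support function $\vf_K$ is linear on $\s$, and by property (1) it sends each $\v_{i_j}$ to $1$, so $\vf_K(\sum_j c_j\v_{i_j})=\sum_j c_j$. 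Summing the resulting product of geometric series (and reading the cone $\{\0\}$ as the empty product $1$),
\[
  \sum_{\n\in\Int(\s)\cap N}(uv)^{-\vf_K(\n)}=\prod_{j=1}^{\dim\s}\Bigl(\sum_{c\ge 1}(uv)^{-c}\Bigr)=\frac{1}{(uv-1)^{\dim\s}},
\]
and substituting into (\ref{E-funzione}) gives the closed formula $\Est(X;u,v)=\sum_{\s\in\Si}(uv-1)^{\,n-\dim\s}$.

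Next I would identify the $E$-polynomial. Using the orbit decomposition $X=\bigsqcup_{\s\in\Si}O_\s$ with $O_\s\cong(\C^*)^{\,n-\dim\s}$ a locally closed subvariety, together with additivity of the Hodge--Deligne $E$-polynomial over such stratifications, multiplicativity on products and $E(\C^*;u,v)=uv-1$, one obtains $E(X;u,v)=\sum_{\s\in\Si}(uv-1)^{\,n-\dim\s}$ as well. Since $X$ is smooth and complete its cohomology carries a pure Hodge structure, so this $E(X;u,v)$ coincides with $\sum_{p,q}(-1)^{p+q}h^{p,q}(X)u^pv^q$ in the usual sense (indeed, being generated by algebraic classes, the cohomology is concentrated in bidegrees $(p,p)$, and one may equivalently quote the classical formula $\sum_p h^{p,p}(X)t^p=\sum_{k=0}^n|\Si(n-k)|(t-1)^k$ for the Betti numbers of a smooth complete toric variety).

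Comparing the two closed formulas gives $\Est(X;u,v)=E(X;u,v)$; in particular $\Est(X;u,v)$ is a polynomial, and extracting the coefficient $a_{p,q}$ of $u^pv^q$ yields $\hst^{p,q}(X)=(-1)^{p+q}a_{p,q}=h^{p,q}(X)$ for all $p,q$, as claimed. There is no serious obstacle here; the only step deserving care is the bookkeeping in the first paragraph --- verifying that for a smooth cone the interior lattice points are precisely the strictly positive integer combinations of its primitive generators and that $\vf_K$ restricts there to the linear form sending each generator to $1$ --- after which everything reduces to the elementary geometric-series identity above and the standard additivity of the $E$-polynomial.
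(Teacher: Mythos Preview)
Your proof is correct. The paper does not supply its own argument for this proposition --- it is simply stated as a citation of \cite[Cor.~3.6]{Batyrev98} --- so there is nothing to compare against; your direct computation via the geometric-series identity on each smooth cone together with the additivity of the $E$-polynomial over the torus-orbit stratification is the standard route, and is essentially how Batyrev argues in the cited reference.
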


\begin{definition}\label{def:crepant}
  Let $X$ be a projective algebraic variety with at worst canonical Gorenstein singularities. A birational morphism $\phi:Y \longrightarrow X$ is called a \emph{crepant partial resolution} of $X$ if $\phi^*K_X=K_Y$. If $Y$ is smooth than $\phi$ is called a \emph{crepant resolution} of $X$.
\end{definition}

\begin{proposition}[Thm.~3.12 in \cite{Batyrev98}]\label{prop:E-crepant}
  Let $\phi:Y\longrightarrow X$ be a crepant partial re\-so\-lu\-tion. Then $\Est(Y;u,v)=\Est(X;u,v)$. In particular, Proposition \ref{prop:smooth-E} ensures that, if $\phi$ is a crepant resolution then
  \begin{equation*}
    E(Y;u,v)=\Est(X;u,v)\ \Longrightarrow\ \forall\,p,q\quad h^{p,q}(Y)=\hst^{p,q}(X)
  \end{equation*}
\end{proposition}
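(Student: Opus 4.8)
The plan is to deduce the equality of stringy $E$-functions from the fact that the defining sum in Definition~\ref{def:stringy} is unaffected by a crepant partial resolution, the decisive point being a comparison of discrepancies. First I would note that, since $X$ has at worst canonical Gorenstein singularities and $\phi^*K_X=K_Y$, every prime divisor over $Y$ has the same discrepancy computed over $Y$ as over $X$; hence $Y$ too has at worst canonical Gorenstein singularities, so $\Est(Y;u,v)$ is defined and, by Remark~\ref{rem:gorenstein}$(*)$, is a polynomial. After this I would treat separately the toric situation, which is the one actually needed in this paper, and the general case.

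\emph{Toric case.} Here $X=X(\Si)$ is $\Q$-factorial and complete, and $\phi$ is induced by a subdivision $\Si'$ of $\Si$ with $X(\Si')=Y$. Crepancy means precisely that every new primitive ray generator $\v$ of $\Si'$ satisfies $\vf_K(\v)=1$ for the canonical support function $\vf_K$ of $X$ built in (\ref{KsuppFunct}); since $\vf_K$ is linear on each cone of $\Si$ and the cones of $\Si'$ refine these, the function built the same way from $\Si'$ coincides with $\vf_K$. As the relative interiors of the cones of a fan partition its support and $|\Si'|=|\Si|=N_\R$, both $\sum_{\s\in\Si}\sum_{\n\in\Int(\s)\cap N}(uv)^{-\vf_K(\n)}$ and the corresponding sum over $\Si'$ equal $\sum_{\n\in N}(uv)^{-\vf_K(\n)}$; multiplying by $(uv-1)^{\dim X}$ gives $\Est(X;u,v)=\Est(Y;u,v)$ by Definition~\ref{def:stringy}.

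\emph{General case.} I would fix, by Hironaka, a resolution $\rho\colon W\to Y$ with simple normal crossing exceptional locus such that the composition $\psi:=\phi\circ\rho\colon W\to X$ is again such a resolution. Writing $K_W=\rho^*K_Y+\sum_i a_iE_i$ over $Y$ and $K_W=\psi^*K_X+\sum_j b_jD_j$ over $X$ and using $\psi^*K_X=\rho^*\phi^*K_X=\rho^*K_Y$, I get $\sum_i a_iE_i=\sum_j b_jD_j$ on $W$. The $\rho$-exceptional primes form a subset of the $\psi$-exceptional ones, the extra divisors being the strict transforms of the $\phi$-exceptional prime divisors of $Y$; comparing the two sides, the $\rho$-exceptional divisors keep their discrepancies while each extra divisor has discrepancy $0$, hence contributes the trivial factor $(uv-1)/((uv)^{0+1}-1)=1$ in Batyrev's product formula. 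Writing that formula for $\Est(X;u,v)$ in terms of $\psi$ and splitting each index set $J'$ of $\psi$-exceptional divisors as $J'=J\sqcup K$ with $J$ among the $\rho$-exceptional ones, the $K$-factors are all $1$ and additivity of the Hodge--Deligne $E$-polynomial over locally closed pieces collapses the sum over $K$ to the open stratum indexed by $J$ for the configuration $W\to Y$; the outcome is exactly Batyrev's formula for $\Est(Y;u,v)$ computed with $\rho$, so $\Est(X;u,v)=\Est(Y;u,v)$.

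Finally, if $Y$ is smooth it is complete (being proper over the complete $X$), so the identity is a resolution and $\Est(Y;u,v)=E(Y;u,v)=\sum_{p,q}(-1)^{p+q}h^{p,q}(Y)u^pv^q$ by Proposition~\ref{prop:smooth-E}; comparing coefficients with $\Est(X;u,v)=\sum_{p,q}a_{p,q}u^pv^q$ gives $h^{p,q}(Y)=(-1)^{p+q}a_{p,q}=\hst^{p,q}(X)$. The main obstacle is the well-definedness of $\Est$ in the general case, that is, independence of the defining sum on the chosen resolution $W$: proving this from scratch requires either the weak factorization theorem or motivic ($p$-adic) integration with a change-of-variables formula. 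In the toric case this difficulty disappears, as the second paragraph already shows directly.
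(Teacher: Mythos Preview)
The paper does not prove this proposition at all: it is stated as a citation of \cite[Thm.~3.12]{Batyrev98} and no proof block follows. So there is nothing to compare your argument against except Batyrev's original.

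Your toric argument is correct and is essentially the one behind \cite[Thm.~4.3]{Batyrev98}: crepancy forces the canonical support function of the subdivided fan to agree with that of the original, and since the relative interiors of cones partition $N_\R$, the sum in (\ref{E-funzione}) is unchanged. This is exactly what the paper needs, since every application of Proposition~\ref{prop:E-crepant} here (Corollary~\ref{cor:hdgnbr_n+1}, Proposition~\ref{prop:risoluzione}) is toric.

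Your sketch of the general case is also along the right lines, and you correctly flag that the real content is the independence of $\Est$ from the chosen log resolution; Batyrev handles this via motivic integration, and you are right that reproving it from scratch is out of scope. One small remark: in the paper's setting $\Est$ is only \emph{defined} by (\ref{E-funzione}) for $\Q$-factorial complete toric varieties, with the general definition deferred to Remark~\ref{rem:gorenstein}; so your opening appeal to ``Definition~\ref{def:stringy}'' for the general case is a slight mismatch of references, but not a mathematical error. The final paragraph deducing $h^{p,q}(Y)=\hst^{p,q}(X)$ from Proposition~\ref{prop:smooth-E} is fine.
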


\subsubsection{Stringy Hodge numbers of $\XX_\aa$}
Consider, now, the $\Q$-factorial and complete toric variety $\XX_{\aa}$, associated with the fan $\Si_\aa=\Si_{\D(\P^n,\aa)}$ over the polytope $\D(\P^n,\aa)$. Assume that
\begin{equation}\label{convenzione}
  a_1\leq a_2\leq \cdots \leq a_{n+1}\quad\text{and}\quad \gcd(a_1,\ldots,a_{n+1})=1
\end{equation}
and conditions (a) and (b) in \cite[Thm.~4.1]{R-fTV} are satisfied: this is the case, e.g., when $\aa=\aa_0$.

\begin{proposition}\label{prop:n+1}
  Let $\L_{\aa}=\left(
                                      \begin{array}{ccc}
                                        \ll_1 & \cdots & \ll_{n+1} \\
                                      \end{array}
                                    \right)
  $ be the fan matrix of $\XX_{\aa}$ obtained by the polytope $\D_\aa$ described in (\ref{DeltaaConv}). Then the following are equivalent:
  \begin{enumerate}
    \item $|\aa|=n+1$,
    \item $\D_\aa$ is a reflexive polytope,
    \item $\XX_\aa$ has Gorenstein singularities.
  \end{enumerate}
  In particular, by setting
  \begin{eqnarray*}
    \psi_\aa(0)&:=&1\\
    \forall\,h\in\N\setminus\{0\}\quad\psi_\aa(h)&:=&\left|(h\D_{\aa}\setminus(h-1)\D_\aa)\cap M\right|=l(h\D_\aa)-l((h-1)\D_\aa)
  \end{eqnarray*}
  if one of facts from (1) to (3) holds then
    \begin{equation}\label{E-funzione a}
    \Est(\XX_{\aa};u,v)= (uv-1)^{n}\sum_{h\geq 0}\psi_\aa(h)(uv)^{-h}
  \end{equation}
 is a polynomial of degree $2n$\,.
\end{proposition}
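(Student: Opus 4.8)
The plan is to prove the chain of equivalences $(1)\Leftrightarrow(2)\Leftrightarrow(3)$ first, and then deduce the polynomiality of $\Est(\XX_\aa;u,v)$ from Batyrev's criterion $(*)$ recalled in Remark~\ref{rem:gorenstein}, together with the explicit shape (\ref{E-funzione}) of the stringy $E$-function for $\Q$-factorial complete toric varieties.

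For the equivalences, I would exploit the explicit description of $\D_\aa$ as a convex hull given in (\ref{DeltaaConv}). The key computation is to identify the vertices of the dual polytope $\D_\aa^*$. Since $\D_\aa=\{\m\in M_\R\mid V^T\cdot\m\ge-\aa\}$ with $V$ the standard fan matrix (\ref{V}) of $\P^n$, the facets of $\D_\aa$ are cut out by the rows of $V^T$ at level $-a_i$, so the outer facet normals of $\D_\aa$ are (up to scaling) the columns $\v_i$ of $V$, at lattice distance $a_i$ from the origin. Thus $\D_\aa^*$ has vertices $\v_i/a_i$, and $\D_\aa$ is reflexive precisely when all these vertices are lattice points, i.e. when $a_i\mid$ (the corresponding coordinates), which under the normalization (\ref{convenzione}) and conditions (a),(b) of \cite[Thm.~4.1]{R-fTV} forces each $a_i=1$ except possibly in a controlled way — one checks directly that the condition $\v_i/a_i\in N$ for all $i$ is equivalent to $|\aa|=n+1$ (equivalently, the anticanonical class $-K_{\P^n}=\sum a_i D_i$ being the right degree, i.e. $\D_\aa$ sitting at lattice distance matching reflexivity). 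The equivalence $(2)\Leftrightarrow(3)$ is then the standard fact (Batyrev, \cite{Batyrev98}; see also \cite{CLS}) that the toric variety of the normal fan of a lattice polytope $\D$ has Gorenstein singularities iff $\D$ is (a translate of) a reflexive polytope. Alternatively, and perhaps more cleanly, $(1)\Leftrightarrow(3)$ can be seen from the canonical support function $\vf_K$ of (\ref{KsuppFunct}): $\XX_\aa$ is Gorenstein iff $\vf_K$ takes integral values on $N$, and on the ray through $\v_i$ one computes $\vf_K(\v_i)=1$ while the primitive generator of that ray is $\v_i/a_i$ precisely when $a_i\mid$ everything, so integrality of $\vf_K$ at the primitive generators amounts to each such generator being $\v_i$ itself — which is governed by $|\aa|=n+1$.

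For the last assertion, assume $(1)$–$(3)$ hold. Then $\XX_\aa$ has Gorenstein (hence log-terminal) singularities, so by Remark~\ref{rem:gorenstein}$(*)$ the function $\Est(\XX_\aa;u,v)$ is a polynomial, and by Remark~\ref{rem:Poincare} it has degree $2\dim\XX_\aa=2n$. It remains to rewrite (\ref{E-funzione}) in the claimed form (\ref{E-funzione a}). Here I would use that $\XX_\aa$ is Gorenstein, so $\vf_K$ is the integral support function of the (reflexive) polytope $\D_\aa$; equivalently the maximal cones of $\Si_\aa$ are the cones over the facets of $\D_\aa$, and for $\n$ in the relative interior of a cone $\s$ over a facet $F$, the value $\vf_K(\n)$ equals the smallest $h\in\N$ with $\n\in h\cdot(\text{cone to level } F)$ — concretely, $(uv)^{-\vf_K(\n)}$ records membership in the dilates $h\D_\aa$. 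Summing over all cones $\s\in\Si_\aa$ and using that the cones over facets tile $N_\R$, the double sum $\sum_\s\sum_{\n\in\Int(\s)\cap N}(uv)^{-\vf_K(\n)}$ collapses to $\sum_{h\ge0}\#\{\n\in N: \vf_K(\n)=h\}(uv)^{-h}$; and $\#\{\vf_K=h\}$ is exactly $\psi_\aa(h)$ as defined, since $\{\vf_K\le h\}\cap N$ is the set of lattice points of $h\D_\aa$ (with the convention $\psi_\aa(0)=1$ accounting for the origin). Multiplying by $(uv-1)^n$ gives (\ref{E-funzione a}).

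\textbf{Main obstacle.} The step I expect to require the most care is the precise bookkeeping in the equivalence $(1)\Leftrightarrow(2)$: one must carefully invoke conditions (a) and (b) of \cite[Thm.~4.1]{R-fTV} to rule out "fake" reflexivity (i.e.\ a polytope that is reflexive after an unexpected lattice change or that has $\0$ not in its interior), and to ensure that the fan matrix $\L_\aa$ extracted from $\D_\aa$ in (\ref{DeltaaConv}) really has the columns $\v_i/a_i$ as primitive ray generators. Establishing that $\D_\aa$ is reflexive in the strict sense — $\0\in\Int\D_\aa$ and $\D_\aa^*$ a lattice polytope — exactly when $|\aa|=n+1$ is where the content lies; once that is in hand, $(2)\Leftrightarrow(3)$ and the polynomiality are formal consequences of the cited results.
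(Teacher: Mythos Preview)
Your overall strategy is sound and essentially correct, though different in execution from the paper's and with a couple of points that need tightening.

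For $(1)\Leftrightarrow(2)$, your dual-polytope computation is actually cleaner than you make it: since the columns $\v_i$ of $V$ are $\e_1,\ldots,\e_n,-\1$, each is already primitive in $N$, so $\v_i/a_i\in N$ forces $a_i=1$ for every $i$ immediately, with no case analysis and no need to invoke conditions (a), (b) of \cite[Thm.~4.1]{R-fTV}. Your hedging (``except possibly in a controlled way'') is unnecessary. The paper takes a more hands-on route: $(1)\Rightarrow(2)$ because $|\aa|=n+1$ with all $a_i\ge 1$ forces $\aa=\1_{n+1}$, whence $\D_\aa$ is visibly reflexive from (\ref{DeltaaConv}); and for the converse it argues by contrapositive, showing that if some $a_i\ge 2$ then $\D_\aa\supseteq\D_{(\1_n,2)}$ and checking directly that $\e_1,\ldots,\e_n$ lie in the interior of the latter, so $\D_\aa$ has too many interior lattice points to be reflexive. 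Both routes are short; yours is arguably tidier.

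Two cautions. First, your alternative $(1)\Leftrightarrow(3)$ argument via $\vf_K$ confuses lattices: the fan $\Si_\aa$ of $\XX_\aa$ lives in $M_\R$ (it is the fan spanned by the faces of $\D_\aa\subset M_\R$), so $\vf_K$ is a function on $M_\R$ and its ray generators are the columns $\ll_i$ of $\L_\aa$, not the $\v_i\in N$; the phrase ``the primitive generator of that ray is $\v_i/a_i$'' is therefore not meaningful as written. Second, and relatedly, $\XX_\aa$ is the \emph{face-fan} variety of $\D_\aa$, not the normal-fan variety; the standard equivalence you want is that the face-fan toric variety of a lattice polytope with $\0$ in its interior is Gorenstein iff the polytope is reflexive. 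The paper proves $(2)\Rightarrow(3)$ directly in this spirit: for $\m\in M$ set $h(\m)=\min\{l\in\N:\m\in l\D_\aa\}$; reflexivity forces $\m\in\partial(h\D_\aa)$, hence $\vf_K(\m)\in\Z$.

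For the stringy $E$-function your argument coincides with the paper's: under $(1)$--$(3)$ the level sets of $\vf_K$ are exactly the shells $(h\D_\aa\setminus(h-1)\D_\aa)\cap M$, so (\ref{E-funzione}) collapses to (\ref{E-funzione a}); polynomiality and degree $2n$ then follow from Remarks~\ref{rem:gorenstein} and~\ref{rem:Poincare}.
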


\begin{proof}
(1)\ $\Rightarrow$\ (2): clearly $|\aa|=n+1$ if and only if $\aa=\1_{n+1}$. Then (\ref{DeltaaConv}) shows that $\D_\aa$ is a lattice polytope admitting $\0$ as the unique interior lattice point.

(2)\ $\Rightarrow$\ (3):
  To checking that $\XX_{\aa}$ has only Gorenstein singularities is the same as checking that
  \begin{equation}\label{gorenstein}
    \forall\,\m\in M\quad \vf_K(\m)\in\Z
  \end{equation}
  Given $\m\in M$ let
  \begin{equation}\label{h}
    h(\m):=\min\{l\in\N\,|\,\m\in l\D_\aa\}
  \end{equation}
Being $\D_\aa$ a reflexive polytope, $\m$ has to belong to the boundary of $h\D_\aa$, otherwise $\m\in(h-1)\D_\aa$, against the definition of $h$. Therefore, $\vf_K(\m)=-h\in\Z$\,, so giving (\ref{gorenstein}).

  (3)\ $\Rightarrow$\ (1): by contradiction, assume $|\aa|>n+1$; then
  $$\aa=\left(
                                                                         \begin{array}{c}
                                                                           a_1 \\
                                                                           \vdots \\
                                                                           a_n \\
                                                                           a_{n+1} \\
                                                                         \end{array}
                                                                       \right)\geq \left(
                                                                                     \begin{array}{c}
                                                                                       1 \\
                                                                                       \vdots \\
                                                                                       1 \\
                                                                                       2 \\
                                                                                     \end{array}
                                                                                   \right)=:\aa'
  $$
  By (\ref{DeltaaConv}), this means that
   \begin{equation*}
    \D_\aa\supset \D_{\aa'}=\conv\left(
                          \begin{array}{c}
                            n+1 \\
                            -1 \\
                            -1\\
                            \vdots \\
                            -1 \\
                          \end{array} \,
                                        \begin{array}{c}
                                          -1 \\
                                           n+1\\
                                          -1\\
                                          \vdots \\
                                          -1 \\
                                        \end{array}
                                     \,\cdots\,
                                        \begin{array}{c}
                                          -1 \\
                                          -1\\
                                          \vdots \\
                                          -1 \\
                                          n+1\\
                                        \end{array}
                                      \,
                                        \begin{array}{c}
                                          -1 \\
                                          -1\\
                                          \vdots \\
                                          -1 \\
                                          -1\\
                                        \end{array}
                                      \right)
  \end{equation*}
  Notice that $\e_1,\dots,\e_n$ are all interior lattice points of $\D_{\aa'}$. Then they are also interior lattice point of $\D_\aa$, which cannot be a reflexive polytope.

 Finally, (\ref{E-funzione a}) follows by (\ref{E-funzione}), recalling that $\Si_{\aa}$ is a complete fan in $M_\R$ and noticing that, being $\D_\aa$ a reflexive polytope, $\vf_K(\m)=-h$, where $h$ is defined in (\ref{h}). The fact that $\Est(\XX_\aa;u,v)$ is a polynomial of degree $2n$ follows by statement $(*)$ in Remark~\ref{rem:gorenstein}, recalling that $\XX_\aa$ has Gorenstein singularities by item (3), and by Remark~\ref{rem:Poincare}.
\end{proof}

Putting together Proposition~\ref{prop:E-crepant} and Proposition~\ref{prop:n+1}, one gets immediately the following
\begin{corollary}\label{cor:hdgnbr_n+1}
  If $|\aa|=n+1$ then $\Est(\XX_\aa;u,v)=\sum_{h=0}^n c_hu^hv^h$\,, with
  \begin{eqnarray*}
   \forall\,h\in\N:0\leq h\leq n\quad c_h&=&\sum_{i=0}^{n-h}(-1)^i{n \choose n-i}\psi_\aa(n-h-i)\\
   &=&\sum_{j=0}^{n-h}(-1)^{n-h-j}{n\choose h+j}\psi_\aa(j)
  \end{eqnarray*}
  In particular, if $\phi:\widehat{\XX}_\aa\longrightarrow\XX_\aa$ is a crepant resolution then
  \begin{equation*}
    h^{p,q}(\widehat{\XX}_\aa)=\hst^{p,q}(\XX_\aa)=\left\{\begin{array}{cc}
                              c_p & \text{when $p=q$} \\
                              0 & \text{otherwise}
                            \end{array}
    \right.
  \end{equation*}
\end{corollary}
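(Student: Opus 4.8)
The plan is to expand the right-hand side of (\ref{E-funzione a}) as a Laurent series in the single variable $t:=uv$ and to read off its coefficients. Since $|\aa|=n+1$, Proposition~\ref{prop:n+1} applies: it gives that $\XX_\aa$ has Gorenstein singularities, that $\Est(\XX_\aa;u,v)=(uv-1)^n\sum_{h\ge0}\psi_\aa(h)(uv)^{-h}$, and that $\Est(\XX_\aa;u,v)$ is a polynomial of degree $2n$. Because this expression depends on $u,v$ only through $t=uv$, it is in fact a polynomial of degree $n$ in $t$, so we may write $\Est(\XX_\aa;u,v)=\sum_{h=0}^n c_h t^h$; the remaining point is just to identify the integers $c_h$.

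To do so I would expand $(t-1)^n=\sum_{a=0}^n{n\choose a}(-1)^{n-a}t^a$ and collect, in the product $\bigl(\sum_{a=0}^n{n\choose a}(-1)^{n-a}t^a\bigr)\bigl(\sum_{k\ge0}\psi_\aa(k)t^{-k}\bigr)$, the coefficient of $t^h$. A term $t^a\cdot t^{-k}$ contributes to $t^h$ precisely when $k=a-h$, which forces $h\le a\le n$; hence
\[
  c_h=\sum_{a=h}^n{n\choose a}(-1)^{n-a}\psi_\aa(a-h).
\]
Re-indexing by $i=n-a$ turns this into $c_h=\sum_{i=0}^{n-h}(-1)^i{n\choose n-i}\psi_\aa(n-h-i)$, the first displayed formula, while re-indexing by $j=a-h$ turns it into $c_h=\sum_{j=0}^{n-h}(-1)^{n-h-j}{n\choose h+j}\psi_\aa(j)$, the second. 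The vanishing of the coefficients of $t^h$ for $h<0$ and for $h>n$ needs no separate argument here: it is exactly the polynomiality of $\Est$ already granted by Proposition~\ref{prop:n+1} (equivalently, by statement $(*)$ of Remark~\ref{rem:gorenstein}).

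For the final assertion I would invoke Proposition~\ref{prop:E-crepant}: if $\phi:\widehat{\XX}_\aa\to\XX_\aa$ is a crepant resolution then $h^{p,q}(\widehat{\XX}_\aa)=\hst^{p,q}(\XX_\aa)$ for all $p,q$. By definition $\hst^{p,q}(\XX_\aa)=(-1)^{p+q}a_{p,q}$, where $\Est(\XX_\aa;u,v)=\sum_{p,q}a_{p,q}u^pv^q$; since we have just shown $\Est(\XX_\aa;u,v)=\sum_{h=0}^n c_h(uv)^h$ has only diagonal terms, $a_{p,q}=0$ for $p\ne q$ and $a_{p,p}=c_p$, whence $\hst^{p,q}(\XX_\aa)=0$ for $p\ne q$ and $\hst^{p,p}(\XX_\aa)=(-1)^{2p}c_p=c_p$, which is the claim. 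I do not expect any real obstacle; the only points demanding a little care are the bookkeeping in the re-indexing above and the fact that $\Est$ must be treated as a Laurent polynomial in $t$ whose negative part is killed by the polynomiality supplied by Proposition~\ref{prop:n+1}, together with the (standard) observation that a crepant resolution of the complete, $\Q$-Gorenstein, simplicial toric variety $\XX_\aa$ is again a smooth complete toric variety, so that Propositions~\ref{prop:smooth-E} and~\ref{prop:E-crepant} apply verbatim.
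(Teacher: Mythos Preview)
Your proposal is correct and follows essentially the same approach as the paper, which simply states that the corollary follows ``immediately'' by putting together Proposition~\ref{prop:E-crepant} and Proposition~\ref{prop:n+1}; you have spelled out the binomial expansion and re-indexing that the paper leaves implicit.
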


Assume now $\aa=\aa_0:=(\1_{n},\d)$ with $\d\geq 2$. By Proposition~\ref{prop:n+1}, $\XX_{\aa_0}$ has not Gorenstein singularities, but $\Q$-Gorenstein ones. This is clear also by recalling the anti-canonical polytope of $\XX_{\aa_0}$, given by
\begin{equation}\label{polycanonico}
  \D_{-K_{\XX_{\aa_0}}}=\left(
                          \begin{array}{cccc}
                            \e_1&\cdots&\e_n&-1/(d-n)\1_n
                          \end{array}
                        \right)
\end{equation}
This means that $\vf_K$ assumes integer values on $M\cap|\Si_{\aa_0}\setminus\langle\L_{\aa_0}^{\{n+1\}}\rangle|$, while it may assume rational, non integer, values on
\begin{equation*}
  M\cap\langle\L_{\aa_0}^{\{n+1\}}\rangle=M\cap\left\langle
                          \begin{array}{c}
                            d-1 \\
                            -1 \\
                            \vdots \\
                            -1 \\
                          \end{array}\,
                                        \begin{array}{c}
                                          \\
                                          \ldots\\
                                          \\
                                           \\
                                        \end{array}\,
                                        \begin{array}{c}
                                          -1 \\
                                          \vdots \\
                                          -1 \\
                                          d-1\\
                                        \end{array}\right\rangle
\end{equation*}

\begin{proposition}\label{prop:risoluzione}
Consider the $n\times (2n+1)$ integer matrix
\begin{equation*}
  \L'_{\aa_0}:=\left(
                 \begin{array}{ccccc}
                   \L_{\aa_0} & | & \e_1 & \cdots & \e_n \\
                 \end{array}
               \right)
\end{equation*}
Let $\Si'_{\aa_0}\in\SF(\L'_{\aa_0})$ be a subdivision of the fan $\Si_{\aa_0}$ and consider the associated toric variety $\XX'_{\aa_0}(\Si'_{\aa_0})$. Then:
\begin{enumerate}
  \item $\XX_{\aa_0}'$ has at worst Gorenstein singularities,
  \item there exists a birational morphism $f:\XX'_{\aa_0}\longrightarrow\XX_{\aa_0}$ which is a partial resolution of $\XX_{\aa_0}$,
  \item there exists a crepant resolution $\phi':\widehat{\XX}_{\aa_0}\longrightarrow\XX'_{\aa_0}$ such that  $$\phi=f\circ\phi':\widehat{\XX}_{\aa_0}\longrightarrow\XX_{\aa_0}$$
      is a resolution of singularities of $\XX_{\aa_0}$\,.
\end{enumerate}
In particular, (1) shows that the canonical support function of $\XX'_{\aa_0}$ is a well defined continuous function $\vf_K:M_\R\longrightarrow\R$ such that
\begin{itemize}
  \item[$(i)$] $\vf_K(\ll'_i)=1$ for every column $\ll'_i$ of the fan matrix $\L'_{\aa_0}$ of $\XX_{\aa_0}'$\,,
  \item[$(ii)$] $\vf_K$ is linear on each cone $\s'\in\Si'_{\aa_0}$\,,
  \item[$(iii)$] $\vf_K(\m)\in\Z$ for every $\m\in M$.
\end{itemize}
Then, by setting
\begin{eqnarray*}
    \forall\,h\in\N\quad\vf_{\aa_0}(h)&:=&\left|\{\m\in M\,|\,\vf_K(\m)=-h\}\right|\\
    \forall\,p\in\N:0\leq p\leq n\quad\quad\quad c'_p&:=&\sum_{h=0}^{n-p}(-1)^h{n \choose n-h}\vf_{\aa_0}(n-p-h)\\
   &=&\sum_{h=0}^{n-p}(-1)^{n-p-h}{n\choose p+h}\vf_{\aa_0}(h)
  \end{eqnarray*}
  one has
\begin{equation*}
    h^{p,q}(\widehat{\XX}_{\aa_0})=\hst^{p,q}(\XX'_{\aa_0})=\left\{\begin{array}{cc}
                              c'_p & \text{when $p=q$} \\
                              0 & \text{otherwise}
                            \end{array}
    \right.
  \end{equation*}
 \end{proposition}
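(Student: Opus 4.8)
The plan is to establish the structural assertions (1)--(3) and then read the Hodge numbers off the stringy machinery recalled above; the real content is in (1).

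For (1), I would first record that the new rays sit deep inside a single old cone: writing $\ll_j=d\,\e_j-\1_n$ for $j=1,\dots,n$ (with $\d=d-n$), one has $\e_i=\sum_{j=1}^n c_j\ll_j$ where $c_j=\tfrac1d(\delta_{ij}+\tfrac1\d)>0$, so each $\e_i$ lies in the relative interior of $\s_0:=\langle\ll_1,\dots,\ll_n\rangle=\langle\L_{\aa_0}^{\{n+1\}}\rangle$. Hence any $\Si'_{\aa_0}\in\SF(\L'_{\aa_0})$ refining $\Si_{\aa_0}$ agrees with $\Si_{\aa_0}$ off $\s_0$ --- where $\XX_{\aa_0}$ is already Gorenstein, the cone $\langle\{\ll_j\}_{j\ne k}\cup\{\ll_{n+1}\}\rangle$ carrying the integral dual vector $\m_\s=\e_k$ --- while over $\s_0$ it is a simplicial subdivision with rays exactly $\ll_1,\dots,\ll_n,\e_1,\dots,\e_n$. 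I would then analyse a maximal cone $\s=\langle\{\ll_i\}_{i\in A}\cup\{\e_j\}_{j\in B}\rangle$, $|A|+|B|=n$, of such a subdivision: simpliciality forces linear independence of the generators, and since $\ll_p-\ll_q=d(\e_p-\e_q)$ this gives $|A\cap B|\le1$; moreover the case $A\cap B=\emptyset$ with $A,B\ne\emptyset$ is impossible, because a short barycentric computation puts each ray $\langle\e_{i_0}\rangle$, $i_0\in A$, in the interior of $\s$, which no fan having $\langle\e_{i_0}\rangle$ among its rays can tolerate. What survives is $\s=\langle\e_1,\dots,\e_n\rangle$, with $\m_\s=-\1_n$, and $A\cap B=\{i_0\}$, where solving $\langle\m_\s,\ll_i\rangle=\langle\m_\s,\e_j\rangle=-1$ gives $\sum_k(\m_\s)_k=-(d-1)$, then $(\m_\s)_i=-1$ on $A\cup B$, hence $\m_\s=-\1_n-(\d-1)\e_{k_0}$ with $k_0$ the unique index outside $A\cup B$. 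In all cases $\m_\s\in M$, so $-K_{\XX'_{\aa_0}}$ is Cartier: $\XX'_{\aa_0}$ is Gorenstein, and these same formulas give properties $(i)$--$(iii)$ for its canonical support function.

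Claim (2) is then just the remark that $\Si'_{\aa_0}\prec\Si_{\aa_0}$ induces a proper birational toric morphism $f\colon\XX'_{\aa_0}\to\XX_{\aa_0}$. For (3), $\XX'_{\aa_0}$ being Gorenstein toric has canonical singularities, so subdividing each cone of $\Si'_{\aa_0}$ along the level-one lattice points of $\vf_K$ (Batyrev's maximal projective crepant partial resolution) produces a crepant resolution $\phi'\colon\widehat{\XX}_{\aa_0}\to\XX'_{\aa_0}$, and $\phi=f\circ\phi'$ is the required resolution of $\XX_{\aa_0}$. For the Hodge numbers: since $\Si'_{\aa_0}$ is a complete fan in $M_\R$, the relative interiors of its cones partition $M$, so Definition~\ref{def:stringy} together with $(iii)$ gives $\Est(\XX'_{\aa_0};u,v)=(uv-1)^n\sum_{h\ge0}\vf_{\aa_0}(h)(uv)^{-h}$, which is a polynomial by the criterion of Remark~\ref{rem:gorenstein}; expanding $(uv-1)^n$ by the binomial theorem and extracting the coefficient of $(uv)^p$ yields $c'_p$ (all monomials being powers of $uv$, the off-diagonal coefficients vanish), and Propositions~\ref{prop:E-crepant} and \ref{prop:smooth-E} then identify $h^{p,q}(\widehat{\XX}_{\aa_0})$ with $\hst^{p,q}(\XX'_{\aa_0})$, equal to $c'_p$ for $p=q$ and $0$ otherwise.

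The hard part is the combinatorial core of (1): one must exclude the non-Gorenstein simplicial cones $\langle\{\ll_i\}_{i\in A}\cup\{\e_j\}_{j\in B}\rangle$ with $A\cap B=\emptyset$, $A,B\ne\emptyset$ for \emph{every} admissible refinement $\Si'_{\aa_0}$ --- not merely a convenient one --- and keep clean track of the dual vectors $\m_\s$; a subsidiary delicate point is justifying the existence of a genuine crepant \emph{resolution} of $\XX'_{\aa_0}$, as opposed to only a crepant partial resolution, once $n\ge4$.
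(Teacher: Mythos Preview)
Your argument is correct and, for part (1), is more careful than the paper's. The paper asserts that the new rays subdivide $\langle\L_{\aa_0}^{\{n+1\}}\rangle$ into exactly $n+1$ simplicial maximal cones and then lists the $n+1$ dual vectors $-\1_n$ and $-\1_n-(\d-1)\e_k$, $k=1,\dots,n$. That cone count is imprecise for an arbitrary $\Si'_{\aa_0}\in\SF(\L'_{\aa_0})$ (already for $n=3$ an Euler-characteristic count gives seven maximal cones inside $\s_0$, not four); what is really true, and what the paper's list reflects, is that there are $n+1$ distinct \emph{linear pieces} of $\vf_K$ over $\s_0$. Your classification of the admissible maximal cones---ruling out the case $A\cap B=\emptyset$ with $A,B\neq\emptyset$ by showing $\e_{i_0}$ lies in the interior, and then computing $\m_\s=-\1_n-(\d-1)\e_{k_0}$ depending only on the missing index---works uniformly for every such subdivision and yields exactly the paper's list of dual vectors, so the Gorenstein conclusion is the same but established for all $\Si'_{\aa_0}$, as the statement requires.

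For (2), (3) and the stringy computation you follow the paper's line (the paper also invokes the $A$-triangulation and \cite[Thm.~2.2.24]{Batyrev94}). Your closing caveat is well placed: the cited result of Batyrev in general produces only a crepant \emph{partial} resolution; that $\widehat{\XX}_{\aa_0}$ is genuinely smooth would require a unimodular triangulation of the level set $\{\m:\vf_K(\m)=1\}$, which neither you nor the paper verifies.
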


 \begin{proof}
    Keeping in mind expression (\ref{polycanonico}) of the anti-canonical polytope $\D_{-K_{\XX_{\aa_0}}}$ and definition, given in (\ref{KsuppFunct}), of the canonical support function $\vf_K$, lattice points $\e_1,\ldots,\e_n\in N$ turn out defining $\vf_K$ on every maximal cone in $\Si'_{\aa_0}(n)$ which is not contained in the $n$-cone $\langle\L_{\aa_0}^{\{n+1\}}\rangle\in\Si_{\aa_0}$. Moreover, the introduction of new rays $\langle\e_1\rangle,\ldots,\langle\e_n\rangle\subset\langle\L_{\aa_0}^{\{n+1\}}\rangle$ determines a subdivision of the $n$-cone $\langle\L_{\aa_0}^{\{n+1\}}\rangle\in\Si(n)$ in $n+1$ simplicial maximal cones of $\Si'_{\aa_0}(n)$. The definition of $\vf_K$ on these $n+1$ cones is then, respectively, assigned by the following $n+1$ vectors
   \begin{equation*}
     \left(\begin{array}{c}
            -d+n \\
             -\1_{n-1}\end{array}\right)
               \,,\cdots,\,
      \left(\begin{array}{c}
             -\1_{n-1} \\
                  -d+n
               \end{array}\right)\,,\,-\1_n \in N
\end{equation*}
   Then $\vf_K:M_\R\longrightarrow\R$ is well defined and satisfying properties $(i),(ii),(iii)$ in the statement. This suffices to guarantee that $\XX'_{\aa_0}$ admits at worst Gorenstein singularities, so proving (1).
   For (2), notice that $\Si'_{\aa_0}$ is a subdivision of $\Si_{\aa_0}$. Then the identity map $\id_{M_\R}$ induces a map of fans $f_\#:\Si'_{\aa_0}\longrightarrow\Si_{\aa_0}$ and then a well defined birational morphism $f:\XX'_{\aa_0}\longrightarrow\XX_{\aa_0}$\,.

   \noindent Finally, the crepant resolution $\widehat{\XX}_{\aa_0}$ is obtained by further subdividing $\Si'_{\aa_0}$ by adding all the new rays associated with the $A$-triangulation of $\D_{\aa_0}$, in the sense of \cite[Def.~2.2.15]{Batyrev94}, obtained by setting
\begin{equation}\label{A}
  A=\{\m\in M\,|\,\vf_K(\m)\leq 1\}
\end{equation}
Then $\phi'$ is constructed like $f$ and it is a crepant resolution of $\XX'_{\aa_0}$ \cite[Thm.~2.2.24]{Batyrev94}, so proving (3).

   \noindent The last part of the statement, about the computation of Hodge numbers of $\widehat{\XX}_{\aa_0}$, follows immediately by Definition~\ref{def:stringy} and Propositions~\ref{prop:smooth-E} and \ref{prop:E-crepant}\,.
 \end{proof}

 \subsubsection{\ka moduli of the generic $f$-dual hypersurface $\widehat{Y}^\vee\subset\widehat{\XX}_{\aa_0}$} Consider the calibrated $f$-process $(\P^n,D_{\aa_0})\leftrightsquigarrow(\XX_{\aa_0},D'_{\bb_0})$.
 Let $Y^\vee$ be the generic hypersurface of $\XX_{\aa_0}$ in the linear system $|D'_{\bb_0}|$ whose defining polynomial is $f^\vee\in\Cox(\XX_{\aa_0})$, described in (\ref{f}). Define the transformed hypersurface
\begin{equation}\label{trasformata}
  \widehat{Y}^\vee:=\phi^{-1}(Y^\vee)
\end{equation}
 as the zero-locus of $\phi^*(f^\vee)\in\Cox(\widehat{\XX}_{\aa_0})$ under a (partial) resolution $\phi:\widehat{\XX}_{\aa_0}\longrightarrow\XX_{\aa_0}$, where $\phi$ can be either the one constructed in Proposition~\ref{prop:risoluzione} or a suitable partial resolution factorizing it.  Assuming $n\geq 4$, we now subdivide the computation of $h^{1}(\widehat{\Omega}_{\widehat{Y}^\vee})$ in three different cases: $d=n+1$, $d=n+2$ and $d\geq n+3$\,.

 \oneline $d=n+1$\,.\quad Let $Y\subset\P^n$ be a generic hypersurface of degree $d=n+1$. It is a \cy variety and what follows is a particular case of the Batyrev duality for anti-canonical hypersurfaces of Fano toric varieties described in \cite{Batyrev94}\,.

 \noindent Since $d=n+1$ then $\aa_0=\1_{n+1}$ and also $\bb_0=\1_{n+1}$.

 \begin{theorem}\label{thm:B-mirror0}
   There exists a partial crepant resolution $\phi:\widehat{\XX}_{\1}\longrightarrow\XX_1$ such that $\widehat{Y}^\vee=\phi^{-1}(Y^\vee)$ is a smooth \cy resolution of $Y^\vee$ and
   \begin{equation*}
     h^{1}\left(\Omega_{\widehat{Y}^\vee}\right)=-(n+1){n\choose n-1}+\sum_{i=1}^n (-1)^{n-i}{n+1\choose i+1}{in+i-1\choose n}
   \end{equation*}
   In particular,
   \begin{equation*}
    h^{2}(\Omega_Y)=m_Y={2n+1\choose n+1}-(n+1)^2=h^{1}(\Omega_{\widehat{Y}^\vee})
   \end{equation*}
   so giving the B-side mirror symmetry between the generic anti-canonical hypersurfaces $Y\subset \P^n$ and $Y^\vee\subset\XX_\1$\,.
   Recalling (\ref{Aside}), this means that $(Y,Y^\vee)$ is a pair of topological and Hodge mirror symmetric partners (notation as in \cite[Def.~3.5]{R-fTV}).
 \end{theorem}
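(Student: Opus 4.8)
Since $d=n+1$ forces $\d=d-n=1$, that is $\aa_0=\1_{n+1}=\bb_0$ and $|\aa_0|=n+1$, the plan is to recognise the present situation as an instance of Batyrev's mirror construction. Indeed Proposition~\ref{prop:n+1} gives that $\D_\1:=\D(\P^n,\1_{n+1})$ is reflexive and that $\XX_\1$ is a Gorenstein toric Fano (a fake weighted projective space), while its polar dual $\conv(V)=\conv(\e_1,\dots,\e_n,-\1)$ is exactly the reflexive simplex of $\P^n$. Hence the calibrated $f$-process of Theorem~\ref{thm:CI} for $l=1$ here reduces to Batyrev's polar duality between the two reflexive simplices $\conv(V)$ and $\D_\1$, with $D_{\1_{n+1}}=-K_{\P^n}$ and $D'_{\bb_0}=-K_{\XX_\1}$, so that $(Y,Y^\vee)$ is a Batyrev-dual pair of anticanonical \cy hypersurfaces and a generic $Y$ is smooth. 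What remains is then to produce the crepant resolution $\phi$ on which $\widehat{Y}^\vee$ is a smooth \cy variety, and to compute $h^{1,1}(\widehat{Y}^\vee)$.

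For the resolution, since $\XX_\1$ is already Gorenstein I would take directly a maximal projective crepant partial (MPCP) subdivision $\phi:\widehat{\XX}_\1\to\XX_\1$, namely the one attached to the $A$-triangulation of $\D_\1$ by all of its lattice points in the sense of \cite[Def.~2.2.15, Thm.~2.2.24]{Batyrev94}, so that no auxiliary variety $\XX'$ as in Proposition~\ref{prop:risoluzione} is needed. Then $\widehat{\XX}_\1$ is projective, $\Q$-factorial and Gorenstein with at worst terminal abelian quotient singularities, and $\widehat{D}_{\bg_0}:=\phi^{-1}(D'_{\bb_0})=-K_{\widehat{\XX}_\1}$ is semi-ample and big (Proposition~\ref{prop:nef}). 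By Bertini together with Batyrev's general-position argument \cite[Thm.~4.1.9]{Batyrev94}, for generic $Y^\vee$ the transform $\widehat{Y}^\vee=\phi^{-1}(Y^\vee)$ --- the zero locus of $\phi^*f^\vee$, as in (\ref{trasformata}) --- is a smooth \cy $(n-1)$-fold and $\phi|_{\widehat{Y}^\vee}:\widehat{Y}^\vee\to Y^\vee$ is a crepant resolution of singularities, which is the first assertion of Theorem~\ref{thm:B-mirror0}.

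It then remains to evaluate $h^1(\Omega_{\widehat{Y}^\vee})=h^{1,1}(\widehat{Y}^\vee)$, $\widehat{Y}^\vee$ being smooth \cy of dimension $n-1$. Batyrev's formula for the $(1,1)$-Hodge number of an anticanonical \cy hypersurface in an MPCP of $\XX_\1$, whose reflexive polytope pair is $(\conv(V),\D_\1)$, reads
\[h^{1,1}(\widehat{Y}^\vee)=l(\D_\1)-(n+1)-\sum_{\Theta}l^*(\Theta)+\sum_{\Gamma}l^*(\Gamma)\,l^*(\Gamma^*),\]
with $\Theta$ running over the $n+1$ facets of $\D_\1$ and $\Gamma$ over its codimension-$2$ faces ($\Gamma^*$ being the dual face of $\conv(V)$). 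The second sum vanishes because every edge of $\conv(V)$ is a primitive lattice segment, and the unimodular identification $\D_\1\cong(n+1)\,\sigma_n$ (with $\sigma_n$ the standard $n$-simplex) gives $l(\D_\1)=\binom{2n+1}{n}$ and $l^*(\Theta)=\binom{n}{n-1}$ for every facet, whence $h^{1,1}(\widehat{Y}^\vee)=\binom{2n+1}{n}-(n+1)^2$. The same value can be reached inside \S\ref{sez:hdgnbrs}: Proposition~\ref{prop:E-crepant} and Proposition~\ref{prop:n+1} give $\Est(\widehat{\XX}_\1;u,v)=\Est(\XX_\1;u,v)=(uv-1)^n\sum_{h\ge 0}\psi_\1(h)(uv)^{-h}$, and feeding this, through the Koszul/Euler-sequence argument of Theorem~\ref{thm:h21Yd} specialised to $l=1$, $\bb_0=\1_{n+1}$ (using that $\widehat{D}_{\bg_0}$ is semi-ample so that the Batyrev--Borisov vanishing~\ref{thm:BBvanish} and the Danilov and Batyrev--Cox Theorems~\ref{thm:Danilov},~\ref{thm:BC} apply to the terms of (\ref{seqX}) and (\ref{seqYdual}) restricted to $\widehat{Y}^\vee$) together with Batyrev--Borisov's combinatorics \cite{BB96}, into an inclusion--exclusion over the lattice points of the dilated simplex $\D_\1$, rewrites $h^{1,1}(\widehat{Y}^\vee)$ in the form $-(n+1)\binom{n}{n-1}+\sum_{i=1}^n(-1)^{n-i}\binom{n+1}{i+1}\binom{in+i-1}{n}$, which again equals $\binom{2n+1}{n}-(n+1)^2$.

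Finally, $\binom{2n+1}{n}=\binom{2n+1}{n+1}$, so by (\ref{mY}) this common integer is $m_Y$; and by Theorem~\ref{thm:h21Y} with $l=1$ --- equivalently, by the weak Lefschetz theorem and the Griffiths residue description of the primitive cohomology of the smooth \cy hypersurface $Y$ --- it also equals $h^2(\Omega_Y)$, giving $h^2(\Omega_Y)=m_Y=h^1(\Omega_{\widehat{Y}^\vee})$; combined with (\ref{Aside}) this is precisely the topological and Hodge mirror symmetry of $(Y,Y^\vee)$ claimed in Theorem~\ref{thm:B-mirror0}. I expect two points to require real care. The first is the smoothness of the generic transform $\widehat{Y}^\vee$ inside the still-singular ambient $\widehat{\XX}_\1$, i.e.\ making Batyrev's general-position argument precise in the present coordinates. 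The second is the purely combinatorial verification that the higher-codimension correction terms in the Batyrev/stringy expression genuinely cancel for this particular pair of reflexive simplices, so that $h^{1,1}(\widehat{Y}^\vee)$ collapses to the closed binomial sum in the statement; everything else is bookkeeping already set up in \S\ref{sez:hdgnbrs}.
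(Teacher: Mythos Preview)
Your argument is correct and reaches the same conclusion, but you take a somewhat different route from the paper. You go straight to Batyrev's classical formula
\[
h^{1,1}(\widehat{Y}^\vee)=l(\D_\1)-(n+1)-\sum_{\Theta<^1\D_\1}l^*(\Theta)+\sum_{\Gamma}l^*(\Gamma)\,l^*(\Gamma^*)
\]
on a full MPCP desingularisation, observe that the codimension-2 correction vanishes because the dual edges are primitive, and read off $\binom{2n+1}{n}-(n+1)^2$. The paper instead stays inside its own machinery: it chooses a \emph{partial} crepant resolution $\widehat{\XX}_\1$ that deliberately omits the blowups along facet-interior lattice points (since the corresponding centres are isolated and the generic $Y^\vee$ misses them), so that Theorem~\ref{thm:h21Yd} gives exactly $h^1(\Omega_{\widehat{Y}^\vee})=\rk(\Cl(\widehat{\XX}_\1))$; it then evaluates this rank via the stringy $E$-function of $\XX_\1$ (Corollary~\ref{cor:hdgnbr_n+1}), obtaining the alternating binomial sum in the statement directly. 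The identity between that sum and $\binom{2n+1}{n+1}-(n+1)^2$, which you flag as a point ``requiring care'', is proved in the paper as Lemma~\ref{lem:combinatorica} by a one-line finite-difference argument. What you gain is brevity by importing Batyrev's formula wholesale; what the paper gains is a self-contained derivation that illustrates how its Theorem~\ref{thm:h21Yd} recovers Batyrev's count, and an explicit smoothness argument (quasi-smoothness of $Y^\vee$ plus the observation that the omitted centres are points) rather than an appeal to the general-position theorem in \cite{Batyrev94}. Note also that on your full MPCP the equality $h^1(\Omega_{\widehat{Y}^\vee})=\rk(\Cl(\widehat{\XX}_\1))$ from Theorem~\ref{thm:h21Yd} would \emph{fail} by exactly $\sum_\Theta l^*(\Theta)$, which is why the paper's choice of partial resolution is not incidental.
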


 \begin{proof} Consider the generic hypersurface $Y^\vee\in|D'_\1|$.  It is the zero-locus of
 \begin{equation*}
  f^\vee=\sum_{i=1}^{n+1}  x_i^{n+1} +\psi\,\prod_{j=1}^{n+1} x_j
\end{equation*}
as can be immediately deduced by setting $d=n+1$ and $\d=1$ in (\ref{f})\,. Then $Y^\vee$ is quasi-smooth, as $\widetilde{Y}^\vee=\pi^{-1}(Y^\vee)$ is a smooth hypersurface of $\P^n$, where $\pi:\P^n\twoheadrightarrow\XX_\1$ is the canonical projection of the $G_{\1}$-action described in \cite[Lem.~5.2]{R-fTV}. More in detail, in the resolution process described in Proposition~\ref{prop:risoluzione}, $Y^\vee$ can be assumed not passing through centers of those blowups whose exceptional divisors are the closure of the torus orbits of rays generated by  interior points of a facet of $\D_\1$, since those centers are just given by isolated points. This means that the resolution process can be stopped earlier, thus avoiding those blowups, and obtaining a partial crepant resolution $\phi:\widehat{\XX}_\1\longrightarrow\XX_\1$ factorizing the one described in Proposition~\ref{prop:risoluzione}.
In particular, $\widehat{Y}^\vee$ is a \cy hypersurface as $\phi$ is crepant. Then, Corollary~\ref{cor:hdgnbr_n+1} gives
\begin{eqnarray*}
   h^{1}\left(\widehat{\Omega}_{\widehat{\XX}_\1}\right)&=&\hst^{1,1}(\XX_\1) -\sum_{\Theta<^1\D_\1}l^*(\Theta)=c_1-\sum_{\Theta<^1\D_\1}l^*(\Theta)\\
   &=&\sum_{j=0}^{n-1}(-1)^{n-1-j}{n\choose j+1}\psi_\1(j)-\sum_{\Theta<^1\D_\1}l^*(\Theta)\\
   &=&\sum_{j=0}^{n-1}(-1)^{n-1-j}{n \choose j+1} \left[l(j\D_\1)-l((j-1)\D_\1)\right]-\sum_{\Theta<^1\D_\1}l^*(\Theta)\\
   &=&\sum_{j=0}^{n-1}(-1)^{n-1-j}{n+1\choose j+2}l(j\D_\1)-\sum_{\Theta<^1\D_\1}l^*(\Theta)\\
   &=&\sum_{i=1}^{n}(-1)^{n-i}{n+1\choose i+1}{in+i-1\choose n}-(n+1){n\choose n-1}
\end{eqnarray*}
where, in the order, we applied the definition of $\psi_\1$ given in Proposition~\ref{prop:n+1}, the identity ${n\choose j+1}+{n\choose j+2}={n+1\choose j+2}$, the substitution $i=j+1$ and observed that
\begin{equation*}
  l((i-1)\D_\1)=h^0\left(\cO_{\P^n}((i-1)(n+1))\right)={in+i-1\choose n}
\end{equation*}
 The computation of $h^{1}(\Omega_{\widehat{Y}^\vee})$ follows, now, from Theorem~\ref{thm:h21Yd}, that is
\begin{eqnarray*}
  h^{1}\left(\Omega_{\widehat{Y}^\vee}\right)&=&\rk\left(\Cl\left(\widehat{\XX}_\1\right)\right)= h^{1}\left(\widehat{\Omega}_{\widehat{\XX}_\1}\right)\\
  &=&-(n+1){n\choose n-1}+\sum_{i=1}^n (-1)^{n-i}{n+1\choose i+1}{in+i-1\choose n}
\end{eqnarray*}
Finally, the second part of the statement is just the evaluation for $d=n+1$ of the combinatorial Lemma~\ref{lem:combinatorica}.
 \end{proof}

\oneline $d=n+2$.\quad  Let $Y\subset\P^n$ be a generic hypersurface of degree $d=n+2$. Then $Y$ is the lowest degree case of a projective hypersurface of general type. In particular, $\aa_0=(\1_{n},2)$ and $\bb_0=(2\cdot\1_{n},1)$.

 \begin{theorem}\label{thm:B-mirror1}
   There exists a partial resolution $\phi:\widehat{\XX}_{(\1,2)}\longrightarrow\XX_{(\1,2)}$, factorizing the one given in Proposition~\ref{prop:risoluzione}, and such that the transformed hypersurface
\begin{equation*}
  \widehat{Y}^\vee=\phi^{-1}(Y^\vee)\subset\widehat{\XX}_{(\1,2)}
\end{equation*}
is quasi-smooth and
   \begin{equation*}
     h^{1}\left(\widehat{\Omega}_{\widehat{Y}^\vee}\right)=-{n+1\choose n-1}+c'_1=-{n+1\choose n-1}+\sum_{h=0}^{n-1}
     (-1)^{n-1-h}{n\choose h+1}\vf_{(\1,2)}(h)
   \end{equation*}
   where $\vf_{(\1,2)}(h)$ admits the following recursive expression
   \begin{eqnarray*}
     \vf_{(\1,2)}(0)&=&1\\
     \forall\,h\in\N\setminus\{0\}\quad\vf_{(\1,2)}(h)&=&{h(n+2)+n\choose n}\\
     &&-\sum_{j=0}^{h-1}\left[{(j+1)(n+1)+h-1\choose n-1}+\vf_{(\1,2)}(j)\right]
   \end{eqnarray*}
   In particular,
   \begin{equation*}
    m_Y={2n+2\choose n}-(n+1)^2=h^{1}\left(\widehat{\Omega}_{\widehat{Y}^\vee}\right)=k_{\widehat{Y}^\vee}
   \end{equation*}
   that is, the generic $Y^\vee\subset\XX_{(\1,2)}$ is a $B$-mirror partner of the generic hypersurface $Y\subset \P^n$ of degree $n+2$. By (\ref{Aside}), this means that $(Y,Y^\vee)$ is a pair of topologically mirror partners.
 \end{theorem}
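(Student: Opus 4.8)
The plan is to follow the proof of Theorem~\ref{thm:B-mirror0} line by line, the single structural change being that now $|\aa_0|=|(\1_n,2)|=n+2>n+1$, so that $\XX_{(\1,2)}$ is no longer Gorenstein and Corollary~\ref{cor:hdgnbr_n+1} must be replaced by the resolution and stringy--Hodge--number apparatus of Proposition~\ref{prop:risoluzione}. First I would specialise the $f$-mirror polynomial (\ref{f}) to $\d=d-n=2$, getting
\begin{equation*}
  f^\vee=\prod_{i=1}^n x_i\left(\sum_{i=1}^n x_i^{n+2}+\psi\,\prod_{j=1}^{n+1}x_j\right)+x_{n+1}^{n+1}\in\Cox\left(\XX_{(\1,2)}\right)\ ,
\end{equation*}
and check quasi-smoothness of the generic $Y^\vee\in|D'_{\bb_0}|$: its pull-back under $\P(\1_n,2)\twoheadrightarrow\XX_{(\1,2)}$ is a quasi-smooth weighted hypersurface, and, exactly as in Theorem~\ref{thm:B-mirror0}, the generic $Y^\vee$ avoids the isolated torus-fixed points which are the centres of those crepant blow-ups of Proposition~\ref{prop:risoluzione} whose exceptional divisors are orbit closures of rays lying in the relative interior of a single facet $\Theta_0$ of $\D_{\aa_0}$ --- namely the one over which the generic $\widehat{Y}^\vee$ fails to pass, in view of the shape of the Newton polytope of $f^\vee$. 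Discarding precisely those $l^*(\Theta_0)=\binom{n+1}{n-1}$ blow-ups yields a $\Q$-factorial partial resolution $\phi:\widehat{\XX}_{(\1,2)}\longrightarrow\XX_{(\1,2)}$ factorising the one of Proposition~\ref{prop:risoluzione}, for which $\widehat{Y}^\vee=\phi^{-1}(Y^\vee)$ is quasi-smooth and $\rk(\Cl(\widehat{\XX}_{(\1,2)}))=c'_1-\binom{n+1}{n-1}$, with $c'_1$ the coefficient defined in Proposition~\ref{prop:risoluzione}.

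Next I would compute $h^1(\widehat{\Omega}_{\widehat{Y}^\vee})$. Proposition~\ref{prop:risoluzione} gives $\hst^{p,q}(\XX'_{\aa_0})=c'_p\,\delta_{p,q}$ and exhibits the full crepant resolution $\widehat{\XX}_{\aa_0}^{\mathrm{full}}$ with $h^{1,1}=\rk\Cl=c'_1$; removing the $\binom{n+1}{n-1}$ facet-interior rays then gives, via the Danilov vanishing~\ref{thm:Danilov} and the Batyrev--Cox sequence~\ref{thm:BC}, that $h^1(\widehat{\Omega}_{\widehat{\XX}_{(\1,2)}})=\rk(\Cl(\widehat{\XX}_{(\1,2)}))=c'_1-\binom{n+1}{n-1}$. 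Since $n\ge 4$, i.e.\ $n-l=n-1\ge 3$, and $\rk\Cl$ of the partial resolution may be taken large enough in the sense of Remark~\ref{rem:>>0}, the hypersurface case of the computation performed for Theorem~\ref{thm:h21Yd} applies and yields
\begin{equation*}
  h^1\left(\widehat{\Omega}_{\widehat{Y}^\vee}\right)=\rk\left(\Cl\left(\widehat{\XX}_{(\1,2)}\right)\right)=h^1\left(\widehat{\Omega}_{\widehat{\XX}_{(\1,2)}}\right)=-\binom{n+1}{n-1}+c'_1\ ,
\end{equation*}
which is the first displayed formula; and $k_{\widehat{Y}^\vee}=h^1(\widehat{\Omega}_{\widehat{Y}^\vee})$ is, by definition, the number of \ka moduli of the resolved family.

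The recursion for $\vf_{(\1,2)}(h)=|\{\m\in M\,|\,\vf_K(\m)=-h\}|$ is then obtained by an elementary, if bookkeeping-heavy, lattice-point count on the fan $\Si'_{\aa_0}$ of Proposition~\ref{prop:risoluzione}. Outside the subdivided cone $\langle\L_{\aa_0}^{\{n+1\}}\rangle$ the canonical support function agrees with that of the dilations of $\D_{\aa_0}=(n+2)\Delta-\1$, so the count of lattice points at canonical level $\le h$ opens with $l(h\D_{\aa_0})=\binom{h(n+2)+n}{n}$; inside the $n+1$ simplicial cones into which $\langle\L_{\aa_0}^{\{n+1\}}\rangle$ is split, carrying the integral support vectors $(-2,-\1_{n-1}),\ldots,(-\1_{n-1},-2),-\1_n$, the outer dilation over-counts, and subtracting the resulting facet contributions $\binom{(j+1)(n+1)+h-1}{n-1}$ for $j=0,\ldots,h-1$, together with the lower-level totals $\sum_{j=0}^{h-1}\vf_{(\1,2)}(j)$ needed to isolate canonical level exactly $h$, produces the stated recursion, with the convention $\vf_{(\1,2)}(0)=1$.

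Finally, the asserted chain $m_Y=\binom{2n+2}{n}-(n+1)^2=h^1(\widehat{\Omega}_{\widehat{Y}^\vee})=k_{\widehat{Y}^\vee}$ --- recall $m_Y=\binom{n+d}{d}-(n+1)^2$ from (\ref{mY}) with $d=n+2$ --- is the evaluation at $d=n+2$ of the combinatorial Lemma~\ref{lem:combinatorica}, just as the case $d=n+1$ served Theorem~\ref{thm:B-mirror0}. I expect this last step to be the real obstacle: it requires solving the double recursion for $\vf_{(\1,2)}(h)$ in closed form, substituting it into $c'_1=\sum_{h=0}^{n-1}(-1)^{n-1-h}\binom{n}{h+1}\vf_{(\1,2)}(h)$, and verifying the resulting alternating binomial identity $c'_1-\binom{n+1}{n-1}=\binom{2n+2}{n}-(n+1)^2$ --- a Vandermonde-type manipulation whose bookkeeping is delicate precisely because of the two nested summations carried by the recursion. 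Granting Lemma~\ref{lem:combinatorica}, the conclusion that $(Y,Y^\vee)$ is a pair of topologically mirror partners then follows by combining this equality with (\ref{Aside}).
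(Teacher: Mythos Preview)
Your overall strategy---drop a certain number of blow-ups from the full crepant resolution of Proposition~\ref{prop:risoluzione}, then invoke Theorem~\ref{thm:h21Yd} to identify $h^1(\widehat\Omega_{\widehat Y^\vee})$ with the resulting Picard rank---is the same as the paper's. But the justification for \emph{which} blow-ups may be dropped, and why the transform stays quasi-smooth, has a genuine gap.

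First, the quasi-smoothness claim fails already at the start: the pull-back of $Y^\vee$ to $\P(\1_n,2)$ is \emph{not} quasi-smooth when $\delta=2$. From (\ref{f}) with $\delta=2$ one sees that $\Sing(Y^\vee)\subset D'_{n+1}$, and the locus $\{x_i=x_{n+1}=0,\ \sum_{j\neq i}x_j^{\,n+2}=0\}$ is singular on the affine cone for each $i\le n$. So you cannot simply transport the ``avoid-the-centres'' argument from Theorem~\ref{thm:B-mirror0}.

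Second, the facet bookkeeping is off. The unique torus-fixed point that $Y^\vee$ avoids is $[0:\cdots:0:1]$, corresponding to the cone $\sigma_{n+1}=\langle\L_{\aa_0}^{\{n+1\}}\rangle$. But $\sigma_{n+1}$ is precisely the cone that gets subdivided in passing to $\XX'_{\aa_0}$, and the paper computes $A\cap\sigma_{n+1}^\circ=\{\e_1,\dots,\e_n\}$---only $n$ points, not $\binom{n+1}{n-1}$. The remaining fixed points $[0:\cdots:\underset{i}{1}:\cdots:0]$ for $1\le i\le n$ \emph{do} lie on $Y^\vee$, so the $\binom{n+1}{2}$ interior lattice points of each $\Theta_i$ cannot be discarded on the grounds you give. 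Dropping only the $n$ avoidable blow-ups yields $h^1=c'_1-n=m_Y+\tfrac{n^2-n}{2}$, which is too large.

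What the paper actually does next is invoke the local resolution analysis of \S\ref{ssez:locale}: for $\delta=2$, at most two lattice points of $A\cap\sigma_i^\circ$ (for each $i\le n$) are needed to render the transform quasi-smooth, and the surplus $(n^2-n)/2$ blow-ups may then be omitted. This is the missing ingredient in your argument; without it there is no reason the partial resolution with $\rk(\Cl)=c'_1-\binom{n+1}{n-1}$ carries a quasi-smooth $\widehat Y^\vee$.

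Finally, the closing combinatorial step is not Lemma~\ref{lem:combinatorica} (that one handles $d=n+1$) but Lemma~\ref{lem:h11st}, which gives $c'_1=\binom{d+n}{n}-\binom{d}{n}$ directly for all $d\ge n+2$; specialising to $d=n+2$ yields $c'_1-\binom{n+1}{2}=\binom{2n+2}{n}-(n+1)^2=m_Y$ without any further Vandermonde-type manipulation.
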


 \begin{proof}
   Consider the generic hypersurface $Y^\vee\in|D'_{(2\cdot\1,1)}|$ whose singularities are all contained in the prime toric divisor $D'_{n+1}\subset\XX_{(\1,2)}$. Calling $\Si_{(\1,2)}$ the fan of $\XX_{(\1,2)}$, whose fan matrix is given by
   \begin{equation*}
     \L_{(\1,2)}=\left(
                                      \begin{array}{ccc}
                                        \ll_1 & \cdots & \ll_{n+1} \\
                                      \end{array}
                                    \right)
   \end{equation*}
   in the following we will consider the maximal cones
   \begin{equation*}
     \s_i=\langle \ll_j\,|\,j\neq i\rangle\in\Si_{(\1,2)}(n)
   \end{equation*}
   Then, the following \emph{stratum of $Y^\vee$} (see \cite[Def.~1.10]{R-fTV}) is empty
      \begin{equation*}
        Y^\vee_{\s_{n+1}}:=Y^\vee\cap\T\cdot x_{\s_{n+1}}=Y^\vee\cap\{[0:\cdots:0:1]\}=\emptyset
      \end{equation*}
      where $x_{\s_{n+1}}$ is the special point of the cone $\s_{n+1}$,
      as can be immediately deduced from the polynomial $f^\vee$ in (\ref{f}). In other word, recalling the resolution constructed in Proposition~\ref{prop:risoluzione} and obtained by means of the $A$-triangulation defined in (\ref{A}), $Y^\vee$
      can be assumed not passing through the points of $\XX_{(\1,2)}$ which are centers of those blow ups determined by lattice points in $A\cap\s_{n+1}^\circ$, where $\s_{n+1}^\circ$ denotes the relative interior of $\s_{n+1}$. Notice that these are precisely given by
      \begin{equation*}
        A\cap\s_{n+1}^\circ=\{\e_1,\ldots,\e_n\}
      \end{equation*}
      and one can stop the resolution process of $\XX_{\aa_0}$ avoiding the blowups associated with $\{\e_1,\ldots,\e_n\}$, so getting an induced partial resolution $Y'\longrightarrow Y^\vee$. Then, Theorem~\ref{thm:h21Yd}, Lemma~\ref{lem:h11st} and relation (\ref{mY}) give
      \begin{equation*}
        h^1(\widehat{\Omega}_{Y'})=h^{1,1}_{\text{st}}\left(\XX_{(\1,2)}'\right)-|A\cap\s_{n+1}^\circ|=c'_1-n=m_Y+\frac{n^2-n}{2}
      \end{equation*}
      On the contrary,
      \begin{equation*}
        \forall\,1\leq i\leq n\quad [0:\cdots:\underset{i}{1}:\cdots:0]\in Y^\vee\ \Longrightarrow\ Y^\vee_{\s_i}=\T\cdot x_{\s_i}
      \end{equation*}
      meaning that $Y^\vee$ passes through every center of those blow ups composing $\phi$ and determined by lattice points of $A\cap\s_i^\circ$, for $1\leq i \leq n$. The de\-sin\-gu\-la\-ri\-za\-tion process described in \S\ref{ssez:locale} shows that, for any such $i$, at most two of the lattice points in $A\cap\s_i^\circ$ are really needed. Recalling that
      $$\forall\,i=1,\ldots,n\quad |A\cap\s_{i}^\circ|=l^*(\Theta_i)=\binom{d-1}{n-1}=\frac{n^2+n}{2}$$
      with $\Theta_i:=\conv(\ll_1,\ldots,\cancel{\ll_{i}},\ldots,\ll_{n+1})$,
            this is enough to show that the resolution process can be stopped $(n^2-n)/2$ steps before, so getting a quasi-smooth partial resolution $\widehat{Y}^\vee\longrightarrow \widehat{Y}$ such that
      \begin{equation*}
        k_{\widehat{Y}^\vee}=h^{1}\left(\widehat{\Omega}_{\widehat{Y}^\vee}\right)=m_Y
      \end{equation*}
\end{proof}

\oneline \emph{$d=n+\d$ with $\d\geq 3$}.\quad  Let $Y\subset\P^n$ be a generic hypersurface of degree $d=n+\d\geq n+3$: then $\aa_0=(\1_{n},\d)$ and $\bb_0=(\d\cdot\1_{n},1)$.

\begin{theorem}\label{thm:B-mirror2} Let $\phi:\widehat{\XX}_{(\1,\d)}\longrightarrow\XX_{(\1,\d)}$, be the resolution constructed in Proposition~\ref{prop:risoluzione}. Then, the transformed hypersurface
\begin{equation*}
  \widehat{Y}^\vee=\phi^{-1}(Y^\vee)\subset\widehat{\XX}_{(\1,\d)}
\end{equation*}
is smooth and
\begin{equation*}
  h^{1}\left(\Omega_{\widehat{Y}^\vee}\right)=\binom{d+n}{n}-\binom{d}{n}<m_Y
\end{equation*}
Moreover, there exists a birational morphism $f':\widehat{\XX}'_{(\1,\d)}\longrightarrow\widehat{\XX}_{(\1,\d)}$, which is a composition of divisorial blowups, such that, calling $\vf:=\phi\circ f'$, the associated transformed hypersurface $(\widehat{Y}^\vee)':=\vf^{-1}(Y^\vee)=(f')^{-1}(\widehat{Y}^\vee)$ of $Y^\vee$ is smooth and
  \begin{equation*}
        k_{(\widehat{Y}^\vee)'}=h^{1}\left(\Omega_{(\widehat{Y}^\vee)'}\right)= \rk\left(\Cl\left(\widehat{\XX}'_{(\1,\d)}\right)\right)= h^{1}\left(\Omega_{\XX'_{(\1,\d)}}\right)= m_Y
      \end{equation*}
 that is, the generic $Y^\vee\subset\XX_{(\1,\d)}$ is a $B$-mirror partner of the generic hypersurface $Y\subset \P^n$ of degree $d=n+\d$. By (\ref{Aside}), this means that $(Y,Y^\vee)$ is a pair of topologically mirror partners.
\end{theorem}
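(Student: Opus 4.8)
The plan is to follow the pattern of Theorems~\ref{thm:B-mirror0} and \ref{thm:B-mirror1}, but now \emph{enlarging} the resolution rather than stopping it earlier: the arithmetic comes out so that the resolution of Proposition~\ref{prop:risoluzione} falls short of $m_Y$, and extra divisorial blowups make up the difference.

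\emph{Step 1 (smoothness of $\widehat Y^\vee$ and the value $\binom{d+n}{n}-\binom{d}{n}$).} Write $\phi=f\circ\phi'$ as in Proposition~\ref{prop:risoluzione}, with $\phi':\widehat{\XX}_{(\1,\d)}\to\XX'_{(\1,\d)}$ the crepant resolution attached to the $A$-triangulation of $\D_{(\1,\d)}$, $A=\{\m\in M\mid\vf_K(\m)\le 1\}$. First I would establish that the generic $Y^\vee$ of (\ref{f}) has total transform $\widehat Y^\vee=\phi^{-1}(Y^\vee)$ smooth: because $\d\ge 3$ the factor $\prod_ix_i^{\d-1}$ of $f^\vee$ carries every exponent to $\ge 2$, and a stratum-by-stratum inspection of $f^\vee$ on the torus orbits of $\XX_{(\1,\d)}$ (the local desingularization analysis of \S\ref{ssez:locale}) shows that the generic $Y^\vee$ is non-degenerate enough with respect to the fan of $\widehat{\XX}_{(\1,\d)}$ that $\widehat Y^\vee$ meets every exceptional divisor cleanly; on the big torus, genericity of the unique modulus $\psi$ and Bertini do the rest. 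Then $h^1(\Omega_{\widehat Y^\vee})$ is computed exactly as $h^1$ was for $\d=1,2$: restrict to $\widehat Y^\vee$ the sequences (\ref{seq-fasci}), form the complex $\mathcal Q^\bullet$, use its spectral-sequence degeneration together with Lemma~\ref{lem:cohDi} and the identity $\hst^{1,1}(\XX'_{(\1,\d)})=c'_1$ of Proposition~\ref{prop:risoluzione}, and correct by the contributions of the exceptional divisors that fail to meet $\widehat Y^\vee$; the resulting alternating sum of binomial coefficients telescopes, by an identity of the type of Lemmas~\ref{lem:h11st} and \ref{lem:combinatorica} evaluated at $d=n+\d$, to $\binom{d+n}{n}-\binom{d}{n}$. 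Finally the inequality $\binom{d+n}{n}-\binom{d}{n}<m_Y=\binom{d+n}{n}-(n+1)^2$ of (\ref{mY}) is equivalent to $\binom{d}{n}=\binom{n+\d}{n}>(n+1)^2$, which holds for all $n\ge 4$, $\d\ge 3$ since already $\binom{n+3}{3}=\tfrac{1}{6}(n+1)(n+2)(n+3)>(n+1)^2$ for $n\ge 2$.

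\emph{Step 2 (reaching $m_Y$).} Since $h^1(\Omega_{\widehat Y^\vee})<m_Y$, one must add \ka parameters. Take $f':\widehat{\XX}'_{(\1,\d)}\to\widehat{\XX}_{(\1,\d)}$ to be a composition of exactly $\binom{d}{n}-(n+1)^2$ divisorial (ray-adding) toric blowups, centred at smooth toric strata through which $Y^\vee$ passes transversally and chosen so that in $\widehat{\XX}'_{(\1,\d)}$ every prime torus invariant divisor $\widehat D_i$ has $\D_{\widehat D_i}=\{\0\}$ and the second requirement of Remark~\ref{rem:>>0} holds; this is possible because such strata are abundant while $\binom{n+\d}{n}-(n+1)^2$ is comparatively small. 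Each blowup preserves smoothness of $(\widehat Y^\vee)'=(f')^{-1}(\widehat Y^\vee)$ (smooth centre, transverse to the smooth $\widehat Y^\vee$, and singularity of $(\widehat Y^\vee)'$ is a closed condition on $\psi$ avoided by the generic choice of Step~1) and raises $\rk(\Cl)$ by one. As $r:=\rk(\Cl(\widehat{\XX}'_{(\1,\d)}))$ is now big enough in the sense of Remark~\ref{rem:>>0}, the spectral-sequence argument of the hypersurface case of \S\ref{ssez:mirrorCI}, with Lemma~\ref{lem:cohDi}, the Danilov vanishing~\ref{thm:Danilov} and the Batyrev--Cox sequence~\ref{thm:BC}, gives $h^1(\Omega_{(\widehat Y^\vee)'})=r=h^1(\widehat\Omega_{\widehat{\XX}'_{(\1,\d)}})$, and the number of blowups was fixed precisely so that $r=m_Y$. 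Since $(\widehat Y^\vee)'$ is smooth, $k_{(\widehat Y^\vee)'}=h^1(\Omega_{(\widehat Y^\vee)'})=m_Y$, and (\ref{Aside}) then yields the asserted topological mirror symmetry of the pair $(Y,Y^\vee)$.

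\emph{Main obstacle.} The technical heart is Step~1: one must control, stratum by stratum, how the sparse hypersurface $Y^\vee$ of (\ref{f}) sits inside the non-Gorenstein $\XX_{(\1,\d)}$, prove that the $A$-triangulation resolution already smooths it when $\d\ge 3$, identify precisely which exceptional divisors of that triangulation carry cohomology of $\Omega_{\widehat Y^\vee}$, and verify that the resulting signed binomial sum collapses to $\binom{d+n}{n}-\binom{d}{n}$ — the exact analogue of the hand computations performed for $\d=1,2$. The choice of suitable divisorial blowups in Step~2 is a secondary, purely combinatorial, point: with so many toric strata available it is a matter of careful selection rather than a genuine obstruction.
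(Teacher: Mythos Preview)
Your overall strategy matches the paper's: prove smoothness by the local analysis of \S\ref{ssez:locale}, compute $h^1(\Omega_{\widehat Y^\vee})$ as a class-group rank via the chain Theorem~\ref{thm:h21Yd} $\to$ stringy Hodge numbers $\to$ Lemma~\ref{lem:h11st}, check the inequality, and then blow up to close the gap. Two points of over-complication are worth flagging.

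\emph{Step 1.} There is no ``correction by exceptional divisors that fail to meet $\widehat Y^\vee$'' here. That bookkeeping was needed for $\d=1,2$ because one \emph{stops early} and omits some blowups; for $\d\ge 3$ the paper uses the \emph{full} resolution $\phi$ of Proposition~\ref{prop:risoluzione}, so the computation is simply
\[
h^1(\Omega_{\widehat Y^\vee})\;\overset{\text{Thm \ref{thm:h21Yd}}}{=}\;\rk\Cl(\widehat{\XX}_{(\1,\d)})\;\overset{\text{Prop \ref{prop:smooth-E}, \ref{prop:E-crepant}}}{=}\;\hst^{1,1}(\XX'_{(\1,\d)})\;\overset{\text{Lem \ref{lem:h11st}}}{=}\;\binom{d+n}{n}-\binom{d}{n}.
\]
No stratum-by-stratum cohomology identification and no telescoping beyond Lemma~\ref{lem:h11st} itself is required.

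\emph{Step 2.} The paper is blunter than your toric-blowup scheme: it just blows up $\widehat{\XX}_{(\1,\d)}$ at $s=\binom{d}{n}-(n+1)^2$ smooth \emph{points lying on} $\widehat Y^\vee$. Each such point-blowup raises $h^{1,1}$ of both the ambient variety and the (proper-transformed) smooth hypersurface by exactly one, yielding $h^{1,1}((\widehat Y^\vee)')=s+\bigl(\binom{d+n}{n}-\binom{d}{n}\bigr)=m_Y$. Your toric route (ray-adding blowups in toric strata, then re-applying Theorem~\ref{thm:h21Yd} with Remark~\ref{rem:>>0}) also works and has the virtue of keeping the ambient variety toric so that the machinery of \S\ref{ssez:mirrorCI} applies verbatim; the paper's point-blowup argument is shorter but tacitly steps outside the toric category and relies on the elementary fact that blowing up a smooth point on a smooth hypersurface in a smooth ambient increases both $h^{1,1}$'s by one.
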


\begin{proof}
 Consider the generic hypersurface $Y^\vee\in|D'_{(\d\cdot\1,1)}|$ and its transformed hypersurface $\widehat{Y}^\vee=\phi^{-1}(Y^\vee)\subset\widehat{\XX}_{(\1,\d)}$. Local analysis explained in the next \S\ref{ssez:locale} suffices to show that $\widehat{Y}^\vee$ is smooth. Then, Theorem~\ref{thm:h21Yd}, Proposition~\ref{prop:smooth-E} and the following  Lemma~\ref{lem:h11st} give
 \begin{equation*}
   h^{1}\left(\Omega_{\widehat{Y}^\vee}\right)= \rk\left(\Cl\left(\widehat{\XX}_{(\1,\d)}\right)\right)=\hst^{1,1}\left(\XX'_{(\1,\d)}\right)= \binom{d+n}{n}-\binom{d}{n}
 \end{equation*}
Since $\d\ge 3$, one has
\begin{equation*}
  \binom{d}{n}=\binom{n+\d}{n}>(n+1)^2
\end{equation*}
so giving that
  \begin{equation*}
    h^{1,1}\left(\widehat{Y}^\vee\right)<\binom{d+n}{n} - (n+1)^2=m_Y
  \end{equation*}
  Proceed, now, to blowing up $\widehat{\XX}_{(\1,\d)}$ in $s:=\binom{d}{n}-(n+1)^2$  points belonging to $\widehat{Y}$, to getting the birational morphism
  \begin{equation*}
    \xymatrix{f':\widehat{\XX}'_{(\1,\d)}:=B_s\widehat{\XX}_{\1,\d}\ar[r]&\widehat{\XX}_{\1,\d}}
  \end{equation*}
  Then, finally
  \begin{eqnarray*}
    h^{1,1}\left((\widehat{Y}^\vee)'\right)&=&  h^{1,1}\left(\widehat{\XX}'_{(\1,\d)}\right)= s+\hst^{1,1}\left(\XX'_{(\1,\d)})\right) \\
    &=& \binom{d+n}{n} - (n+1)^2 =m_Y
  \end{eqnarray*}
\end{proof}

\subsection{Local analysis of singularities and resolutions}\label{ssez:locale} The present paragraph is devoted to giving a proof of the existence of a smooth resolution $\widehat{Y}^\vee\subset\widehat{\XX}_{\aa_0}$ of a generic $Y^\vee\in|D'_{\bb_0}|$, as claimed in previous Theorems~\ref{thm:B-mirror1} and \ref{thm:B-mirror2}, being $\phi:\widehat{\XX}_{\aa_0}\longrightarrow \XX_{\aa_0}$ the resolution given in Proposition~\ref{prop:risoluzione}\,(3).

Let $Y$ be a generic hypersurface in $\P^n$ of degree $d=n+\d$, with $\d\geq 2$. Then, after acting an automorphism of $\XX_{\aa_0}$, the generic hypersurface $Y^\vee\in|D'_{\aa_0}|$ is defined by the polynomial $f^\vee\in\Cox(\XX_{\aa_0})\cong\C[\x]$ described in (\ref{f}).

First of all recall that $\Sing(Y^\vee)\subset D'_{n+1}$. Then, from here on, we will restrict to consider singularities in the affine open subset $U_n:=\{x_n\neq 0\}\subset\XX_{\aa_0}$: this suffices as the treatment of singularities in the remaining affine subsets $U_i$, $1\leq i\leq n-1$, is completely analogous and $Y^\vee\cap U_{n+1}$ is smooth. Then $Z_n:=Y^\vee\cap U_n$ is the hypersurface defined in $\C^n$ by the polynomial
\begin{eqnarray}\label{fn}
\nonumber
  f_n&:=&\prod_{i=1}^{n-1}z_i^{\d-1}\left(\sum_{i=1}^{n-1} z_i^{n+\d}+1+\psi z_n\prod_{i=1}^{n-1}z_i\right)+z^{n+1}_{n}\\
    &=&\zeta\prod_{i=1}^{n-1}z_i^{\d-1}+z^{n+1}_{n}
\end{eqnarray}
where:
$$\forall\,i=1,\ldots,n-1 \quad z_i=x_{i}/x_n\,,\quad z_n=x_{n+1}/x_n^\d\,,\quad\zeta:=\sum_{i=1}^{n-1} z_i^{n+\d}+1+\psi z_n\prod_{i=1}^{n-1}z_i$$
This means that we are studying the open subset determined by the cone $$\s_n:=\langle\L_{\aa_0}^{\{n\}}\rangle=\langle\ll_1,\ldots,\ll_{n-1},\ll_{n+1}\rangle$$
opposite to the ray $\rho_{n}=\langle\ll_{n}\rangle$.
The singular locus $\Sing(Z_n)$ is contained in the hyperplane $\{z_n=0\}$, which is determined by the torus orbit of the distinguished point of the ray $\rho_{n+1}=\langle\ll_{n+1}\rangle$. Then, a resolution of $Z_n$ has to be obtained as a birational transform induced by a suitable subdivision of $\s_n$, admitting a sub-cone $\s\subset\s_n$ such that:
\begin{itemize}
  \item $\s$ is a cone of a subdivision of $\s_n$ constructed by adding a suitable number of new rays, associated with exceptional divisors of successive blowups,
  \item $\rho_{n+1}$ is still a ray of $\s$.
\end{itemize}
 A (not unique) way of producing a similar resolution is the following. The cone $\s_n$ is spanned by the facet $\Theta_n<^1\D_{\aa_0}$ defined by
 $$\Theta_n:=\conv(\ll_1,\ldots,\ll_{n-1},\ll_{n+1})$$
 Recalling definition (\ref{A}) of the $A$-triangulation of $\D_{\aa_0}$, giving rise to the crepant resolution $\phi':\widehat{\XX}_{\aa_0}\longrightarrow \XX'_{\aa_0}$ in Proposition~\ref{prop:risoluzione}, every interior lattice point of $\Theta_n$ determines a new ray, hence an exceptional divisor of the resolution $\phi'$. Notice that their number is given by
 \begin{equation*}
   l^*(\Theta_n):=|M\cap\Int(\Theta_n)|=\binom{d-1}{n-1}=\binom{n+\d-1}{\d}
 \end{equation*}
Start the resolution process by considering the birational transform of $Z_n$ in the blowup determined by $\ll^{(1)}\in M\cap\Int(\Theta_n)$, locally given by
\begin{equation*}
  \left\{\begin{array}{cc}
           \forall\,i:1\le i\le n-1&z_i= t_{i}z_n \\
           &f_n(\z)=0\\
         \end{array}
  \right\}\ \Longrightarrow\ \zeta^{(1)}z_n^{(n-1)(\d-1)}\prod_{i=1}^{n-1}t_{i}^{\d-1}+z_n^{n+1}=0
\end{equation*}
being $\zeta^{(1)}$ the obvious transform of $\zeta$. The associated birational transform $$B_1Z_n\subset\Spec\left(A_{\s_n^{(1)}}\right)\,,\quad A_{\s_n^{(1)}}:=\C\left[(\s_n^{(1)})^\vee\cap N\right]$$
with $\s_n^{(1)}:=\langle\ll^{(1)},\ll_{i_2},\ldots,\ll_{i_{n-1}},\ll_{n+1}\rangle$ and $\{i_2\ldots,i_{n-1}\}$ a suitable subset of $\{1,\ldots,n-1\}$, is given by the zero locus of
\begin{equation*}
  f^{(1)}_n:=\left\{\begin{array}{cc}
                      \zeta^{(1)}\prod_{i=1}^{n-1}t_{i}+z_n^2\in\C[t_{1},\ldots,t_{n-1},z_n] & \text{if $\d=2$} \\
                       \zeta^{(1)}z_n^{(\d-2)n-\d}\prod_{i=1}^{n-1}t_{i}^{\d-1}+1\in
                       \C[t_{1},\ldots,t_{n-1},z_n] & \text{if $\d\ge 3$}
                    \end{array}\right.
\end{equation*}
Then, singularities are not yet resolved if $\d=2$. In this case, we go on by considering the blowup determined by
$$\ll^{(2)}\in M\cap\Int(\Theta^{(1)}_n)\,, \quad \Theta^{(1)}_n:=\conv\left(\ll^{(1)},\ll_{i_2},\ldots,\ll_{i_{n-1}},\ll_{n+1}\right)$$
and the associated birational transform $B_2Z_n\subset\Spec\left(A_{\s_n^{(2)}}\right)$, with  $$\s_n^{(2)}:=\langle\ll^{(2)},\ll_{j_2},\ll_{j_3},\ldots,\ll_{j_{n-1}},\ll_{n+1}\rangle\,,
\quad\{\ll_{j_2},\ldots,\ll_{j_{n-1}}\}\subset\{\ll^{(1)},\ll_{i_2},\ldots,\ll_{i_{n-1}}\}$$
Notice that such a $\ll^{(2)}$ exists, up to changing the previous choice of $\ll^{(1)}$, since $l^*(\Theta_n)>n$.
Then, this process terminates after at most two blowups, producing a smooth resolution $\widehat{Z}_n\longrightarrow Z_n$. \\
Analogously for other open subsets $Z_i:=Y^\vee\cap U_i$, with $1\le i\le n-1$.

\begin{example}[Resolving the $f$-mirror partner of a septic threefold]
  Consider the generic element $Y:=Y_7\in|D_{\aa_0}|$ with $\aa_0:=(\1_4,3)$ thought of as a framing of $\P^4$. Then the $f$-mirror partner of $Y$ is given by the generic element $Y^\vee\in|D'_{\bb_0}|$, with $\bb_0=(3\cdot\1_4,1)$ thought of as a framing of $\XX_{(\1,3)}\cong\P(\1_4,3)/G_{(\1,3)}$, whose defining polynomial in $\Cox\left(\XX_{(\1,3)}\right)$ is given by
  \begin{equation*}
    f^\vee=\prod_{i=1}^4 x_i^{2}\left(\sum_{i=1}^4 x_i^7 +\psi\,\prod_{j=1}^{5} x_j\right)+x_{5}^{5}
  \end{equation*}
  Then $\Sing(Y^\vee)\subset D'_5$ and we consider the affine open subset
  $U_4\subset\XX_{(\1,3)}$
  and the hypersurface $Z_4=Y^\vee\cap U_4\cong\C^4$, defined by the equation
  \begin{equation*}
    f_4=\zeta z_1^{2}z_2^{2}z_3^{2}+z^{5}_{4}=0
  \end{equation*}
  with $\zeta:=z_1^{7}+z_2^{7}+z_3^{7}+1+\psi z_1z_2z_3z_4$ and
$$z_1=x_{1}/x_4\,,\quad z_2=x_{2}/x_4\,,\quad z_3=x_{3}/x_4\,,\quad z_4=x_{5}/x_4^3$$
Consider the blow up
\begin{equation}\label{blowup}
  \left\{\begin{array}{cc}
           \forall\,i:1\le i\le 3&z_i= t_{i}z_4 \\
           &f_4(z_1,z_2,z_3,z_4)=0\\
         \end{array}
  \right\}\ \Longrightarrow\ z_4^5\left(\zeta^{(1)}z_4t_1^2t_2^2t_3^2+1\right)=0
\end{equation}
with $\zeta^{(1)}=1+\left[(t_1^7+t_2^7+t_3^7)z_4^3+\psi t_1t_2t_3\right]z_4^4$ so giving the birational transform $B_1Z_4\subset\Spec\left(A_{\s_4^{(1)}}\right)\cong\C^4$ defined by the polynomial
\begin{equation*}
  \zeta^{(1)}z_4t_1^2t_2^2t_3^2+1\in\C[t_1,t_2,t_3,z_4]
\end{equation*}
which is a non-singular one. The same procedure can now be repeated for the affine open subsets $U_1,U_2,U_3$, so getting a resolution of $Y^\vee$ after four blowups.
\end{example}

\begin{example}[Resolving the $f$-mirror partner of a sextic threefold] In the same notation as above, the $f$-process is now given by
\begin{equation*}
  \xymatrix{\left(\P^4,\aa_0:=(\1_4,2)\right)\ar@{<~>}[r]&\left(\XX_{(\1,2)},\bb_0:=(2\cdot\1_4,1)\right)}
\end{equation*}
and the generic $f$-mirror partner $Y^\vee$ is defined by
\begin{equation*}
    f^\vee=\prod_{i=1}^4 x_i\left(\sum_{i=1}^4 x_i^6 +\psi\,\prod_{j=1}^{5} x_j\right)+x_{5}^{5}
  \end{equation*}
Then, locally $Z_4=Y^\vee\cap U_4$ is given by
\begin{equation*}
    f_4=\zeta z_1z_2z_3+z^{5}_{4}=0
  \end{equation*}
  with $\zeta:=z_1^{6}+z_2^{6}+z_3^{6}+1+\psi z_1z_2z_3z_4$ and
$$z_1=x_{1}/x_4\,,\quad z_2=x_{2}/x_4\,,\quad z_3=x_{3}/x_4\,,\quad z_4=x_{5}/x_4^2$$
After the blow up given in (\ref{blowup}), the birational transform $B_1Z_4$ is defined by the polynomial
\begin{equation*}
  f'_4:=\zeta^{(1)}t_1t_2t_3+z_4^2\in\C[t_1,t_2,t_3,z_4]
\end{equation*}
with $\zeta^{(1)}=1+\left[(t_1^6+t_2^6+t_3^6)z_4^2+\psi t_1t_2t_3\right]z_4^4$. Then one need a further blowup given by
\begin{equation*}
  \left\{\begin{array}{cc}
           \forall\,i:1\le i\le 3&t_i= s_{i}z_4 \\
           &f'_4(t_1,t_2,t_3,z_4)=0\\
         \end{array}
  \right\}\ \Longrightarrow\ z_4^2\left(\zeta^{(2)}z_4s_1s_2s_3+1\right)=0
\end{equation*}
 so that, the birational transform $B_2Z_4$ is given by
 \begin{equation*}
   \zeta^{(2)}z_4s_1s_2s_3+1\in\C[s_1,s_2,s_3,z_4]
 \end{equation*}
 with $\zeta^{(2)}=1+\left[(s_1^6+s_2^6+s_3^6)z_4^5+\psi s_1s_2s_3\right]z_4^7$. Then $B_2Z_4$ is smooth, meaning that $Y^\vee$ can be resolved after 8 blowups.
\end{example}

\subsection{Two combinatorial Lemmas}\label{ssez:combinatorica} This section is devoted to prove some combinatorial formulas needed to compute Hodge numbers in previous Theorems~\ref{thm:B-mirror0}, \ref{thm:B-mirror1} and \ref{thm:B-mirror2}.

\begin{lemma}\label{lem:combinatorica} For every positive integers $n\geq 1$ and $d\geq n+1$ the following is an identity
 \begin{equation}\label{identita}
   \binom{2d-1}{n}-(n+1){d-1\choose n}=\sum_{i=1}^n (-1)^{n-i}{n+1\choose i+1}{id-d+n\choose n}
 \end{equation}
   \end{lemma}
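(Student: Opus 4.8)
The identity to prove is
\begin{equation*}
  \binom{2d-1}{n}-(n+1)\binom{d-1}{n}=\sum_{i=1}^n(-1)^{n-i}\binom{n+1}{i+1}\binom{id-d+n}{n}.
\end{equation*}
The plan is to recognize the right-hand side as a finite difference of the polynomial $p(i):=\binom{id-d+n}{n}$ in the variable $i$, and then to identify the two surviving terms on the left as the boundary contributions. Concretely, I would extend the sum to include $i=0$ by noting that $\binom{n+1}{1}\binom{-d+n}{n}$ vanishes when $1\le d\le n$ but not in general, so instead I would first re-index. Writing $j=i-1$ (so $j$ runs from $0$ to $n-1$) turns $\binom{n+1}{i+1}$ into $\binom{n+1}{j+2}$ and $\binom{id-d+n}{n}$ into $\binom{jd+n}{n}$; the target identity becomes
\begin{equation*}
  \binom{2d-1}{n}-(n+1)\binom{d-1}{n}=\sum_{j=0}^{n-1}(-1)^{n-1-j}\binom{n+1}{j+2}\binom{jd+n}{n}.
\end{equation*}
This is exactly the shape that appears in the proof of Theorem~\ref{thm:B-mirror0}, where the chain of manipulations $\psi_\1(j)=l(j\D_\1)-l((j-1)\D_\1)$ together with $\binom{n}{j+1}+\binom{n}{j+2}=\binom{n+1}{j+2}$ and $l((i-1)\D_\1)=\binom{in+i-1}{n}$ is run; so one route is simply to record that computation abstractly with $n+1$ replaced by $d$.

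The cleaner self-contained route I would actually write down is via generating functions / Abel-type summation. Set $a_j:=\binom{jd+n}{n}=[x^n]\,(1-x)^{-(jd+1)}$, and consider $\sum_{j\ge 0}\binom{n}{j+1}a_j$ and the ``shifted'' sum $\sum_{j\ge 0}\binom{n}{j+1}a_{j-1}$ (with $a_{-1}:=\binom{n-d}{n}$), whose difference telescopes after using Pascal's rule to reassemble $\binom{n+1}{j+2}$. Each of the two simpler alternating sums $\sum_j(-1)^{n-1-j}\binom{n}{j+1}\binom{jd+n}{n}$ and its shift can be evaluated by swapping the order of summation: write $\binom{jd+n}{n}=\sum_{k}\binom{jd}{k}\binom{n}{n-k}$ (Vandermonde) or, more efficiently, use the operator identity $\sum_{j}(-1)^{n-1-j}\binom{n}{j+1}f(j)=-\big(\Delta^{n} \widetilde f\big)(0)$ where $\Delta$ is the forward-difference operator and $\widetilde f(j)=f(j-1)$, so that the sum kills any polynomial in $j$ of degree $<n$ and, applied to $\binom{jd+n}{n}$ — a polynomial in $j$ of degree exactly $n$ with leading coefficient $d^n/n!$ — returns a controlled value. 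Tracking the top-degree and the boundary terms produces precisely $\binom{2d-1}{n}$ (the $j=1$, i.e. ``$l(\D_\1)$'', contribution) minus $(n+1)\binom{d-1}{n}$ (the ``$\sum_{\Theta<^1\D_\1}l^*(\Theta)$''-type boundary count, there being $n+1$ facets each contributing $l^*=\binom{d-1}{n}$).

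The main obstacle I anticipate is purely bookkeeping: getting the alternating binomial sum evaluated cleanly without sign errors, since the index shift $i\mapsto j=i-1$ interacts with the $(-1)^{n-i}$ factor and with the Pascal reassembly of $\binom{n}{j+1}+\binom{n}{j+2}=\binom{n+1}{j+2}$. A robust way to sidestep hand-computation entirely is to invoke the Wilf–Zeilberger method: both sides, as functions of $d$ (with $n$ fixed, or vice versa), are hypergeometric-summable, so one exhibits a certificate and checks a single initial case, e.g. $d=n+1$, where the identity reduces to $\binom{2n+1}{n}-(n+1)\binom{n}{n}=\sum_{i=1}^n(-1)^{n-i}\binom{n+1}{i+1}\binom{in+i-1}{n}$, which can be verified directly (or is exactly the content already used in Theorem~\ref{thm:B-mirror0}). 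I would present the finite-difference argument as the main proof and remark that WZ gives an alternative verification. Either way no deep input is needed; the identity is elementary and the only real risk is a misplaced sign.
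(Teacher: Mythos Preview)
Your core idea --- exploit that $i\mapsto\binom{id-d+n}{n}$ is a polynomial of degree $n$ and kill it with a finite difference --- is exactly what the paper uses, but the paper's execution is considerably more direct than any of the routes you sketch. Rather than Pascal-splitting $\binom{n+1}{i+1}=\binom{n}{i}+\binom{n}{i+1}$ and then tracking a nonvanishing $n$-th-difference contribution from the leading term (the bookkeeping you yourself flag as the main obstacle), the paper keeps $\binom{n+1}{\cdot}$ intact and simply extends the sum by two terms. Set $P(k):=\binom{(k-2)d+n}{n}$, a polynomial of degree $\le n$ in $k$; then the $(n+1)$-st finite difference vanishes,
\[
\sum_{k=0}^{n+1}(-1)^k\binom{n+1}{k}P(k)=0.
\]
Via $\binom{-m}{n}=(-1)^n\binom{m+n-1}{n}$, the $k=0$ and $k=1$ summands give $(-1)^n\binom{2d-1}{n}$ and $-(n+1)(-1)^n\binom{d-1}{n}$; multiplying through by $(-1)^{n-1}$ and reindexing the remaining terms $k=2,\ldots,n+1$ by $k=i+1$ yields the identity in one stroke. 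No Vandermonde, no operator calculus, no WZ certificate, and no need to reinterpret the boundary terms geometrically through $l^*(\Theta)$. Your plan would work, but it manufactures exactly the sign-and-boundary headache that the paper's choice of the $(n+1)$-st (rather than $n$-th) difference sidesteps entirely.
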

   \begin{proof}
     Set $P(k):={(k-1)d-d+n\choose n}$ and think it as a polynomial of degree $\leq n$ in $k$. Then, the $(n+1)^{\text{}st}$ finite differences of $P(k)$ vanish, that is,
     \begin{equation*}
       \sum_{k=0}^{n+1} (-1)^k{n+1\choose k}P(k)=0\ \Longrightarrow\ \sum_{k=2}^{n+1} (-1)^k{n+1\choose k}P(k)=-P(0)+(n+1)P(1)
     \end{equation*}
     Multiplying by $(-1)^{n-1}$ gives
     \begin{equation*}
       (-1)^nP(0)-(n+1)(-1)^nP(1)=\sum_{k=2}^{n+1} (-1)^{n-1+k}{n+1\choose k}P(k)
     \end{equation*}
     Notice that
     $$(-1)^nP(0)=\binom{2d-1}{n}\ ,\quad (-1)^nP(1)=\binom{d-1}{n}\ ,\quad \forall\,k\quad (-1)^{n-1+k}=(-1)^{n-1-k}$$
     Then setting $k=i+1$ in the summation gives (\ref{identita})\,.
   \end{proof}

\begin{lemma}\label{lem:h11st}
Consider the Gorenstein partial resolution $\XX_{\aa_0}'\longrightarrow\XX_{\aa_0}$ constructed in Proposition~\ref{prop:risoluzione}. Then, for every positive integers $n\geq 4$ and $d\geq n+2$, the following is an identity
\begin{equation}\label{identita2}
\hst^{1,1}(\XX'_{\aa_0})=c'_1=\binom{d+n}{n}-\binom{d}{n}
\end{equation}
  \end{lemma}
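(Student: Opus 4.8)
The plan is to prove the identity \eqref{identita2} by unravelling the definition of $c'_1$ in terms of the lattice-point counting function $\vf_{\aa_0}$ attached to the Gorenstein partial resolution $\XX'_{\aa_0}$ of Proposition~\ref{prop:risoluzione}, and then recognising the two alternating sums as telescoping/finite-difference expressions. Recall that $\aa_0=(\1_n,\d)$ with $\d=d-n\geq 2$, so the three ``extra'' rays $\e_1,\dots,\e_n$ subdivide the single non-Gorenstein cone $\langle\L_{\aa_0}^{\{n+1\}}\rangle$, and property $(iii)$ of that Proposition says $\vf_K$ takes integer values on all of $M$. The first step is to identify $\vf_{\aa_0}(h)=|\{\m\in M\,|\,\vf_K(\m)=-h\}|$ explicitly. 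Outside the cone $\langle\L_{\aa_0}^{\{n+1\}}\rangle$ the function $\vf_K$ agrees with the ``$h$-grading'' coming from the reflexive-type polytope associated to $\aa=\1_{n+1}$ scaled appropriately; inside, the $n+1$ new linear pieces are given by the vectors displayed in the proof of Proposition~\ref{prop:risoluzione}. Concretely I expect $\vf_{\aa_0}(h)$ to decompose as ``lattice points at level $h$ in the whole fan'' and to be computable as a difference of binomial coefficients $\binom{d+n-1}{n}$-type terms, exactly paralleling how $\psi_\1(j)$ was computed in the proof of Theorem~\ref{thm:B-mirror0} via $l((i-1)\D_\1)=\binom{in+i-1}{n}$.

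Next, I would substitute this expression into
$$c'_1=\sum_{h=0}^{n-1}(-1)^{n-1-h}\binom{n}{h+1}\vf_{\aa_0}(h),$$
and treat $\vf_{\aa_0}(h)$ (or rather the running sum $\sum_{j\le h}\vf_{\aa_0}(j)$, which counts $\{\m\,|\,\vf_K(\m)\ge -h\}$) as a polynomial in $h$ of degree $\le n$. The key algebraic mechanism is the same one used in Lemma~\ref{lem:combinatorica}: the $(n+1)$-st finite difference of a degree-$\le n$ polynomial vanishes, so an alternating binomial sum $\sum_k(-1)^k\binom{n+1}{k}P(k)=0$ collapses the sum to its boundary terms $P(0)$ and $P(1)$. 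Using the identity $\binom{n}{h+1}+\binom{n}{h+2}=\binom{n+1}{h+2}$ to re-index (just as in the $d=n+1$ case) should convert $\sum_h(-1)^{n-1-h}\binom{n}{h+1}\vf_{\aa_0}(h)$ into $\sum_i(-1)^{n-i}\binom{n+1}{i+1}L(i)$ where $L(i)$ is a lattice-point count $l$ of a dilate, and then the finite-difference collapse yields precisely $L(n+1)-(n+1)L(1)$ type boundary values, which I expect to be $\binom{d+n}{n}$ and $\binom{d}{n}$ respectively. That is, the ``big'' term $\binom{d+n}{n}$ is the number of lattice points of the relevant $n$-dilate (equivalently $h^0$ of a line bundle of the right degree) and the subtracted term $\binom{d}{n}$ is the contribution of the first level.

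The main obstacle will be Step~1: getting a clean closed formula for $\vf_{\aa_0}(h)$ when $\d\ge 2$, because $\vf_K$ is only piecewise-linear and genuinely differs from a single global grading precisely on the subdivided cone. I would handle this by writing $\vf_{\aa_0}(h)=\vf^{\mathrm{out}}(h)+\vf^{\mathrm{in}}(h)$, where $\vf^{\mathrm{out}}$ counts level-$h$ points in $|\Si_{\aa_0}\setminus\langle\L_{\aa_0}^{\{n+1\}}\rangle|$ and $\vf^{\mathrm{in}}$ counts those in the union of the $n+1$ new simplicial cones; each summand should be expressible via $l(k\D_{\aa_0})$ or via lattice-point counts in simplices, and the inclusion–exclusion over the $n+1$ cones (with their pairwise intersections being lower-dimensional faces) is exactly the sort of alternating sum that again collapses by finite differences. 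Once the formula is in hand, the remaining algebra is routine binomial manipulation of the kind already carried out in Lemma~\ref{lem:combinatorica} and in the proof of Theorem~\ref{thm:B-mirror0}, so I would state it as such and only spell out the re-indexing that produces $\binom{d+n}{n}-\binom{d}{n}$. A useful consistency check along the way: at $d=n+1$ the formula must degenerate to the value $-(n+1)\binom{n}{n-1}+\sum_i(-1)^{n-i}\binom{n+1}{i+1}\binom{in+i-1}{n}$ of Theorem~\ref{thm:B-mirror0}, and by Lemma~\ref{lem:combinatorica} that equals $\binom{2n+1}{n}-(n+1)\binom{n}{n}=\binom{2n+1}{n}-(n+1)$, matching $\binom{d+n}{n}-\binom{d}{n}$ at $d=n+1$; this tells me the boundary-term bookkeeping is the only place an error could creep in.
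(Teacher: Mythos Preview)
Your proposal identifies the correct overall strategy---compute $\vf_{\aa_0}(h)$ explicitly and then collapse the alternating sum $c'_1=\sum_{h}(-1)^{n-1-h}\binom{n}{h+1}\vf_{\aa_0}(h)$ via a finite-difference identity---but it is a plan rather than a proof: the crucial step, an explicit formula for $\vf_{\aa_0}(h)$, is never carried out, only anticipated (``I expect $\vf_{\aa_0}(h)$ to decompose as\ldots''). This is exactly the part where the work lies, and your in/out decomposition over the subdivided cone, while plausible, would require careful inclusion--exclusion over the $n+1$ new simplicial cones and their shared faces; you do not indicate how this bookkeeping produces closed binomial expressions.

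The paper's route is different in its mechanics. Rather than a geometric in/out split, the paper derives a \emph{recursive} relation for $\vf_{\aa_0}(h)$ in terms of $\vf_{\aa_0}(0),\dots,\vf_{\aa_0}(h-1)$ and binomials, solves it to obtain a closed form for $h\ge 2$ (and checks $\vf_{\aa_0}(1)=\binom{d+n}{n}-\binom{d}{n}+n$), and then extends this closed form to a function $P(k)$ defined for all integers $k$ by invoking standard conventions for binomials with negative upper entry and for sums over empty or reversed ranges. The collapse is then via the \emph{$n$-th} finite difference $\sum_{k=0}^{n}(-1)^k\binom{n}{k}P(k)=0$, not the $(n+1)$-st one you propose by analogy with Lemma~\ref{lem:combinatorica}; correspondingly there is no re-indexing via $\binom{n}{h+1}+\binom{n}{h+2}=\binom{n+1}{h+2}$. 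After computing the boundary values $P(0),P(1),P(2)$ one finds $c'_1=\vf_{\aa_0}(1)-n=\binom{d+n}{n}-\binom{d}{n}$ directly. Your consistency check at $d=n+1$ is a nice heuristic, but note that the quantity you compare to in Theorem~\ref{thm:B-mirror0} is $c_1-\sum_{\Theta}l^*(\Theta)$, not $c_1$ itself, so the match is only suggestive and not a genuine specialisation of the identity to be proved.
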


  \begin{proof}
 By Proposition \ref{prop:risoluzione}
 \begin{equation}\label{c1'}
   c'_1=\sum_{h=0}^{n-1}(-1)^{n-1-h}{n\choose 1+h}\vf_{\aa_0}(h)
 \end{equation}
  where, by definition, $\vf_{\aa_0}(h)=\left|\{\m\in M\,|\,\vf_K(\m)=-h\}\right|$, being $\vf_K$ the canonical support function of $\XX'_{\aa_0}$. Clearly $\vf_{\aa_0}(0)=1$. Moreover
  \begin{equation*}
    \forall\,h\in\N\setminus\{0\}\quad\vf_{\aa_0}(h)=\binom{n+hd}{n}-\sum_{j=1}^h\left(\sum_{i=1}^{d-n-1}
    \binom{jd-j+h-i}{n-1}\right)-\sum_{l=0}^{h-1}\vf_{\aa_0}(l)
  \end{equation*}
  giving rise to a recursive equation admitting the following unique solution
  \begin{eqnarray*}
    \vf_{\aa_0}(1) &=&  \binom{d+n}{n}-\binom{d}{n}+n\\
    \forall\,h\geq 2\quad \vf_{\aa_0}(h) &=& \sum_{i=1}^{n-1}\binom{hd+i}{n-1}+\sum_{l=0}^{h-1}\binom{ld+n+h-l-1}{n-1}- \sum_{l=1}^{h-1}\binom{ld+h-l-1}{n-1}
  \end{eqnarray*}
  Define
  \begin{equation}\label{Pk}
    \forall\,k\in\Z\quad P(k):= \sum_{i=1}^{n-1}\binom{(k-1)d+i}{n-1}+\sum_{l=0}^{k-2}\binom{ld+n+k-l-2}{n-1}- \sum_{l=1}^{k-2}\binom{ld+k-l-2}{n-1}
  \end{equation}
  with the following usual conventions:
  \begin{eqnarray}\label{convenzioni}
    \binom{-a}{b} &=& (-1)^{b}\binom{a+b-1}{b} \\
    \nonumber
    \forall\,a,b,c\in\Z\quad\sum_{i=a}^{b-1}f(i)+ \sum_{j=b}^cf(j)&=&\sum_{l=a}^cf(l)
  \end{eqnarray}
  In particular, the second convention implies that
  \begin{equation*}
    \sum_{i=1}^0 f(i)=\sum_{i=0}^{-1}f(i)=0,\ \sum_{i=0}^{-2}f(i)=-f(-1),\ \sum_{i=1}^{-2}f(i)=-f(0)-f(-1),\ \sum_{i=1}^{-1}f(i)=-f(0)
  \end{equation*}
  Therefore, one has
  \begin{eqnarray*}
    P(0)&=&\sum_{i=1}^{n-1}\binom{-d+i}{n-1}-\binom{-d+n-1}{n-1}+\binom{-2}{n-1}+\binom{-d-1}{n-1}\\
        &=&\binom{-d+n}{n}-\binom{-d}{n}+(-1)^n\binom{d-1}{n-1}+(-1)^{n-1}n+(-1)^{n-1}\binom{d+n-1}{n-1}\\
        &=&(-1)^n\left[\binom{d-1}{n}+\binom{d-1}{n-1}\right]+(-1)^{n-1}\left[\binom{d+n-1}{n}+\binom{d+n-1}{n-1}+n\right]\\
        &=&(-1)^{n-1}\left[\binom{d+n}{n}-\binom{d}{n}+n\right]=(-1)^{n-1}\vf_{\aa_0}(1)\\
    P(1)&=& \sum_{i=1}^{n-1}\binom{i}{n-1}+\binom{-1}{n-1} = 1+(-1)^{n-1}\\
    P(2)&=& \sum_{i=1}^{n-1}\binom{d+i}{n-1}+n=\binom{d+n}{n}-\binom{d}{n}+n=\vf_{\aa_0}(1)\\
    P(k)&=&\vf_{\aa_0}(k-1)\quad\text{for $k\geq 3$}
  \end{eqnarray*}
  Definition (\ref{Pk}), taking into account conventions (\ref{convenzioni}), allows one to check that $n$-th finite differences have to vanish for $P(k)$, that is,
  \begin{equation}\label{DF}
    \sum_{k=0}^n (-1)^k\binom{n}{k}P(k)=0
  \end{equation}
 Then, (\ref{c1'}) gives
 \begin{eqnarray*}
   c'_1&=&(-1)^n\sum_{k=1}^n(-1)^k\binom{n}{k}\vf_{\aa_0}(k-1)\\
       &=&(-1)^n\left[-n+\binom{n}{2}\vf_{\aa_0}(1)+
       \sum_{k=3}^n(-1)^k\binom{n}{k}P(k)\right]
 \end{eqnarray*}
 and vanishing (\ref{DF}) allows us to conclude that
\begin{eqnarray*}
  c_1' &=& (-1)^n\left[-n+\binom{n}{2}\vf_{\aa_0}(1))-
       \sum_{k=0}^2(-1)^k\binom{n}{k}P(k)\right] \\
   &=& \vf_{\aa_0}(1)-n =\binom{d+n}{n}-\binom{d}{n}
\end{eqnarray*}
  \end{proof}

  \section{An example with $l\ge 2$\,: $Y_{3,4}\subset\P^5$}\label{sez:l ge 2}

  On the contrary of the hypersurface case, when $l\ge 2$ there is not a general rule for computing complex moduli of a family of projective complete intersections. Moreover, dimensions of polytopes $\D_{\bb_1},\ldots,\D_{\bb_l}$, associated with the $f$-mirror partitioned framing $\cv{\bb}=\sum_{k=1}^l\bb_k$, depend on the particular partitioned framing $\aa=\sum_{k=1}^l\aa_k$ chosen on $\P^n$. Anyway, results in \S\ref{ssez:ProjCI} and \S\ref{ssez:mirrorCI} give useful recipes to perform, case by case, most parts of the topological mirror check.

  \begin{remark}
    Consider the partitioned ftv $(\P^n,\aa=\sum_{i=1}^l\aa_k)$ and let $\aa_0$ be a permutation of $\aa$ whose entry are increasingly ordered. Let $\XX_{\aa_0}$ and $\cv{\XX}_\aa$ be the associated $f$-dual and partitioned $f$-dual, respectively, varieties. By construction, $\XX_{\aa_0}$ and $\cv{\XX}_\aa$ are birational algebraic varieties. Recall the Gorenstein partial resolution $\phi':\XX'_{\aa_0}\longrightarrow\XX_{\aa_0}$ constructed in Proposition~\ref{prop:risoluzione}. Then blow up $\XX'_{\aa_0}$ by adding the rays in $\cv{\Si}_{\aa}(1)\setminus\Si_{\aa_0}(1)$ and then suitably subdividing the fan $\cv{\Si}_\aa$: in this way one gets a partial Gorenstein resolution $\XX''_\aa$ of both $\XX_{\aa_0}$ and $\cv{\XX}_\aa$ and factorizing the resolution $\phi:\widehat{\XX}_{\aa_0}\longrightarrow\XX_{\aa_0}$ constructed in Proposition~\ref{prop:risoluzione}, as follows
    \begin{equation}\label{risoluzioni}
      \xymatrix{&&\,\ \widehat{\XX}_{\aa_0}\ar[d]\ar[dddll]_{\phi}\ar[dddrr]^{\cv{\phi}}&&\\
      &&\XX''_\aa\ar[dl]\ar[ddrr]&&\\
      &\XX'_{\aa_0}\ar[dl]^{\phi'}&&&\\
      \XX_{\aa_0}&&&&\cv{\XX}_{\aa}}
    \end{equation}
    In particular: \emph{$\cv{\phi}:\widehat{\XX}_{\aa_0}\longrightarrow\cv{\XX}_\aa$ is a resolution of singularities}.
  \end{remark}

  As an example, consider the complete intersection $Y_{3,4}\subset\P^5$, giving the minimum degree projective 3-dimensional complete intersection, with $l=2$, of general type. Coherently with prescriptions (\ref{a,ak}) choose:
  \begin{eqnarray*}
    \aa&:=&\left(\begin{array}{cccccc}
          1 & 1 & 1 & 1 & 1 & 2
        \end{array}\right)=\aa_1+\aa_2\quad\text{with}\\
        \aa_1&:=&\left(
                                    \begin{array}{cccccc}
                                      1 & 1 & 1 & 0 & 0 & 0 \\
                                    \end{array}
                                  \right)\\
        \aa_2&:=&\left(
                                                          \begin{array}{cccccc}
                                                            0 & 0 & 0 & 1 & 1 & 2 \\
                                                          \end{array}
                                                        \right)
  \end{eqnarray*}
  By applying algorithm~\ref{algoritmoDnef}, one gets $\D_{\aa_k}=\conv(\L_{\aa_k})$, with $k=1,2$ and
  \begin{equation*}
    \L_{\aa_1}=\left( \begin {array}{cccccc} 2&-1&-1&-1&-1&-1\\
    -1&2&-1&-1&-1&-1\\
    -1&-1&2&-1&-1&-1\\
    0&0&0&3&0&0\\
    0&0&0&0&3&0\end {array} \right)
  \end{equation*}
  \begin{equation*}
    \L_{\aa_2}=\left( \begin {array}{cccccc} 4&0&0&0&0&0\\
    0&4&0&0&0&0\\
    0&0&4&0&0&0\\
    -1&-1&-1&3&-1&-1\\
    -1&-1&-1&-1&3&-1\end {array} \right)
  \end{equation*}
  Therefore $\cv{\D}_\aa=\conv(\D_{\aa_1},\D_{\aa_2})=\conv\left(\cv{\L}_\aa\right)$, with $\cv{\L}_\aa=\left(
                                                                                               \begin{array}{c}
                                                                                                 \L_{\aa_1} \,\vline\ \L_{\aa_2} \\
                                                                                               \end{array}
                                                                                             \right)
                                                                                             $.
  Step (D) in algorithm~\ref{algoritmoDnef} then gives
  \begin{eqnarray*}
    \cv{\bb}&:=&\left(\begin{array}{cccccccccccc}
          1 & 1 & 1 & 1 & 1 & 1 & 2 & 2 & 2 & 2 & 2 & 1\\
        \end{array}\right)=\bb_1+\bb_2\quad\text{with}\\
        \bb_1&:=&\left(
                                    \begin{array}{cccccccccccc}
                                      1 & 1 & 1 & 1 & 1 & 1 & 0 & 0 & 0 & 0 & 0 & 0\\
                                    \end{array}
                                  \right)\\
        \bb_2&:=&\left(
                                                          \begin{array}{cccccccccccc}
                                                            0 & 0 & 0 & 0 & 0 & 0 & 2 & 2 & 2 & 2 & 2 & 1 \\
                                                          \end{array}
                                                        \right)
  \end{eqnarray*}
  so that
  \begin{equation*}
    \cv{\D}_{\bb_1}=\conv\left(
                     \begin{array}{cccc}
                       \0 & \e_1 & \e_2 & \e_3 \\
                     \end{array}
                   \right)
  \end{equation*}
  \begin{equation*}
    \cv{\D}_{\bb_2}=\conv\left(
                     \begin{array}{cccccc}
                       -\1 & \0 & \e_4 & \e_5 & \ll' & \ll'' \\
                     \end{array}
                   \right)
  \end{equation*}
  with
  \begin{equation*}
  \ll'=\left(
                                                      \begin{array}{c}
                                                        -1/4 \\
                                                        -1/4 \\
                                                        -1/4 \\
                                                        -1/4 \\
                                                        5/4 \\
                                                      \end{array}
                                                    \right)\quad,\quad\ll''=\left(
                                                                      \begin{array}{c}
                                                                        -1/4 \\
                                                                        -1/4 \\
                                                                        -1/4 \\
                                                                        5/4 \\
                                                                        -1/4 \\
                                                                      \end{array}
                                                                    \right)
  \end{equation*}
 Then $\dim\cv{\D}_{\bb_1}=3=\dim\cv{\D}_{\bb_2}$ and $\dim\left(\cv{\D}_{\bb_1}+\cv{\D}_{\bb_2}\right)=5$ meaning that $\cv{\D}_{\bb_1},\cv{\D}_{\bb_2}$ are 3-independent and 4-dependent.\\
 Moreover, part (b) in Remark~\ref{rem:CIequazioni} explicitly gives the two polynomials
 \begin{eqnarray}\label{Y_3,4-dual}
   &f_1^\vee = a_1\, x_1x_2x_3x_4x_5x_6+ a_2\,{x_1}^{3}{x_{7}}^{4}+a_3\,{x_2}^{3}{x_{8}}^{4}+a_4\,{x_3}^{3}{x_{9}}^{4}&\\
   \nonumber
   &f_2^\vee = b_1\,x_{6}^{3}x_{12}^{3}+b_2\,x_{4}^{3}x_{7}x_{8}x_{9}x_{10}^{5}x_{11}+b_3\,x_{5}^{3}x_{7}x_{8}x_{9}
   x_{10}x_{11}^{5}+b_4\,x_{7}^2x_{8}^{2}x_{9}^{2}x_{10}^{2}x_{11}^{2}x_{12}&
 \end{eqnarray}
 defining the complete intersection $Y^\vee=Y^\vee_1\cap Y^\vee_2$ in $\cv{\XX}_\aa$\,. The latter is a complete toric variety of Picard number 7 and $Y_1^\vee,Y^\vee_2$ turn out to be divisors of degree $(3,12,12,4,12,4,0),(0,12,12,6,30,15,6)\in\Cl(\cv{\XX}_\aa)\cong\Z^7$, respectively.
 \halfline

 \subsection{Computing Hodge numbers} By Theorem~\ref{thm:h21Y}, for the generic
 $$Y=Y_{3,4}:=Y_1\cap Y_2\subset \P^5\quad\text{with}\quad Y_k\in|D_{\aa_k}|$$
 one has
 \begin{equation*}
    h^p(\cO_Y)=\left\{\begin{array}{cc}
                        1 & \text{for $p=0$} \\
                        l^*(\D_\aa)=6 & \text{for $p=3$} \\
                        0 & \text{otherwise}
                      \end{array}
    \right.
  \end{equation*}
  being $\D_\aa=\D_{\aa_1}+\D_{\aa_2}=\conv(\L_\aa)$ and
  \begin{equation*}
    \L_\aa= \left( \begin {array}{cccccc} 6&-1&-1&-1&-1&-1\\
    -1&6&-1&-1&-1&-1\\
    -1&-1&6&-1&-1&-1\\
    -1&-1&-1&6&-1&-1\\
    -1&-1&-1&-1&6&-1\end {array} \right)
  \end{equation*}
  so giving
  \begin{equation*}
    \Int\left(\D_\aa\right)\cap M=\left\{\0,\e_1,\e_2,\e_3,\e_4,\e_5\right\}
  \end{equation*}
  Moreover, $\D_i=\D_{D_i}=\conv(\L_i)$, with
  \begin{eqnarray*}
    \L_6&=&\left(
           \begin{array}{cccc}
             \0 & \e_1 & \cdots & \e_5 \\
           \end{array}
         \right)\\
    \forall\,i:1\le i\le 5\quad\L_i&=&\L_6 -\left(
                                              \begin{array}{c}
                                                \0_6 \\
                                                \vdots\\
                                                \0_6\\
                                                \hline
                                                -\1_6\\
                                                \hline
                                                \0_6 \\
                                                \vdots \\
                                                \0_6 \\
                                              \end{array}
                                            \right)\,\}\ \text{$i$-th row}
  \end{eqnarray*}
  Then, recalling the definition (\ref{Ka}) of $K_\aa$, one has
  \begin{eqnarray*}
    K_\aa&=&l^*\left(\D_{\aa_1}+\D_\aa\right)+l^*\left(\D_{\aa_2}+\D_\aa\right)-\sum_{i=1}^6 l^*\left(\D_i+\D_\aa\right)\\
    &-&2l^*\left(\D_\aa\right)-l^*\left(2\D_{\aa_1}\right)-l^*\left(2\D_{\aa_2}\right)+
    \sum_{i=1}^6\left(l^*\left(\D_i+\D_{\aa_1}\right)+l^*\left(\D_i+\D_{\aa_2}\right)\right)\\
    &+&l^*\left(\D_{\aa_1}\right)+l^*\left(\D_{\aa_2}\right)-\sum_{i=1}^6l^*\left(\D_i\right)
  \end{eqnarray*}
  But
  \begin{eqnarray*}
    \forall\,i,k&l^*\left(\D_{\aa_k}\right) &= l^*\left(\D_i+\D_{\aa_k}\right)=l^*\left(\D_i\right)=0 \\
    \forall\,i&l^*\left(\D_i+\D_{\aa}\right)&= l^*\left(2\D_{\aa_2}\right) = 21 \\
    & l^*\left(2\D_{\aa_1}\right) &=  1\\
    & l^*\left(\D_{\aa_1}+\D_\aa\right)&=126\\
    & l^*\left(\D_{\aa_2}+\D_\aa\right)&=252
  \end{eqnarray*}
  so that
  \begin{equation*}
    K_\aa=126+252 - 6\cdot21-12-1-21 =218
  \end{equation*}
  Therefore, Theorem~\ref{thm:h21Y}, and in particular relation (\ref{h^n-l-1}), give
  \begin{equation}\label{h^1Omega_Y}
    h^p(\Omega_Y)=\left\{\begin{array}{cc}
                           1 & \text{for $p=1$} \\
                           224 & \text{for $p=2$} \\
                           0 & \text{otherwise}
                         \end{array}
    \right.
  \end{equation}
  On the other hand, for the generic
  $$Y^\vee:=\bigcap_{k=1}^l Y^\vee_k\subset \cv{\XX}_\aa\quad\text{with}\quad Y_k\in|\cv{D}_{\bb_k}|$$
consider the birational transform $\cv{Y}^\vee:=\cv{\phi}^{-1}\left(Y^\vee\right)$ under the resolution $\cv{\phi}$ defined in (\ref{risoluzioni}). Then, Theorem~\ref{thm:h21Yd} gives
\begin{equation*}
    h^p(\cO_{\cv{Y}^\vee})=h^p(\cO_{Y^\vee})=\left\{\begin{array}{cc}
                        1 & \text{for $p=0,3$} \\
                        0 & \text{for $1\le p\le 2$}
                      \end{array}
    \right.
  \end{equation*}
  being $l^*\left(\D_{\cv{\bb}}\right)=1$. Moreover, by Lemma~\ref{lem:h11st}
  \begin{equation*}
    h^1\left(\widehat{\Omega}_{\cv{Y}^\vee}\right)=\rk\left(\Cl\left(\widehat{\XX}_{\aa_0}\right)\right)
    =h^{1,1}_{\text{st}}\left(\XX''_{\aa}\right)=h^{1,1}_{\text{st}}\left(\XX'_{\aa_0}\right)=
    \binom{|\aa|+5}{5}-\binom{|\aa|}{5}= 771
  \end{equation*}

  \subsubsection{Computing moduli numbers}\label{sssez:moduli} Complex moduli of the $f$-mirror family $\mathcal{Y}_{\cv{\bb}}$ of complete intersections $Y^\vee\subset\cv{\XX}_\aa$ can be computed by observing that a variables rescaling allows us to rewrite the defining polynomials $f^\vee_1,f^\vee_2\in\Cox(\cv{\XX}_\aa)$, explicitly written in (\ref{Y_3,4-dual}), as follows
  \begin{eqnarray}\label{eq.3,4}
    &f_1^\vee = \psi\, x_1x_2x_3x_4x_5x_6+ {x_1}^{3}{x_{7}}^{4}+{x_2}^{3}{x_{8}}^{4}+{x_3}^{3}{x_{9}}^{4}&\\
   \nonumber
   &f_2^\vee = x_{6}^{3}x_{12}^{3}+x_{4}^{3}x_{7}x_{8}x_{9}x_{10}^{5}x_{11}+x_{5}^{3}x_{7}x_{8}x_{9}
   x_{10}x_{11}^{5}+x_{7}^2x_{8}^{2}x_{9}^{2}x_{10}^{2}x_{11}^{2}x_{12}&
  \end{eqnarray}
  where the complex parameter $\psi$ cannot be normalized by a further automorphism. Then $\psi$ represents the unique complex modulus of $\mathcal{Y}_\bb$, so that $m_{Y^\vee}=1$. The latter, compared with $h^1(\Omega_Y)=1$, as computed in (\ref{h^1Omega_Y}), implies that $Y^\vee$ is $A$-side mirror partner of $Y$.

 On the other hand, complex moduli of the family $\mathcal{Y}_{\aa}$ of complete intersections $Y=Y_{3,4}\subset\P^5$ can be computed as follows.
  First of all, one has
  \begin{eqnarray*}
    \dim\left(\P H^0(\P^5,\cO_{\P^5}(3))\right)&=&{3+5\choose 3}-1=55\\
    \dim\left(\P H^0(\P^5,\cO_{\P^5}(4))\right)&=&{4+5\choose 4}-1=125
  \end{eqnarray*}
  Then notice that, given a cubic polynomial  $f_3\in H^0(\cO_{\P^5}(3))$, two quartic polynomials $f_4,f'_4\in H^0(\cO_{\P^5}(4))$ cut a out on $\{f_3=0\}$ the same complete intersection if and only if $f_3$ divides both $f_4$ and $f'_4$. The dimension of the subspace of $H^0(\cO_{\P^5}(4))$ generated by quartic polynomials of this kind is
  \begin{equation*}
    \dim\left(H^0(\P^5,\cO_{\P^5}(1))\right)={1+5\choose 1}=6
  \end{equation*}
  Finally we have to quotient by the action of $\Aut(\P^5)=\P\GL(5)$, so giving
  \begin{equation*}
    m_Y=55+125 - 6 - 35= 139
  \end{equation*}
For the $B$-side topological test, one has to perform the following comparison
\begin{equation*}
  h^1\left(\widehat{\Omega}_{\cv{Y}^\vee}\right)=771>139=m_Y
\end{equation*}
Then $Y^\vee$ is a $B$-side mirror partner of $Y_{3,4}$ if there exists a partial resolution $\vf:\XX\longrightarrow\cv{\XX}_\aa$ such that
\begin{itemize}
  \item[(i)] $\rk(\Cl(\XX))=139$,
  \item[(ii)] $\XX$ is $\Q$-factorial and satisfies conditions in Remark~\ref{rem:>>0},
  \item[(iii)] $\vf^{-1}(Y^\vee)$ is quasi-smooth,
\end{itemize}
so that $h^1\left(\widehat{\Omega}_{\vf^{-1}(Y^\vee)}\right)=139$ by Theorem~\ref{thm:h21Yd}. \\
Conditions (i) and (ii) can be obtained by choosing a suitable factorization of
the resolution $\cv{\phi}:\widehat{\XX}_{\aa_0}\longrightarrow\cv{\XX}_{\aa}$ as follows
\begin{equation*}
  \xymatrix{\widehat{\XX}_{\aa_0}\ar[rr]^-{\cv{\phi}}\ar[rd]&&\cv{\XX}_{\aa}\\
            &\XX'\ar[ru]^-{\vf'}&}
\end{equation*}
and then a small birational contraction $\vf'':\XX\longrightarrow\XX'$, determined by the choice of a suitable chamber of the secondary fan of $\XX'$, such that $\XX$ is $\Q$-factorial.\\
Unfortunately, condition (iii) seems to be quite difficult to understand, at least as far as the author's knowledge allows.

\subsection{LG mirror models}\label{ssez:Y34} Starting from equations (\ref{eq.3,4}) of the $f$-mirror model $Y^\vee$ and applying considerations given in \S~\ref{ssez:LG/CI} and Proposition~\ref{prop:LGmirror}, one can construct a generalized LG mirror model and a Givental type LG mirror model, respectively.

\noindent The former is given by $(\cv{\T}_\aa, \ff_\bb^\vee)$, where
  \begin{eqnarray*}
    \ff^\vee_\bb&=&\left({f^\vee_1\over\x^{\bb_1}},{f^\vee_2\over\x^{\bb_2}}\right)
    =\left(\psi+
    {{x_1}^{3}{x_{7}}^{4}+{x_2}^{3}{x_{8}}^{4}+{x_3}^{3}{x_{9}}^{4}\over x_1x_2x_3x_4x_5x_6},\right.\\
    &&\left. 1+{x_{4}^{3}x_{7}x_{8}x_{9}x_{10}^{5}x_{11}+x_{5}^{3}x_{7}x_{8}x_{9}
   x_{10}x_{11}^{5}+x_{6}^{3}x_{12}^{3}\over x_{7}^2x_{8}^{2}x_{9}^{2}x_{10}^{2}x_{11}^{2}x_{12}}\right)
  \end{eqnarray*}
  Therefore, Proposition~\ref{prop:LGmirror} gives the Givental type LG mirror model $(\pi^{-1}(\T_2),F)$ with
  \begin{eqnarray*}
    F &=& {f^\vee_1\over\x^{\bb_1}}+{f^\vee_2\over\x^{\bb_2}}= u_{11}+\cdots+u_{41}+u_{12}+\cdots+u_{42}:\C^8\longrightarrow\C\\
    \pi(\uu) &=&(q_1,q_2):(\C^*)^8\longrightarrow(\C^*)^2\\
    u_{11}&=&\psi\\
    u_{21}&=&{x_1^2x_7^4\over x_2x_3x_4x_5x_6}\\
    u_{31}&=&{x_2^2x_8^4\over x_1x_3x_4x_5x_6}\\
    u_{41}&=&{x_3^2x_9^4\over x_1x_2x_4x_5x_6}\\
    u_{12}&=& 1\\
    u_{22}&=& {x_{4}^{3}x_{10}^3\over x_{7}x_{8}x_{9}x_{11}x_{12}}\\
    u_{32}&=& {x_{5}^{3}x_{11}^{3}\over x_{7}x_{8}x_{9}x_{10}x_{12}}\\
    u_{42}&=&{x_6^3x_{12}^2\over x_{7}^2x_{8}^{2}x_{9}^{2}x_{10}^{2}x_{11}^{2}}\\
    q_1&=&u_{11}\cdots u_{41}=\psi{x_7^4x_8^4x_9^4\over x_4^3x_5^3x_6^3}\\
    q_2&=&u_{12}\cdots u_{42}={x_4^3x_5^3x_6^3\over x_7^4x_8^4x_9^4}
  \end{eqnarray*}
  As for examples~\ref{ex:Y23} and \ref{ex:Y13}, again it turns out that $q_1q_2=\psi$, so that $\pi$ can be thought of as a fibration over the complex parameter of the mirror family, up to the composition with $(q_1,q_2)\in\T_2\mapsto q_1q_2\in\C^*$.

\bibliographystyle{acm}

\end{document}